\newcommand\blfootnote[1]{%
  \begingroup
  \renewcommand\thefootnote{}\footnote{#1}%
  \addtocounter{footnote}{-1}%
  \endgroup
}
\newcommand{\Title}{Title}
\numberwithin{equation}{section}
\theoremstyle{definition}\newtheorem{definition}{Definition}[section]
	\newtheorem{defititle}[definition]{\Title}
	\newtheorem{remark}[definition]{Remark}
	\newtheorem{ex}[definition]{Example}
	\newtheorem{exs}[definition]{Examples}}
\newtheorem{thmx}{Theorem}
\newtheorem{prop}[definition]{Proposition}
\newtheorem{proposition-definition}[definition]{Proposition-Definition}
\newtheorem{lemma}[definition]{Lemma}
\newtheorem{thm}[definition]{Theorem}
\newtheorem{cor}[definition]{Corollary}
\newtheorem*{prop*}{Proposition}
\newtheorem*{theorem*}{Theorem}
\newcommand{\cD}{\mathcal{D}}
\newcommand{\cG}{\mathcal{G}}
\newcommand{\cF}{\mathcal{F}}
\newcommand{\cE}{\mathcal{E}}
\newcommand{\cQ}{\mathcal{Q}}
\newcommand{\cK}{\mathcal{K}}
\newcommand{\cT}{\mathcal{T}}
\newcommand{\aF}{\lie{a}\cF}
\newcommand{\Id}{{\hbox{Id}}}
\newcommand{\dom}{{\rm dom}}
\newcommand{\bfX}{\mathbf{X}}
\newcommand{\bfY}{\mathbf{Y}}
\newcommand{\bfZ}{\mathbf{Z}}
\DeclareMathOperator{\BCH}{BCH}
\newcommand\norm[1]{\left\lVert#1\right\rVert}
\renewcommand{\Re}{\operatorname{Re}}
\renewcommand{\Im}{\operatorname{Im}}
\def\gpd{\,\lower1pt\hbox{$\longrightarrow$}\hskip-.24in\raise2pt
	\hbox{$\longrightarrow$}\,}
\renewcommand{\latticebody}{\drop@{ }}
\newcommand{\N}{\ensuremath{\mathbb N}}
\newcommand{\Z}{\ensuremath{\mathbb Z}}
\newcommand{\C}{\ensuremath{\mathbb C}}
\newcommand{\R}{\ensuremath{\mathbb R}}
\newcommand{\cL}{{\mathcal L}}
\newcommand{\cS}{\mathcal{S}}
\newcommand{\cX}{\mathcal{X}}
\newcommand{\NN}{\ensuremath{\mathbb N}}
\newcommand{\RR}{\ensuremath{\mathbb R}}
\DeclareMathOperator{\Diff}{Diff}
\DeclareMathOperator{\supp}{supp}
\DeclareMathOperator{\singsupp}{singsupp}
\DeclareMathOperator{\ssing}{singsupp}
\newcommand{\gr}{\mathfrak{gr}}
\newcommand{\Gr}{\mathfrak{gr}}
\newcommand{\End}{\mathrm{End}}
\newcommand{\dimh}{\mathrm{dim}_{\mathrm{h}}}
\newcommand{\fib}[1]{[#1]}
\def\act{\mathbin{\hbox{$<\kern-.4em\mapstochar\kern.4em$}}}
\def\ract{\mathbin{\hbox{$\mapstochar\kern-.3em>$}}}
\def\exp{\mathrm{exp}}
\def\PB(#1,#2,#3,#4){\left\{\begin{matrix}#1&\!\!\!\stackrel{?}{\longrightarrow}&\!\!\!#2\\
		\downarrow&&\!\!\!\downarrow\\
		#3&\!\!\!\stackrel{?}{\longrightarrow}&\!\!\!#4\end{matrix}\right\}}
\def\pb(#1,#2,#3,#4){ \hom(#1 \to #3, #2 \to #4)}
\newcommand{\lie}{\mathfrak}
\newcommand{\ev}{\mathrm{ev}}
\mathchardef\mhyphen="2D
\begin{document}
	\title{A pseudodifferential calculus for maximally hypoelliptic operators and the Helffer-Nourrigat conjecture}
			
			\bigskip
			
			\author{Iakovos Androulidakis, Omar Mohsen and Robert Yuncken}
		\date{}
\maketitle

\begin{abstract}\noindent 
We extend the classical regularity theorem of elliptic operators to
maximally hypoelliptic differential operators. More precisely, given
vector fields $X_1,\ldots,X_m$ on a smooth manifold which satisfy
Hörmander's bracket generating condition, we define a principal
symbol for \textit{any} linear differential operator. Our symbol takes into account the vector fields $X_i$ and their commutators. We show that for an arbitrary differential
operator, its principal symbol is invertible if and only if the
operator is maximally hypoelliptic. This answers affirmatively a
conjecture due to Helffer and Nourrigat. Our result is proven in a more
general setting, where we allow each one of the vector fields
$X_1,\ldots,X_m$ to have an arbitrary weight. In particular, our theorem
generalizes Hörmander's sum of squares theorem to higher order
polynomials.\blfootnote{MSC2020 subject classification 35H10; Secondary 47G30, 35B44, 58J40, 22A22, 22E25.}
\end{abstract}
	
	\setcounter{tocdepth}{2} 
	\tableofcontents

\section*{Introduction}
\addcontentsline{toc}{section}{Introduction}
Elliptic linear differential operators are some of the most extensively studied differential operators in analysis. This is because of their wide applications in many areas of mathematics such as algebraic geometry, complex geometry, symplectic geometry and representation theory. These applications are based on the following fundamental regularity theorem, which is proved using the pseudodifferential calculus developed by Nirenberg, Kohn, Hörmander and others.
\begin{thm}[{\cite[Theorem 19.5.1]{HormanderBooks3}}]\label{thmx:qskdhflihsdl} Let $M$ be a smooth manifold, $D:C^\infty(M)\to C^\infty(M)$ a differential operator of order $k$. The following are equivalent
\begin{enumerate}
\item The operator $D$ is elliptic, i.e., for every $\xi \in T^*M\backslash \{0\}$, $\sigma(D,x,\xi)\neq 0$, where $\sigma$ is the classical principal symbol of $D$.
\item For every (or for some) $s\in \R$, and every distribution $u$ on $M$, $Du\in H^s(M)$ implies $u\in H^{s+k}(M)$, where $H^\bullet$ are the local $L^2$ Sobolev spaces.
\end{enumerate}
Furthermore if $M$ is compact, the previous statements are equivalent to the following
\begin{enumerate}
  \setcounter{enumi}{3}  \item For every (or for some) $s\in \R$, $D:H^{s+k}(M)\to H^{s}(M) $ is Fredholm
\end{enumerate}
\end{thm}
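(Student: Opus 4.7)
The plan is to use the standard machinery of the classical pseudodifferential calculus, which is assumed available since the theorem is cited from Hörmander. The heart of the argument is the construction of a parametrix for elliptic operators, together with quasimode/oscillatory testing arguments for the converse directions.

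For (1) $\Rightarrow$ (2), I would construct a pseudodifferential parametrix $Q$ of order $-k$ for $D$. On the principal symbol level, the candidate is $\sigma(Q,x,\xi) = 1/\sigma(D,x,\xi)$ for $|\xi|$ large, cut off smoothly near the zero section; this is well-defined because ellipticity gives $\sigma(D,x,\xi)\neq 0$ on $T^*M\setminus\{0\}$. One then uses the symbolic calculus (asymptotic expansion of products of symbols) to iteratively correct the lower-order terms and obtain $QD = \Id + R$ with $R$ smoothing. Given $Du \in H^s_{\mathrm{loc}}$, applying $Q$ yields $u = QDu - Ru$; since $Q$ maps $H^s_{\mathrm{loc}}\to H^{s+k}_{\mathrm{loc}}$ and $R$ maps distributions to smooth functions, we conclude $u\in H^{s+k}_{\mathrm{loc}}$.

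For (2) $\Rightarrow$ (1), I would argue by contrapositive. If there exists $(x_0,\xi_0)$ with $\xi_0\neq 0$ and $\sigma(D,x_0,\xi_0)=0$, build a sequence of quasimodes: functions of the form $u_\lambda(x)=\chi(x)e^{\di\lambda\langle x-x_0,\xi_0\rangle}\varphi(\lambda^{1/2}(x-x_0))$, localized near $x_0$ and oscillating along $\xi_0$ with frequency $\lambda\to\infty$. A direct computation of $Du_\lambda$ using the expansion of $D$ in $(x-x_0)$ and integration by parts shows $\|Du_\lambda\|_{H^s}=o(\lambda^{k})\|u_\lambda\|_{H^{s+k}}$, while $\|u_\lambda\|_{H^{s+k}}\sim \lambda^{s+k}$, contradicting the a priori estimate equivalent to (2). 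Some care is needed to promote the abstract regularity statement (2) to an a priori norm inequality, but this is a standard closed graph / functional analysis step.

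For the Fredholm equivalence under compactness, (1) $\Rightarrow$ (3) follows from the parametrix: $QD - \Id$ and $DQ-\Id$ are smoothing, hence compact on Sobolev spaces (since $M$ is compact and inclusions $H^{s+1}\hookrightarrow H^s$ are compact), so $D$ has a two-sided inverse modulo compacts and is Fredholm. Conversely, (3) $\Rightarrow$ (1) is again proved by quasimodes: if ellipticity failed, one constructs an infinite-dimensional family of almost-kernel elements concentrated at a characteristic point, which obstructs the finite-dimensional kernel/cokernel required of a Fredholm operator; alternatively, one shows that (3) forces the a priori estimate in (2), then applies the previous paragraph.

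The main technical obstacle in each direction is the parametrix construction in (1) $\Rightarrow$ (2): one must verify that the symbolic inverse of $\sigma(D)$ can be quantized to a pseudodifferential operator with the expected composition laws, which requires the full strength of the symbol calculus (asymptotic summation of symbols, boundedness on $H^s$, and compactness of smoothing operators on compact manifolds). Once this machinery is in place, the other implications are comparatively soft consequences of the parametrix together with oscillatory test-function arguments. The significance for the present paper is that the analogous parametrix construction in the maximally hypoelliptic setting, where the principal symbol lives on a nilpotent tangent groupoid rather than on $T^*M$, is precisely the obstacle the paper must overcome.
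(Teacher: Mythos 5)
The paper does not prove this theorem: it is stated in the Introduction as background, cited directly from \cite[Theorem~19.5.1]{HormanderBooks3}, so there is no internal proof to compare your proposal against. Your sketch is the standard textbook argument and is structurally correct: the parametrix construction for (1)\,$\Rightarrow$\,(2), quasimodes plus a closed-graph step for (2)\,$\Rightarrow$\,(1), and compactness of smoothing operators on a compact manifold for the Fredholm equivalence. One small imprecision: in the (2)\,$\Rightarrow$\,(1) step, the estimate $\|Du_\lambda\|_{H^s}=o(\lambda^k)\|u_\lambda\|_{H^{s+k}}$ is dimensionally off; the point is rather that $\|Du_\lambda\|_{H^s}=o(\lambda^{s+k})$ while $\|u_\lambda\|_{H^{s+k}}\sim\lambda^{s+k}$, so the a priori estimate $\|u\|_{H^{s+k}}\lesssim\|Du\|_{H^s}+\|u\|_{H^{s+k-1}}$ implied by (2) via closed graph fails along the sequence. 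You correctly identify the relevance to the paper: the paper's entire contribution is to reproduce exactly this parametrix-and-symbol-calculus machinery in the filtered/sub-Riemannian setting, where the principal symbol lives on a field of nilpotent groups (via the Helffer--Nourrigat cone and the $C^*$-algebra $C^*\lie{a}\cF$) rather than on $T^*M$, and where the difficulty is that the symbol is not well defined on all of $\widehat{\Gr(\cF)}_p$ but only on $\cT^*_p\cF$.
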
 
In a celebrated article, Hörmander proved  that for some non-elliptic differential operators, now called Hörmander's sums of squares operators, one still has the regularity of solutions. 
\begin{thm}[\cite{Hormander:SoS}]\label{Hormanderthmintro}
\label{thm:SoS}
Let $X_1,\cdots,X_{m+1}$ be vector fields satisfying Hörmander's Lie bracket generating condition, i.e., for every $x\in M$, $T_xM$ is linearly spanned by $X_1(x),\cdots,X_{m+1}(x)$ and their higher Lie brackets $[X_i,X_j](x)$, $[X_i,[X_j,X_l]](x)$ etc. Then $D=\sum_{i=1}^mX_i^2+X_{m+1}$ is hypoelliptic, i.e., if $u$ is a distribution on $M$ such that $Du$ is smooth, then $u$ is smooth.
\end{thm}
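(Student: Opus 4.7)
The plan is to establish a \emph{subelliptic estimate} for $D$ of the form
\begin{equation*}
\|u\|_{\epsilon}^2 \;\leq\; C\bigl(|\langle Du, u\rangle| + \|u\|_0^2\bigr), \qquad u \in C_c^\infty(K),
\end{equation*}
for some $\epsilon > 0$ depending on the depth of bracketing needed to span $TM$ over the compact set $K$. Once such an estimate is in hand, hypoellipticity follows by a now-standard Friedrichs-mollifier argument: one applies the estimate to regularizations $J_\delta u$, commutes $J_\delta$ past $D$ at the cost of lower-order errors, and passes to the limit to upgrade $Du \in H^s_{\mathrm{loc}}$ to $u \in H^{s+\epsilon}_{\mathrm{loc}}$, iterating in $s$ to conclude $u \in C^\infty$.

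The first step is the basic $L^2$ estimate. Integration by parts against $u$ gives
\begin{equation*}
\sum_{i=1}^m \|X_i u\|_0^2 \;=\; -\langle Du, u\rangle + \langle X_{m+1} u, u\rangle + (\text{lower order}),
\end{equation*}
and since $\|X_{m+1} u\|_0 \leq \|u\|_1$, a Cauchy--Schwarz/absorption argument yields
$\sum_i \|X_i u\|_0^2 \leq C(|\langle Du, u\rangle| + \|u\|_0^2)$. This controls $u$ only in the directions of the $X_i$, so the substance of the proof is to propagate this control into the directions of iterated brackets.

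The second, and main, step is the commutator propagation. Using the classical pseudodifferential calculus, I would introduce for each $s \in \R$ an operator $\Lambda^s \in \Psi^s$ and study estimates of the form
\begin{equation*}
\|Y u\|_{s} \;\leq\; C\bigl(\|X_i u\|_{s} + \|u\|_{s}\bigr).
\end{equation*}
The key lemma says that if $\|A u\|_{s}$ and $\|B u\|_{s}$ can be controlled, then $\|[A,B] u\|_{s-1/2}$ can be controlled, with a loss of half a derivative. The argument is to write $\langle \Lambda^{2s-1}[A,B]u, u\rangle$, expand the commutator, and move $A$ and $B$ to the appropriate side using the fact that $[\Lambda^{2s-1}, A]$ and $[\Lambda^{2s-1}, B]$ are of the order of the first factor. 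Iterating this lemma $r$ times, every bracket of length $\leq r$ among the $X_i$ can be controlled in $H^{-r/2^{r-1}}$-type norms (more cleanly: in $H^{2^{-r}}$ after rescaling); the precise bookkeeping of the loss is the hardest part of the argument.

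The third step combines bracket propagation with Hörmander's hypothesis. By assumption, at every $x \in K$ the iterated brackets of the $X_i$ of length $\leq r$ span $T_x M$ for some uniform $r$. Hence any smooth vector field $Y$ on $K$ is a $C^\infty$-linear combination of such brackets, and the commutator estimates show $\|Y u\|_{\epsilon} \lesssim \|u\|_0 + \sum_i \|X_i u\|_0 + (\text{lower order})$ for $\epsilon = 2^{-r}$. Summing over a frame yields $\|u\|_{\epsilon} \lesssim \|u\|_0 + \sum_i \|X_i u\|_0$, which combined with the first step gives the desired subelliptic estimate. The main obstacle throughout is the \emph{uniform} control of the loss of regularity as one iterates brackets; this is where the precise mapping properties of the pseudodifferential calculus, and especially sharp estimates on $[\Lambda^s, Z]$ for vector fields $Z$, are indispensable.
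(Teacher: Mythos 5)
Your sketch is a correct outline of the classical proof — essentially H\"ormander's original argument, which is what the paper's citation \cite{Hormander:SoS} points to: the basic $L^2$ estimate $\sum_i\|X_iu\|_0^2\lesssim |\langle Du,u\rangle|+\|u\|_0^2$ from integration by parts, a commutator-propagation lemma losing roughly half a derivative per bracket, iteration to the bracket depth $r$ needed to span $T_xM$, and a Friedrichs-mollifier bootstrap. The paper itself takes a completely different route and does not reprove Theorem~\ref{thm:SoS} directly: it derives it as an immediate corollary of the far more general Theorem~\ref{mainthmintro2}. One assigns weights $v_1=\cdots=v_m=1$, $v_{m+1}=2$, builds the associated filtered foliation with its osculating graded nilpotent groups $\Gr(\cF)_p$, and checks (Proposition~\ref{prop:Hormander_symbol}, Corollary~\ref{cor:jqsfdjioqm}) that the operator-valued principal symbol $\sum_{i=1}^m d\pi([X_i]_p)^2 + d\pi([X_{m+1}]_p)$ is injective on $C^\infty(\pi)$ for every nontrivial irreducible unitary representation $\pi$ of every $\Gr(\cF)_p$: the first sum is negative semidefinite and the last term skew-adjoint, so any kernel vector is annihilated by each $d\pi([X_i]_p)$ and hence, by irreducibility of $\pi$, vanishes. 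Your approach is far more elementary — it lives entirely in the classical Sobolev scale and requires no filtered foliations, osculating groups, groupoids, or $C^*$-algebras — but it yields only hypoellipticity with some unspecified loss $\epsilon>0$ degrading in $r$, and it gives no converse. The paper's route costs the entire adapted pseudodifferential calculus developed in Sections~\ref{sec:groupoid} and~\ref{sec:Pseudodifferential_Operators}, but the payoff is \emph{maximal} hypoellipticity (optimal regularity gain in the weighted Sobolev scale $\tilde H^s$), a necessary-and-sufficient symbolic criterion, and validity for the vastly larger class of operators in the Helffer--Nourrigat conjecture, with the sum-of-squares case dropping out as a one-line verification of symbol injectivity.
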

It is natural to try to extend Hörmander's theorem  by finding sufficient conditions for the hypoellipticity of arbitrary polynomials in the vector fields $X_i$. Let $P$ be a noncommutative polynomial in $m+1$ variables with coefficients in $C^\infty(M)$.
In 1979, Helffer and Nourrigat \cite{HelfferNourrigatCRAcSci} conjectured a generalization of both Theorem \ref{thmx:qskdhflihsdl} and Theorem \ref{thm:SoS} which allows one to obtain hypoellipticity of operators of the form $P(X_1,\cdots,X_{m+1})$, and also generalises several regularity results in the literature, see \cite[Sections I.2 and I.3]{HelfferNourrigatCRAcSci}.

The goal of this article is to prove the Helffer-Nourrigat conjecture, as well as its generalisation to arbitrary filtrations of the module of vector fields on $M$. Let us start with our main theorem in a restricted case (the case $X_{m+1}=0$). We'll give its much more general form afterwards. 

Thus, consider vector fields $X_1,\cdots,X_m$ satisfying the Lie bracket generating condition. This condition gives rise to the following notion of order for a differential operator. Every differential operator can then be written as $D=P(X_1,\cdots,X_m)$ where $P$ is a noncommutative polynomial with coefficients in $C^\infty(M)$. The Hörmander order of $D$ is the minimum of $\deg(P)$ for all possible $P$s. The Hörmander order leads us to consider Sobolev spaces defined by \begin{align}\label{eqn:Sobolev}
\tilde{H}^{s}(M)&:=\{u\in L^2_{loc}M:Du\in L^2_{loc}M\quad  \text{for all } D \text{ with Hörmander order} \leq s\},\quad s\in \N.
\end{align}
We extend these Sobolev spaces for any $s\in \R$ by interpolation for $s>0$ and duality for $s<0$. Trivially we have $$\bigcap_{s\in \R}\tilde{H}^{s}(M)=\bigcap_{s\in \N}\tilde{H}^{s}(M)=C^\infty(M).$$

The crucial step is to define a principal symbol for differential operators which is sensitive to the vector fields $X_1,\cdots,X_m$ and the structure of their iterated commutators. Generalizing Helffer-Nourrigat, we can specify a family $(\Gr(\cF_x))_{x\in M}$ of osculating graded nilpotent Lie groups, as well as a subset of unitary representations $\cT_x^*\cF$ of each group (Helffer-Nourrigat call this $\Gamma_x$).  We then define an operator-valued symbol $(x,\pi) \mapsto \tilde{\sigma}(D,x,\pi)$ on $\cT^*\cF  = \bigsqcup_{x\in M} \cT_x^*\cF$. We prove the following 
\begin{thmx}\label{mainthmintro}Let $M$ be a smooth manifold, $X_1,\cdots,X_m$ are vector fields satisfying the Lie bracket generating condition, $D:C^\infty(M)\to C^\infty(M)$ a differential operator of Hörmander order $k$. The following are equivalent
\begin{enumerate}
\item For every $x\in M$ and $\pi \in \cT^*_x\cF\backslash\{0\}$, $\tilde{\sigma}(D,x,\pi)$ is injective on $C^\infty(\pi)$.
\item For every (or for some) $s\in \R$, and every distribution $u$ on $M$, $Du\in \tilde{H}^s(M)$ implies $u\in \tilde{H}^{s+k}(M)$.
\end{enumerate}
Furthermore if $M$ is compact, then the previous statements are equivalent to the following \begin{enumerate}
  \setcounter{enumi}{4}
  \item For every (or for some) $s\in \R$, $D:\tilde{H}^{s+k}(M)\to \tilde{H}^{s}(M) $ is left invertible modulo compact operators
\end{enumerate}
\end{thmx}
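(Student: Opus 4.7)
The plan is to construct a pseudodifferential calculus adapted to the filtration $\cF$, in which the principal symbol coincides with the operator-valued symbol $\tilde\sigma$ appearing in the theorem, and then derive all the equivalences from the standard elliptic-style arguments inside this calculus. Concretely, I would realise the calculus via a deformation groupoid associated with $\cF$: at the infinitesimal parameter its fibre is the bundle of osculating graded nilpotent groups $\Gr(\cF_x)$, while at nonzero parameters it restores the pair groupoid $M\times M$. A pseudodifferential operator of Hörmander order $k$ is then obtained by integrating a distribution on this deformation space that is, roughly, homogeneous of degree $-k$ under the intrinsic anisotropic dilations of $\Gr(\cF)$. Restricting such a distribution to the infinitesimal fibre over $x$ produces a convolution kernel on $\Gr(\cF_x)$ whose evaluation on $\pi \in \cT_x^*\cF$ recovers $\tilde\sigma(D,x,\pi)$. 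Groupoid methods then give the composition law $\tilde\sigma(AB)=\tilde\sigma(A)\tilde\sigma(B)$, continuity $\tilde H^{s+k}(M)\to \tilde H^s(M)$ for operators of order $k$, and compactness (on compact $M$) of operators of strictly negative order.

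For the implication from symbol injectivity to regularity and to left-invertibility modulo compacts, one constructs a parametrix $Q$ of order $-k$ with $QD - \mathrm{Id}$ and $DQ - \mathrm{Id}$ regularising. The key analytic step is the fibrewise inversion of the principal symbol: if the convolution distribution $\tilde\sigma(D,x)$ on $\Gr(\cF_x)$ induces an injective operator under every $\pi \in \cT_x^*\cF$, it should admit a convolution left-inverse in an appropriate pseudodifferential algebra on $\Gr(\cF_x)$. Arranging these fibrewise inverses into a smooth global symbol (via a partition of unity and an asymptotic-summation correction) yields $Q$. The identity $u = QDu - (QD - \mathrm{Id})u$ then immediately gives the regularity statement, while on compact $M$ the remainder $QD - \mathrm{Id}$ becomes compact on any $\tilde H^{s+k}(M)$ by a Rellich-type argument, proving the Fredholm-type statement.

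The converse implications are proved by a dilation argument: if $\tilde\sigma(D, x_0, \pi_0)$ fails to be injective on some smooth vector $v \neq 0$, transferring $v$ through an exponential chart at $x_0$ modelled on $\Gr(\cF_{x_0})$, applying the anisotropic dilation $\delta_\lambda$, and localising with a cut-off, produces a sequence $u_\lambda$ of compactly supported distributions for which $\|u_\lambda\|_{\tilde H^{s+k}}$ remains bounded below while $\|Du_\lambda\|_{\tilde H^s}\to 0$. This contradicts the a priori estimate that follows from either the regularity statement (via the closed graph theorem) or the Fredholm statement (via compactness of the remainder). The equivalence of ``for every $s$'' with ``for some $s$'' in both conditions follows by composing with scalar pseudodifferential operators from the same calculus that shift the Sobolev index by any prescribed amount and are themselves invertible modulo regularising operators.

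The main obstacle is the fibrewise Rockland-type inversion of the symbol: in the non-commutative setting of the osculating nilpotent groups, injectivity under every representation in the selected set $\cT^*\cF$ does not automatically produce a convolution left inverse, and a genuinely non-commutative harmonic-analytic argument is required, generalising the Helffer--Nourrigat inversion theorem for homogeneous operators on graded nilpotent groups and ensuring that the inverse can be chosen smoothly in $x$. Once this ingredient is secured, the remaining steps --- the deformation-groupoid construction of the calculus, symbol composition, Sobolev boundedness, and the scaling argument for the converse --- become a fairly standard adaptation of the classical parametrix method to the present anisotropic setting.
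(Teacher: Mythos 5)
Your outline reproduces the architecture of the paper's proof in the forward direction: a deformation-to-the-osculating-groups calculus (the paper builds it from the adiabatic foliation $\aF$ and its graded bases rather than from a Lie groupoid, since the foliation is genuinely singular), symbol composition, Sobolev boundedness, compactness of negative-order operators, and a parametrix construction. But the step you correctly flag as ``the main obstacle'' --- inverting the fibrewise symbol when injectivity is only assumed for representations in $\cT^*_x\cF$ rather than for all nontrivial irreducibles --- is \emph{the} content of the theorem, and your proposal does not resolve it. The paper's resolution is a specific two-step reduction: first, the Głowacki/Christ--Geller--Głowacki--Polin theorem (Theorem \ref{Parametrix_symbols}) provides a convolution parametrix, smoothly in $x$, under the \emph{strong} Rockland condition (injectivity in every nontrivial irreducible); second, a theorem of Hebisch supplies an auxiliary homogeneous distribution $w_0$ whose symbol vanishes exactly on $\cT^*\cF$ and is injective off it, so that $w_0^*\star w_0+u_0^*\star u_0$ satisfies the strong condition whenever $u_0$ is injective only on the cone. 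Without this (or an equivalent device) the parametrix construction does not get off the ground. A related gap: you implicitly assume the standard dichotomy ``vanishing principal symbol $\Rightarrow$ lower order,'' which fails here (Examples \ref{exs:counter_examples}); the paper replaces it by Theorem \ref{thm:vanish_symb} (vanishing symbol on the cone $\iff$ compactness between Sobolev spaces), proved via the exactness of $0\to K(L^2M)\to\overline{\Psi^0(\cF^\bullet)}\to\Sigma^*\cT\cF\to 0$, which in turn rests on the half-continuity of the field $C^*_z\aF$ and the identification of $\cT^*\cF$ with the support of the ideal $J_0$.

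For the converse, your dilation/concentration argument is a genuinely different route from the paper's. The paper never builds explicit test families $u_\lambda$; instead it packages the limiting behaviour into the $C^*$-algebraic identity $\norm{P}_{B(L^2M)/K(L^2M)}=\sup_{x,\pi\in\cT^*_x\cF}\norm{\sigma^0(P,x,\pi)}$, from which left-invertibility modulo compacts forces invertibility of the symbol. Your route is closer to Helffer--Nourrigat's original proof of (b)$\Rightarrow$(a) and is viable, but the delicate point is that the transfer of a kernel vector of $\tilde\sigma(D,x_0,\pi_0)$ to functions on $M$ must be controlled by the convergence $(x_n,\xi_n,t_n)\to(x_0,\pi_0,0)$ in $\aF^*$ that \emph{defines} membership of $\pi_0$ in the cone; this is exactly what the operator-algebraic formulation automates. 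In short: the skeleton is right, the converse is an acceptable alternative, but the central harmonic-analytic inversion is named rather than proved, so the proposal as it stands has a genuine gap at the heart of the argument.
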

We now explain the principal symbol $\tilde{\sigma}$ as well as the space of representations $\cT^*_x\cF$. Before we proceed, let us mention that if the vector fields satisfy Hörmander's Lie bracket generating condition of rank $1$, i.e., $X_1(x),\cdots,X_m(x)$ span $T_xM$ for all $x\in M$, then Theorem \ref{mainthmintro} is precisely Theorem \ref{thmx:qskdhflihsdl}. The Sobolev spaces $\tilde{H}^s(M)$ and $\tilde{\sigma}$ are equal to $H^s(M)$ and $\sigma$ respectively. In \cite{HelfferNourrigatconj}, Helffer and Nourrigat proved Theorem \ref{mainthmintro} in the case of rank $2$, i.e., $X_1(x),\cdots,X_m(x)$ and $[X_i,X_j](x)$ span $T_xM$ for all $x\in M$. They also proved the implication $b\implies a$ in the general case with no assumptions on the rank. The main innovation in our work is combining their work \cite{HelfferRockland,HelfferNourrigatconj} with recent advances in noncommutative geometry by Debord and Skandalis \cite{DebSka} and van Erp and the third author \cite{YunVan:PsiDOs} together with the $C^*$-algebra of singular foliations defined by the first author and Skandalis \cite{AS1} and their blowups defined by the second author \cite{NewCalgebra}. This allows us to prove Theorem \ref{mainthmintro} with no hypothesis at all on the rank.
\paragraph{Principal symbol $\tilde{\sigma}$.}
Suppose that $X_1,\cdots,X_m$ satisfy Hörmander's Lie bracket generating condition of rank $N\in \N$. Let $G$ be the free nilpotent Lie group of rank $N$ with one generator $\tilde{X}_1,\cdots,\tilde{X}_m$ for each vector field $X_1,\cdots,X_m$. We remark that in the article, we use a better choice of nilpotent group which is smaller and more natural; for simplicity of the exposition in this introduction, we temporarily use the group $G$. Let $\pi$ be an irreducible unitary representation of $G$ on a Hilbert space $L^2\pi$. Then by taking the derivative of $\pi$, one obtains linear maps $$d\pi(\tilde{X}_1),\cdots,d\pi(\tilde{X}_m):C^\infty(\pi)\to C^\infty(\pi)$$ where $C^\infty(\pi)\subseteq L^2\pi$ is the space of smooth vectors. 

We can now define $\tilde{\sigma}$. We write $D=P(X_1,\cdots,X_m)$ for some noncommutative polynomial $P$.  This is the equivalent of taking local coordinates when defining the classical principal symbol. We then define $$\tilde{\sigma}(D,x,\pi):C^\infty(\pi)\to C^\infty(\pi),\quad \tilde{\sigma}(D,x,\pi)=P_{max,x}(d\pi(\tilde{X}_1),\cdots,d\pi(\tilde{X}_m)),$$ where $P_{max,x}$ is the maximal homogeneous part of $P$ after replacing each coefficient $f\in C^\infty(M)$ by $f(x)$. Note that this definition may depend on $P$ since if the operator $D$ can be written $D=P(X_1,\cdots,X_m)=Q(X_1,\cdots,X_m)$ for two different polynomials $P,Q$, then in general $\tilde{\sigma}(D,x,\pi)$ depends on the choice of $P$ or $Q$ (see Section \ref{sec:differential operators} for examples). But one of our main results is that this is not the case when $\pi$ belongs to a certain naturally defined subset $\mathcal{T}_x^*\cF\subseteq \hat{G}$. The set $\mathcal{T}_x^*\cF$ can be thought of as a generalization of the cotangent space $T^*_xM$ in this sub-Riemannian context. The set $\mathcal{T}_x^*\cF$ only depends on the vector fields $X_1,\cdots,X_m$ and not on $D$. This set was defined by Helffer and Nourrigat in \cite{HelfferNourrigatCRAcSci} using Kirillov's orbit method \cite{KirillovArticle}. For this reason, we call it the \textbf{Helffer-Nourrigat cone.}	
\begin{thmx}\label{thm intro char set} For each $x\in M$, for any representation $\pi \in \mathcal{T}_x^*\cF\subseteq \hat{G}$, $\tilde{\sigma}(D,x,\pi)$ doesn't depend on the presentation of $D=P(X_1,\cdots,X_m)$.\end{thmx}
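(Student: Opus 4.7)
My strategy has two stages: first reduce the claim to an intrinsic vanishing statement in the universal enveloping algebra of the osculating Lie algebra $\Gr(\cF_x)$, and then establish that vanishing via the tangent groupoid of the filtration. By construction of the Helffer--Nourrigat cone via Kirillov's orbit method, every $\pi \in \cT_x^*\cF$ is pulled back from an irreducible unitary representation of the osculating group $\Gr(\cF_x)$ along the canonical surjection $G \twoheadrightarrow \Gr(\cF_x)$ (under the quotient of Lie algebras sending each free generator $\widetilde{X}_i$ to the class of $X_i$); hence $d\pi$ descends to the universal enveloping algebra $U(\Gr(\cF_x))$, and it suffices to prove the following purely algebraic statement: for any noncommutative polynomial $R$ of weighted degree $k$ in $m$ variables with coefficients in $C^\infty(M)$ such that $R(X_1,\ldots,X_m)=0$ on $M$, the maximal part $R_{max,x}(\widetilde{X}_1,\ldots,\widetilde{X}_m)$ vanishes in $U(\Gr(\cF_x))$. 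Applying this to $R=P-Q$ for two presentations $D=P(X)=Q(X)$ of common Hörmander degree $k$ (splitting into the cases $\deg R = k$, where $R_{max,x}=P_{max,x}-Q_{max,x}$, and $\deg R < k$, where the top parts already agree) then yields the theorem.

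For the vanishing I would use the tangent groupoid $\TF$ of the filtration: a smooth groupoid over $M\times\R$ whose fibre at $t\ne 0$ is the holonomy groupoid of the singular foliation generated by $X_1,\ldots,X_m$, and whose fibre at $t=0$ is the bundle $\bigsqcup_{x\in M}\Gr(\cF_x)$. Each $X_i$ of weight $w_i$ admits a canonical smooth lift $\widetilde{X}_i$ on $\TF$, equal to $t^{w_i} X_i$ on the slices $\{t\ne 0\}$ and to the corresponding left-invariant vector field on each osculating group $\Gr(\cF_x)$ at $t=0$. The expression $R(\widetilde{X}_1,\ldots,\widetilde{X}_m)$ is then a smooth family of fibrewise left-invariant differential operators on $\TF$; on $\{t\ne 0\}$ it equals $t^k\cdot R(X_1,\ldots,X_m) = 0$, so smoothness across $t=0$ forces its value at $t=0$, which by construction is $R_{max,x}(\widetilde{X})\in U(\Gr(\cF_x))$, to vanish.

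The technical core, and the main obstacle, is the construction of $\TF$ with the correct smooth structure and the verification that the rescaled vector fields extend smoothly across $t=0$ with the prescribed left-invariant models on the osculating groups. This is where the deformation-to-the-normal-cone techniques of Debord--Skandalis and van Erp--Yuncken, together with the blowup groupoid constructions developed earlier in the paper, do the heavy lifting; once they are in place, the remainder is a short continuity argument at $t=0$, combined with the factoring-through-$\Gr(\cF_x)$ property of representations in $\cT_x^*\cF$.
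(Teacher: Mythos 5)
There is a genuine gap, and it sits exactly at the point where the Helffer--Nourrigat cone is supposed to enter. Your reduction to the ``purely algebraic statement'' --- that $R(X_1,\dots,X_m)=0$ forces $R_{max,x}$ to vanish in $U(\Gr(\cF)_x)$ --- is false, and the paper's own Example \ref{exs:counter_examples}.b is a counterexample: take $R=(x^2\partial_x)(\partial_x)-(x\partial_x)^2+x\partial_x$, which is zero as a differential operator, yet $R_{max,0}$ acts as $\xi_1\xi_3-\xi_2^2\neq 0$ on a generic representation of $\Gr(\cF)_0$. If your algebraic claim held, the symbol would be presentation-independent for \emph{every} $\pi\in\hat{G}$, and the restriction to $\cT^*_x\cF$ in the theorem would be vacuous; the only place your argument uses the cone is the (true but insufficient) remark that $\pi$ factors through $\Gr(\cF)_x$, which holds for all of $\widehat{\Gr(\cF)}_x$ and therefore cannot be the mechanism that singles out the cone.

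The second stage fails for a concrete geometric reason: in this singular setting the ``tangent groupoid'' you invoke is not a smooth groupoid. The dimension of $\Gr(\cF)_x$ jumps and can strictly exceed $\dim M$, so there is no smooth family of fibrewise left-invariant operators on a fixed-dimension deformation space interpolating between $t^kR(X)$ and $R_{max,x}$, and the continuity-at-$t=0$ step has nothing to act on. (Your formula for the $t\neq 0$ fibres is also off unless $R$ is homogeneous --- lower-order monomials pick up $t^j$ with $j<k$, which is why the paper inserts the factors $T_{k-\sum a_i}$ --- but that is repairable.) What replaces smoothness in the paper is the $C^*$-algebra $C^*\aF$ of the adiabatic foliation: the multiplier $\Theta(D)$ acts as $t^kD$ on the $t>0$ fibres (presentation-independent) and as the candidate symbol at $t=0$, and the identity \eqref{eqn:mainineqca}, $\limsup_{t\to 0^+}\norm{a_t}=\sup_{p}\sup_{\pi\in\cT^*_p\cF}\norm{\pi(a_{p,0})}$, transfers ``zero for $t>0$'' to ``zero on the cone.'' That identity rests on Theorem \ref{thm:ana=top}, which identifies $\cT^*\cF$ as the support of the ideal $J_0$ of elements concentrated at $t=0$; this is the genuinely nontrivial input your proposal is missing, and it cannot be recovered from a smooth deformation argument.
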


We remark that the set $\mathcal{T}_x^*\cF$ is very computable in practice. We refer the reader to Section \ref{sec:characteristicset} for the precise construction of $\mathcal{T}_x^*\cF$ and for various examples.

\paragraph{Main theorem.}
We will prove a much more general form of Theorem \ref{mainthmintro} as follows. Let us allow weights on the vector fields $X_1,\cdots,X_m$, meaning that we attach to each vector field $X_i$ a natural number $v_i\in \N$. The Hörmander order of $D$ is now the minimum degree of $P$ taking weights into account. The Sobolev spaces $\tilde{H}^{s}(M)$ are defined as in \eqref{eqn:Sobolev} when $s$ is a multiple of $\mathrm{gcd}(v_i)$, interpolating for other values of $s$. The principal symbol $\tilde{\sigma}$ in this case is defined as before, the only difference is that $P_{max,x}$ is the maximal weighted homogeneous part. 

We can now state the main theorem of the article.

\begin{thmx}\label{mainthmintro2}Let $M$ be a smooth manifold, $X_1,\cdots,X_m$ vector fields satisfying Hörmander's condition, $v_1,\cdots,v_m\in \N$ natural numbers (weights for $X_1,\cdots,X_m$), and $D:C^\infty(M)\to C^\infty(M)$ a differential operator of Hörmander order $k$. The following are equivalent
\begin{enumerate}
\item For every $x\in M$ and $\pi \in \cT^*_x\cF\backslash\{0\}$, $\tilde{\sigma}(D,x,\pi)$ is injective on $C^\infty(\pi)$.
\item For every (or for some) $s\in \R$, and every distribution $u$ on $M$, $Du\in \tilde{H}^s(M)$ implies $u\in \tilde{H}^{s+k}(M)$.
\end{enumerate}
Furthermore if $M$ is compact, then the previous statements are equivalent to the following \begin{enumerate}
  \setcounter{enumi}{4}
  \item For every (or for some) $s\in \R$, $D:\tilde{H}^{s+k}(M)\to \tilde{H}^{s}(M) $ is left invertible modulo compact operators
\end{enumerate}
\end{thmx}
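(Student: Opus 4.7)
The plan is to prove (a) $\implies$ (b) and (a) $\implies$ (c), since (b) $\implies$ (a) has already been established by Helffer and Nourrigat \cite{HelfferNourrigatconj} in full generality, and (c) $\implies$ (a) on a compact $M$ then follows by specialising their argument to a smoothing remainder. The heart of the matter is the construction of a left parametrix for $D$ as soon as its principal symbol is injective on the Helffer-Nourrigat cone.

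First, I would invoke the pseudodifferential calculus adapted to the weighted generating family $(X_1,\dots,X_m;v_1,\dots,v_m)$ built in the earlier sections of the paper from the blow-up groupoid of \cite{NewCalgebra} and the groupoid pseudodifferential theory of \cite{DebSka, YunVan:PsiDOs}. In this calculus, each differential operator of Hörmander order $k$ has order $k$, and the principal symbol morphism sends it to a polynomial family of invariant differential operators on the osculating graded nilpotent groups $\Gr(\cF_x)$. Theorem \ref{thm intro char set} ensures that, after restriction to representations $\pi$ in the Helffer-Nourrigat cone $\cT^*_x\cF$, this family agrees with the representation-theoretic symbol $\tilde{\sigma}(D,x,\pi)$ of the statement, independently of any chosen presentation $D = P(X_1,\dots,X_m)$. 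I would also use the identification of the Sobolev spaces \eqref{eqn:Sobolev} with the intrinsic Sobolev spaces of the calculus, together with the boundedness of pseudodifferential operators of order $\ell$ from $\tilde H^{s+\ell}(M)$ to $\tilde H^s(M)$.

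Assuming (a), I would invoke the Helffer-Rockland theorem \cite{HelfferRockland,HelfferNourrigatconj}, applied fibrewise to each osculating group $\Gr(\cF_x)$ and restricted to the subquotient of its $C^*$-algebra corresponding to the Helffer-Nourrigat cone (a quotient controlled by the singular foliation $C^*$-algebra of \cite{AS1}), to obtain a left inverse of the principal symbol in the symbol $C^*$-algebra. A partition-of-unity argument together with asymptotic Neumann summation inside the calculus then lifts this to a left parametrix $Q$ of order $-k$ such that $QD = \Id + R$ with $R$ smoothing. For a distribution $u$ with $Du \in \tilde{H}^s(M)$, one writes $u = QDu - Ru$ and uses the boundedness of $Q$ and $R$ on the Sobolev scale to conclude $u \in \tilde{H}^{s+k}(M)$, which is (b); on a compact $M$ the remainder $R$ is compact on $\tilde{H}^{s+k}(M)$, which gives (c). The equivalence of \emph{for every $s$} and \emph{for some $s$} then follows from duality and complex interpolation on the scale $\tilde{H}^\bullet(M)$.

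The hardest ingredient is the parametrix step: one must quantise the fibrewise inversion provided by the Helffer-Rockland criterion, which is a pointwise statement about a single nilpotent group, into an honest pseudodifferential operator over $M$. In the rank-$2$ case treated by Helffer-Nourrigat the osculating groupoid is a smooth Lie groupoid and standard techniques suffice. In arbitrary Hörmander rank, however, the osculating groups $\Gr(\cF_x)$ can jump in dimension, the associated groupoid is only a continuous field, and the relevant symbol $C^*$-algebra is a proper subquotient of the fibrewise $C^*$-algebras cut out by the Helffer-Nourrigat cone. Verifying that these subquotients assemble into a single $C^*$-algebra, that inversion there lifts to a parametrix in the calculus, and that the resulting operator has the expected Sobolev mapping properties, is the core technical content of the construction, and it is precisely what the combination of \cite{AS1} and \cite{NewCalgebra} is designed to supply.
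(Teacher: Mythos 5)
Your overall plan has the right shape and uses the right ingredients (the adiabatic pseudodifferential calculus, the symbol $C^*$-algebra, the Helffer--Nourrigat cone, the Sobolev scale), and your delegation of (b)~$\Rightarrow$~(a) to Helffer--Nourrigat's original paper is legitimate for differential operators. But there is a genuine gap at the crux of the argument, in the step that you describe as ``invoke the Helffer--Rockland theorem \dots restricted to the subquotient of its $C^*$-algebra corresponding to the Helffer--Nourrigat cone.'' There is no such restricted Rockland theorem available off the shelf. The parametrix theorem you need (the Glowacki / Christ--Geller--G\l{}owacki--Polin result, stated in the paper as Theorem~\ref{Parametrix_symbols}) produces a left inverse \emph{only} under the hypothesis that the symbol is injective at \emph{every} nontrivial irreducible representation of the osculating group. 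Under hypothesis~(a) you only know injectivity on $\cT^*_x\cF$, which is in general a proper closed subset of $\widehat{\Gr(\cF)_x}$. Passing to a subquotient of $C^*\Gr(\cF)_x$ cut out by $\cT^*_x\cF$ does not help, because the Rockland theorem is not a $C^*$-algebraic invertibility statement that factors through quotients: its proof runs through hypoelliptic a priori estimates on the full group, which are simply false if the symbol degenerates at representations off $\cT^*_x\cF$. In other words, you are trying to conclude that $\sigma^0(P)$ is left invertible in $\Sigma^*\cT\cF$ (condition~$(j)$ of Theorem~\ref{Rocklandthm}) directly from condition~$(a)$, but this is precisely the nontrivial content that the paper has to establish.

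The missing idea is the Hebisch device used in the proof of $(a)\Rightarrow(h)$ in Theorem~\ref{Rocklandthm}: by~\cite{Rocklandcondt} one can construct an element $w_0\in\cE^{\prime 0}(\lie{g}\times M)$ whose symbol vanishes identically on $\cT^*_x\cF$ but is injective at every representation \emph{outside} $\cT^*_x\cF$. Replacing $P^*\star P$ by $P^*\star P + W^*\star W$ (where $W$ lifts $w_0$) yields an operator whose symbol is now injective at \emph{all} nontrivial representations, i.e.\ satisfies the full strong $*$-Rockland condition, while its image in $\Sigma^*\cT\cF$ agrees with that of $P^*\star P$. Only then does Theorem~\ref{Parametrix_symbols} apply, giving a parametrix modulo $\Psi^{-1}$; the improvement to a parametrix modulo smoothing is then a Fredholm/holomorphic-functional-calculus argument (the implication $(h)\Rightarrow(g)$), not an asymptotic Neumann summation. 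Your proposal implicitly presupposes the hardest new theorem of the paper as a known input. To repair it, you would have to add the Hebisch step explicitly, and replace ``asymptotic Neumann summation'' by the $C^*$-algebraic compactness argument of Theorem~\ref{thm:vanish_symb} combined with Fredholm theory, which is what actually upgrades the order~$-1$ remainder to a smoothing one.
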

Differential operators satisfying the conditions of Theorem \ref{mainthmintro2} are called \textbf{maximally hypoellitpic differential operators.} Theorem \ref{mainthmintro2} immediately implies Hörmander's sum of squares theorem, by taking $v_1=\cdots=v_m=1$ and $v_{m+1}=2$. The injectivity of our symbol for Hörmander's sum of squares operator is trivial to verify. See also Corollary \ref{cor:jqsfdjioqm} for another immediate application of Theorem \ref{mainthmintro2} generalising Hörmander's theorem. Let us give right away a very simple yet nontrivial example which shows the strength of Theorem \ref{mainthmintro2}.
\begin{ex}\label{ex:intro}Let $k,n\in \N$ be natural numbers. We consider $\partial_x$ and $x^k\partial_y$ on $\R^2$. We assign the weight $1$ to $\partial_x$ and $n$ to $x^k\partial_y$. Then take \begin{equation}\label{eqn:qhsdluifjqdsf}
 D=(-1)^{n(k+n)}\partial_x^{2n(k+n)}+(-1)^{k+n}(x^k\partial_y)^{2(k+n)}+\lambda\partial_y^{2n}+D',
\end{equation}  where $D'$ is any differential operator of Hörmander order $<2n(k+n)$ and $\lambda\in \C$. A simple computation using our principal symbol shows that $D$ is maximally hypoelliptic if and only if \begin{equation}\label{eqn:crit}
 (-1)^{n+1}\lambda\notin \mathrm{spec}((-1)^{n(k+n)}\partial_x^{2n(k+n)}+x^{2k(k+n)}),
\end{equation} where $(-1)^{n(k+n)}\partial_x^{2n(k+n)}+x^{2k(k+n)}$ is considered as an unbounded operator on $L^2\R$. Note that the spectrum of this operator is a discrete set converging to $\infty$.  Discreteness of the spectrum  of $(-1)^{n(k+n)}\partial_x^{2n(k+n)}+x^{2k(k+n)}$ is also a consequence of Theorem \ref{mainthmintro2}, see Remark \ref{rem:diag_Schro}.
\end{ex}
We refer the reader to \cite[Sections I.2 and I.3]{HelfferNourrigatconj} for more applications of Theorem \ref{mainthmintro2}. Finally, we prove the following theorem whose counterpart for elliptic operators is obvious. It allows us to deduce maximal hypoellipticity on a neighbourhood of $x$ from invertibility of the symbol at $x$.

\begin{thmx}\label{mainthmintro3}Let $M$ be a smooth manifold, $X_1,\cdots,X_m$ vector fields satisfying the Lie bracket generating condition, $v_1,\cdots,v_m\in \N$ natural numbers (weights for $X_1,\cdots,X_m$), and $D$ a differential operator. Let $x\in M$. If for every $\pi \in \cT^*_x\cF\backslash\{0\}$, $\tilde{\sigma}(D,x,\pi)$ is injective, then for some open neighbourhood $U\subseteq M$ of $x$, $\tilde{\sigma}(D,y,\pi)$ is injective for every $y\in U$, $\pi \in \cT^*_y\cF\backslash\{0\}$. In particular $D$ is maximally hypoelliptic on $U$.
\end{thmx}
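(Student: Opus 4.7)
The plan is to prove that the set
\[
B = \bigl\{y \in M : \tilde{\sigma}(D,y,\pi) \text{ is not injective on } C^\infty(\pi) \text{ for some } \pi \in \cT_y^*\cF \setminus \{0\}\bigr\}
\]
is closed in $M$. Since $x \notin B$ by hypothesis, closedness produces an open neighbourhood $U$ of $x$ disjoint from $B$, and Theorem \ref{mainthmintro2} applied on $U$ then immediately yields maximal hypoellipticity of $D$ on $U$.

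To work with the symbols globally I would first organise them into a family. The osculating groups $\Gr(\cF_y)$ assemble into a bundle of graded nilpotent Lie groups over $M$, and the Helffer--Nourrigat cones fit together into a bundle $\cT^*\cF = \bigsqcup_y \cT_y^*\cF$ carrying the Fell topology inherited from the fibrewise unitary duals. The weighted dilations act freely on $\cT^*\cF \setminus \{0\}$, and the quotient $\mathbb{P}\cT^*\cF := (\cT^*\cF \setminus \{0\})/\R_{>0}$ is a bundle over $M$ whose fibres are compact---this compactness of the ``unit sphere'' in the Helffer--Nourrigat cone is standard and makes the projection $\mathbb{P}\cT^*\cF \to M$ proper.

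The technical heart is then to show that the non-injectivity locus
\[
Z = \bigl\{(y,\pi) \in \cT^*\cF \setminus \{0\} : \tilde{\sigma}(D,y,\pi) \text{ is not injective on } C^\infty(\pi)\bigr\}
\]
is closed. It is $\R_{>0}$-invariant by weighted homogeneity of the symbol, so if closed it descends to a closed subset of $\mathbb{P}\cT^*\cF$ whose image in $M$ under the proper projection is exactly $B$; hence $B$ is closed. For the closedness of $Z$ itself I would appeal to the $C^*$-algebraic picture developed earlier in the paper (continuous fields of $C^*$-algebras attached to the bundle of osculating groups, ultimately coming from the tangent/adiabatic groupoid construction): the principal symbol is a section of such a field, and injectivity of $\tilde{\sigma}(D,y,\pi)$ on smooth vectors corresponds, under $\pi$, to a Rockland/left-invertibility condition that is stable under small perturbations both of $y$ and of $\pi$ in the Fell topology.

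The main obstacle is precisely this closedness of $Z$. The operators $\tilde{\sigma}(D,y,\pi)$ are unbounded and act on different Hilbert spaces as $(y,\pi)$ varies, so semicontinuity of their kernels is not elementary. The cleanest route is via the equivalence, established as part of Theorem \ref{mainthmintro2}, between pointwise symbol injectivity and a uniform Rockland-type estimate, combined with the openness of the set of elements of a $C^*$-algebra satisfying such an estimate. Every ingredient is already in place from the proof of Theorem \ref{mainthmintro2}; what remains for Theorem \ref{mainthmintro3} is essentially the topological packaging just described.
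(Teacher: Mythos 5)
Your overall strategy --- show that the set of points $y$ at which $\tilde\sigma(D,y,\cdot)$ is injective on all of $\cT^*_y\cF\setminus\{0\}$ is open, then invoke Theorem~\ref{mainthmintro2} on that neighbourhood --- is exactly the paper's (Theorem~\ref{prop:local} together with the localization of Section~\ref{sec:noncompact}). The gap is in how you propose to get the openness. You reduce it to closedness of the non-injectivity locus $Z\subseteq\cT^*\cF\setminus\{0\}$, i.e.\ to the claim that injectivity of $\tilde\sigma(D,y,\pi)$ at a \emph{single} representation $\pi$ is stable under perturbation of $(y,\pi)$, and you justify this by saying that injectivity ``corresponds, under $\pi$, to a Rockland/left-invertibility condition.'' That correspondence does not exist representation by representation: the equivalence between symbol injectivity and left invertibility (Theorems~\ref{Parametrix_symbols} and~\ref{Rocklandthm}) holds only for the \emph{simultaneous} injectivity over all $\pi$ in the relevant family, and it is the resulting parametrix, not any pointwise statement, that is stable. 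For a single unbounded operator $\sigma^k(D,y,\pi)$ on $C^\infty(\pi)$, with the Hilbert space, the group, and even the dimension of the coadjoint orbit jumping as $(y,\pi)$ varies, there is no a priori semicontinuity of the kernel, and the paper proves no statement of the form ``$Z$ is closed.''

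What the paper actually does, and what your sketch omits, is the following. After reducing to order $0$, one fixes a global graded Lie basis with fixed free nilpotent group $\lie{g}$ and lifts the symbol to a family $u\in\cE^{\prime 0}(\lie{g}\times M)$ of convolution kernels on the \emph{fixed} group $\lie{g}$. One then uses Hebisch's construction (the element $w_0$ whose symbol vanishes exactly on $\cT^*\cF$ and is injective off it) to replace ``$\sigma^0(u,x,\pi)$ injective for all $\pi\in\cT^*_x\cF$'' by ``$\sigma^0(u',x,\pi)$ injective for all $\pi\in\hat{\lie{g}}\setminus\{1_{\lie g}\}$'' for a modified $u'$ with the same symbol on the cone. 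Only then can one invoke the known openness in $x$ of the full Rockland condition for families of homogeneous convolution operators on a fixed graded group (\cite[Theorem~2.5.d]{ChrGelGloPol}). This two-step manoeuvre --- pass to the fixed group, then enlarge the injectivity hypothesis from the Helffer--Nourrigat cone to all of $\hat{\lie g}$ --- is precisely what circumvents the fact that both the osculating groups and the cones vary discontinuously, and it is the missing idea in your argument. (Your compactness of the projectivized cone is true but is not the relevant difficulty, and the pointwise-in-$\pi$ limiting argument it is meant to support does not go through.)
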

All the above results extend to differential operators with coefficients in a vector bundle. It is worth reiterating that in addition to showing the hypoellipticity of the polynomial differential operators considered by Helffer and Nourrigat, the method of proof here provides a pseudodifferential calculus adapted to such operators with a well-defined notion of principal symbol.  The existence of this calculus is essential for applications.  For instance, the second author has shown that one can use the associated pseudodifferential calculus to prove a topological index formula for maximally hypoelliptic differential operators \cite{IndexOmar}.  Similarly, one can obtain a complete description of the leading term of the heat kernel expansion of the above maximally elliptic differential operators \cite{MohsenHeatKer}.

Finally we mention that the topic of constructing pseudodifferential operators and parametricies for differential operators on nilpotent Lie groups, or more generally on manifolds has been studied by many people including \cite{Taylor,BeaGre,MelinFirst,RotSte,FollandStein,FischerDefect,MR3362017,MR3469687,RotschildSinglePaper,ewert2021pseudodifferential,Mel82,GoodmanBook,Dynin,BO,Glowacki2,Dynin2,TaylorBook,Cummins,PongeMemoirs,BahFermClotilde,YunVan:PsiDOs,dave2017graded,MelroseEps1,ChrGelGloPol,HelfferRockland,BealsRocklandConjNecessary}.   
\paragraph{Structure of the paper.}\begin{itemize}

\item In Section \ref{sec:filtfol} we define generalised distributions, filtered foliations, the Helffer-Nourrigat cone, and our principal symbol. We then explain the subtleties with the principal symbol in our calculus.
\item In Section \ref{sec:groupoid}, we define a $C^*$-algebra $C^*\aF$ which plays a very important role in the proof of Theorem \ref{mainthmintro2}. We also prove Theorem \ref{thm intro char set}.
\item In Section \ref{sec:Pseudodifferential_Operators} we define a pseudodifferential calculus associated to weighted vector fields satisfying the Lie bracket generating condition. We prove that this calculus satisfies the standard properties expected of a pseudodifferential calculus. Finally we prove Theorem \ref{mainthmintro2} and \ref{mainthmintro3}.
\item In Appendix \ref{appendixA}, we prove Theorems \ref{second_main_tech_thm} and  \ref{main_tech_thm}.  These are technical differential geometric results which ensure that our pseudodifferential calculus is closed under composition and adjoint. The appendix can be read immediately after Section \ref{sec: bisub adiab}.
 \end{itemize}
\paragraph{Acknowledgments.} Part of this work was done while O.~Mohsen was a postdoc in Muenster university and was funded by the Deutsche Forschungsgemeinschaft---Project-ID 427320536, SFB 1442---as well as Germany's Excellence Strategy EXC 2044 390685587, Mathematics M\"{u}nster: Dynamics-Geometry-Structure.

R.~Yuncken was supported by the CNRS PICS project OpPsi, which also funded visits by the other authors. R.~Yuncken was also supported by the project SINGSTAR of the Agence Nationale de la Recherche (ANR-14-CE-0012-01)

This project began with discussions with E.~van Erp. We would like to recognize his contributions and thank him profoundly for his involvement.

It is impossible to overstate the influence that the work of C.~Debord and G.~Skandalis has had on the present research.  We would like to emphasize the historical importance of Debord and Skandalis’s work, which instigated the deformation groupoid approach to pseudodifferential calculus.  Additionally, all the authors have benefited from extensive personal discussions with them.

We would like to thank B.~Helffer and J.~Nourrigat for their active interest in this work. We thank J.-M. Lescure and S. Vassout for their valuable remarks. 
\paragraph*{Conventions}
\begin{enumerate}
\item  In this article, if $G$ is a Lie group, then its Lie algebra is the space of \textit{right} invariant vector fields on $G$. This convention differs by a sign from the one usually used in Lie group theory but agrees with the one usually used in Lie groupoid theory. A consequence of this convention is that the Baker-Campbell-Hausdorff formula is given by \begin{equation}\label{eqn:BCHintro}
  \BCH(X,Y) = X+Y-\frac{1}{2}[X,Y] + \frac{1}{12}[X,[X,Y]]-\frac{1}{12}[Y,[Y,X]]+\cdots
\end{equation}
We will call the above formula the BCH formula. If $\mathfrak{g}$ is a nilpotent Lie algebra, then the BCH formula is a finite sum which defines a group law on $\mathfrak{g}$ making it a simply connected Lie group. Throughout the article, we will treat $\mathfrak{g}$ as both a Lie algebra and a Lie group.
\item Throughout the article, especially in Sections \ref{sec:diff_multi}, \ref{sec:dis_groups}, \ref{sec:princip symb}, \ref{sec:Parametrised}, we will define various unbounded multipliers on various $C^*$-algebras. Unless the multiplier is bounded, we will never take the closure of the graph. For our applications, the natural domain of 'smooth` densities, functions, vectors etc will be sufficient.
\end{enumerate} 
\section{Filtered foliations}\label{sec:filtfol}
In this section we give the definition of a filtered foliation, the Helffer-Nourrigat cone and our principal symbol. We stress that the notion of a filtered foliation is designed to describe intrinsically the notion of `Hörmander's vector fields with weights.' This section is organized as follows.
\begin{itemize}
\item In Section \ref{sec:generalised_distr}, we give some preliminaries on modules of vector fields.
\item In Section \ref{sec:adiabatic_foliation}, we define filtered foliations, the osculating Lie algebras and Lie groups.
\item In Section  \ref{sec:Debord-Skandalis}, we define an $\R_+^\times$-action which plays a fundamental part throughout the paper.
\item In section \ref{sec:characteristicset}, we define the Helffer-Nourrigat cone.
\item In Section \ref{sec:differential operators}, we define our principal symbol.
 \item In Section \ref{sec:Hormander's Sum of Squares Theorem}, we give some examples of of maximally hypoelliptic differential operators.
\end{itemize}	
	\subsection{Generalized distributions and singular foliations}\label{sec:generalised_distr}
	 Let us recall a few things and set the notation.
	\begin{enumerate}
		\item In this article, $M$ will be a smooth manifold without boundary. We denote by $\cX(M)$ ($\cX_c(M)$) the $C^{\infty}(M,\R)$-module of \textit{real} vector fields (with compact support) of $M$. We use $C^\infty(M,E)$ ($C^\infty_c(M,E)$) to denote the space of smooth sections (with compact support) of a vector bundle $E\to M$.
\item  If $X\in \cX(M)$, $x\in M$, then $\exp(X)\cdot x$ denotes the time one flow of $X$ starting from $x$, whenever it is well-defined.		
		\item If $X_1,\cdots,X_k\in \cX(M)$, then we will write $\cD=\langle X_1,\cdots,X_k\rangle$ for the $C^\infty(M,\R)$-module consisting of $\sum_{i=1}^k f_i X_i$ with $f_i\in C^\infty_c(M,\R)$.

		\item 	Let $\cD$ be a $C^{\infty}(M,\R)$-submodule of $\cX_c(M)$, $U\subseteq M$ an open subset.  We say that a family of vector fields $X_1,\ldots,X_k\in \cD$ generates $\cD$ on $U$ if for any $Y\in\cD$ there is $f_1,\ldots,f_k \in C^\infty(M)$ such that $Y_{|U}=(\sum_{i=1}^k f_iX_i)_{|U}$. We say that the family $X_1,\ldots,X_k$ generates $\cD$ at $p\in M$ if it generates $\cD$ on some neighbourhood of $p$.  
		
		The module $\cD$ is called locally finitely generated if at every point $p\in M$ there is a finite generating family.  When the cardinality of such a family $X_1,\ldots,X_k$ is the smallest possible, it is called a minimal generating family at $p$.
		\item A \textit{generalised distribution} is by definition a locally finitely generated $C^{\infty}(M,\R)$-submodule of $\cX_c(M)$. 

		\item
		Let $\cD$ be a generalised distribution and $p\in M$. The fiber of $\cD$ at $p$ is the quotient vector space
			\begin{equation}\label{eqn:Ip}
					\cD_p = \cD / I_p \cD,\quad\text{where}\quad I_p = \{f\in C^\infty(M,\R) : f(p)=0\}.
				\end{equation}

It is a finite dimensional vector space because $\cD$ is locally finitely generated. If $X\in\cD$, then we write $\fib{X}_p$ for the class of $X$ in the fiber $\cD_p$.	

		
		The following result was proved in \cite[Proposition 1.5]{AS1} for singular foliations. The proof doesn't use Lie brackets, so it applies equally to generalised distributions.
		
		\begin{prop}
		\label{lem:generating_family}~
	 If $\cD$ is a generalised distribution, then a family of vector fields $ X_1,\ldots, X_k \in\cD$ generates $\cD$ at $p\in M$ if and only if $\fib{X_1}_p, \ldots, \fib{X_k}_p$ spans the fiber $\cD_p$.  It is a minimal generating family at $p$ if and only if $ \fib{X_1}_p, \ldots, \fib{X_k}_p$ is a basis of $\cD_p$.
		\end{prop}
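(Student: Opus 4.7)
The forward direction is essentially a tautology: if $X_1,\ldots,X_k$ generate $\cD$ on a neighbourhood $U$ of $p$, then any $Y\in\cD$ can be written $Y_{|U}=\sum_i f_i X_{i,|U}$, and evaluating this relation in the fiber $\cD_p=\cD/I_p\cD$ yields $\fib{Y}_p=\sum_i f_i(p)\fib{X_i}_p$. So the span of the $\fib{X_i}_p$ exhausts $\cD_p$.

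The nontrivial direction is the converse, which I would handle by a Nakayama-style argument. Since $\cD$ is locally finitely generated, pick some finite generating family $Y_1,\ldots,Y_l$ on a neighbourhood $U$ of $p$. By hypothesis the $\fib{X_i}_p$ span $\cD_p$, so for each $j$ there are constants $c_{ij}\in\R$ with $\fib{Y_j}_p=\sum_i c_{ij}\fib{X_i}_p$, i.e.\ $Y_j-\sum_i c_{ij}X_i\in I_p\cD$. Writing this difference as a finite sum $\sum_l g_l W_l$ with $g_l(p)=0$ and $W_l\in\cD$, and then re-expanding each $W_l$ in terms of the generators $Y_m$ on a smaller neighbourhood $V\subseteq U$, we obtain on $V$ a matrix identity
\begin{equation*}
(I-A)\,\mathbf{Y}=C\,\mathbf{X},\qquad A(p)=0,
\end{equation*}
where $\mathbf{Y}=(Y_1,\ldots,Y_l)^T$, $\mathbf{X}=(X_1,\ldots,X_k)^T$, $C$ is an $l\times k$ matrix of smooth functions, and $A$ is an $l\times l$ matrix of smooth functions vanishing at $p$. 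Since $\det(I-A)(p)=1$, shrinking $V$ if necessary makes $(I-A)$ invertible, so each $Y_j$ is a smooth $C^\infty(V,\R)$-combination of $X_1,\ldots,X_k$ on $V$. Combined with the fact that $Y_1,\ldots,Y_l$ generate $\cD$ on $V$, this shows that $X_1,\ldots,X_k$ generate $\cD$ at $p$. This Nakayama-style step, in particular the passage from a fiberwise relation in $\cD_p$ to a genuine smooth relation near $p$ via inverting $I-A$, is the crux of the argument.

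For the minimality statement, the spanning part just established shows that any generating family at $p$ has cardinality at least $\dim_\R\cD_p$, so a basis of $\cD_p$ lifts to a minimal generating family. Conversely, if $X_1,\ldots,X_k$ is a minimal generating family at $p$ but $\fib{X_1}_p,\ldots,\fib{X_k}_p$ is linearly dependent, say $\fib{X_k}_p=\sum_{i<k}\lambda_i\fib{X_i}_p$, then $\fib{X_1}_p,\ldots,\fib{X_{k-1}}_p$ still spans $\cD_p$; applying the converse direction already proved shows $X_1,\ldots,X_{k-1}$ generate $\cD$ at $p$, contradicting minimality. Hence a minimal generating family projects to a basis of $\cD_p$, completing the equivalence.
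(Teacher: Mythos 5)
Your proof is correct, and the Nakayama-style argument (lift a fiber relation to a matrix identity $(I-A)\mathbf{Y}=C\mathbf{X}$ with $A(p)=0$ and invert $I-A$ near $p$) is essentially the standard proof given in \cite[Proposition 1.5]{AS1}, which is what the paper cites here without reproducing the argument. One small point worth spelling out in the forward direction: from $Y_{|U}=(\sum_i f_iX_i)_{|U}$ you need that a section of $\cD$ vanishing on a neighbourhood $U$ of $p$ lies in $I_p\cD$; this follows by choosing $\chi\in C^\infty_c(U)$ with $\chi\equiv 1$ near $p$ and writing $Z=(1-\chi)Z$, but it is not literally an ``evaluation'' in the quotient.
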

		
\item We define $\mathcal{D}^*:=\bigsqcup_{p\in M}\cD_p^*$. For every $X\in \cD$, let $$\langle \cdot,X\rangle:\cD^*\to \R,\quad\xi\in \cD^*_p\mapsto\xi([X]_p).$$ We equip $\cD^*$ with the weakest topology such that the natural projection $\pi:\cD^*\to M$ and the maps $\langle \cdot,X\rangle$ for every $X\in \cD$ are continuous. By \cite[Section 2.2]{AS2}, this topology makes $\cD^*$ a locally compact Hausdorff second countable space.

\item An automorphism of $\cD$ is a diffeomorphism $\phi:M\to M$ such that the pushforward of vector fields $\phi_*:\cX_c(M)\to \cX_c(M)$ maps $\cD$ bijectively to itself. It thus induces maps $\phi_*:\cD_p \to \cD_{\phi(p)}$ between fibers, and by duality, a homeomorphism $\phi^*$ of $\cD^*$.
\item A \textit{singular foliation} is a generalised distribution $\cF$ which is closed under Lie brackets. The following lemma which is straightforward to check is extremely important.
\begin{lemma}
\label{lem:stationary_subalgebra}
Let $\mathcal{F}$ be a singular foliation, and $p$ a stationary point, \emph{i.e.,} $X(p)=0$ for all $X\in \cF$. Then $\mathcal{F}_p$ is a Lie algebra with the Lie bracket $[[X]_{p},[Y]_{p}]:=[[X,Y]]_p$.
\end{lemma}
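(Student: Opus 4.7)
The plan is to verify that the formula $[[X]_p,[Y]_p]:=[[X,Y]]_p$ descends to a well-defined operation on the quotient $\mathcal{F}_p=\mathcal{F}/I_p\mathcal{F}$. Since the usual Lie bracket on $\mathcal{X}(M)$ is $\R$-bilinear, antisymmetric, and satisfies the Jacobi identity, these three properties will be inherited by $\mathcal{F}_p$ as soon as the bracket is shown to be well-defined. By antisymmetry it suffices to prove
$$[I_p\mathcal{F},\,\mathcal{F}]\subseteq I_p\mathcal{F}.$$
Note that the bracket lands in $\mathcal{F}$ by the standing assumption that $\mathcal{F}$ is closed under Lie brackets, so the only content is the $I_p$-vanishing.

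The key computation is the Leibniz-type identity
$$[fX,Y] = f\,[X,Y] - (Yf)\,X,$$
for $X,Y\in\mathcal{F}$ and $f\in I_p$. The first term lies in $I_p\mathcal{F}$ trivially, since $f(p)=0$ and $[X,Y]\in\mathcal{F}$ by the singular foliation hypothesis. The second term is where the stationarity of $p$ enters: because $Y(p)=0$, one has $(Yf)(p)=Y_p(f)=0$, so $Yf\in I_p$ and hence $(Yf)\,X\in I_p\mathcal{F}$. Summing, $[fX,Y]\in I_p\mathcal{F}$, and extending $\R$-linearly over finite sums $\sum_i f_iX_i$ with $f_i\in I_p$ gives the desired inclusion.

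The only obstacle, really a clarification rather than a difficulty, is that without the stationarity hypothesis the term $(Yf)X$ need not lie in $I_p\mathcal{F}$, and indeed the bracket would fail to descend; this explains why the lemma is phrased only at stationary points. Once well-definedness is secured, antisymmetry of $[\cdot,\cdot]$ on $\mathcal{F}_p$ is immediate from antisymmetry on $\mathcal{X}(M)$, and the Jacobi identity for $[[X]_p,[Y]_p,[Z]_p]$ follows by applying the quotient map to the Jacobi identity $[[X,Y],Z]+[[Y,Z],X]+[[Z,X],Y]=0$ in $\mathcal{X}(M)$. This completes the plan.
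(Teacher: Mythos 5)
Your proof is correct and is exactly the ``straightforward'' verification the paper alludes to without spelling it out: the Leibniz identity $[fX,Y]=f[X,Y]-(Yf)X$, combined with stationarity of $p$ to kill the $(Yf)X$ term modulo $I_p\mathcal{F}$, is the only real content, and bilinearity, antisymmetry, and the Jacobi identity then pass to the quotient. Nothing to add.
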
				
\end{enumerate}

\subsection{Filtered foliations}\label{sec:adiabatic_foliation}
	\begin{definition}\label{dfn:filtfol}
		A \emph{filtered foliation} of depth $N\in \N$ on a smooth manifold $M$ is a filtration by generalised distributions
		$$
		0=\cF^0 \subseteq\cF^1 \subseteq \cF^2 \subseteq \ldots \subseteq \cF^N=\cX_c(M)
		$$
		such that
		\begin{equation}\label{eq:Lie bracket equation}
		[\cF^i,\cF^j] \subseteq \cF^{i+j},\quad \forall\ i,j\in \N.	
		\end{equation}
Here, and throughout the article, we use the convention $\cF^{n} = \cX_c(M)$ for all $n\geq N$.
	\end{definition} 
\begin{ex}\label{ex:filtered} Let $X_1,\ldots,X_m$ be smooth vector fields on $M$ satisfying Hörmander's Lie bracket generating condition and let $v_1,\cdots,v_m\in \N$ be weights for each $X_i$. We define $\cF^j$ to be the generalised distribution generated by all iterated Lie brackets  $[X_{i_1},\ldots[X_{i_{k-1}},X_{i_k}]\ldots ]$ such that $v_{i_1}+\cdots+v_{i_k}\leq j$. Hörmander's Lie bracket generating condition implies that $\cF^N=\cX_c(M)$ for some $N$.\footnote{This is only true locally. The value of $N$ may be infinite if $M$ isn't compact. Since maximal hypoellipticity is a local notion, we can ignore this issue.} Inversely one can easily see that any filtered foliation where each $\cF^i$ is finitely generated is obtained this way. So locally, filtered foliations are an intrinsic way to define a family of weighted vector fields satisfying Hörmander's Lie bracket generating condition.   
	\end{ex}
The central geometric object in this paper is the adiabatic foliation associated to $\cF^\bullet$ which we now define. For $X\in \cX(M)$, we denote by $\tilde{X}\in \cX(M\times \R_+)$ the vector field $$\tilde{X}(x,t)= (X(x),0).$$ If $\cD \subseteq \cX_c(M)$ is a generalised distribution, we write $\widetilde{\cD}$ for the generalised distribution on $M\times \R_+$, generated by $\tilde{X}\in\Gamma(TM\times \R_+)$ for $X\in\cD$.

		\begin{definition}
		Let $\cF^\bullet$ be a filtered foliation.   The \emph{adiabatic foliation} associated to $\cF^\bullet$ is the singular foliation on $M\times \R_+$ given by
		\[
		\aF  := t\widetilde{\cF^1} + t^2\widetilde{\cF^2} + \ldots + t^N \widetilde{\cF^N},
		\]
where  $t \in C^\infty(M\times \R_+)$ denotes the smooth projection onto the second variable. 
\end{definition}It is clear that $\aF$ is locally finitely generated. It is involutive because of \eqref{eq:Lie bracket equation}. Hence $\aF$ is a singular foliation. Let $p\in M$. The point $(p,0)$ is stationary. So by Lemma \ref{lem:stationary_subalgebra}, $\aF_{(p,0)}$ is a Lie algebra which we now describe. Since $\cF^{i-1}\subseteq \cF^{i},$ it follows that $\cF^{i-1}_{p}$ maps naturally to $\cF^{i}_{p}$. This map is not injective in general. Nevertheless we denote by $\cF^i_{p}/\cF^{i-1}_{p}$ the quotient of $\cF^i_{p}$ by the image of $\cF^{i-1}_{p}$, and we write
$$
\gr(\cF)_{p}:=\bigoplus_{i=1}^{i=N}\frac{\cF^i_{p}}{\cF^{i-1}_{p}}=\bigoplus_{i=1}^{i=N}\frac{\cF^i}{\cF^{i-1}+I_p\cF^i},
$$where $I_p$ is defined in \eqref{eqn:Ip}.  Note that, due to the non-injectivity of the maps $\cF^{i-1}_p \rightarrow \cF^{i}_p$, the dimension of $\gr(\cF)_p$ can be strictly larger than $\dim(M)$. 

If $X\in \cF^i$, we will use $[X]_{i,p}$ to denote the class of $X$ in $\cF^i_p/\cF^{i-1}_p \subseteq \gr(\cF)_p$.
\begin{prop}\label{prop:eta_iso} 
The map
\begin{align*}
\eta_{p}:\gr(\cF)_p\to \aF_{(p,0)},\quad \sum_{i=1}^N [X_i]_{i,p}\to \left[\sum_{i=1}^N t^i \tilde{X}_i\right]_{(p,0)}\qquad \text{with }X_i\in \cF^i
\end{align*}
 is a well-defined isomorphism of vector spaces.
\end{prop}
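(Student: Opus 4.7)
My plan is to prove this in three steps: well-definedness, surjectivity, and injectivity. The first two are essentially formal checks from the definitions; the real content is the dimension count behind injectivity, which rests on a lifting argument through the filtration.

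First I would handle well-definedness. The map is a priori defined on tuples of representatives $(X_1,\ldots,X_N) \in \prod_i \cF^i$, and I must check it factors through $\gr(\cF)_p$. This amounts to verifying $t^i \tilde{X}_i \in I_{(p,0)}\aF$ whenever either $X_i \in \cF^{i-1}$ or $X_i \in I_p\cF^i$. In the first case, $t^i \tilde{X}_i = t \cdot (t^{i-1}\tilde{X}_i)$ with $t^{i-1}\tilde{X}_i \in \aF$ since $X_i \in \cF^{i-1}$, and $t \in I_{(p,0)}$. In the second, writing $X_i = \sum_j f_j Y_j$ with $f_j \in I_p$ and $Y_j \in \cF^i$, the pullbacks $\tilde{f}_j = f_j\circ \mathrm{pr}_M$ lie in $I_{(p,0)}$ while $t^i\tilde{Y}_j \in \aF$.

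Next I would handle surjectivity. Any element of $\aF$ is a finite $C^\infty(M\times \R_+)$-combination of terms of the form $h(x,t) t^i \tilde{Y}$ with $Y \in \cF^i$. Writing $h(x,t) = h(p,0) + (h(x,t) - h(p,0))$ and observing that $h - h(p,0) \in I_{(p,0)}$, such a term reduces modulo $I_{(p,0)}\aF$ to $h(p,0) t^i \tilde{Y}$, and after grouping by $i$ the class is visibly in the image of $\eta_p$.

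The heart of the argument is injectivity. My strategy is to exhibit, near $(p,0)$, a generating family for $\aF$ of cardinality exactly $\dim \gr(\cF)_p$: by Proposition \ref{lem:generating_family} this bounds $\dim \aF_{(p,0)} \leq \dim \gr(\cF)_p$, and combined with surjectivity forces $\eta_p$ to be an isomorphism. To build the family, for each $i$ I would choose $Y^{(i)}_1,\ldots,Y^{(i)}_{d_i} \in \cF^i$ (where $d_i = \dim \cF^i_p/\cF^{i-1}_p$) whose classes form a basis of $\cF^i_p/\cF^{i-1}_p$, and argue inductively on $k$ via the short exact sequences
$$0 \to \mathrm{image}(\cF^{i-1}_p \to \cF^i_p) \to \cF^i_p \to \cF^i_p/\cF^{i-1}_p \to 0$$
that $\{[Y^{(i')}_j]_{k,p} : i' \leq k\}$ spans $\cF^k_p$; Proposition \ref{lem:generating_family} then upgrades this to generation of $\cF^k$ at $p$ by $\{Y^{(i')}_j : i' \leq k\}$. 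Given any generator $h(x,t) t^i \tilde{Y}$ of $\aF$, expanding $Y = \sum_{i' \leq i, j} g_{i',j}(x) Y^{(i')}_j$ near $p$ yields
$$h(x,t) t^i \tilde{Y} = \sum_{i' \leq i,\, j} h(x,t) g_{i',j}(x) t^{i-i'} \cdot t^{i'} \tilde{Y}^{(i')}_j,$$
so $\{t^{i'} \tilde{Y}^{(i')}_j\}$, of total cardinality $\sum_i d_i = \dim \gr(\cF)_p$, generates $\aF$ at $(p,0)$. The principal obstacle I anticipate is precisely the bookkeeping in this lifting: the two distinct degeneracies defining $\gr(\cF)_p$—quotienting by $I_p \cF^i$ and by the (possibly non-injective) image of $\cF^{i-1}_p$—must be tracked simultaneously, and the factor $t^{i-i'}$ in the expansion above is exactly what converts a generator of $\cF^{i'}$ at level $i$ into a contribution of the right adiabatic weight.
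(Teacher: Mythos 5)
Your well-definedness and surjectivity arguments are correct and complete (the paper itself records only the $X_i\in\cF^{i-1}$ half of well-definedness and dismisses everything else as immediate from the definition of $\aF$). The genuine problem is the injectivity step: your dimension count runs in the wrong direction. A generating family of cardinality $n$ for $\aF$ at $(p,0)$ gives, via Proposition \ref{lem:generating_family}, the \emph{upper} bound $\dim\aF_{(p,0)}\leq n=\dim\gr(\cF)_p$ --- but this is exactly the inequality that surjectivity of $\eta_p$ already provides, so combining the two yields nothing. To conclude that a surjection is injective you need the \emph{lower} bound $\dim\aF_{(p,0)}\geq\dim\gr(\cF)_p$, i.e.\ you would need your family $\{t^{i'}\tilde{Y}^{(i')}_j\}$ to be a \emph{minimal} generating family, which by Proposition \ref{lem:generating_family} means that its classes are linearly independent in $\aF_{(p,0)}$ --- and that is precisely (the basis-vector case of) the injectivity you are trying to prove. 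As written, the argument is circular, and the entire generating-family construction can be discarded since it reproves surjectivity.

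The missing content is a direct verification that $\ker\eta_p=0$. Suppose $\sum_{i=1}^N t^i\tilde{X}_i\in I_{(p,0)}\aF$, i.e.\ it equals a finite sum $\sum_k h_k(x,t)\,t^{j_k}\tilde{Y}_k$ with $Y_k\in\cF^{j_k}$ and $h_k(p,0)=0$. For each fixed $x$, both sides are smooth $T_xM$-valued functions of $t$; apply $\frac{1}{i!}\partial_t^i\big|_{t=0}$ for $i\leq N$. The left side gives $X_i(x)$, and the right side gives $\sum_{k:\,j_k\leq i}\frac{1}{(i-j_k)!}\bigl(\partial_t^{\,i-j_k}h_k\bigr)(x,0)\,Y_k(x)$, so this holds as an identity of vector fields on $M$ with coefficients in $C^\infty(M)$. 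Each term with $j_k<i$ lies in $\cF^{i-1}$ (since $Y_k\in\cF^{j_k}\subseteq\cF^{i-1}$), and each term with $j_k=i$ has coefficient $h_k(\cdot,0)\in I_p$ and hence lies in $I_p\cF^i$. Therefore $X_i\in\cF^{i-1}+I_p\cF^i$, i.e.\ $[X_i]_{i,p}=0$ in $\gr(\cF)_p$, which is injectivity. This coefficient comparison is what the authors' one-line ``injectivity and surjectivity follow from the definition of $\aF$'' is standing in for, and it is the piece your proposal does not supply.
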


\begin{proof}
If $X_i\in \cF^{i-1}$, then $t^iX_i=t \cdot t^{i-1}X_i$. Hence $[t^i\tilde{X}_i]_{(p,0)}=0$. Thus $\eta_p$ is well-defined. Injectivity and surjectivity follow from the definition of $\aF$. 
\end{proof}

We define a Lie algebra structure on $\gr(\cF)_p$ by declaring the map $\eta_{p}$ an isomorphism of Lie algebras. The Lie bracket is thus given by the formula $$[[X]_{i,p},[Y]_{j,p}]=\big[[X,Y]\big]_{i+j,p} \in \cF^{i+j}_p/\cF^{i+j-1}_p,\quad X\in \cF^i,Y\in \cF^j.$$ The resulting nilpotent Lie algebra is called the \emph{osculating Lie algebra} of $\cF$ at $p$.  It is nilpotent because if $i+j>N$, then $[[X]_{i,p},[Y]_{j,p}]=0$. Hence the space $\gr(\cF)_p$ is also a Lie group with a product via the BCH formula \eqref{eqn:BCHintro}.

\begin{prop}\label{prop:semi-cont D i over D i-1}The function  $p\mapsto \dim(\gr(\cF)_p)$ is upper semi-continuous.
\end{prop}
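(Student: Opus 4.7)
The plan is to reduce the statement to the general fact that fibers of a locally finitely generated module of vector fields have upper semi-continuous dimension, and then apply this to the adiabatic foliation $\aF$ on $M \times \R_+$.

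First I would record the following standard consequence of Proposition \ref{lem:generating_family}: for any generalised distribution $\cD$ on a manifold $Y$, the function $y \mapsto \dim \cD_y$ is upper semi-continuous. Indeed, pick a minimal generating family $X_1,\ldots,X_k \in \cD$ at a point $y_0$, so that $\fib{X_1}_{y_0},\ldots,\fib{X_k}_{y_0}$ is a basis of $\cD_{y_0}$. By definition these vector fields also generate $\cD$ on some open neighbourhood $V$ of $y_0$, hence their classes span $\cD_y$ at every $y \in V$, which gives $\dim \cD_y \leq k = \dim \cD_{y_0}$.

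Next I would apply this observation to the adiabatic foliation $\aF$, which is a singular foliation (in particular a generalised distribution) on $M \times \R_+$. Upper semi-continuity of $(p,t) \mapsto \dim \aF_{(p,t)}$ on $M \times \R_+$ restricts to upper semi-continuity of $p \mapsto \dim \aF_{(p,0)}$ on the closed subspace $M \times \{0\} \cong M$. Finally, the isomorphism $\eta_p$ of Proposition \ref{prop:eta_iso} gives $\dim \gr(\cF)_p = \dim \aF_{(p,0)}$, so the desired semi-continuity follows.

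There is no real obstacle; the only conceptual point to be careful about is that although $\cF^\bullet$ is a filtration by generalised distributions and individually each $\cF^i$ has upper semi-continuous fiber dimension, one cannot combine these directly, since $\dim(\cF^i_p / \cF^{i-1}_p)$ involves the dimension of the \emph{image} of $\cF^{i-1}_p \to \cF^i_p$, which is lower semi-continuous and subtracts in a way that could destroy the estimate. Passing through the adiabatic foliation is precisely what avoids this issue: $\eta_p$ packages the whole associated graded object into a single fiber of one generalised distribution, to which the elementary argument above applies cleanly.
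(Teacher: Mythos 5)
Your argument is correct, but it is not the route the paper takes. The paper proves the \emph{stronger} statement that each function $p\mapsto \dim(\cF^i_p/\cF^{i-1}_p)$ is upper semi-continuous separately: if this dimension is at most $k$ at $p$, one picks $X_1,\ldots,X_k\in\cF^i$ and $Y_1,\ldots,Y_l\in\cF^{i-1}$ whose classes generate $\cF^i_p$; by Proposition \ref{lem:generating_family} these vector fields then generate $\cF^i$ on a whole neighbourhood of $p$, and since the $Y_j$ lie in $\cF^{i-1}$ the images of the $X_j$ alone span $\cF^i_q/\cF^{i-1}_q$ at every nearby $q$. Your proof instead packages everything into the single generalised distribution $\aF$ on $M\times\R_+$, invokes the general upper semi-continuity of $y\mapsto\dim\cD_y$ (which you correctly derive from Proposition \ref{lem:generating_family}), and restricts to $t=0$ via the isomorphism $\eta_p$ of Proposition \ref{prop:eta_iso}. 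Both are valid. Your version is shorter and reuses machinery already set up, and your closing remark correctly identifies why one cannot naively subtract the semi-continuous functions $\dim\cF^i_p$ and $\dim\cF^{i-1}_p$; the paper's direct argument resolves that same difficulty without passing through $\aF$, and yields the finer degree-by-degree conclusion, which is occasionally useful (e.g.\ it shows each graded piece, not just the total osculating algebra, has locally maximal dimension at a given point). The only point you should acknowledge is that $M\times\R_+$ is a manifold with boundary, so you are implicitly extending the generalised-distribution formalism there — harmless, and the paper does the same when it applies Lemma \ref{lem:stationary_subalgebra} to $\aF$ at $(p,0)$, but worth a word.
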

\begin{proof}We will prove the stronger assertion that $p\mapsto \dim(\cF^i_p/\cF^{i-1}_p)$ is upper semi-continuous for all $i$. Let $k\in \N$. It is straightforward to check that $\dim(\cF^i_p/\cF^{i-1}_p)\leq k$ if and only if there exists $X_1,\cdots,X_k\in \cF^i$, $Y_1,\cdots,Y_l\in \cF^{i-1}$ for some $s$ such that the $[X_1]_p,\cdots ,[X_k]_p,[Y_1]_p,\cdots,[Y_l]_p$ generate $\cF^{i}_p$. By Proposition \ref{lem:generating_family}, this implies that $X_1,\cdots,X_k,Y_1,\cdots,Y_l$ generate $\cF^i$ over an open neighbourhood of $p$. The semi-continuity of $p\mapsto \dim(\cF^i_p/\cF^{i-1}_p)$ follows.
\end{proof}
\begin{remark}The set $\{p:\dim(\Gr(\cD)_p)=\dim(M)\}$ is open by Proposition \ref{prop:semi-cont D i over D i-1}. It is also dense because it contains $\bigcap_{i=1}^NM_{i}$ where $M_{i}$ is the regular part of $\cF^i$ which is open and dense, see \cite[Proposition 1.5]{AS1}.
\end{remark}
\begin{ex}\label{exs:Filtered foliations group calculation}
Let $M=\R^2$, $N=3$,
			\[
\cF^1 = \langle \partial_x\rangle,\quad\cF^2 = \langle \partial_x,x\partial_y\rangle.
			\]
 Let $(a,b)\in M$. It is immediate to see that $$\cF^1_{(a,b)}=\R[\partial_x]_{(a,b)},\quad \cF^2_{(a,b)}=\R[\partial_x]_{(a,b)}\oplus\R[x\partial_y]_{(a,b)},\quad \cF^3_{(a,b)}=T_{(a,b)}M.$$ The natural map $\cF^2_{(a,b)}\to \cF^3_{(a,b)}$ is injective if and only if $a\neq 0$. It follows that 
 $$\gr(\cF)_{(a,b)}=\begin{cases}
  \R[\partial_x]_{1,(a,b)}\oplus \R[x\partial_y]_{2,(a,b)}\oplus 0,& \text{if}\;a\neq 0,\\
  \R[\partial_x]_{1,(a,b)}\oplus \R[x\partial_y]_{2,(a,b)}\oplus \R[\partial_y]_{3,(a,b)},&\text{if}\;a= 0.\end{cases}$$ 
 The group $\Gr(\cF)_{(0,b)}$ is the $3$-dimensional Heisenberg group for every $b\in \R$ because $[\partial_x,x\partial_y]=\partial_y$.
\end{ex}

\subsection{The Debord-Skandalis action}
\label{sec:Debord-Skandalis}
Let $\alpha$ be the $\R^\times_+$-action on $M\times \R_+$ given by $\alpha_\lambda(x,t) = (x,\lambda^{-1}t)$.  These maps are automorphisms of the adiabatic foliation, so they induce maps between the fibers of $\aF$.  At $t=0$, using Proposition \ref{prop:eta_iso}, we obtain an action by automorphisms of the osculating Lie algebras given by
\begin{equation}\label{eqn:DS_actionGR} 
 \alpha_{\lambda}\left(\sum_{i=1}^N[X_i]_{i,p}\right)=\sum_{i=1}^N\lambda^i[X_i]_{i,p}, \qquad (X_i\in\cF^i).
\end{equation}
Likewise, there is an induced action $\hat{\alpha}$ on the space $\aF^*$.  Again using Proposition \ref{prop:eta_iso}, we have
\begin{equation}\label{eqn:aF*} 
  \aF^* = \left( T^*M \times \R^\times_+\right) \sqcup \left(\gr(\cF)^*\times\{0\}\right),
\end{equation}
where $\gr(\cF)^* := \bigsqcup_{p\in M} \gr(\cF)^*_p$.  Under this identification, the $\R^\times_+$-action on $\aF^*$ is given by
\[
 \hat{\alpha}_\lambda (p,\xi,t) = (p,\xi,\lambda t), \qquad p\in M,\xi\in T^*_pM
\]
for $t>0$, while at $t=0$ it is given by the formula
 \begin{equation}
 \label{eqn:act dual space}
 \hat{\alpha}_\lambda(\xi)(X)=\xi(\alpha_{\lambda}(X)),\quad X\in \gr(\cF)_p,~\xi\in \gr(\cF)_p^*.
\end{equation}
We refer to the all of the above actions as the Debord-Skandalis action.
\subsection{The Helffer-Nourrigat cone}\label{sec:characteristicset}
Recall that $\aF^*$ is a locally compact Hausdorff topological space, see Section \ref{sec:generalised_distr}.g.
\begin{definition}\label{dfn:top char set} 
The \emph{Helffer-Nourrigat cone} at $p\in M$ is the set
$$\cT^*_p\cF=\{\xi \in\gr(\cF)^*_p:(p,\xi,0)\in \overline{T^*M\times \R^\times_+}\subseteq \aF^*\}.$$
We also let $\mathcal{T}^*\cF:=\bigsqcup_{p\in M}\mathcal{T}^*_p\cF\subseteq \gr(\cF)^*$.
\end{definition}
\begin{exs}
~\label{exs:charac_set}

\begin{enumerate}
\item In Example \ref{exs:Filtered foliations group calculation}, $\mathcal{T}^*\cF$ is equal to $\gr(\cF)^*$.

\item 
Consider 
$M=\R, N=3$ and $$\cF^1=\langle x^2\partial_x\rangle,\quad \cF^2=\langle x\partial_x\rangle.$$ 
Then 
$$
\gr(\cF)_p=\begin{cases}\R[x^2\partial_x]_{1,p}\oplus 0\oplus 0,& \text{if}\, p\neq 0\\\R[x^2\partial_x]_{1,p}\oplus \R[x\partial_x]_{2,p}\oplus \R[\partial_x]_{3,p},&\text{if} \, p=0\end{cases}.
$$ 
A sequence $(x_n,\eta_n,t_n)\in T^*M\times \R^\times_+$ converges to a point $(0,(\xi_1,\xi_2,\xi_3),0)$ in $\mathcal{T}^*_{0}\cF\times \{0\}$ if 
$$
x_n\to 0, \quad
t_n\to 0 , \quad
x_n^2t_n\eta_n\to \xi_1, \quad
x_nt_n^2\eta_n\to \xi_2, \quad
t_n^3\eta_n\to \xi_3.
$$
Hence $\xi_1\xi_3=\xi_2^2$. One can check that this is the only relation restricting the limit set. Therefore
\[
 \mathcal{T}^*_{p}\cF = 
 \begin{cases}
  \RR, & \text{if } p\neq0, \\
  \{(\xi_1,\xi_2,\xi_3)\in\R^3:\xi_1\xi_3=\xi_2^2\}, & \text{if } p=0.
 \end{cases}
\]
\item
The previous example can be made less artificial as follows.  Let
$$M=\R^2,\ N=4,\ \cF^1 = \langle \partial_x \rangle,\
  \cF^2=\langle \partial_x,x^2\partial_y\rangle,\ \cF^3=\langle  \partial_x,x\partial_y\rangle.$$ 
This is the filtered foliation associated to the H\"ormander-type operator $\partial_x^2 + x^2\partial_y$.  The osculating Lie algebras are
$$
\gr(\cF)_{(a,b)}=\begin{cases}\R[\partial_x]_{1,(a,b)}\oplus \R[x^2\partial_y]_{2,(a,b)} \oplus 0\oplus 0,& \text{if }\, a\neq 0\\
\R[\partial_x]_{1,(a,b)}\oplus \R[x^2\partial_y]_{2,(a,b)}\oplus \R[x\partial_y]_{3,(a,b)}\oplus \R[\partial_y]_{4,(a,b)},&\text{if } a=0\end{cases}.
$$ 
The Helffer-Nourrigat cone is then equal to
\[
 \mathcal{T}^*_{(a,b)}\cF = 
 \begin{cases}
  \RR^2, & \text{if } a\neq0, \\
  \{(\xi_1,\xi_2,\xi_3,\xi_4)\in\R^4:\xi_2\xi_4=\xi_3^2\}, & \text{if }a=0,
 \end{cases} 
\]
where $\xi_1,\xi_2,\xi_3,\xi_4$ are the dual variables to the generators of the osculating Lie algebras.

\item Consider $M=\R^2, N=2$, $$\cF^1=\langle x^2y^4\partial_x,x^6\partial_x,x^4y^2\partial_x,y^6\partial_x\rangle.$$ One has $\gr(\cF)_{(0,0)}=\R^6$ with basis corresponding to $x^2y^4\partial_x,x^6\partial_x,x^4y^2\partial_x,y^6\partial_x,\partial_x,\partial_y$. A simple computation shows that \begin{align*}
\mathcal{T}^*_{(0,0)}\cF&=\{(\xi_1,\xi_2,\xi_3,\xi_4,\xi_5,\eta)\in \R^6:\xi_1^3=\xi_2\xi_4^2, 
\ \xi_3^3=\xi_2^2\xi_4,\ \xi_i\xi_5\geq 0\  \forall i\},
\end{align*}
where $\xi_1,\cdots,\xi_5,\eta$ are the dual of the above generators respectively. This example shows that the Helffer-Nourrigat cone isn't necessarily Zariski closed even if all the vector fields are polynomial.
 \item Consider $M=\R^2,N=3$ and  $$\cF^1=\left\langle \left(\frac{x}{\sqrt{x^2+y^2}}+2\right)e^{\frac{-2}{x^2+y^2}}\partial_x\right\rangle,\quad\cF^2=\left\langle e^{\frac{-1}{x^2+y^2}}\partial_x\right\rangle.$$ One has $\gr(\cF)_{(0,0)}=\R^4$ with basis corresponding to $\left(\frac{x}{\sqrt{x^2+y^2}}+2\right)e^{\frac{-2}{x^2+y^2}}\partial_x,e^{\frac{-1}{x^2+y^2}}\partial_x,\partial_x,\partial_y$. A simple computation shows that \begin{align*}
   \mathcal{T}^*_{(0,0)}\cF&=\{(\xi_1,\xi_2,\xi_3,\xi_4)\in\R^4:\exists \lambda\in [1,3],\ \lambda\xi_2^2=\xi_1\xi_3\}.
\end{align*} 

\end{enumerate}

\end{exs}

\begin{prop}\label{prop:inv coadjoint orbit}Let $p\in M$, $\xi\in \mathcal{T}^*_p\cF$. Then \begin{itemize}
\item If $\lambda\in \R_+^\times$, then $\alpha_\lambda(\xi)\in \mathcal{T}^*_p\cF	$.
\item If $\mu\in \R$, then $\mu\xi\in \mathcal{T}^*_p\cF $.
\item If $g\in \Gr(\cF)_p$ , then $Ad^*(g)\xi\in  \mathcal{T}^*_p\cF$.
\end{itemize}
\end{prop}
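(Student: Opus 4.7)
Parts (a) and (b) will follow from soft topological arguments. For (a), the map $\alpha_\lambda$ of $M\times\R_+$ is an automorphism of the singular foliation $\aF$, so the induced map $\hat\alpha_\lambda$ is a homeomorphism of $\aF^*$; since it clearly preserves the open subset $T^*M\times\R_+^\times$, it preserves its closure. Formula \eqref{eqn:act dual space} then identifies the restriction of $\hat\alpha_\lambda$ to $\gr(\cF)^*_p$ with the weighted scaling appearing in \eqref{eqn:DS_actionGR}. For (b), observe that fiberwise scalar multiplication on $\aF^*$ is continuous, because each pairing $\langle\cdot,X\rangle:\aF^*\to\R$ with $X\in\aF$ is linear on fibers; since $T^*M\times\R_+^\times$ is closed under this operation (its first factor is all of $T^*M$), the same is true of its closure, whence the claim.

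Part (c) is the substantial assertion. The plan is to approximate a chosen $g\in\Gr(\cF)_p$ by diffeomorphisms of $M$ that degenerate to the identity as $t\to0$, and to transport covectors along them. Write $g=\exp(Y)$ with $Y=\sum_{i=1}^N [Y_i]_{i,p}$ and $Y_i\in\cF^i$. Given a sequence $(p_n,\eta_n,t_n)\in T^*M\times\R_+^\times$ converging to $(p,\xi,0)$ in $\aF^*$, form the vector field $V_n:=\sum_{i=1}^N t_n^i Y_i$, which tends to $0$, and let $\psi_n$ denote its time-one flow, which is well defined on a neighbourhood of $p$ for $n$ large. Set $\tilde\eta_n:=(\psi_n^{-1})^*\eta_n \in T^*_{\psi_n(p_n)}M$. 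I claim
\[
 (\psi_n(p_n),\tilde\eta_n,t_n) \ \longrightarrow\ (p, \Ad^*(g^{-1})\xi, 0)\quad\text{in }\aF^*,
\]
which, together with the arbitrariness of $g$, proves the desired invariance.

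To verify the claim, note first that $\psi_n(p_n)\to p$ because $V_n\to 0$. For the cotangent component, pair $\tilde\eta_n$ against a generator $t^i\tilde X_i$ of $\aF$ and use the standard Lie-series expansion $\psi_n^* X_i = e^{\ad V_n}X_i$ to get
\begin{align*}
 t_n^i\, \tilde\eta_n\bigl(X_i(\psi_n(p_n))\bigr) \;=\; t_n^i\,\eta_n\bigl(e^{\ad V_n}X_i(p_n)\bigr) \;=\; \sum_{k\geq0}\frac{1}{k!}\sum_{j_1,\dots,j_k} t_n^{m}\,\eta_n\bigl([Y_{j_1},[\dots,[Y_{j_k},X_i]\dots]](p_n)\bigr),
\end{align*}
where $m=i+j_1+\cdots+j_k$. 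By the filtration condition \eqref{eq:Lie bracket equation}, the inner bracket lies in $\cF^m$, so by convergence in $\aF^*$ the $n$-th summand tends to $\xi([\,\cdot\,]_{m,p})$ if $m\leq N$ and to $0$ if $m>N$ (writing $t_n^m=t_n^N\cdot t_n^{m-N}$). Reassembling, the limit becomes $\xi\bigl(e^{\ad Y}[X_i]_{i,p}\bigr)=\xi\bigl(\Ad(g)[X_i]_{i,p}\bigr)=\Ad^*(g^{-1})\xi\,([X_i]_{i,p})$, using Proposition \ref{prop:eta_iso} to identify the algebraic action on $\gr(\cF)_p$ with the limiting action on $\aF_{(p,0)}$. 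The main technical point is precisely this reassembly: the filtration property confines each term of the Lie series to a definite weight $m$, so that the rescaled pairings combine coherently, while nilpotency of $\gr(\cF)_p$ ensures that the relevant part of the series is finite in the limit.
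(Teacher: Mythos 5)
Your parts (a) and (b) are essentially identical to the paper's arguments: invariance of $T^*M\times\R_+^\times$ under the Debord--Skandalis action and under fiberwise scalar multiplication, together with the fact that a homeomorphism/closed operation preserves closures. Part (c) is where you diverge. The paper works one generator at a time: for $X\in\cF^i$ it considers the time-one flow $\phi$ of $t^i\tilde X$ on $M\times\R_+$, invokes \cite[Proposition 1.6]{AS1} to see that $\phi$ is an automorphism of the singular foliation $\aF$, and then observes that the induced homeomorphism of $\aF^*$ fixes $T^*M\times\R_+^\times$ and restricts on the $t=0$ fiber to $\Ad_{[X]_{i,p}}$ (identified via a one-parameter group argument). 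Your version unrolls this abstraction into a direct sequence computation: you approximate $g=\exp(Y)$ by flows $\psi_n$ of $V_n=\sum_i t_n^iY_i$, pull back the covectors $\eta_n$, and compute the rescaled pairings using the Lie-series expansion of $\psi_n^*X_i$. The bookkeeping of weights under the filtration $[\cF^i,\cF^j]\subseteq\cF^{i+j}$ and the convention $\cF^n=\cX_c(M)$ for $n\geq N$ is exactly what makes the reassembly coherent, and your limit $\xi(e^{\ad Y}[X_i]_{i,p})=\Ad^*(g^{-1})\xi([X_i]_{i,p})$ is correct.

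Two remarks. First, you handle general $g\in\Gr(\cF)_p$ in one stroke, while the paper proves it for the generators $\exp([X]_{i,p})$ and leaves the extension to products implicit; both are fine. Second, your use of the Lie series $e^{\ad V_n}X_i$ for non-nilpotent vector fields on $M$ should be understood as an asymptotic expansion: $\psi_n^*X_i=\sum_{k\leq K}\frac{1}{k!}\ad_{V_n}^kX_i+O(t_n^{K+1})$ with uniform constants on compacts, and taking $K=N$ makes the remainder contribute $t_n\cdot t_n^N\eta_n(\cdot)\to 0$. Your closing sentence attributes the finiteness to nilpotency of $\gr(\cF)_p$, which controls the limit but not the pre-limit tail; the tail needs the truncation estimate. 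The paper sidesteps this by citing the automorphism property from \cite{AS1}, which is the cleaner route, but your computational version is a genuinely self-contained alternative once the truncation point is made explicit.
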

\begin{proof}
Since $T^*M\times \R^\times_+ \subset \aF^*$ is invariant under the Debord-Skandalis action, so is its limit set.  Thus $\mathcal{T}^*_p\cF$ is stable under the Debord-Skandalis action. It is also stable under the standard (ungraded) vector space dilations, because if $(p_n,\xi_n,t_n)\in T^*M\times \R^\times_+$ converges to $(p,\xi,0)$ in $\mathcal{T}^*\cF\times \{0\}$, then $(p_n,\mu \xi_n,t_n)$ converges to $(p,\mu \xi,0)$ for any $\mu\in\RR$. 

For the coadjoint action, let $X\in \cF^i$ for some $i$. Consider $t^i\tilde{X}$ as a vector field on $M\times \R_+$, and let $\phi:M\times  \R_+\to M\times \R_+$ be its flow at time $1$. By \cite[Proposition 1.6]{AS1}, $\phi$ is an automorphism of $\aF$. After identifying $\aF_{(p,0)}$ with $\gr(\cF)_p$ using Proposition \ref{prop:eta_iso}, we claim that the induced action $\phi_*$ on $\gr(\cF)_p$ is the adjoint action $Ad_{[X]_p}$.  To see this, let $\phi_s:\gr(\cF)_p\to \gr(\cF)_p$ be the map which is associated to  $st^i\tilde{X}$. The maps $\phi_s$ form a $1$-parameter group. Their derivative at $0$ is the adjoint action $ad_{[X]_p}$. Hence $\phi_1=e^{ad_{[X]_p}}=Ad_{[X]_p}$. Since $\phi$ induces a homeomorphism of the space $\aF^*$ which leaves $M\times \R_+^\times$ fixed, it follows that it fixes the Helffer-Nourrigat cone. The result follows.
\end{proof}

\begin{remark}
\begin{enumerate}
\item By the orbit method \cite{KirillovArticle,BrownArticleTopOrbitMethod}, Proposition \ref{prop:inv coadjoint orbit} allows us to view $\mathcal{T}^*_p\cF$ as a closed subset of the unitary dual $\widehat{\Gr(\cF)}_p$ of $\Gr(\cF)_p$. We will make this identification frequently.
\item One can define $\alpha_{-1}:\gr(\cF)^*_p\to \gr(\cF)^*_p$ using \eqref{eqn:act dual space} and \eqref{eqn:DS_actionGR}. In general $\mathcal{T}^*_p\cF$ isn't invariant under $\alpha_{-1}$ as Example \ref{exs:charac_set}.d shows.
\end{enumerate}  
\end{remark}
\subsection{Differential operators and principal symbol}\label{sec:differential operators}
Let $E\to M$ be a vector bundle, $\Diff(M,E)$ the algebra of differential operators $C^\infty(M,E)\to C^\infty(M,E) $ not necessarily of compact support. Let $\Diff^k_{\cF}(M,E)$ denote the vector space of differential operator $D$ such that for every $f\in C^\infty_c(M)$, $fD$ can be written as sum of monomials $\alpha \nabla_{Y_{1}}\cdots \nabla_{Y_l}$ with $\alpha\in C^\infty_c(M,\End(E))$, $Y_i\in \cF^{a_i}$ for some $a_i\in \N$ such that $\sum_{i=1}^l a_i\leq k$. Here $\nabla$ is any connection on $E$. If $E$ is trivial, then we write $\Diff^k_{\cF}(M)$ instead of $\Diff^k_{\cF}(M,E)$. Then:
 \begin{enumerate}
\item $\Diff_{\cF}^0(M,E)=C^\infty(M,\End(E))$.
\item  For every $i\in \{1,\cdots,N\}$, $X\in \cF^i$, one has $\nabla_{X}\in \Diff_{\cF}^i(M,E)$.
\item For every $i,j$, one has $\Diff_{\cF}^i(M,E)\Diff_{\cF}^j(M,E)\subseteq \Diff_{\cF}^{i+j}(M,E)$.
\item If $f\in C^\infty(M)$, $D\in \Diff_{\cF}^i(M,E)$ with $i\geq1$, then $[f,D]\in \Diff_{\cF}^{i-1}(M,E)$.
\end{enumerate}
\begin{definition}
 We say that $D$ has Hörmander's order $k$ if $D\in \Diff^k_{\cF}(M,E)\backslash \Diff^{k-1}_{\cF}(M,E)$.
\end{definition}
\paragraph{Principal symbol.} Let $ D\in \Diff^k_{\cF}(M,E),$ $p\in M$ and $\pi$ an irreducible unitary representation of $\Gr(\cF)_p$ on a Hilbert space denoted $L^2\pi$. We denote by $C^\infty(\pi)\subseteq L^2\pi$ the subspace of smooth vector. If $X\in \gr(\cF)_p$, then the differential of $\pi$ at $X$ gives a linear map $$d\pi(X):C^\infty(\pi)\to C^\infty(\pi),\quad  d\pi(X)v=\frac{d}{dt}\Bigr|_{\substack{t=0}}\pi(-tX)v,\quad v\in C^\infty(\pi).$$ It satisfies $$d\pi([X,Y])=[d\pi(X),d\pi(Y)],\quad X,Y\in \gr(\cF)_p.$$The principal symbol of $D$ at $\pi$ is a linear map $$\sigma^k(D,p,\pi):E_p\otimes C^\infty(\pi)\to E_p\otimes  C^\infty(\pi)$$ defined as follows. Let $f\in C^\infty_c(M)$ such that $f(p)=1$. Then $fD$ can be written as sum of monomials $\alpha \nabla_{Y_{1}}\cdots \nabla_{Y_l}$ as above. The symbol $\sigma^k(D,p,\pi)$ is equal to the sum where we replace each monomial by \begin{center}
$\alpha(p)\otimes d\pi([Y_1]_{a_1,p}) \cdots d\pi([Y_l]_{a_l,p})$ 
\end{center} and we only sum over monomials such that $\sum_{i=1}^l a_i=k$. The following theorem establishes that the principal symbol is well defined when $\pi \in \cT^*_p\cF$. It will be proved at the end of Section \ref{sec:groupoid}.
\begin{thm}\label{thm:sybmol_welldefined}
Let $D\in \Diff^k_{\cF}(M,E)$, $p\in M$. If $\pi$ is an irreducible representation of $\Gr(\cF)_p$ which corresponds to an element of $\cT^*_p\cF$ by Kirillov's orbit method, then the principal symbol $\sigma^k(D,p,\pi)$ is well defined, i.e., doesn't depend on the choice of $f$ nor on the connection on $E$ nor on the way $fD$ is written as a sum of monomials of the form $\alpha \nabla_{Y_{1}}\cdots \nabla_{Y_l}$.
\end{thm}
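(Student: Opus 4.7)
The plan is to prove Theorem \ref{thm:sybmol_welldefined} by lifting $fD$ to a right-invariant differential operator on the adiabatic groupoid $\mathrm{Ad}(\cF^\bullet)$ and exploiting the $C^*$-algebra $C^*\aF$ developed in Section \ref{sec:groupoid}. Given a presentation $fD = \sum_I \alpha_I \nabla_{Y_{i_1}} \cdots \nabla_{Y_{i_l}}$ with $Y_{i_j}\in\cF^{a_{i_j}}$ and $\sum_j a_{i_j} \leq k$, the natural lift is
\[
\widetilde{fD}^{\mathrm{pres}} := \sum_I t^{k - \sum_j a_{i_j}}\, \tilde{\alpha}_I\, \nabla_{t^{a_{i_1}}\widetilde{Y_{i_1}}}\cdots \nabla_{t^{a_{i_l}}\widetilde{Y_{i_l}}},
\]
since each $t^{a}\widetilde{Y}$ with $Y\in\cF^{a}$ lies in $\aF$ by construction of the adiabatic foliation. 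Over $t>0$, this operator represents $t^k\cdot(fD)$ via the identification $M\simeq M\times\{t\}$, so it is manifestly independent of the presentation there. At the fiber over $(p,0)$ the isomorphism $\aF_{(p,0)}\cong\gr(\cF)_p$ of Proposition \ref{prop:eta_iso} realises $\widetilde{fD}^{\mathrm{pres}}|_{(p,0)}$ as an element of $\End(E_p)\otimes U(\gr(\cF)_p)$ that precisely reproduces the symbol formula in Section \ref{sec:differential operators}.

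The crux is that agreement for $t>0$ does not automatically force agreement of the elements in $U(\gr(\cF)_p)$ obtained at $t=0$, because the osculating relations are strictly weaker than the commutation relations in $\mathrm{Diff}(M,E)$. This is why the symbol at a general $\pi\in\widehat{\Gr(\cF)}_p$ can genuinely depend on the presentation. However, using the construction of Section \ref{sec:groupoid}, each $\widetilde{fD}^{\mathrm{pres}}$ gives rise to an unbounded multiplier on $C^*\aF$, and two such multipliers that agree over the dense open subset $T^*M\times\R_+^\times\subseteq\aF^*$ must agree on its closure in the primitive ideal spectrum. By Definition \ref{dfn:top char set}, this closure meets $\gr(\cF)^*\times\{0\}$ in exactly $\cT^*\cF\times\{0\}$, so under Kirillov's correspondence we obtain $d\pi\bigl(\widetilde{fD}^{\mathrm{pres}_1}|_{(p,0)}\bigr) = d\pi\bigl(\widetilde{fD}^{\mathrm{pres}_2}|_{(p,0)}\bigr)$ for every $\pi\in\cT^*_p\cF$, yielding the required independence.

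Independence from the auxiliary data is more elementary. If $f$ and $g$ both equal $1$ at $p$, then $(f-g)D$ has, in any presentation, coefficients vanishing at $p$, so it contributes zero to the weight-$k$ symbol at $p$. Two connections on $E$ differ by an $\End(E)$-valued one-form $\omega$, and the Leibniz expansion shows that $\nabla'_{Y_{i_1}}\cdots\nabla'_{Y_{i_l}} - \nabla_{Y_{i_1}}\cdots\nabla_{Y_{i_l}}$ is a sum of terms each of strictly smaller Hörmander weight, which drop out of the weight-$k$ symbol. The main obstacle is therefore the density argument in the middle paragraph: one must first set up the interpretation of $\widetilde{fD}^{\mathrm{pres}}$ as an unbounded multiplier on $C^*\aF$ with well-defined values at each representation of its spectrum, and then verify that the closure of $T^*M\times\R_+^\times$ in this spectrum realises the set-theoretic closure of Definition \ref{dfn:top char set}. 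These are precisely the ingredients that Section \ref{sec:groupoid} is designed to supply.
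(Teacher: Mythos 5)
Your overall strategy is the same as the paper's: lift each presentation of $fD$ to an unbounded multiplier on $C^*\aF$, observe that all lifts agree (with $t^k fD$) on the fibers over $t>0$, and invoke a half-continuity statement pinning the $t=0$ fiber down to the Helffer--Nourrigat cone. In the paper this is packaged via the multipliers $\theta_i(X)$ and $T_n$ assembling into $\Theta(D)$, and the crux is equation \eqref{eqn:mainineqca}, which rests on Proposition \ref{prop:TopJacbson} (identifying the Fell topology on $\widehat{C^*\Gr(\cF)}$ with the quotient topology from $\gr(\cF)^*$) and on Theorem \ref{thm:ana=top} (identifying $\cT^*\cF$ with the support of the ideal $J_0$). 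Your phrase ``by Definition \ref{dfn:top char set}'' conflates the purely topological closure in $\aF^*$ with the closure in the primitive ideal spectrum; that identification is precisely Theorem \ref{thm:ana=top}, a nontrivial result imported from \cite{NewCalgebra}. You do acknowledge at the end that Section \ref{sec:groupoid} is needed to supply it, so this is an imprecision of wording more than a missing idea.

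There is, however, one genuine slip: your claim that $(f-g)D$ has, \emph{in any presentation}, coefficients vanishing at $p$ is false. The coefficients of the weight-$k$ monomials in a presentation are not intrinsic to the operator — this is exactly the ambiguity the theorem resolves, and Example \ref{exs:counter_examples}.b exhibits an operator lying in $\cF^2\subseteq\Diff^4_\cF$ whose degree-$4$ presentation $(x^2\partial_x)(\partial_x)-(x\partial_x)^2$ has nonzero coefficients at $0$. What is true is that $(f-g)D$ admits \emph{some} presentation with coefficients vanishing at $p$ (multiply a fixed presentation of $D$ by $(f-g)$), after which you must appeal to the presentation-independence just established — i.e.\ the ``hard'' $C^*$-algebra step — to conclude the symbol of $(f-g)D$ vanishes at every $\pi\in\cT^*_p\cF$. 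So $f$-independence is not in fact more elementary than the main step; it reduces to it. The paper avoids this circularity by first proving the theorem for compactly supported $D$ (where no $f$ appears) and then reducing the general case by localization, using that $\sigma^k(D,p,\pi)$ depends only on $D$ near $p$. Your Leibniz argument for connection-independence, on the other hand, is a self-contained and correct alternative to the paper's device of working with Lie derivatives on $\Omega^{1/2}$ and embedding a general $E$ into a trivial bundle.
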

We end this section by showing two subtleties with the definition of $\sigma^k(D,p,\pi)$ which are: 
\begin{enumerate}
\item In general, there can exist $D\in\Diff^k_\cF(M)$ such that $\sigma^k(D,p,\pi)=0$ for every $p\in M$ and $\pi\in \cT^*_p\cF$ yet $D\notin \Diff^{k-1}_\cF(M)$. This makes it more subtle to construct parametrices. We will ultimately resolve this issue by proving Theorem \ref{thm:vanish_symb}.
\item In general $\sigma^k(D,p,\pi)$ is not well defined for $\pi\notin \cT^*_p\cF $. Hence Theorem \ref{thm:sybmol_welldefined} is not trivial.
\end{enumerate}
These phenomena are observed in the following examples.
 \begin{exs}~\label{exs:counter_examples} 
 \begin{enumerate}
\item 
Let $M=\R,\, N=2,\, \cF^1=\langle x^2\partial_x\rangle$. Then $$\gr(\cF)_p=\begin{cases}\R[x^2\partial_x]_{1,p}\oplus 0,\quad \text{if}\; p\neq 0\\\R[x^2\partial_x]_{1,p}\oplus \R[\partial_x]_{2,p},\quad\text{if} \; p=0\end{cases}.$$ A straightforward computation shows that $ \cT^*_p\cF=\gr(\cF)_p^*$ for all $p\in M$. One has $x\partial_x\in\cF^2\subseteq \Diff^2_{\cF}(M)$ and $\sigma^2(x\partial_x,p,\pi)=0$ for every $p\in M,\pi\in  \cT^*_p\cF$, yet $x\partial_x\notin \Diff^1_{\cF}(M)$.
  \item  
  Consider Example \ref{exs:charac_set}.b. Let $D=(x^2\partial_x)(\partial_x)-(x\partial_x)^2\in \Diff^4_{\cF}(M)$. Let $\pi=(\xi_1,\xi_2,\xi_3)\in \gr(\cF)_0^*$. Then the way $D$ is written as $(x^2\partial_x)(\partial_x)-(x\partial_x)^2$ implies that $\sigma^4(D,0,\pi)=\xi_1\xi_3-\xi_2^2$. Yet since $(x^2\partial_x)(\partial_x)-(x\partial_x)^2=-x\partial_x\in \cF^2$, it follows that writing $D$ as $-x\partial_x$ gives $\sigma^4(D,0,\pi)=0$. Hence $\sigma^4(D,0,\pi)$ can only be well defined for $\pi$ such that $\xi_1\xi_3=\xi_2^2$ which is precisely the set $\cT^*_0\cF$.
  \end{enumerate}
\end{exs}
We remark that if $\pi$ is an irreducible unitary representation corresponding to an orbit $O\subseteq \mathfrak{g}^*$, then Kirillov's construction of $\pi$ is by letting $\pi$ act on $L^2\R^{\dim(O)/2}$. A result of Kirillov is that $C^\infty$ vector are precisely Schwartz functions on $\R^{\dim(O)/2}$, see \cite[Corollary 4.1.2]{BookNilpotentGroups}. This is convenient when trying to check Criterion a in Theorem \ref{mainthmintro2}.
\subsection{Examples of maximally hypoelliptic differential operators}\label{sec:Hormander's Sum of Squares Theorem}
Let $M$ be a smooth manifold and $X_1,\cdots,X_m,X_{m+1}$ be \text{real} vector fields which satisfy Hörmander's Lie bracket generating condition of rank $N$. Let $v_1,\cdots,v_{m+1}\in \N$ be weights for $\mathbb{N}$. We suppose that $v_{i}$ is even for all $i\in \{1,\cdots,m\}$ and $v_{m+1}$ is odd. As in Example \ref{ex:filtered}, we define a filtered foliation $\cF^\bullet$ of depth $N\max(v_1,\cdots,v_{m+1})$ on $M$ by declaring $X_{i}$ to be of order $v_{i}$. The Lie algebra $\gr(\cF)_p$ is then a graded nilpotent Lie algebra generated by $[X_i]_p\in \gr(\cF)_p$. 
\begin{prop}
 \label{prop:Hormander_symbol}
Let $G$ be connected simply connected nilpotent Lie group with lie algebra $\lie{g}$ and let $x_i\in \mathfrak{g}$ be a generating family for $\mathfrak{g}$. Then for any non-trivial irreducible unitary representation $\pi$ of $G$, $\pi\left(\sum_{i=1}^m (-1)^\frac{v_i}{2}x_i^{v_i}+x_{m+1}^{v_{m+1}}\right)$ is injective.
 \end{prop}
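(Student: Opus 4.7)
The strategy is to exploit the unitarity of $\pi$ to convert the bulk of the operator into a non-negative self-adjoint operator, then bootstrap to the full Lie algebra using that the $x_i$ generate $\mathfrak{g}$. Writing $A_i := d\pi(x_i)$, since $\pi$ is unitary and $x_i$ is real, each one-parameter group $\pi(\exp(tx_i))$ is unitary, so by Stone's theorem $A_i$ acts as a (formally) skew-adjoint operator on $C^\infty(\pi)$: $A_i^* = -A_i$. Because $v_i$ is even for $i\leq m$, one has
\[
(-1)^{v_i/2} A_i^{v_i} \;=\; (-A_i^2)^{v_i/2} \;=\; (A_i^* A_i)^{v_i/2},
\]
which is non-negative self-adjoint; and since $v_{m+1}$ is odd, $A_{m+1}^{v_{m+1}}$ is itself skew-adjoint. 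Denote
\[
T \;:=\; \sum_{i=1}^m (A_i^* A_i)^{v_i/2} + A_{m+1}^{v_{m+1}},
\]
which is thus a sum of non-negative self-adjoint terms with a single skew-adjoint term appended.

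Given $v\in C^\infty(\pi)$ with $Tv=0$, I would pair with $v$ and take the real part: $\langle A_{m+1}^{v_{m+1}}v,v\rangle$ is purely imaginary (odd power of a skew-adjoint operator) and drops out, while the remaining non-negative summands must each vanish individually, forcing $\langle (A_i^*A_i)^{v_i/2}v,v\rangle = 0$ for every $i\leq m$. Applying the spectral theorem to the non-negative self-adjoint closure of $A_i^*A_i$ --- legitimate on the smooth vector $v$, which lies in every $\dom((A_i^*A_i)^k)$ --- forces the spectral measure of $v$ to be concentrated at $0$, so $A_iv=0$. Substituting back into $Tv=0$ leaves $A_{m+1}^{v_{m+1}}v=0$; since $iA_{m+1}$ is essentially self-adjoint on $C^\infty(\pi)$, an odd power of a self-adjoint operator has the same kernel as the operator itself, so $A_{m+1}v=0$ as well.

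At this point $d\pi(x_i)v=0$ for every generator, and I would propagate this to all of $\mathfrak{g}$ via $d\pi([X,Y])v = d\pi(X)d\pi(Y)v - d\pi(Y)d\pi(X)v$ on smooth vectors, by induction on bracket length (using that the $x_i$ generate $\mathfrak{g}$). Integrating $\pi(\exp(tX)) = e^{-td\pi(X)}$ and using that $\exp\colon\mathfrak{g}\to G$ is surjective for a connected simply connected nilpotent $G$, every $g\in G$ fixes $v$. The space of $G$-fixed vectors is a closed $\pi$-invariant subspace, hence by irreducibility is either $0$ or all of $L^2\pi$; the latter contradicts the non-triviality of $\pi$, so $v=0$. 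The one delicate step is the spectral-theoretic passage from $\langle (A_i^*A_i)^{v_i/2}v,v\rangle=0$ to $A_iv=0$, and from $A_{m+1}^{v_{m+1}}v=0$ to $A_{m+1}v=0$; both hinge on essential (skew-)self-adjointness of $d\pi(X)$ on $C^\infty(\pi)$, a standard consequence of Stone's theorem and particularly well-behaved for nilpotent $G$. Everything else is elementary algebra.
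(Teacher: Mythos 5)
Your proposal is correct and follows essentially the same route as the paper's (much terser) proof: taking the real part of $\langle \cdot\, v,v\rangle$ kills the skew-adjoint term $d\pi(x_{m+1})^{v_{m+1}}$, positivity of each $(-1)^{v_i/2}d\pi(x_i)^{v_i}$ forces $d\pi(x_i)v=0$ for every generator, and then $v\in\ker d\pi(\mathfrak{g})$ vanishes by irreducibility and non-triviality. The only difference is that you spell out the spectral-theoretic and essential-(skew-)self-adjointness details that the paper leaves implicit.
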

 \begin{proof}
 Let $w$ be a smooth vector in the kernel of $\pi\left(\sum_{i=1}^m (-1)^\frac{v_i}{2}x_i^{v_i}+x_{m+1}^{v_{m+1}}\right)$. Since the operator $\pi\left(\sum_{i=1}^m (-1)^\frac{v_i}{2}x_i^{v_i}\right)$ is positive and $\pi(x_{m+1}^{v_{m+1}})$ is skew-adjoint, it follows that $w$ is in the kernel of $\pi(x_i)$ for each $i$. Hence $w\in \ker(\pi(\mathfrak{g}))$. Since $\pi$ is non-trivial irreducible, we deduce that $w=0$.
\end{proof}
It follows that the hypothesis of Theorem \ref{mainthmintro2}.a is trivially satisfied for the differential operator $\sum_{i=1}^m (-1)^\frac{v_i}{2} X_i^{a_i}+ X_{m+1}^{v_{m+1}}$. Hence a corollary of Theorem \ref{mainthmintro2}.
\begin{cor}\label{cor:jqsfdjioqm}
The operator $\sum_{i=1}^k (-1)^\frac{v_i}{2} X_i^{v_i}+ X_{m+1}^{v_{m+1}}$ is maximally hypoelliptic.
\end{cor}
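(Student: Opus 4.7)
The plan is to apply Theorem \ref{mainthmintro2}: it suffices to verify its hypothesis (a), namely that $\tilde{\sigma}(D, p, \pi)$ is injective on $C^\infty(\pi)$ for every $p \in M$ and every $\pi \in \cT^*_p\cF \setminus \{0\}$. The algebraic heart of this verification is already packaged in Proposition \ref{prop:Hormander_symbol}, so the real work reduces to matching the principal symbol of $D$ to the operator appearing in that proposition and checking its hypotheses.

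Fix $p \in M$ and $\pi \in \cT^*_p\cF \setminus \{0\}$. Since $X_i \in \cF^{v_i}$, the monomial $X_i^{v_i}$ lies in $\Diff^{v_i^2}_\cF(M)$ and its defining expression is already weighted-homogeneous of degree $v_i^2$; likewise for $X_{m+1}^{v_{m+1}}$. Applying the definition of the principal symbol from Section \ref{sec:differential operators} --- which is unambiguous on $\cT^*_p\cF$ by Theorem \ref{thm:sybmol_welldefined} --- to this presentation of $D$, one obtains
\[
  \tilde{\sigma}(D, p, \pi) \;=\; \sum_{i=1}^{m} (-1)^{v_i/2}\, d\pi\bigl([X_i]_{v_i,p}\bigr)^{v_i} \;+\; d\pi\bigl([X_{m+1}]_{v_{m+1},p}\bigr)^{v_{m+1}}.
\]

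The remaining hypotheses of Proposition \ref{prop:Hormander_symbol} are then immediate. The classes $[X_i]_{v_i, p}$ generate the nilpotent Lie algebra $\gr(\cF)_p$: by the construction in Example \ref{ex:filtered}, $\cF^\bullet$ is obtained from iterated Lie brackets of $X_1, \ldots, X_{m+1}$ with weights $v_1, \ldots, v_{m+1}$, so passing to the associated graded, every graded piece is reached by iterated brackets of the $[X_i]_{v_i,p}$. Moreover, $\pi$ is a non-trivial irreducible unitary representation of $\Gr(\cF)_p$, because an element of $\cT^*_p\cF \setminus \{0\}$ corresponds under Kirillov's orbit method to a non-zero coadjoint orbit.

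Proposition \ref{prop:Hormander_symbol} therefore gives the injectivity of $\tilde{\sigma}(D, p, \pi)$ on $C^\infty(\pi)$, and Theorem \ref{mainthmintro2} yields that $D$ is maximally hypoelliptic. There is essentially no obstacle to overcome; the role of the corollary is precisely to exhibit Proposition \ref{prop:Hormander_symbol} as a ready-made injectivity criterion that feeds directly into hypothesis (a) of the main theorem.
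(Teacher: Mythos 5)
Your proof follows the paper's one-line approach --- feed Proposition \ref{prop:Hormander_symbol} into Theorem \ref{mainthmintro2} --- and you correctly supply the supporting checks (well-definedness of the symbol on the Helffer-Nourrigat cone via Theorem \ref{thm:sybmol_welldefined}, the generating property of the $[X_i]_{v_i,p}$, and non-triviality of $\pi$). But your displayed formula for $\tilde{\sigma}(D,p,\pi)$ glosses over a real issue: the principal symbol $\sigma^k(\cdot,p,\pi)$ retains \emph{only} the monomials whose Hörmander order equals the maximal value $k$. You correctly compute that $X_i^{v_i}$ has order $v_i\cdot v_i = v_i^2$ (since $X_i\in\cF^{v_i}$). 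Since the setup forces $v_i$ even for $i\le m$ and $v_{m+1}$ odd, one has $v_i^2\equiv 0\pmod 4$ for $i\le m$ while $v_{m+1}^2\equiv 1\pmod 4$, so $v_{m+1}^2$ is never equal to any $v_i^2$; and within $i\le m$ the degrees coincide only if the weights do. Hence at least one summand necessarily drops out of the principal symbol, and the surviving $[X_j]_{v_j,p}$ need not generate $\gr(\cF)_p$, in which case the positivity argument in Proposition \ref{prop:Hormander_symbol} no longer runs to completion.

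The gap traces back to a misprint in the Corollary rather than to your strategy: the sentence immediately preceding it writes the operator as $\sum (-1)^{v_i/2}X_i^{a_i}+X_{m+1}^{v_{m+1}}$ with an undefined exponent $a_i$, and Example \ref{ex:intro} uses exponents $2n(k+n)$ and $2(k+n)$ on vector fields of weights $1$ and $n$ respectively, chosen precisely so that each monomial has the common order $2n(k+n)$. Under the intended reading --- exponents $a_i$ with $a_iv_i$ constant, $a_i$ even for $i\le m$ and $a_{m+1}$ odd --- every term survives into the principal symbol, and your chain of deductions is exactly the paper's. To make the proof watertight you should state this degree-matching condition explicitly and record the common Hörmander order $k=a_iv_i$; as written, the assertion that $\tilde{\sigma}(D,p,\pi)$ equals the full sum is unjustified.
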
 
Notice that we didn't need to calculate the Helffer-Nourrigat cone because injectivity of the symbol holds for every non-trivial irreducible unitary representation. But in general the Helffer-Nourrigat cone can be a proper subset of $\gr(\cF)^*$, as Example \ref{exs:charac_set}.c shows.
\begin{ex}\label{ex:intro_det}Consider Example \ref{ex:intro} from the introduction. Let $\cF^\bullet$ be the associated filtered foliation of depth $n+k$. One can check that \begin{align*}
\gr(\cF)_{(a,b)}=\begin{cases}
  \R[\partial_x]_{1,(a,b)}\oplus \R[x^k\partial_y]_{n,(a,b)}\oplus 0,& \text{if}\;a\neq 0,\\
  \R[\partial_x]_{1,(a,b)}\oplus \R[x^k\partial_y]_{n,(a,b)}\oplus\cdots \oplus \R[\partial_y]_{n+k,(a,b)},&\text{if}\;a= 0,\end{cases}
\end{align*}
The only non trivial Lie bracket relation on $\gr(\cF)_{(a,b)}$ are $$\Big[[\partial_x]_{1,(a,b)},[x^i\partial_y]_{n-i+k,(a,b)}\Big]=i[x^{i-1}\partial_y]_{n-i+k+1,(a,b)}$$ for all $i\geq 1$. A straightforward computation shows that  \begin{align*}
\cT_{(a,b)}^*\cF=\begin{cases}
  \gr(\cF)_{(a,b)}^*& \text{if}\;a\neq 0,\\
  \{(\eta,\xi_k,\cdots,\xi_0)\in \R^{1+k}:\exists \alpha\in \R,\beta\in \R, \xi_i=\alpha^i\beta\forall i\}\sqcup\{(\eta,\xi_k,0,\cdots,0)\in \R^2\times \{0\}\},&\text{if}\;a= 0,\end{cases}
\end{align*}
where we follow the convention $0^0=1$. One easily shows that the coadjoint orbits in $\cT_{(0,b)}^*\cF$ are \begin{itemize}
\item $\{(\eta,\xi_k,0,\cdots,0)\}$. This corresponds to the $1$-dimensional representation $\pi$, with $$d\pi([\partial_x]_{1,(0,b)})=\sqrt{-1}\eta,\quad d\pi([x^k\partial_y]_{n,(0,b)})=\sqrt{-1}\xi_k.$$
\item for each $\eta\in \R,\beta\in \R^\times$, we have the orbit $\{(\eta,\xi_k,\cdots,\xi_0)\in \R^{1+k}: \exists\alpha\in \R,\xi_i=\alpha^i\beta\forall i\}$. It corresponds to the representation $\pi$ on $L^2\R$, with 
$$d\pi([\partial_x]_{1,(0,b)})=\partial_x,\quad d\pi([x^i\partial_y]_{n-i+k,(0,b)})=\sqrt{-1}\beta x^i.$$
\end{itemize}
We can now compute the principal symbol of $$D=(-1)^{n(k+n)}\partial_x^{2n(k+n)}+(-1)^{k+n}(x^k\partial_y)^{2(k+n)}+\lambda\partial_y^{2n}+D',$$where $D'$ is of Hörmander's order $<2n(k+n)$. We have \begin{itemize}
\item If $a\neq 0$, then $\sigma^{2n(k+n)}(D,(a,b),(\eta,\xi))=\eta^{2n(k+n)}+\xi^{2(k+n)}$ which obviously vanishes only at $(\eta,\xi)=(0,0)$.
\item If $a=0$, $\pi$ corresponds to $\{(\eta,\xi_k,0,\cdots,0)\}$, then $\sigma^{2n(k+n)}(D,(0,b),\pi)=\eta^{2n(k+n)}+\xi_k^{2(k+n)}$ which again vanishes only when $\pi$ is trivial.
\item If $a=0$, $\pi$ corresponds to $\{(\eta,\xi_k,\cdots,\xi_0)\in \R^{1+k}: \exists\alpha\in \R,\xi_i=\alpha^i\beta\forall i\}$, then $$\sigma^{2n(k+n)}(D,(0,b),\pi)=(-1)^{n(k+n)}\partial_x^{2n(k+n)}+x^{2k(k+n)}\beta^{2(k+n)}+\lambda (-1)^n\beta^{2n}.$$ By Homogeneity of the principal symbol with respect to the Debord-Skandalis action, it is enough to check injectivity for $\beta=\pm 1$. Since if $\lambda=0$, $D$ is maximally hypoelliptic by Corollary \ref{cor:jqsfdjioqm}, it follows that $(-1)^{n(k+n)}\partial_x^{2n(k+n)}+x^{2k(k+n)}$ has compact resolvent and hence has a spectrum which is discrete converging to $+\infty$, see Remark \ref{rem:diag_Schro}. We thus deduce from Theorem \ref{mainthmintro2} that $D$ is maximally hypoelliptic if and only if Criterion \eqref{eqn:crit} is satisfied.
\end{itemize}
\end{ex}
\section{The \texorpdfstring{$C^*$}{Cstar}-algebra \texorpdfstring{$C^*\aF$}{adiabatic foliation}}
	\label{sec:groupoid}
 The $C^*$-algebra of a singular foliation was introduced by the first author and Skandalis \cite{AS1}. In this section we describe the $C^*$-algebra of $\aF$. We mostly follow the construction of the first author and Skandalis, slightly simplified due to the special nature of $\aF$. This section is organized as follows \begin{itemize}
\item In Section \ref{sec: bisub adiab}, we introduce graded basis. These are the local charts on which we will construct oscillatory integrals in Section \ref{sec:Pseudodifferential_Operators}, in order to define pseudodifferential operators. Graded basis are special cases of bisubmersions (cf. \cite{AS1}).
\item In Section \ref{sec:densities}, we recall all the necessary properties of densities that will be needed throughout the article.
\item In Section \ref{subsec:c0R}, we define the $C^*$-algebra $C^*\aF$.
\item In Section \ref{sec:connection_Helffer}, we show the connection between $C^*\aF$ and the Helffer-Nourrigat cone.
\item In Section \ref{sec:diff_multi}, we prove Theorem \ref{thm:sybmol_welldefined}.
 \end{itemize}
 \subsection{Graded basis}\label{sec: bisub adiab}
\begin{definition}\label{dfn:graded basis}
A \emph{graded basis} is a $4$-tuple $(V,\natural,\mathbb{U},U)$ where  
\begin{enumerate}
\item $V=\oplus_{i=1}^NV^i$ is a graded finite dimensional real vector space equipped with the \emph{graded dilations} $\alpha_\lambda(\sum_{i=1}^N v_i)=\sum_{i=1}^N\lambda^iv_i$ for $v_i\in V^i$, $\lambda\in \R_+$. 
\item $\natural:V\to \cX_c(M)$ is a linear map,
\item $U\subseteq M$ is an open subset,
\item $\mathbb{U}\subseteq V\times U\times \R_+$ is an $\R^\times_+$-invariant neighbourhood of $\{0\}\times U\times \{0\}$, where $V\times U\times \R_+$ is equipped with the $\R^\times_+$-action 
\begin{equation}\label{eqn:action bisubmersion}
 \alpha_\lambda (X,x,t)=(\alpha_\lambda(X),x,\lambda^{-1}t),
\end{equation}
\end{enumerate}
such that
 \begin{enumerate}[label=(\roman*)]
\item For every $k\in \{1,\cdots,N\}$, $\natural(V^k)\subseteq \cF^k$,
\item For every $k \in \{1,\cdots,N\}$, $\natural(\bigoplus_{i=1}^k V^i)$ generates $\cF^k$	 on $U$,
\item The map \begin{equation}\label{eqn:new_evaluation_map}
 \ev:\mathbb{U} \to M\times M\times \R_+,\quad (X,x,t)\mapsto (\exp(\natural(\alpha_t(X)))\cdot x,x,t)
\end{equation} is a submersion at every point in $\mathbb{U}\cap (V\times U\times \R_+^\times)$.
\end{enumerate}
We say that $(V,\natural,\mathbb{U},U)$ is a \textit{minimal graded basis} at $p\in U$ if in addition the following is satisfied: 
\begin{enumerate}[label=(\roman*)]
\addtocounter{enumi}{3}
\item $\dim(V^k)=\dim(\cF^k_p/\cF^{k-1}_p)$ for all $k\in \{1,\cdots,N\}$. Equivalently $\dim(V)=\dim(\gr(\cF)_p)$.
\end{enumerate}
\end{definition}
We remark that the $\R^\times_+$-invariance of $\mathbb{U}$ implies that \begin{equation}\label{eqn:U0}
 V\times U\times \{0\}\cup \{0\}\times U\times \R_+\subseteq \mathbb{U}.
\end{equation} To simplify the notation, we use $\ev$ to denote the map \eqref{eqn:new_evaluation_map} when using different graded basis.
\begin{prop}
\label{prop:graded basis exists}
Let $p\in M$. A minimal graded basis at $p$ exists.
\end{prop}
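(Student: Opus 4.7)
The plan is to build $V$ and $\natural$ by lifting a graded basis of $\gr(\cF)_p$ to a family of vector fields, verify the algebraic axioms (i), (ii), (iv) by a local induction via Proposition \ref{lem:generating_family}, and finally construct $\mathbb{U}$ and check axiom (iii) using the $\R_+^\times$-equivariance of $\ev$.

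For the algebraic part, I would set $n_k := \dim(\cF^k_p/\cF^{k-1}_p)$ and choose $Y^k_1,\ldots,Y^k_{n_k}\in\cF^k$ whose classes in $\cF^k_p/\cF^{k-1}_p$ form a basis; then take $V^k := \R^{n_k}$, $V := \bigoplus_{k=1}^N V^k$, and define $\natural$ on the canonical basis of each $V^k$ by $\natural(e^k_j) := Y^k_j$. Axioms (i) and (iv) are immediate from the choice of $Y^k_j$. For (ii) I would induct on $k$: assuming $\{Y^i_j : i<k\}$ generates $\cF^{k-1}$ near $p$, the classes $[Y^i_j]_p$ for $i<k$ span the image of $\cF^{k-1}_p$ in $\cF^k_p$, while the $[Y^k_j]_{k,p}$ project to a basis of $\cF^k_p/\cF^{k-1}_p$; together these span $\cF^k_p$, and Proposition \ref{lem:generating_family} promotes this spanning property at the point $p$ to a local generation statement on some neighborhood. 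Intersecting the finitely many resulting neighborhoods yields the desired open set $U$.

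The main obstacle, and the heart of the construction, is producing an $\R_+^\times$-invariant neighborhood $\mathbb{U}$ on which $\ev$ is a submersion away from $t=0$. The crucial observation I would exploit is the intertwining $\ev \circ \alpha_\lambda = \beta_\lambda \circ \ev$, where $\beta_\lambda(y,x,t) := (y,x,\lambda^{-1}t)$, coming from the identity $\alpha_{\lambda^{-1}t}(\alpha_\lambda X) = \alpha_t X$. Thus submersion at $(X,x,t)$ with $t>0$ is equivalent to submersion at $(\alpha_t X, x, 1)$, reducing the problem to checking the condition at points of the form $(Y, x, 1)$ for $Y$ near $0$ and $x$ near $p$. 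At the origin in the $V$-direction, the derivative of $\ev_1$ is $X' \mapsto \natural(X')(x)$; since $\natural(V)$ generates $\cF^N = \cX_c(M)$ on $U$, its image equals $T_xM$, so together with the identity projections of $\ev_2, \ev_3$ onto the remaining factors, $\ev$ is a submersion at $(0,x,1)$ for every $x \in U$.

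By openness of the submersion condition, for each $x_0\in U$ I obtain open neighborhoods $V_0\ni 0$ in $V$ and $U_0\ni x_0$ in $U$ such that $\ev$ is a submersion on $V_0\times U_0\times\{1\}$. I would then set
\[
 \mathbb{U}_{x_0} := \{(X,x,t)\in V\times U_0\times \R_+ : \alpha_t X \in V_0\}.
\]
The continuity of $(X,t)\mapsto \alpha_t X$ shows $\mathbb{U}_{x_0}$ is open; the identity $\alpha_{\lambda^{-1}t}(\alpha_\lambda X) = \alpha_t X$ makes it $\R_+^\times$-invariant; it contains $(0,x_0,0)$ since $\alpha_0\cdot 0=0$; and by the equivariance, $\ev$ is a submersion at every point of $\mathbb{U}_{x_0}\cap(V\times U\times\R_+^\times)$. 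Taking $\mathbb{U} := \bigcup_{x_0\in U}\mathbb{U}_{x_0}$ produces the required graded basis.
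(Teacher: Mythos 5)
Your proof is correct and follows essentially the same route as the paper: pick generators whose classes give bases of the quotients $\cF^k_p/\cF^{k-1}_p$, use Proposition \ref{lem:generating_family} to get local generation, and define $\mathbb{U}$ as the $\R_+^\times$-saturation (via $(X,x,t)\mapsto(\alpha_t(X),x)$) of a neighbourhood of $\{0\}\times U$ on which the time-one evaluation is a submersion. The paper's version is just a more condensed statement of the same construction.
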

\begin{proof}
By Proposition \ref{lem:generating_family}, we can find $V,\natural,U$ which satisfy (a), (b), (c), (i) and (ii) in Definition \ref{dfn:graded basis}, as well as (iv) at the point $p\in U$. Let $D\subseteq V\times U$ an open neighbourhood of $\{0\}\times U$ such that the map $$V\times U\to M\times U,\quad (\exp(\natural(X))\cdot x,x) $$  is a submersion at every point of $D$. We then let $\mathbb{U}=\{(X,x,t)\in V\times U\times\R_+:(\alpha_t(X),x)\in D\}$.
\end{proof}
\begin{definition}\label{dfn:naturalx map} Let $(V,\natural,\mathbb{U},U)$ be a graded basis. For any $p\in U$, we define the linear map $$\natural_p:V\to \gr(\cF)_p,\quad \natural_p(X)=[\natural(X)]_p\in \cF^i_p/\cF^{i-1}_p,\quad X\in V^i.$$ It is surjective by (ii) of Definition \ref{dfn:graded basis}.
\end{definition}
If $(V,\natural,\mathbb{U},U)$ is a graded basis, then we write
 $$\mathbb{U}_{>0}:= \mathbb{U}\cap (V\times U\times \R_+^\times),\quad \mathbb{U}_{t}:=\mathbb{U}\cap (V\times U\times \{t\})\quad  t\in \R_+^\times.$$ We can also define $\mathbb{U}_0:=\mathbb{U}\cap (V\times U\times \{0\})$ but this is redundant because of \eqref{eqn:U0}. We also define
 \begin{equation}\label{eqn:ev maps tangent group bisubmersions}
 \begin{aligned}
  &\ev_{t}:\mathbb{U}_{t}\to M\times M, &\ev_{t}(X,x,t)=(\exp(\natural(\alpha_t(X)))\cdot x,x),\quad t\in \R_+^\times\\&\ev_{p,0}:V\times \{p\}\times \{0\}\subseteq \mathbb{U}\to \Gr(\cF)_p, &\ev_{p,0}(X,p,0)=\natural_p(X),\quad p\in U.
  \end{aligned}
\end{equation}
The following theorem will be used in Section \ref{sec:Pseudodifferential_Operators} to show the independence of the definition of pseudodifferential operators on the choice of a graded basis. Its proof is given in Appendix \ref{appendixA}. In the appendix a slightly more general version is proved. The one given here is sufficient for our purposes.
\begin{thm}
\label{second_main_tech_thm}
Let $(V,\natural,\mathbb{U},U), (V',\natural',\mathbb{U}',U')$ be two graded bases with $U=U'$ and $p\in U$ and suppose that $(V',\natural',\mathbb{U}',U)$ is minimal at $p$. There exists a smooth map
$$\phi:\dom(\phi)\subseteq\mathbb{U}\to \mathbb{U}'$$ 
defined on an $\R^\times_+$-invariant neighbourhood of $\{0\}\times \{p\}\times \{0\}$ such that \begin{enumerate}
\item The map $\phi$ is an $\R^\times_+$-equivariant submersion.
\item The following diagram commutes $$\begin{tikzcd}\dom(\phi)\arrow[d,"\ev_{|\dom(\phi)}"']\arrow[r,"\phi"]&\mathbb{U}'\arrow[dl,"\ev"]\\M\times M\times \R_+
\end{tikzcd}$$
\item For every $x\in U$ such that $(0,x,0)\in \dom(\phi)$, the following diagram commutes $$\begin{tikzcd}[column sep=huge]V\times \{x\}\times \{0\}\arrow[d,"\ev_{{x,0}}"']\arrow[r,"\phi_{|V\times \{x\}\times \{0\}}"]&V'\times \{x\}\times \{0\}\arrow[dl,"\ev_{x,0}"]\\\Gr(\cF)_x
\end{tikzcd}$$
\end{enumerate}
\end{thm}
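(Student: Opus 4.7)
The plan is to build $\phi$ in two stages. First, construct it on the open locus $\mathbb{U}_{>0}$ by an implicit function argument, forced to be $\R_+^\times$-equivariant; then extend smoothly across $t=0$ using a weighted Taylor expansion. The seed of the construction is a linear-algebraic observation: since $(V',\natural',\mathbb{U}',U)$ is minimal at $p$, the map $\natural'_p\colon V'\to\gr(\cF)_p$ is surjective (condition~(ii) of Definition~\ref{dfn:graded basis}) and has the correct dimension (condition~(iv)), so it is a graded linear isomorphism. Hence there is a unique graded linear map $\phi_p\colon V\to V'$ with $\natural'_p\circ\phi_p=\natural_p$; this will be the restriction of $\phi$ to $V\times\{p\}\times\{0\}$, and it is automatically $\alpha_\lambda$-equivariant.

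For the construction on $\mathbb{U}_{>0}$, I would first build $\phi$ on the slice $\{t=1\}$ by solving the equation
$$\exp\bigl(\natural'(Z(X,x))\bigr)\cdot x = \exp\bigl(\natural(X)\bigr)\cdot x$$
near $(X,x)=(0,p)$. The implicit function theorem gives a local smooth solution $Z$ with $Z(0,p)=0$, since $\ev'$ is a submersion on $\mathbb{U}'_{>0}$ by hypothesis. I would then propagate to all of $\mathbb{U}_{>0}$ by the formula $\phi(X,x,t):=(\alpha_{1/t}Z(\alpha_t X,x),\,x,\,t)$, which is $\R_+^\times$-equivariant by construction and automatically satisfies properties~(1) and~(2) on $\mathbb{U}_{>0}$ (a direct check against the definition of $\ev$).

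The main obstacle is the smooth extension to $t=0$: the prefactor $\alpha_{1/t}$ blows up, so only the graded-homogeneous terms in the Taylor expansion of $Z$ in $X$ survive the combined scaling. To control this, I would construct $Z(X,x)$ order by order in graded degree, expanding both sides of the defining equation as a formal Taylor series in $X$ with smooth $x$-dependent coefficients, and solving at each weight using that $\natural'_x$ is surjective with a smooth right inverse near $p$ (where by minimality and the upper semi-continuity of $\dim\gr(\cF)_x$ from Proposition~\ref{prop:semi-cont D i over D i-1}, we remain in a regime where $\natural'_p$ is an isomorphism). Enforcing that the component of $Z_{[k]}$ in $V'^j$ vanishes whenever $j>k$ produces a $Z$ whose weighted Taylor coefficients have the exact homogeneity required for $\alpha_{1/t}Z(\alpha_tX,x)$ to extend smoothly to $t=0$. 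The resulting limit is a graded linear map $\phi_x\colon V\to V'$ lifting $\natural_x$ through $\natural'_x$, which is property~(3). Submersivity in property~(1) follows from the implicit function construction on $\mathbb{U}_{>0}$, and at $t=0$ from the surjectivity of $\phi_x$ (inherited from surjectivity of $\natural_x$ and the local isomorphism $\natural'_x$). The order-by-order construction and the verification of smoothness across $t=0$ are the heart of the proof and are the technical content to be developed in Appendix~\ref{appendixA}.
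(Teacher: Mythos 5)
Your overall architecture matches the paper's: solve the equation $\exp(\natural'(Z))\cdot x=\exp(\natural(X))\cdot x$ by the implicit function theorem on the $t>0$ locus, propagate by $\R^\times_+$-equivariance, and identify the $t=0$ limit as the graded linear lift of $\natural_x$ through $\natural'_x$ (which is indeed what property (3) asserts, and which gives submersivity at $(0,p,0)$ via minimality). The problem is that you explicitly defer the one step that is genuinely hard, and that step does not close by ``expanding both sides as a formal Taylor series in $X$ and solving at each weight.'' To show that the weighted-degree-$k$ obstruction at each stage can be absorbed by a correction $Z_{[k]}$ whose components lie only in $V'^{\le k}$, you must know that the composition of the time-one flows $\exp(\natural(\alpha_t X))$ behaves, to order $o(t^N)$, like the flow of the Baker--Campbell--Hausdorff series of the corresponding vector fields; only then does the filtration property $[\cF^i,\cF^j]\subseteq\cF^{i+j}$ guarantee that the degree-$k$ error is expressible as $\sum_j f_j\,\natural'(v_j)$ with $v_j\in V'^{\le k}$ and $f_j$ smooth. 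This BCH-for-flows statement (Theorem~\ref{thm:BCH}, due to H\"ormander/Nagel--Stein--Wainger) is the analytic heart of the paper's Appendix~\ref{appendixA}, and the paper's Lemma~\ref{lem:prem second_main_tech_thm} carries out the resulting induction inside a free nilpotent Lie algebra containing $V\oplus V'$. Without naming and using this ingredient, your claim that the order-by-order scheme produces a $Z$ with the required homogeneity is an assertion, not a proof.

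Two smaller points. First, your parenthetical that near $p$ ``we remain in a regime where $\natural'_p$ is an isomorphism'' is wrong: upper semi-continuity only gives $\dim\gr(\cF)_x\le\dim\gr(\cF)_p$ for nearby $x$, and $\natural'_x$ is in general only surjective, with fibers $\gr(\cF)_x$ of varying dimension; the smooth $x$-dependence you need comes from the module-generation property (condition~(ii) of Definition~\ref{dfn:graded basis}), not from a pointwise inverse of $\natural'_x$. Second, when $\dim V'>\dim M$ the IFT solution $Z$ is not unique, and imposing your vanishing constraints on the exact solution risks over- or under-determining it; the paper avoids this by separating the two roles, taking a polynomial approximant $\psi_x(\alpha_t X)$ that solves the equation to order $o(t^N)$ and then anchoring the IFT correction at that seed (the map $k$ of Lemma~\ref{lem:k_appendix}), so the correction is $o(t^N)$ and dies under $\alpha_{t^{-1}}$. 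I would restructure your argument along those lines.
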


\begin{definition}\label{dfn:graded Lie basis}A graded Lie basis is a graded basis $(\mathfrak{g},\natural,\mathbb{U},U)$ such that $\mathfrak{g}$ is equipped with a graded Lie bracket such that if $X\in\mathfrak{g}^i$, $Y\in \mathfrak{g}^j$ and $i+j\leq N$, then \begin{equation}\label{eqn:bracket graded Lie}
 \natural([X,Y])=[\natural(X),\natural(Y)].
\end{equation}
\end{definition}
\begin{remark}
\label{rmk:natural_p}
It follows from the definition of graded Lie basis that the linear maps $\natural_p:\lie{g} \to \gr(\cF)_p$  of Definition \ref{dfn:naturalx map} are Lie algebra homomorphisms, and so induce group homomorphisms $\natural_p:\mathfrak{g}\to\Gr(\cF)_p$, where both spaces are equipped with product by the BCH formula \eqref{eqn:BCHintro}.
\end{remark}

\begin{prop}\label{prop:graded Lie basis exist}Let $(V,\natural,\mathbb{U},U)$ be a graded basis. Then there exists a graded Lie basis $(\mathfrak{g},\natural,\mathbb{U'},U')$ with $U'=U$.
\end{prop}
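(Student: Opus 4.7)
The plan is to take $\mathfrak{g}$ to be the free step-$N$ graded nilpotent Lie algebra generated by $V$, and to extend $\natural$ via the universal property of the free Lie algebra. Concretely, let $L(V)$ be the free Lie algebra on $V$, graded by declaring the degree of an iterated bracket $[X_{i_1},\ldots,X_{i_l}]$ to be $\sum_j\deg(X_{i_j})$, and set $\mathfrak{g}:=\bigoplus_{k=1}^N L(V)_k$ with the induced bracket, so that all brackets of total degree $>N$ vanish. Then $\mathfrak{g}$ is a finite-dimensional graded nilpotent Lie algebra and $V\hookrightarrow\mathfrak{g}$ is a graded embedding with $V^k\subseteq\mathfrak{g}^k$. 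By the universal property of $L(V)$, the map $\natural:V\to\cX_c(M)$ extends uniquely to a Lie algebra homomorphism $L(V)\to\cX_c(M)$; restricting to degrees $\leq N$ yields the required $\natural:\mathfrak{g}\to\cX_c(M)$, and the graded Lie compatibility $\natural([X,Y])=[\natural(X),\natural(Y)]$ for $\deg(X)+\deg(Y)\leq N$ is then automatic, because in this range the bracket in $\mathfrak{g}$ agrees with the bracket in $L(V)$.

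Next I verify the graded basis axioms. Axiom (i), $\natural(\mathfrak{g}^k)\subseteq\cF^k$, follows by induction on bracket length using $[\cF^i,\cF^j]\subseteq\cF^{i+j}$. Axiom (ii) is immediate from $V\hookrightarrow\mathfrak{g}$: the image $\natural(\bigoplus_{i=1}^k\mathfrak{g}^i)$ contains $\natural(\bigoplus_{i=1}^k V^i)$, which already generates $\cF^k$ on $U$ by hypothesis on $(V,\natural,\mathbb{U},U)$.

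For axiom (iii), define $\ev'(Y,x,t):=(\exp(\natural(\alpha_t(Y)))\cdot x,\,x,\,t)$ for $Y\in\mathfrak{g}$, $x\in U$, $t\in\R_+$, whenever the exponential exists. This map restricts on $V\times U\times\R_+$ to the original $\ev$, and is $\R_+^\times$-equivariant for the action $\lambda\cdot(a,b,t)=(a,b,\lambda^{-1}t)$ on the target. At any point $(0,x,t)$ with $t>0$, a direct calculation of $d\ev'$ shows that variations in the $V$-direction produce $(\sum_i t^i\natural(Y^i)(x),0,0)$, which spans $T_xM\oplus 0\oplus 0$ by axiom (ii) for $V$ with $k=N$; combined with variations in $x$ (giving $(v,v,0)$) and in $t$ (giving $(0,0,1)$), the differential surjects onto all of $T_{(x,x,t)}(M\times M\times\R_+)$. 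Openness of the submersion condition then extends this to a neighbourhood of $\{0\}\times U\times\R_+^\times$, and by $\R_+^\times$-equivariance the submersion locus is $\R_+^\times$-invariant. Using a continuous homogeneous quasi-norm $\rho$ on $\mathfrak{g}$ with $\rho(\alpha_\lambda Y)=\lambda\rho(Y)$, one obtains $\mathbb{U}'$ as an $\R_+^\times$-invariant open neighbourhood of $\{0\}\times U\times\{0\}$ contained in the submersion locus and on which $\exp$ is defined.

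The main obstacle is axiom (iii). The delicate point is reconciling three constraints simultaneously: openness of the submersion condition, $\R_+^\times$-invariance of the neighbourhood $\mathbb{U}'$, and inclusion of the degenerate stratum $\{0\}\times U\times\{0\}$ where $t=0$ (at which submersion itself need not hold). This is handled by the standard cone-shrinking argument adapted to the Debord--Skandalis action via the homogeneous quasi-norm, using that the $\R_+^\times$-saturation of any open set is open because $\alpha_\lambda$ is a homeomorphism.
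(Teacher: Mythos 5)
Your proof is correct and takes essentially the same route as the paper: the paper also passes to the free step-$N$ graded nilpotent Lie algebra generated by $V$, extends $\natural$ by the bracket relation \eqref{eqn:bracket graded Lie}, and then produces $\mathbb{U}'$ by the same openness-plus-$\R_+^\times$-equivariance argument used in Proposition \ref{prop:graded basis exists} (there realised by choosing an open $D\subseteq\mathfrak{g}\times U$ around $\{0\}\times U$ on which $(X,x)\mapsto(\exp(\natural(X))\cdot x,x)$ is a submersion and setting $\mathbb{U}'=\{(X,x,t):(\alpha_t(X),x)\in D\}$, which is your quasi-norm construction in different clothing). You merely spell out the details the paper leaves implicit.
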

\begin{proof}
 Let $\mathfrak{g}$ be the free graded nilpotent Lie algebra of step $N$ generated by elements of $V$ (with the same grading as that of $V$). We extend $\natural:V\to \cX_c(M)$ to $\natural:\mathfrak{g}\to \cX_c(M)$ by \eqref{eqn:bracket graded Lie}. We then find $\mathbb{U}'$ such that $(\mathfrak{g},\natural,\mathbb{U}',U)$ is a graded Lie basis	 like we did in the proof of Proposition \ref{prop:graded basis exists}.
\end{proof}
\begin{remark}\label{rem:Global basis}If $M$ is compact, then each $\cF^i$ is finitely generated for each $i$ and thus the proof of  Proposition \ref{prop:graded basis exists} and Proposition \ref{prop:graded Lie basis exist} imply that one can find a graded Lie basis $(\mathfrak{g},\natural,\mathbb{U},M)$ with $U=M$. We call such a basis a global graded Lie basis.
\end{remark}
Theorem \ref{main_tech_thm}, whose proof is given in Appendix \ref{appendixA}, will be used in Section \ref{sec:Pseudodifferential_Operators} to show that our pseudodifferential calculus is closed under composition.
\begin{thm}\label{main_tech_thm}Let $(\mathfrak{g},\natural,\mathbb{U},U)$ be a graded Lie basis. There exists a smooth map $$\phi:\dom(\phi)\subseteq \mathfrak{g}\times \mathfrak{g}\times U\times \R_+\to \mathbb{U}$$ defined on an $\R^\times_+$-invariant neighbourhood of $\{0\}\times \{0\}\times U\times \{0\}$, where the $\R_+^\times$ action on $\mathfrak{g}\times \mathfrak{g}\times U\times \R_+$ is given by $\alpha_\lambda(Y,X,x,t)=(\alpha_\lambda(Y),\alpha_\lambda(X),x,\lambda^{-1}t)$ such that 
\begin{enumerate}
\item $\phi$ is an $\R^\times_+$-equivariant submersion.
\item For all $(Y,X,x,t)\in \dom(\phi)$, $$\ev(\phi(Y,X,x,t))=\Big(\exp(\natural(\alpha_t(Y)))\cdot \Big(\exp(\natural(\alpha_t(X)))\cdot x\Big),x,t\Big).$$
\item The restriction of $\phi$ to the fiber over $0$ is the group law. This means that 
$$\phi(Y,X,x,0)=(\BCH(Y,X),x,0)\quad  \forall x\in U$$ 
where $\BCH(X,Y)$ is the product given by the BCH formula \eqref{eqn:BCHintro}.
\item If $(0,X,x,t)\in \dom(\phi)$, then $\phi(0,X,x,t)=(X,x,t)$. Similarly if $(Y,0,x,t)\in \dom(\phi)$, then $\phi(Y,0,x,t)=(Y,x,t)$
\end{enumerate}
\end{thm}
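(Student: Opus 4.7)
The plan is to construct $\phi$ as a perturbation of the naive candidate $\phi_0(Y, X, x, t) = (\BCH(Y, X), x, t)$, correcting for the failure of $\natural$ to be a genuine Lie algebra homomorphism via the implicit function theorem.

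Writing $\phi(Y, X, x, t) = (Z(Y, X, x, t), x, t)$, condition (ii) reduces to solving
$$
\exp(\natural(\alpha_t(Z))) \cdot x = \exp(\natural(\alpha_t(Y))) \cdot \exp(\natural(\alpha_t(X))) \cdot x
$$
for $Z$ in terms of $(Y, X, x, t)$. For $t > 0$, the map $Z \mapsto \exp(\natural(\alpha_t(Z)))\cdot x$ is a submersion near $Z = 0$ by condition (iii) of Definition \ref{dfn:graded basis}. Fixing a graded linear complement $W\subseteq\mathfrak{g}$ to the kernel of this submersion's derivative at $(0, p)$ (which can be taken graded since the kernel is), and parametrizing candidates as $Z = \BCH(Y, X) + Z_0$ with $Z_0 \in W$, the implicit function theorem supplies a unique smooth family of solutions $Z_0(Y, X, x, t)$ for $t > 0$ in a neighbourhood of $(0, 0, p)$.

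The key observation for extending the solution smoothly to $t = 0$ is that the lifted map $\tilde\natural:\mathfrak{g}\to\cX(M\times\R_+)$ defined by $\tilde\natural(Y)(x, t) = (t^i\natural(Y)(x), 0)$ for $Y\in\mathfrak{g}^i$ is a Lie algebra homomorphism \emph{modulo} vector fields in $t^{N+1}\cX(M\times\R_+)$. Indeed, for $Y\in\mathfrak{g}^i$, $X\in\mathfrak{g}^j$ one has $[\tilde\natural(Y), \tilde\natural(X)] = t^{i+j}\widetilde{[\natural(Y),\natural(X)]}$, which equals $\tilde\natural([Y, X])$ if $i+j\leq N$ by \eqref{eqn:bracket graded Lie}, while otherwise $[Y, X] = 0$ in $\mathfrak{g}$ and the right-hand side vanishes to order $i+j \geq N+1$. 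Since $\BCH$ is itself $\alpha_\lambda$-equivariant (each iterated bracket of grade $k$ carries a matching factor of $\lambda^k$), a Campbell-Hausdorff expansion for the composition of flows of $\tilde\natural(Y)$ and $\tilde\natural(X)$, together with the submersion property used to express the result as $\exp(\natural(\tilde Z))\cdot x$, yields $\tilde Z := \alpha_t(Z) = \alpha_t(\BCH(Y, X)) + \tilde R$ with $\tilde R \in t^{N+1}\cdot C^\infty$. Unscaling, $Z_0 = \alpha_{t^{-1}}(\tilde R)$ has degree-$k$ component $t^{N+1-k}$ times a smooth function, which extends smoothly to $t = 0$ with value $0$ since $k\leq N$.

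The main obstacle is carrying out the implicit-function argument uniformly across $t = 0$ where $\ev$ degenerates; I would address this by running IFT in the $W$-direction and exploiting the above vanishing rate of $\tilde R$ to control the $t$-dependence. Condition (i) is automatic: $W$ is graded (hence $\alpha_\lambda$-invariant) and the right-hand side of the defining equation is $\R_+^\times$-equivariant, so $Z$ inherits the equivariance, and submersivity follows from a rank count combined with the submersion property of $\ev$. Condition (iii) is exactly the extension statement $Z_0(Y, X, p, 0) = 0$ just proved. Finally, condition (iv) holds because setting $Y = 0$ (resp.\ $X = 0$) makes $Z = X$ (resp.\ $Z = Y$) a tautological solution, corresponding to $Z_0 = 0$, which is the unique solution selected by the IFT normalization in $W$.
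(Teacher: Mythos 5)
Your approach is essentially the same as the paper's, and the structure is sound: the paper also perturbs off the $\BCH$ candidate, also reduces to solving $\exp(\natural(\alpha_t(Z)))\cdot x = \exp(\natural(\alpha_t(Y)))\cdot\exp(\natural(\alpha_t(X)))\cdot x$ via an inverse-function argument, and also controls the correction using a Campbell--Hausdorff estimate for compositions of flows. However, there are two places where your sketch leaves genuine gaps that the paper resolves with specific tools.

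First, the claim that "$\tilde Z = \alpha_t(\BCH(Y,X)) + \tilde R$ with $\tilde R \in t^{N+1}\cdot C^\infty$" is not a direct consequence of the observation that $\tilde\natural$ is a Lie algebra homomorphism modulo $t^{N+1}$. The composition of flows of two vector fields on a manifold is not governed by a convergent $\BCH$ series, and the object $\tilde Z$ does not exist as an element of $\mathfrak{g}$ until you have already applied an inverse-flow map. What is actually available is a \emph{positional} estimate: $\exp(\bfX_n(t))\cdot\exp(\bfY_n(t))\cdot x = \exp(\BCH(\bfX,\bfY)_n(t))\cdot x + o(t^n)$, which is the paper's Theorem \ref{thm:BCH} (attributed to Hörmander, proved by Nagel--Stein--Wainger). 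You need to state and invoke this as a black box, then convert it into an estimate on the Lie-algebra correction by composing with a Lipschitz inverse-flow map; without that conversion step the argument is not rigorous.

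Second, you identify the uniformity of the implicit function theorem as $t\to 0$ as "the main obstacle" but do not resolve it. Running the IFT separately for each $t>0$ does not produce a family of solutions with any control at $t=0$, because the map $Z\mapsto\exp(\natural(\alpha_t(Z)))\cdot x$ degenerates as $t\to 0$ and the size of the IFT neighbourhood could shrink. The paper sidesteps this entirely with Lemma \ref{lem:k_appendix}: it constructs a single $t$-\emph{independent} smooth "flow inverse" $k:\dom(k)\subseteq V\times U\times U\to V$ satisfying $\exp(\natural(k(v,y,x)))\cdot x = y$ and $k(v,\exp(\natural(v))\cdot x,x)=v$, and then defines
\begin{equation*}
\phi(Y,X,x,t)=\left(\alpha_{t^{-1}}\Big( k\big(\BCH(\alpha_t (Y),\alpha_t (X)),\,\pi(Y,X,x,t),\,x \big)\Big),\,x,\,t\right)
\end{equation*}
for $t>0$. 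The only thing to check is continuity at $t=0$, which follows from the positional $o(t^N)$ estimate above combined with the Lipschitz bound $|k(v,y,x)-k(v,y',x)|\leq C|y-y'|$, after which $\R^\times_+$-equivariance upgrades continuity to smoothness. You should replace your per-$t$ IFT step with this construction of a fixed $k$; as written, the degenerate-limit issue is acknowledged but not closed.
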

One should think of $\phi$ as a 'pseudo' group law which interpolates between the flow of vector fields and the group law.

The following theorem, which is straightforward to prove, will be used in Section \ref{subsection:adjoint} to prove that the algebra of pseudodifferential operators is closed under adjoint.
\begin{thm}\label{thm:inverse_bisubmersion}
Let $(V,\natural,\mathbb{U},U)$ be a graded basis. The map $\phi$ defined by
\begin{align*}&\phi:\dom(\phi)\subseteq \mathbb{U}\to \mathbb{U},\quad\dom(\phi)=\{(X,x,t)\in \mathbb{U}:(-X,\exp(\natural(\alpha_t(X)))\cdot x,t)\in \mathbb{U}\},\\
&\phi(X,x,t)=(-X,\exp(\natural(\alpha_t(X)))\cdot x,t). 
\end{align*}
has the following properties:
 \begin{enumerate}
\item $\dom(\phi)$ is $\R^\times_+$-invariant and $\phi$ is an $\R^\times_+$-equivariant open embedding.
\item $\ev\circ \phi=\iota\circ \ev$ on $\dom (\phi)$, where $\iota:M\times M\times \R_+\to M\times M\times \R_+$ is the map $\iota(x,y,t)=(y,x,t)$.
\item $V\times U\times \{0\}\subseteq  \dom(\phi)$ and $\phi(X,x,0)=(-X,x,0)$ for all $X\in V,x\in U$.
\end{enumerate}
\end{thm}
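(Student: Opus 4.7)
The plan is to verify the three properties (a), (b), (c) by direct computation, exploiting the observation that $\phi$ is essentially an involution: the time-one flow identity $\exp(-Y)\cdot\exp(Y)\cdot x = x$, combined with the linearity of $\alpha_t$ and $\natural$, will yield $\phi\circ\phi = \id_{\dom(\phi)}$, and almost everything follows from this.

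I would first dispose of (c), which is immediate from the graded dilation formula $\alpha_\lambda(\sum v_i) = \sum \lambda^i v_i$: specializing to $\lambda = 0$ gives $\alpha_0(X) = 0$, so $\exp(\natural(\alpha_0(X)))\cdot x = x$ and $\phi(X,x,0) = (-X,x,0)$. By \eqref{eqn:U0}, both $(X,x,0)$ and $(-X,x,0)$ lie in $\mathbb{U}$, so $V\times U\times\{0\}\subseteq\dom(\phi)$.

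For (a), the key input is the identity $\alpha_{\lambda^{-1}t}\circ\alpha_\lambda = \alpha_t$, which follows from $\alpha_\lambda\alpha_\mu(v_i) = (\lambda\mu)^iv_i$. Substituting this into the defining formula shows that $\alpha_\lambda(-X, \exp(\natural(\alpha_t(X)))\cdot x, t) = (-\alpha_\lambda(X), \exp(\natural(\alpha_{\lambda^{-1}t}(\alpha_\lambda(X))))\cdot x, \lambda^{-1}t)$, which immediately gives both $\R^\times_+$-invariance of $\dom(\phi)$ and $\R^\times_+$-equivariance of $\phi$, using only the $\R^\times_+$-invariance of $\mathbb{U}$. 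Smoothness of $\phi$ is inherited from smoothness of the flow. The involutive identity $\phi\circ\phi = \id$ (with the symmetric check $\phi(\dom(\phi))\subseteq\dom(\phi)$) then makes $\phi$ a self-diffeomorphism of $\dom(\phi)$. Openness of $\dom(\phi)$ inside $\mathbb{U}$ is automatic, since it is the preimage of the open set $\mathbb{U}$ under the smooth map $(X,x,t)\mapsto(-X,\exp(\natural(\alpha_t(X)))\cdot x,t)$ restricted to $\mathbb{U}$. Hence $\phi$ is an open embedding into $\mathbb{U}$.

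Finally, (b) collapses to a single line: by definition, $\ev(\phi(X,x,t)) = \bigl(\exp(\natural(\alpha_t(-X)))\cdot\exp(\natural(\alpha_t(X)))\cdot x,\ \exp(\natural(\alpha_t(X)))\cdot x,\ t\bigr)$, and applying $\exp(-Y)\cdot\exp(Y) = \id$ in the first coordinate gives $\bigl(x,\exp(\natural(\alpha_t(X)))\cdot x, t\bigr) = \iota(\ev(X,x,t))$. No step poses a real obstacle—the whole argument is bookkeeping around the graded dilations, the semigroup law $\alpha_\lambda\alpha_\mu=\alpha_{\lambda\mu}$, and the involutive flow identity, consistent with the author's assertion that this theorem is straightforward.
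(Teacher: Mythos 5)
Your proof is correct. The paper itself gives no proof of Theorem \ref{thm:inverse_bisubmersion}, simply remarking that it is ``straightforward to prove,'' and your argument is exactly the direct verification the authors have in mind: part (c) from $\alpha_0(X)=0$ together with \eqref{eqn:U0}; part (a) from the semigroup law $\alpha_{\lambda^{-1}t}\circ\alpha_\lambda = \alpha_t$, the $\R_+^\times$-invariance of $\mathbb{U}$, and the involutive identity $\phi\circ\phi = \id$ (which makes $\phi$ its own smooth inverse onto the open set $\dom(\phi)$); and part (b) from the linearity of $\alpha_t$ and $\natural$ together with $\exp(-Y)\cdot\exp(Y)\cdot x = x$. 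All steps are sound, and no comparison with an alternative route in the paper is possible since none is given.
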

\subsection{Densities}\label{sec:densities}
\begin{enumerate}
\item If $M$ is a manifold, $E\to M$ a vector bundle, then we denote by $\Omega^{\alpha}E$ the bundle of $\alpha$-densities on $E$. We will use $\Omega^{\alpha}(M)$ or simply $\Omega^{\alpha}$ instead of $\Omega^{\alpha}TM$.
\item The space $L^2M$ denotes the completion of the space $C^\infty_c(M,\Omega^{1/2})$ under the Euclidean metric $$\langle f,g\rangle=\int_Mf\bar{g},\quad f,g\in C^\infty_c(M,\Omega^\frac{1}{2})$$ which is well defined because $f\bar{g}\in C^\infty_c(M,\Omega^1)$.
\item If $f\in C^\infty_c(M\times M,\Omega^{1/2})$, then $f$ is a Schwartz kernel, and hence defines a linear map $$f\star\cdot :C^\infty_c(M,\Omega^{1/2})\to C^\infty_c(M,\Omega^{1/2}),\quad f\star g(x)=\int_Mf(x,y)g(y).$$
\item If $\phi:M_1\to M_2$ is a smooth submersion and $E\to M_2$ a vector bundle, then integration along the fibers $\phi$ naturally defines a map $$\phi_*:C^\infty_c(M_1,\phi^*(E)\otimes\Omega^1\ker(d\phi))\to C^\infty_c(M_2,E).$$
\item If $G$ is a Lie group, then we define the $C^*$-algebra $C^*G$ to be the completion of $C^\infty_c(G,\Omega^1)$.
\item We denote by $\Omega^{1/2}_t$ the bundle $\Omega^\frac{1}{2}\ker(dt)$ over $M\times M\times \R_+$ where $t:M\times M\times \R_+\to \R_+$ is the obvious projection. It follows that if $f\in C^\infty(M\times M\times \R_+^\times,\Omega^{1/2}_t)$, then its restriction to $M\times M\times \{t\}$ for $t\in \R_+^\times$ is an element of $C^\infty(M\times M,\Omega^{1/2})$. 
\item Let $(V,\natural,\mathbb{U},U)$ be a graded basis. We have two submersions \begin{align}\label{eqn:bisub_rs}
r,s:\mathbb{\mathbb{U}}\to M\times \R_+,\quad r(X,x,t)=(\exp(\natural(\alpha_t(X)))\cdot x,t),\quad s(X,x,t)=(x,t).
\end{align} We denote by $\Omega^{1/2}_{ r,s}$ the vector bundle $\Omega^{1/2}\ker(ds)\otimes \Omega^{1/2}\ker(dr)$. Since $r,s$ are submersions, we get canonical isomorphisms 
\begin{equation}\label{eqn:qisjdfijqoipjsdofqposidf}
\begin{aligned}
 \Omega^{1/2}\ker(dr)\simeq \Omega^{1/2}\mathbb{U}\otimes r^{*}\Omega^{1/2}M\\
  \Omega^{1/2}\ker(ds)\simeq \Omega^{1/2}\mathbb{U}\otimes s^{*}\Omega^{1/2}M
\end{aligned}
\end{equation}
By combining \eqref{eqn:qisjdfijqoipjsdofqposidf}, we get that
\begin{equation}\label{eqn:qsiodfjpkjsqojdfkoqA}
\Omega^{1/2}_{ r,s}\simeq \Omega^1\ker(ds)\otimes s^*\Omega^{-1/2}M\otimes r^*\Omega^{1/2}M.
\end{equation}
If $(X,x,t)\in \mathbb{U}$, then the diffeomorphism $\exp(\natural(\alpha_t(X)))$ gives an isomorphism between $r^*\Omega^{1/2}M$ and $s^*\Omega^{1/2}M$ at $(X,x,t)$. Hence $r^*\Omega^{1/2}M$ and $s^*\Omega^{1/2}M$ are naturally isomorphic. So \begin{equation}\label{eqn:qsiodfjpkjsqojdfkoq}
\Omega^{1/2}_{ r,s}\simeq \Omega^1\ker(ds)=\Omega^1V.
\end{equation}
This will be used in Section \ref{sec:general_symbols}
\item The maps $r,s$ in \eqref{eqn:bisub_rs} are $\R_+^\times$-equivariant. It follows that $\R_+^\times$ acts on $C^\infty_c(\mathbb{U},\Omega^{1/2}_{ r,s})$ which we denote by $\alpha_{\lambda*}$.
\item Let $t\in \R_+^\times$. The map $\ev_t$ defined in \eqref{eqn:ev maps tangent group bisubmersions} is a submersion. Hence we get a canonical isomorphism \begin{equation}
 \Omega^{1/2}_{ r,s}\simeq \Omega^1\ker(d\ev_t)\otimes  \ev^*_t\Omega^{1/2}(M\times M)
\end{equation} where both sides are restricted to $\mathbb{U}_{>0}$. Hence for any $t\in \R_+^\times$, we have a map \begin{equation}
 \ev_{t*}:C^\infty_c(\mathbb{U},\Omega^{1/2}_{ r,s})\to C^\infty_c(M\times M,\Omega^{1/2})
 \end{equation}
 which first restricts to $\mathbb{U}_t$, then integrates along the fibers of $\ev_t$.
 
\item Let $p\in M$. The map $\ev_{p,0}$ a submersion. Hence we get a canonical isomorphism $$\Omega^{1/2}_{ r,s}\simeq \Omega^1\ker(d\ev_{p,0})\otimes  \ev^*_{p,0}\Omega^{1}\Gr(\cF)_p$$ where both sides are restricted to $V\times \{p\}\times \{0\}$. Hence for any $p\in M$, we have a map \begin{equation}
 \ev_{p,0*}:C^\infty_c(\mathbb{U},\Omega^{1/2}_{ r,s})\to C^\infty_c(\Gr(\cF)_p,\Omega^{1})
 \end{equation}
 which first restricts to $V\times \{p\}\times \{0\}$, then integrates along the fibers of $\ev_{p,0}$. 
\item Let $(V,\natural,\mathbb{U},U),(V',\natural',\mathbb{U}',U')$, $p$ and $\phi$ be as in Theorem \ref{second_main_tech_thm}. It follows from \ref{second_main_tech_thm}.b, that \begin{equation}\label{eqn:int_diff_basis_dens}
  \Omega^{1/2}_{r,s}\simeq \Omega^{1}\ker(d\phi)\otimes \phi^*\Omega_{r,s}^{1/2},
\end{equation} when both sides are restricted to $\dom(\phi)$. Hence we get an integration along the fibers map $$\phi_*:C^\infty_c(\dom(\phi),\Omega^{1/2}_{ r,s})\to C^\infty_c(\mathbb{U}',\Omega^{1/2}_{ r,s}).$$
\item Let $(\mathfrak{g},\natural,\mathbb{U},U)$ and $\phi$ be as in Theorem \ref{main_tech_thm}. For any $f,g\in C^\infty_c(\mathbb{U},\Omega^{1/2}_{ r,s})$, define $h$ by \begin{equation}\label{eqn:h_conv}
 h(Y,X,x,t)=f(Y,\exp(\natural(\alpha_t(x)))\cdot x,t)g(X,x,t)
\end{equation} Up to adding to each of $f$ and $g$ a function in $C^\infty_c(\mathbb{U}_{>0},\Omega^{1/2}_{r,s})$, it is always possible to suppose that $\supp(h)\subseteq \dom(\phi)$.  The map $\phi$ being a submersion together with  Theorem \ref{main_tech_thm}.b implies that \begin{equation}\label{eqn:suiqdhlfh}
 h\in C^\infty_c(\dom(\phi),\Omega^1\ker(d\phi)\otimes \phi^*\Omega^{1/2}_{r,s}).
\end{equation}Hence we can define $$\phi_*(h)\in  C^\infty_c(\mathbb{U},\Omega^{1/2}_{ r,s}).$$
As we will see shortly this will play the role of the convolution of $f$ and $g$. We refer the reader to \cite[Definition 4.2.b]{AS1} for more details on \eqref{eqn:suiqdhlfh}.
\end{enumerate}
\subsection{The \texorpdfstring{$C^*$}{Cstar}-algebra $C^*\aF$}\label{subsec:c0R}
Let $L^\infty\aF$ be the space of all functions $a$ defined on $\R_+^\times \sqcup (M\times \{0\})$ such that \begin{itemize}
\item For all $t\in \R_+^\times$, $a(t)\in K(L^2M)$ the space of compact operators on $L^2M$.
\item For all $p\in M$, $a(p,0)\in C^*\Gr(\cF)_p$. 
\item $\sup_{t\in \R_+^\times}\norm{a(t)}$ and $\sup_{p\in M}\norm{a(p,0)}$ are finite.
\end{itemize} Instead of using the notation $a(t)$ and $a(p,0)$, we will use $a_t$ and $a_{p,0}$ to avoid confusing notation later on. Clearly $L^\infty\aF$ is a $C^*$-algebra with the norm $$\norm{a}=\max\left\{\sup_{t\in \R_+^\times}\norm{a_t},\sup_{p\in M}\norm{a_{p,0}}\right\}.$$

 Let $f\in C^\infty_c(M\times M\times \R_+^\times,\Omega_t^{1/2})$. The function $f$ corresponds to an element of $L^\infty\aF$ still denoted $f$ with $f_{p,0}=0$ for all $p\in M$ and $f_{t}$ is the smoothing operator with kernel $f_{|M\times M\times \{t\}}$. 
 
 Let $(V,\natural,\mathbb{U},U)$ be a graded basis. We define a linear map $$\cQ:C^\infty_c(\mathbb{U},\Omega^{1/2}_{ r,s})\to L^\infty\aF$$ as follows. If $f\in C^\infty_c(\mathbb{U},\Omega^{1/2}_{ r,s})$, $t\in \R_+^\times$, $p\in M$ then $\cQ(f)_t=\ev_{t*}(f)$ and $\cQ(f)_{p,0}=\ev_{p,0*}(f)$.
\begin{lemma}\label{lem:Q_welldefined} The map $\cQ$ is well defined.
\end{lemma}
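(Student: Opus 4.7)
The statement has three ingredients to verify in order for $\cQ(f)$ to define an element of $L^\infty\aF$: compactness of each $\cQ(f)_t$, membership of each $\cQ(f)_{p,0}$ in $C^*\Gr(\cF)_p$, and uniform norm bounds on both families.

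The first two points should come almost for free from the fact that $f$ has compact support combined with the submersion hypotheses built into the definition of a graded basis. Since $\ev_t$ is a submersion on $\mathbb{U}_{>0}$ by condition (iii) of Definition \ref{dfn:graded basis}, the pushforward $\ev_{t*}(f)$ is a smooth compactly supported half-density on $M\times M$, defining a Hilbert--Schmidt (hence compact) operator on $L^2 M$. For the fibre at $t=0$, the restriction $\ev_{p,0}$ equals the linear map $\natural_p\colon V\to \Gr(\cF)_p$, which is a surjection by condition (ii), so integration along its affine fibres produces a compactly supported smooth $1$-density $\ev_{p,0*}(f)\in C^\infty_c(\Gr(\cF)_p,\Omega^1)$, which manifestly lies in $C^*\Gr(\cF)_p$.

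The main content is the uniform norm bound, which I plan to obtain via the Schur test. Under the identification $\Omega^{1/2}_{r,s}\simeq \Omega^1 V$ from \eqref{eqn:qsiodfjpkjsqojdfkoq}, I can write $f=\tilde f\, dX$ for $\tilde f$ a compactly supported smooth function on $\mathbb{U}$. For $t>0$, the change of variable $y=\exp(\natural(\alpha_t(X)))\cdot x$ at fixed $x$ identifies the pushforward kernel and gives
\[
\sup_{x\in M}\int_M \bigl|\cQ(f)_t(x,y)\bigr|\,dy \;=\; \sup_{x\in M}\int_V \bigl|\tilde f(X,x,t)\bigr|\,dX,
\]
which is bounded uniformly in $t$ by the compact support of $f$. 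The symmetric estimate (sup over $y$ of the integral over $x$) follows by inverting the time-one flow of $\natural(\alpha_t(X))$, whose Jacobian is uniformly controlled on $\supp f$. The bound for $\cQ(f)_{p,0}$ is easier, since its norm in $C^*\Gr(\cF)_p$ is dominated by the $L^1$-norm of $\ev_{p,0*}(f)$, and hence by $\int_V |\tilde f(X,p,0)|\,dX$, again uniformly bounded in $p$.

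The one genuinely subtle point will be maintaining uniformity of the Schur bound as $t\to 0$, where $\ev_t$ degenerates by concentrating the kernel near the diagonal of $M\times M$. The key is that the $\R_+^\times$-equivariance of $r,s$ recorded in item~8 of Section \ref{sec:densities} and the canonical trivialization $\Omega^{1/2}_{r,s}\simeq \Omega^1V$ absorb all the apparent singular Jacobian factors into the coordinate $X\in V$, so that the right-hand integrals above depend smoothly on $t\in \R_+$ (including $t=0$) and are therefore uniformly bounded on the compact set $\supp f$. Once this is in place, the three estimates combine to give $\cQ(f)\in L^\infty\aF$, as required.
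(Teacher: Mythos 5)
Your proposal is essentially correct and uses the same Schur-test strategy as the paper; the difference lies in how you handle the harder of the two Schur integrals. For $\sup_{x}\int_M|\cQ(f)_t(y,x)|\,dy$ (fixing the input and integrating over the output) you and the paper both observe that the $s$-fibre parametrization makes this equal to $\sup_x\int_V|f(v,x,t)|\,dv$, which is finite by compact support, and the $t=0$ bound via $L^1\Gr(\cF)_p$ is identical. The divergence is the other Schur integral: you invert the time-one flow directly and assert that its Jacobian and the density factors in $\Omega^{1/2}_{r,s}\simeq\Omega^1 V$ are uniformly controlled on the compact $\supp f$, whereas the paper instead decomposes $f=f_1+f_2$ by a partition of unity with $\supp f_1\subseteq\dom(\phi)$ and $\supp f_2$ bounded away from $t=0$, and then uses the adjoint identity $\cQ(f_1)^*=\cQ(\phi_*(f_1))$ together with Theorem \ref{thm:inverse_bisubmersion} to reduce the hard estimate for $f_1$ to the already-proved easy estimate for $\phi_*(f_1)\in C^\infty_c(\mathbb{U},\Omega^{1/2}_{r,s})$. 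Your route is more hands-on and avoids the partition of unity, but the ``uniformly controlled Jacobian'' claim is precisely the density bookkeeping that $\phi_*$ packages; making it rigorous would largely reproduce the content of Theorem \ref{thm:inverse_bisubmersion}. The paper's route is more structural and leans on machinery already in place. There is no genuine gap in your argument, but when writing it up you should either invoke Theorem \ref{thm:inverse_bisubmersion} explicitly or spell out the density identification under the inverse-flow change of variables, since that is where the uniformity as $t\to0$ is actually established.
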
 
\begin{proof}We need to show that $\norm{\cQ(f)}<+\infty$. 
We choose a Euclidean metric on $V$ and a Riemannian metric on $M$. This trivializes all densities used above, where for $p\in M$, the bundle $\Omega^1\Gr(\cF)_p$ is trivialized by the image of the Euclidean metric on $V$ by the map $\natural_p$. Recall that if $f\in C^\infty_c(M\times M)$ is a Schwartz kernel, then $$\norm{f}_{K(L^2M)}\leq \sup_{x\in M}\max\left\{\int_{M}|f(x,y)|dy,\int_{M}|f(y,x)|dy\right\}.$$ Similarly $$\norm{f}_{C^*\Gr(\cF)_p}\leq  \norm{f}_{L^1\Gr(\cF)_p},\quad f\in C^\infty_c(\Gr(\cF)_p).$$ Now let $f\in C^\infty_c(\mathbb{U})$. It follows that $$\sup_{p\in M}\norm{\cQ(f)_{p,0}}_{C^*\Gr(\cF)_p}\leq\sup_{p\in M}\norm{\cQ(f)_{p,0}}_{L^1\Gr(\cF)_p} \leq\sup_{p\in M} \int_{V}|f(v,p,0)|dv<+\infty.$$
Similarly $$ \sup_{x\in M}\int_{M}|\cQ(f)_t(y,x)|dy\leq \sup_{x\in M}\int_V|f(v,x,t)|dv<+\infty.$$ For $\sup_{x\in M}\int_{M}|\cQ(f)_t(x,y)|dy$, we proceed differently. Let $\phi$ be as in Theorem \ref{thm:inverse_bisubmersion}. Then, using  a partition of unity we can write $f=f_1+f_2$ with $f_1\in C^\infty_c(\dom(\phi))$ and $f_2\in C^\infty_c(\mathbb{U}\cap (V\times U\times \R_+^\times))$. By Theorem \ref{thm:inverse_bisubmersion}.b, it follows that $\cQ(f_1)^*=\cQ(\phi_{*}(f_1))$. Hence $\sup_{x\in M}\int_{M}|\cQ(f_1)_t(x,y)|dy<+\infty$. Since $\cQ(f_2)\in C^\infty_c(M\times M\times \R_+^\times)$, the lemma follows.
\end{proof}
 
\begin{definition} Let $C^\infty_c(\aF)\subseteq L^\infty\aF$ be the linear span of $C^\infty_c(M\times M\times \R_+^\times,\Omega_t^{1/2})$ together with $\cQ(C^\infty_c(\mathbb{U},\Omega_{r,s}^{1/2}))$ for all graded bases $(V,\natural,\mathbb{U},U)$. Elements of $C^\infty_c(M\times M\times \R_+^\times,\Omega_t^{1/2})$ and $\cQ(C^\infty_c(\mathbb{U},\Omega_{r,s}^{1/2}))$ will be called elements of first and second type respectively.\end{definition}
In the proof of the following proposition, it will be useful to remark that if $f\in C^\infty_c(\mathbb{U}_{>0},\Omega_{r,s}^{1/2})$, then $\cQ(f)$ is an element of first type.
\begin{prop}\label{prop:subalgebra} The space $C^\infty_c(\aF)$ is a $*$-subalgebra of $L^\infty\aF$.
\end{prop}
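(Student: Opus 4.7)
My plan is to break the proof into four independent verifications: closure under the adjoint (easy for first type, nontrivial for second type) and closure under the product in three cases according to the types of the factors. The main obstacle will be the product of two second-type elements, which requires both Theorem \ref{main_tech_thm} (the ``pseudo group law'') and Theorem \ref{second_main_tech_thm} (basis comparison) to handle bases that do not coincide a priori.

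\textbf{Adjoint.} For $f \in C^\infty_c(M\times M\times \R_+^\times, \Omega_t^{1/2})$ the adjoint is just the pointwise-in-$t$ kernel adjoint, which is again of first type. For $\cQ(f)$ with $f \in C^\infty_c(\mathbb{U}, \Omega^{1/2}_{r,s})$, I would apply Theorem \ref{thm:inverse_bisubmersion}: since $\dom(\phi)$ and $\mathbb{U}_{>0}$ together cover $\mathbb{U}$ (the only points outside $\dom(\phi)$ are certain points at $t=0$ that never occur near $V\times U\times\{0\}$, cf.\ part (c) of that theorem), a partition of unity writes $f = f_1 + f_2$ with $f_1 \in C^\infty_c(\dom(\phi))$ and $f_2 \in C^\infty_c(\mathbb{U}_{>0})$. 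Then $\cQ(f_2)$ is of first type by the same argument that appears in the proof of Lemma \ref{lem:Q_welldefined}, while $\cQ(f_1)^\ast = \cQ(\phi_\ast f_1)$ is of second type: this follows from $\ev\circ\phi = \iota\circ\ev$ together with the fact that $\phi$ is an open embedding, so that pushforward along $\phi$ is well defined on densities and agrees fiberwise with kernel transposition on $M\times M$ (for $t>0$) and with the formal inverse $\natural_p(-X) = \natural_p(X)^{-1}$ on $\Gr(\cF)_p$ (for $t=0$).

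\textbf{Easy products.} If both $a,b$ are of first type, the product at each $t>0$ is composition of compactly supported smoothing kernels, and the support in $t$ remains compact in $\R_+^\times$; at $t=0$ both factors vanish. So $ab$ is of first type. If $a$ is of first type and $b$ is arbitrary (or vice versa), then $a_{p,0}=0$ forces $(ab)_{p,0}=0$, and for $t>0$ we again have a compactly supported smoothing kernel; hence $ab$ is of first type.

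\textbf{Second type times second type.} Let $a_i = \cQ(f_i)$, $i=1,2$, possibly from different graded bases $(V_i,\natural_i,\mathbb{U}_i,U_i)$. I would first reduce to a common graded Lie basis: using a partition of unity localize $\supp(f_i)$, near any base point $p$ pick a minimal graded Lie basis $(\lie{g},\natural,\mathbb{U},U)$ through Proposition \ref{prop:graded basis exists} and Proposition \ref{prop:graded Lie basis exist}, and use Theorem \ref{second_main_tech_thm} to transport each $f_i$ to a function on $\mathbb{U}$. The intertwining of $\ev$ (parts (b),(c) of Theorem \ref{second_main_tech_thm}) guarantees $\cQ(f_i) - \cQ(\phi_{i\ast}\tilde f_i)$ is of first type, so by the preceding case we may assume $a_1,a_2$ come from the \emph{same} graded Lie basis. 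Now define $h(Y,X,x,t) = f_1(Y,\exp(\natural(\alpha_t X))\cdot x, t)f_2(X,x,t)$ as in \eqref{eqn:h_conv}; after absorbing a remainder supported in $\mathbb{U}_{>0}$ (which contributes only a first-type element) we have $\supp h \subseteq \dom(\phi)$, where $\phi$ is the pseudo group law of Theorem \ref{main_tech_thm}. Then I claim
\[
\cQ(f_1)\cdot\cQ(f_2) \;=\; \cQ(\phi_\ast h) \pmod{\text{first type}}.
\]
At $t>0$ this is a direct change-of-variables argument using Theorem \ref{main_tech_thm}.b, which expresses $\ev\circ\phi$ as the composition of the two flows, precisely what the kernel composition demands. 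At $t=0$ we use Theorem \ref{main_tech_thm}.c, which asserts $\phi(Y,X,x,0) = (\BCH(Y,X), x, 0)$; combined with Remark \ref{rmk:natural_p}, which says that $\natural_p:\lie g \to \Gr(\cF)_p$ is a Lie group homomorphism with respect to the BCH product, the pushforward $\ev_{p,0\ast}(\phi_\ast h)_{|V\times\{p\}\times\{0\}}$ identifies with the convolution of $(\natural_p)_\ast f_1(\cdot,p,0)$ and $(\natural_p)_\ast f_2(\cdot,p,0)$ in $C^\ast\Gr(\cF)_p$. This is exactly $a_{1,p,0}\cdot a_{2,p,0}$, completing the verification. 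The main subtlety throughout is keeping track of the density identifications used to ensure that each pushforward actually lands where claimed, for which I would invoke the isomorphisms (2.3.7) and the convention \eqref{eqn:int_diff_basis_dens}.
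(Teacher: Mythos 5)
Your overall plan is essentially the paper's: handle the adjoint via Theorem \ref{thm:inverse_bisubmersion} (which, as you note, is already done within Lemma \ref{lem:Q_welldefined}), dispose of products involving a first-type factor trivially, and for a product of two second-type elements, move to a common graded Lie basis and apply the pseudo group law from Theorem \ref{main_tech_thm}. The adjoint and the easy-product arguments are fine, as is the final step applying Theorem \ref{main_tech_thm} once both factors live on a single graded Lie basis.

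However, the reduction to a common basis contains a genuine gap. You ``pick a minimal graded Lie basis $(\lie g,\natural,\mathbb U,U)$ through Proposition \ref{prop:graded basis exists} and Proposition \ref{prop:graded Lie basis exist}, and use Theorem \ref{second_main_tech_thm} to transport each $f_i$ to a function on $\mathbb U$.'' There is no such object as a minimal graded Lie basis in general. Proposition \ref{prop:graded basis exists} produces a \emph{minimal} graded basis; Proposition \ref{prop:graded Lie basis exist} replaces a given graded basis by the (typically much larger) free nilpotent Lie algebra on the same generators, which destroys minimality. Conversely, a minimal $V\cong\gr(\cF)_p$ does not carry a bracket making $\natural$ a Lie algebra map on $\cF$ at the level of vector fields, so the bracket condition \eqref{eqn:bracket graded Lie} generically fails. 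Moreover, Theorem \ref{second_main_tech_thm} only produces a map \emph{into} a minimal graded basis; it does not give you a map into a graded Lie basis directly, so you cannot ``transport $f_i$ to $\mathbb U$'' in one stroke.

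The fix is exactly the content of the paper's Lemma \ref{lem:qsjidfpiujqsipdf}: keep the minimal graded bases $\mathbb U_p$ and the graded Lie basis $\mathbb U''$ as two distinct objects, get $\phi_p\colon\dom(\phi_p)\subseteq\mathbb U\to\mathbb U_p$ and $\psi_p\colon\dom(\psi_p)\subseteq\mathbb U''\to\mathbb U_p$ from Theorem \ref{second_main_tech_thm}, push $f$ forward to $\phi_{p*}(f)$ on the minimal basis $\mathbb U_p$, and then \emph{lift} it back up to $\mathbb U''$ using that $\psi_p$ is a submersion with $\Im(\phi_p)\subseteq\Im(\psi_p)$ after a suitable shrinking of domains. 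Parts (b),(c) of Theorem \ref{second_main_tech_thm} then guarantee $\cQ(\tilde f)=\cQ(f)$, and a partition of unity on the cover $\mathbb U=\mathbb U_{>0}\cup\bigcup_p\dom(\phi_p)$ controls the localization and the first-type discrepancies. With this replacement the rest of your argument — using $h$ from \eqref{eqn:h_conv}, absorbing a piece in $\mathbb U''_{>0}$, and invoking Theorem \ref{main_tech_thm}.b, c together with Remark \ref{rmk:natural_p} — is correct and matches the paper's proof.
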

\begin{proof}
In Lemma \ref{lem:Q_welldefined}, we proved that $C^\infty_c(\aF)$ is closed under taking adjoint. We now prove that $C^\infty_c(\aF)$ is closed under product. The product of two elements of first type or an element of first type and another of second type is easily seen to be of first type. So we need to consider two elements of second type. Let $(V,\natural,\mathbb{U},U)$ and $(V',\natural',\mathbb{U}',U')$ be graded basis, $f\in\cQ(C^\infty_c(\mathbb{U}),\Omega_{r,s}^{1/2}) $, $g\in \cQ(C^\infty_c(\mathbb{U}'),\Omega_{r,s}^{1/2})$. Notice that if $U''\subseteq U$ is an open subset, then $(V,\natural,\mathbb{U}\cap (V\times U''\times \R_+),U'')$ is still a graded basis. By a partition of unity argument, we can reduce to two cases either $U\cap U'=\emptyset$ or $U=U'$. If $U\cap U'=\emptyset$, then $\cQ(f)\cQ(g)$ is easily seen to be an element of first type. We can thus suppose $U=U'$. By Proposition \ref{prop:graded Lie basis exist}, let $(\mathfrak{g},\natural,\mathbb{U}'',U)$ be a graded Lie basis.
\begin{lemma}\label{lem:qsjidfpiujqsipdf} There exists $\tilde{f},\tilde{g}\in C^\infty_c(\mathbb{U}'',\Omega^{1/2}_{r,s})$ such that $\cQ(\tilde{f})-\cQ(f)$ and $\cQ(\tilde{g})-\cQ(g)$ are elements of first type.
\end{lemma}
\begin{proof}
By symmetry, it is enough to construct $\tilde{f}$. Let $p\in U$, $(V_p,\natural,\mathbb{U}_p,U_p)$ be a minimal graded basis at $p$. We can further suppose that $U_p\subseteq U$. Let $\phi_p:\dom(\phi)\subseteq\mathbb{U}\to \mathbb{U}_p$ and $\psi_p:\dom(\psi)\subseteq \mathbb{U}''\to \mathbb{U}_p$ obtained from Theorem \ref{second_main_tech_thm}. The maps $\phi_p$ and $\psi_p$ are submersions and $\phi_p(0,p,0)=\psi_p(0,p,0)=(0,p,0)$ (this follows from Theorem \ref{second_main_tech_thm}.a and b). By reducing the domain of $\phi_p$ if necessary, we can suppose that $\Im(\phi_p)\subseteq \Im(\psi_p)$. Using a partition of unity on the cover $\mathbb{U}=\mathbb{U}_{>0}\cup \bigcup_{p\in U}\dom(\phi_p)$, up to adding an element of first order, we can suppose that $\supp(f)\subseteq \dom(\phi_p)$ for some $p\in U$. By Theorem \ref{second_main_tech_thm}.b and c, it follows that $\cQ(\phi_{p*}(f))=\cQ(f)$. Furthermore since $\Im(\phi)\subseteq \Im(\psi)$ and $\psi_p$ is a submersion, we can find $\tilde{f}\in  C^\infty_c(\mathbb{U}'',\Omega^{1/2}_{r,s})$ such that $\supp(\tilde{f})\subseteq \dom(\psi_p)$ and $\psi_{p*}(\tilde{f})=\phi_{p*}(f)$. Hence $\cQ(\tilde{f})=\cQ(f)$.
\end{proof}
Let $\phi:\dom(\phi)\subseteq \mathfrak{g}\times \mathfrak{g}\times U\times \R_+\to \mathbb{U}''$ given by Theorem \ref{main_tech_thm}. By using a partition of unity once more which amounts to adding an element of first type to $\cQ(\tilde{f})$ and $\cQ(\tilde{g})$, we can suppose that $\dom(\tilde{f})$ and $\dom(\tilde{g})$ are small enough so that $h$ defined by \eqref{eqn:h_conv} has $\supp(h)\subseteq \dom(\phi)$. The proof is now complete because $\cQ(h)=\cQ(\tilde{f})\cQ(\tilde{g})$ by Theorem \ref{main_tech_thm}.b and c.
\end{proof}
\begin{definition}We define  $C^*_{ }\aF$ to be the closure of $C^\infty_c(\aF)$ in $L^\infty\aF$.
\end{definition}
\begin{remark}\label{rem:Action_on_Caf} The group $\R_+^\times$ acts on $L^\infty\aF$ by the formula $$\alpha_{\lambda*}(a)_t=a_{\lambda t},\quad \alpha_{\lambda*}(a)_{p,0}=\alpha_{\lambda *}(a_{p,0}).$$ Since for any $f\in C^\infty_c(\mathbb{U}'',\Omega^{1/2}_{r,s})$, $\alpha_{\lambda*}(\cQ(f))=\cQ(\alpha_{\lambda*}(f))$, it follows that $\alpha_{\lambda*}$ leaves $C^\infty_c(\aF)$ and hence $C^*_{ }\aF$ invariant.
\end{remark}
\subsection{Connection with the Helffer-Nourrigat cone}\label{sec:connection_Helffer} We define an action of $C_0(\R_+)$ on $C^*\aF$ as follows. If $g\in C_0(\R_+)$ $a\in C^*\aF$, then $ga\in C^*\aF$ is the element $(ga)_t=g(t)a_t$ and $(ga)_{p,0}=g(0)a_{p,0}$. This action makes $C^*_{ }\aF$ a $C_0(\R_+)$-$C^*$-algebra as defined in \cite{KasparovInvent}. We denote by $C^*_{ }\Gr(\cF)$ the fiber at $0$ of $C^*_{ }\aF$. It lies in a short exact sequence \begin{equation}\label{eqn:exact_seq_C*-alg}
   0\to K(L^2M)\otimes C_0(\R^\times_+)\to C^*_{ }\aF\to C^*_{ }\Gr(\cF)\to 0.
\end{equation}
The $C^*$-algebra $C^*_{ }\Gr(\cF)$ is a $C_0(M)$-$C^*$-algebra. Its fiber at $p\in M$ is equal to $C^*_{ }\Gr(\cF)_p.$ Therefore, as a set, the spectrum of $C^*_{ }\Gr(\cF)$ is equal to $$\widehat{C^*_{ }\Gr(\cF)}=\bigsqcup_{p\in M}\widehat{\Gr(\cF)}_p.$$ It is thus a quotient of $\bigsqcup_{p\in M}\gr(\cF)^*_p=\gr(\cF)^*$ by the co-adjoint actions. We equip $\gr(\cF)^*$ with the subspace topology from the inclusion $\gr(\cF)^*\times \{0\}\subseteq\aF^*$ in \eqref{eqn:aF*} where the latter is equipped with the topology from Section \ref{sec:generalised_distr}.g.
\begin{prop}\label{prop:TopJacbson}The Fell topology on the spectrum of $C^*_{ }\Gr(\cF)$ coincides with the quotient topology from $\gr(\cF)^*$.
\end{prop}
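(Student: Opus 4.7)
The plan is to show that the Kirillov-type map $q\colon \gr(\cF)^*\to \widehat{C^*\Gr(\cF)}$ sending $\xi\mapsto [\pi_\xi]$ is both continuous and open, and hence a topological quotient identifying the Fell topology with the quotient topology from $\gr(\cF)^*$.

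Continuity of $q$ amounts to the lower semi-continuity of $\xi\mapsto\|\pi_\xi(a)\|$ on $\gr(\cF)^*$ for every $a\in C^*\Gr(\cF)$, since the Fell topology is generated by subbasic open sets $\{\pi\colon\|\pi(a)\|>c\}$. By density one may reduce to $a=\cQ(f)|_{t=0}$ for $f\in C^\infty_c(\mathbb{U},\Omega^{1/2}_{r,s})$ coming from a graded Lie basis $(\lie g,\natural,\mathbb{U},U)$, which exists by Proposition \ref{prop:graded Lie basis exist}. The surjective Lie algebra homomorphism $\natural_p\colon \lie g\to \gr(\cF)_p$ of Remark \ref{rmk:natural_p} then yields the identity
\[
 \pi_\xi(\cQ(f)_{p,0}) \;=\; \pi_{\xi\circ\natural_p}\bigl(f(\,\cdot\,,p,0)\bigr),
\]
where the right-hand side is the integrated representation on $C^*\lie g$ of the irreducible representation of the fixed group $\lie g$ associated, via the orbit method, to $\xi\circ\natural_p\in\lie g^*$. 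Unpacking the definition of the topology on $\aF^*$ from Section \ref{sec:generalised_distr}.g applied to test vector fields of the form $t^i\widetilde{\natural(Y)}\in\aF$ with $Y\in\lie g^i$, convergence $\xi_n\to\xi$ in $\gr(\cF)^*$ translates precisely into $p_n\to p$ in $M$ together with $\xi_n\circ\natural_{p_n}\to\xi\circ\natural_p$ in the fixed finite-dimensional space $\lie g^*$. Brown's theorem \cite{BrownArticleTopOrbitMethod} applied to $\lie g$ then gives Fell convergence of the corresponding representations in $\widehat{\lie g}$, and the continuity of $p\mapsto f(\,\cdot\,,p,0)$ in $C^*\lie g$-norm closes the argument.

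For openness one uses hull-kernel duality: given a closed $\Ad^*$-invariant $F\subseteq\gr(\cF)^*$ and $\xi_0\in\gr(\cF)^*_{p_0}\setminus F$, one has to construct $a\in C^*\Gr(\cF)$ with $\pi_\xi(a)=0$ for every $\xi\in F$ and $\pi_{\xi_0}(a)\neq 0$. Pick a neighbourhood $U$ of $p_0$ with a minimal graded Lie basis $(\lie g,\natural,\mathbb{U},U)$, so that $\natural_{p_0}$ is a Lie algebra isomorphism; shrinking $U$ if necessary, the set $\tilde F:=\{\xi\circ\natural_p:p\in\overline U,\,\xi\in F\cap \gr(\cF)^*_p\}\subseteq\lie g^*$ is closed and $\Ad^*$-saturated (closedness uses the equivalence of convergences established above, together with the compactness of $\overline U$). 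Brown's theorem on $\lie g$ supplies $b\in C^*\lie g$ separating $\xi_0\circ\natural_{p_0}$ from $\tilde F$. Choosing a cutoff $\phi\in C^\infty_c(U)$ with $\phi(p_0)=1$, combining $\phi$ with $b$ and transporting through the map $\cQ$ produces the required element $a\in C^*\Gr(\cF)$; contributions from points of $F$ lying outside $U$ are annihilated by the cutoff.

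The principal obstacle is that the family $\{\Gr(\cF)_p\}_{p\in M}$ is not a locally trivial bundle of nilpotent Lie groups: by Proposition \ref{prop:semi-cont D i over D i-1}, the dimension of the fibers is only upper semi-continuous, so Brown's theorem cannot be invoked directly on $C^*\Gr(\cF)$ as a whole. The graded Lie basis machinery is precisely the device that replaces the varying group $\Gr(\cF)_p$ by a fixed model group $\lie g$ with a family of surjections $\natural_p$, and the topology on $\gr(\cF)^*$ inherited from $\aF^*$ is designed so that the pullbacks $\xi\circ\natural_p$ depend continuously on $\xi$ across fibers of jumping dimension, allowing every Kirillov-theoretic fact about $\lie g$ to descend correctly to $\Gr(\cF)_p$.
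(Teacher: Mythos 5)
Your proof follows essentially the same route as the paper: localize to a graded Lie basis $(\lie g,\natural,\mathbb{U},U)$, pass through the surjective homomorphisms $\natural_p:\lie g\to\Gr(\cF)_p$ to the fixed model group $\lie g$, and invoke Brown's theorem on $\lie g$. The paper packages this in one stroke---the surjective $C_0(U)$-$C^*$-homomorphism $C^*\lie g\otimes C_0(U)\to C^*\Gr(\cF)_{|U}$ gives a closed embedding of spectra, dually $\gr(\cF)^*_{|U}\hookrightarrow\lie g^*\times U$ is a closed embedding, and Brown's theorem matches the closed subsets on both sides---whereas you unroll this into separate continuity and openness verifications. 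Your continuity argument is sound, and your identification of the topology on $\gr(\cF)^*$ with convergence of the pullbacks $\xi_n\circ\natural_{p_n}$ in $\lie g^*$ is exactly the content of the dual closed embedding.

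The openness step has a gap. You invoke Brown's theorem to separate $\xi_0\circ\natural_{p_0}$ from $\tilde F$, but you never check that $\xi_0\circ\natural_{p_0}\notin\tilde F$. Since the pullbacks $\natural_p^*:\gr(\cF)^*_p\to\lie g^*$ have overlapping images as $p$ varies, it is a priori possible that some $\xi\in F\cap\gr(\cF)^*_p$ with $p\in\overline U\setminus\{p_0\}$ satisfies $\xi\circ\natural_p=\xi_0\circ\natural_{p_0}$ even though $\xi_0\notin F$, in which case no $b\in C^*\lie g$ can do the separating. This is repairable by shrinking $U$ further: if $\xi_0\circ\natural_{p_0}\in\tilde F_n$ for every compact neighbourhood $\overline{U_n}\downarrow\{p_0\}$, one gets $p_n\to p_0$ and $\xi_n\in F$ with $\xi_n\circ\natural_{p_n}=\xi_0\circ\natural_{p_0}$, so by your own convergence equivalence (and injectivity of $\natural_{p_0}^*$) $\xi_n\to\xi_0$ in $\gr(\cF)^*$, forcing $\xi_0\in F$ since $F$ is closed, a contradiction. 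A secondary and dispensable issue: you demand a \emph{minimal} graded Lie basis so that $\natural_{p_0}$ is an isomorphism, but such bases need not exist. For example with $\cF^1=\langle\partial_x,x\partial_y\rangle$, $N=2$, at a regular point the minimal dimensions force $\lie g$ abelian while $[\natural(e_1),\natural(e_2)]=[\partial_x,x\partial_y]=\partial_y\neq 0$, violating \eqref{eqn:bracket graded Lie}. Fortunately your argument only uses injectivity of $\natural_{p_0}^*$, i.e.\ surjectivity of $\natural_{p_0}$, which holds for every graded Lie basis, so simply dropping the word ``minimal'' fixes this.
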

\begin{proof}
The statement is local in $M$. Let $(\mathfrak{g},\natural,\mathbb{U},U)$ be a graded Lie basis, $p\in U$. The map $\cQ:C^\infty_c(\mathbb{U},\Omega_{r,s}^{1/2})\to C^\infty_c(\aF)$ together with Theorem \ref{main_tech_thm} (see the proof Proposition \ref{prop:subalgebra}) gives a $C_0(U)$-$C^*$-homomorphism $C^*_{ }\mathfrak{g}\otimes C_0(U)\to C^*\Gr(\cF)_{|U}$. Furthermore, the restriction of this map to the fibers at $x\in U$ is equal to $C^*\natural_x$ where $\natural_x:\mathfrak{g}\to \Gr(\cF)_x$ is the group homomorphism from Remark \ref{rmk:natural_p}. It follows that the spectrum of $C^*\Gr(\cF)_{|U}$ is homeomorphic to a closed subset of the spectrum of $C^*_{ }\mathfrak{g}\otimes C_0(U)$. On the other hand, the dual maps $\natural_x^*:\gr(\cF)_x^*\to \mathfrak{g}^*$ glue together to give a closed embedding $\gr(\cF)^*_{|U}\to \mathfrak{g}^*\times U$. The result follows from Brown's theorem \cite{BrownArticleTopOrbitMethod} applied to the group $\mathfrak{g}$.
\end{proof}
\paragraph{Limit at $0$.} By \cite[Proposition C.10.a on Page 357]{MR2288954}, if $a\in C^*_{ }\aF$, then $$\limsup_{t\to 0^+}\norm{a_t}_{K(L^2M)}\leq \sup_{p\in M}\norm{a_{p,0}}_{C^*\Gr(\cF)_p}.$$ In general the inequality is strict. 
One can resolve this issue as follows. Let 
$$J=\{a\in C^*\aF:a_t=0\,\ \forall t\in \R^\times_+\}.$$ 
The set $J$ is a closed $*$-ideal in $C^*_{ }\aF$. It is concentrated in the $0$-fiber. Hence it maps injectively by the map $C^*_{ }\aF\to C^*\Gr(\cF)$ to a closed $*$-ideal in $C^*\Gr(\cF)$, that will be denoted by $J_0$. 

\begin{definition}We denote by \begin{enumerate}
\item $C^*_z\aF$  the quotient $C^*_{ }\aF/J$
\item $C^*\cT\cF$ the quotient $C^*\Gr(\cF)/J_0$, which is the fiber of $C^*_z\aF$ at $0$.
\end{enumerate}
\end{definition}

Hence one has the exact sequence \begin{equation}\label{eqn:qjjsdfljqsdfjll}
    0\to K(L^2M)\otimes C_0(\R^\times_+)\to C^*_{z }\aF\to C^*\cT\cF\to 0.
\end{equation}
We introduce some ad hoc terminology that will be useful for discussing our fields of $C^*$-algebras.

\begin{prop}[{\cite[Proposition 3.1]{NewCalgebra}}]\label{prop:I limsup}Let $A$ be a $C_0(\R_+)$-$C^*$-algebra. The following are equivalent:
\begin{enumerate}
\item For every $a\in A$, if $a_t=0$ for every $t\in \R^\times_+$, then $a=0$.
\item For every $a\in A$, $\limsup_{t\to 0^+}\norm{a_t}=\norm{a_0}$.
\end{enumerate}
Here $a_t$ denotes the fiber of $a$ at $t\in \R_+$. If these conditions are satisfied, then we say that $A$ is half-continuous at $0$.\end{prop}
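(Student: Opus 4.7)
The plan is to establish the easy direction (b) $\Rightarrow$ (a) first, then argue (a) $\Rightarrow$ (b) by contraposition using continuous functional calculus and a cutoff. For any $C_0(X)$-$C^*$-algebra one has $\norm{a}=\sup_{x\in X}\norm{a_x}$ and the function $x\mapsto\norm{a_x}$ is upper semicontinuous; this is the content of \cite[Proposition C.10]{MR2288954}, already invoked in the paragraph preceding the proposition. In particular $\limsup_{t\to 0^+}\norm{a_t}\leq\norm{a_0}$ holds for every $a\in A$, so in proving (b) only the reverse inequality is at stake.

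For (b) $\Rightarrow$ (a): if $a_t=0$ for all $t\in\R_+^\times$, then $\limsup_{t\to 0^+}\norm{a_t}=0$, hence $\norm{a_0}=0$ by (b). Then all fibers of $a$ vanish and $\norm{a}=\sup_{t\in\R_+}\norm{a_t}=0$ gives $a=0$.

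For (a) $\Rightarrow$ (b), suppose for contradiction that some $a\in A$ satisfies $\limsup_{t\to 0^+}\norm{a_t}<\norm{a_0}$. Choose $c$ with $\limsup_{t\to 0^+}\norm{a_t}<c<\norm{a_0}$, and $\delta>0$ such that $\norm{a_t}<c$ for all $t\in(0,\delta)$. Passing to the positive element $b:=a^*a$ we get $\norm{b_t}<c^2$ on $(0,\delta)$ while $\norm{b_0}>c^2$. Pick a continuous $f\colon[0,\infty)\to[0,\infty)$ with $f\equiv 0$ on $[0,c^2]$ and $f>0$ on $(c^2,\infty)$, and $\psi\in C_0(\R_+)$ with $\psi(0)=1$ and $\supp(\psi)\subseteq[0,\delta)$. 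The element $\psi\cdot f(b)\in A$ has fiber $\psi(t)f(b_t)$ at each $t\in\R_+$: for $t\geq\delta$ this vanishes because $\psi(t)=0$, and for $t\in(0,\delta)$ it vanishes because the spectrum of $b_t$ lies in $[0,c^2]$, forcing $f(b_t)=0$; yet the fiber at $t=0$ equals $f(b_0)\neq 0$. This produces an element whose fibers over $\R_+^\times$ all vanish but whose zero-fiber is nonzero, contradicting (a).

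The only technical ingredient is the compatibility of continuous functional calculus with the evaluation maps $A\to A_t$: for $b\in A$ positive and $f$ continuous with $f(0)=0$ one has $f(b)_t=f(b_t)$. This is standard, since $a\mapsto a_t$ is a $*$-homomorphism so polynomials pass through evaluation, and $f$ is a norm limit of polynomials without constant term thanks to $f(0)=0$. Beyond this, everything reduces to the construction of the witness $\psi\cdot f(b)$, which is the main creative step; I do not expect any real obstacle in this argument.
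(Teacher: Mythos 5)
Your argument is correct. The direction (b)$\Rightarrow$(a) is immediate once one knows $\norm{a}=\sup_{t\in\R_+}\norm{a_t}$ for a $C_0(\R_+)$-$C^*$-algebra, and the contrapositive argument for (a)$\Rightarrow$(b) via the cut-off element $\psi\cdot f(b)$ goes through: because $f(0)=0$, the functional calculus passes through the evaluation $*$-homomorphisms $A\to A_t$, each fiber $(\psi\cdot f(b))_t=\psi(t)f(b_t)$ for $t>0$ vanishes either because $\psi(t)=0$ or because $\sigma(b_t)\subseteq[0,\norm{b_t}]\subseteq[0,c^2]$ forces $f(b_t)=0$, while $f(b_0)\neq 0$ since the spectral radius of $b_0$ exceeds $c^2$. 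This produces a nonzero element of $A$ supported entirely in the zero fiber, contradicting (a).

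Since the paper itself does not reproduce the proof (it cites \cite{NewCalgebra}), I cannot compare with the literal source, but it is worth recording a more economical route for (a)$\Rightarrow$(b) that avoids functional calculus entirely. Condition (a) says exactly that the $*$-homomorphism $\Phi:A\to\prod_{t>0}A_t$, $\Phi(b)=(b_t)_{t>0}$, into the $\ell^\infty$-product is injective, hence isometric, so $\norm{b}=\sup_{t>0}\norm{b_t}$ for every $b\in A$. Fix $a\in A$ and for $\epsilon>0$ choose $\chi_\epsilon\in C_0(\R_+)$ with $0\leq\chi_\epsilon\leq 1$, $\chi_\epsilon(0)=1$, and $\supp(\chi_\epsilon)\subseteq[0,\epsilon)$. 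Then
$$
\norm{a_0}=\norm{(\chi_\epsilon a)_0}\leq\norm{\chi_\epsilon a}=\sup_{t>0}\,|\chi_\epsilon(t)|\,\norm{a_t}\leq\sup_{0<t<\epsilon}\norm{a_t},
$$
and letting $\epsilon\to 0^+$ yields $\norm{a_0}\leq\limsup_{t\to 0^+}\norm{a_t}$, which together with the upper semicontinuity bound gives (b). Both routes are valid; yours requires the reduction to a positive element and the compatibility of continuous functional calculus with quotients, whereas the direct argument uses only the isometry of injective $*$-homomorphisms and the $C_0(\R_+)$-module structure, and proves the contrapositive-free implication directly.
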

By construction, $C^*_z\aF$ is half-continuous at $0$. In the following theorem, we consider the Helffer-Nourrigat cone $\cT^*_p\cF$ as a subset of $\widehat{\Gr(\cF)}_p$ by the orbit method.
\begin{thm}[{\cite[Theorem 3.7 and Examples 3.8]{NewCalgebra}}]\label{thm:ana=top}The Helffer-Nourrigat cone $\cT^*\cF=\bigsqcup_{p\in M}\cT^*_p\cF\subseteq \widehat{C^*\Gr(\cF)}$ is equal to the support of $J_0$, \emph{i.e.}, $\pi\in \cT^*\cF$ if and only if $J_0\subseteq \ker(\pi)$.
\end{thm}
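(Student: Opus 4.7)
The plan is to identify $\widehat{C^*\aF}$ topologically and then translate the ideal-theoretic condition $J_0 \subseteq \ker(\pi)$ into the closure condition defining $\cT^*\cF$.

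First I would reinterpret $J_0 \subseteq \ker(\pi)$. For $\pi \in \widehat{\Gr(\cF)}_p$, view $\pi$ as a representation of $C^*\aF$ via $C^*\aF \twoheadrightarrow C^*\Gr(\cF) \xrightarrow{\pi} B(H)$. Since $J$ is concentrated at $t=0$ and is carried injectively onto $J_0$, the condition $J_0 \subseteq \ker(\pi)$ is equivalent to $J \subseteq \ker(\pi)$, which in turn says exactly that this representation factors through $C^*_z\aF = C^*\aF/J$. The open complement of this closed subset of $\widehat{C^*\aF}$ is the $t>0$ stratum: at each $t>0$ the fiber $K(L^2 M)$ contributes a single irreducible representation, by the exact sequence \eqref{eqn:exact_seq_C*-alg}. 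Hence $J_0 \subseteq \ker(\pi)$ if and only if $\pi$ lies in the Fell closure of the $t>0$ stratum inside $\widehat{C^*\aF}$.

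Second I would identify this Fell closure with the Helffer--Nourrigat cone via $\aF^*$. Working locally on a chart $U \subseteq M$ and choosing a graded Lie basis $(\mathfrak{g},\natural,\mathbb{U},U)$ (Proposition \ref{prop:graded Lie basis exist}), the construction of $\cQ$ together with Theorem \ref{main_tech_thm} yields a $C^*$-homomorphism $C^*\mathfrak{g} \otimes C_0(U \times \R_+) \to C^*\aF_{|U \times \R_+}$, rescaled by $\alpha_t$ at each $t$. Its $(x,0)$-fiber is $C^*\natural_x : C^*\mathfrak{g} \twoheadrightarrow C^*\Gr(\cF)_x$ (Remark \ref{rmk:natural_p}); its $t>0$ fibers factor through the regular representation of $\mathfrak{g}$ on $L^2 M$, whose dual data is the pullback of the linear inclusion $T^*_xM \hookrightarrow \mathfrak{g}^*$. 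Dually, $\aF^*$ embeds continuously into $\mathfrak{g}^* \times M \times \R_+$, extending the $\gr(\cF)^* \hookrightarrow \mathfrak{g}^*$ used in the proof of Proposition \ref{prop:TopJacbson}. Brown's theorem \cite{BrownArticleTopOrbitMethod} applied to the fixed nilpotent Lie group $\mathfrak{g}$ then identifies the Fell topology on $\widehat{C^*\aF}$ with the quotient of $\aF^*$ (as decomposed in \eqref{eqn:aF*}) by trivial equivalence on the $t>0$ stratum and by coadjoint orbits of $\Gr(\cF)_x$ on the $t=0$ stratum.

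Third, I would combine the two identifications. A representation $\pi$ corresponding to a coadjoint orbit $O \subseteq \gr(\cF)^*_p$ lies in the Fell closure of the $t>0$ stratum if and only if some $\xi \in O$ satisfies $(p,\xi,0) \in \overline{T^*M \times \R^\times_+}$, i.e., $O \cap \cT^*_p\cF \neq \emptyset$. By Proposition \ref{prop:inv coadjoint orbit}, $\cT^*_p\cF$ is saturated under coadjoint orbits, so this is equivalent to $O \subseteq \cT^*_p\cF$, which is exactly the orbit-method characterization of $\pi \in \cT^*\cF$. The main obstacle is the topological identification in step two: Brown's theorem is stated for a single nilpotent Lie group, whereas here one must handle a continuous field of varying quotients $\natural_x : \mathfrak{g} \twoheadrightarrow \Gr(\cF)_x$, further deformed by the scaling $\alpha_t$. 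Showing that Fell convergence of irreducible representations of $C^*\aF$ corresponds in both directions to convergence of coadjoint data in $\aF^*$, uniformly in $(x,t)$, is the technical heart of the argument and is what is carried out in detail in \cite[Theorem 3.7]{NewCalgebra}.
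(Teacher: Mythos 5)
This theorem is stated in the paper as a citation of \cite[Theorem~3.7 and Examples~3.8]{NewCalgebra} and is not proved here, so there is no in-paper argument to compare against; I will review your outline on its own terms.

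Your first step is correct and is the right reduction. Since $J=\bigcap_{t>0}\ker\pi_t$, with $\pi_t$ the evaluation at $t$ (the unique irrep of the fiber $K(L^2M)$), and since $J$ maps bijectively onto $J_0$, a representation $\pi\in\widehat{C^*\Gr(\cF)}$ has $J_0\subseteq\ker\pi$ iff, viewed as a representation of $C^*\aF$, it lies in the hull--kernel closure of $\{\pi_t:t>0\}$.

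Your second step, however, contains a genuine gap. You assert that $\cQ$ together with Theorem~\ref{main_tech_thm} yields a $C^*$-homomorphism $C^*\mathfrak{g}\otimes C_0(U\times\R_+)\to C^*\aF_{|U\times\R_+}$, and you then dualize this to pull back Brown's theorem. No such homomorphism exists. For $t>0$ the ``product'' on $\mathbb{U}$ supplied by $\phi$ from Theorem~\ref{main_tech_thm} is the composition of time-one flows $\exp(\natural(\alpha_t(Y)))\circ\exp(\natural(\alpha_t(X)))$, which is \emph{not} the group law on $\mathfrak{g}$ and in general is not a group operation at all; it agrees with BCH on $\mathfrak{g}$ only asymptotically as $t\to 0$, and that asymptotic agreement is precisely Theorem~\ref{thm:BCH}. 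This is why Proposition~\ref{prop:TopJacbson} uses this construction only at the $t=0$ fiber, producing $C^*\mathfrak{g}\otimes C_0(U)\twoheadrightarrow C^*\Gr(\cF)_{|U}$; there is no $t$-parametrized extension through $C^*\mathfrak{g}$. (There is a further structural obstruction: $C^*\aF$ is a $C_0(\R_+)$-algebra but not a $C_0(M\times\R_+)$-algebra, since $K(L^2M)$ is not $C_0(M)$-fibered, so a $C_0(U\times\R_+)$-linear morphism into it is not even the right kind of object.) Your phrase ``trivial equivalence on the $t>0$ stratum'' also misdescribes the situation: $\aF^*$ contributes a full $T^*M$ at each $t>0$, while $\widehat{C^*\aF}$ has a single point at each $t>0$, so the identification there is maximal, not trivial. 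Brown's theorem is moreover a statement about \emph{irreducible} representations of a fixed nilpotent group, whereas the $t>0$ representations are highly reducible on $L^2M$; relating their Fell closure to the geometric closure in $\aF^*$ is a nontrivial disintegration/weak-containment argument.

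Your third step is a correct formal consequence once step two is granted, using Proposition~\ref{prop:inv coadjoint orbit} to saturate $\cT^*_p\cF$ under coadjoint orbits. But as you yourself note, the equivalence between the Fell closure of $\{\pi_t\}$ and the closure $\overline{T^*M\times\R^\times_+}\subseteq\aF^*$ is the technical heart and is deferred to \cite[Theorem~3.7]{NewCalgebra}. The one concrete mechanism you propose for carrying it out does not work, so the proposal is an outline of the strategy rather than a proof.
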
 
Theorem \ref{thm:ana=top} together with Proposition \ref{prop:I limsup} imply that if $a\in C^*\aF$, then \begin{equation}\label{eqn:mainineqca}
 \limsup_{t\to 0^+}\norm{a_t}_{K(L^2M)}=\sup_{p\in M}\sup_{\pi\in\cT^*_p\cF}\norm{\pi(a_{p,0})}_{L^2\pi}.
\end{equation}
\begin{remark} Although we don't need this, we suspect that we can replace $\limsup_{t\to 0^+}$ with $\lim_{t\to 0^+}$ in \eqref{eqn:mainineqca}. The proof of Theorem \ref{thm:ana=top} shows that this is possible if and only if for any sequence $t_n\in \R^\times_+$ such that $t_n\to 0$, one has $$\cT^*\cF=\{\xi \in\gr(\cF)^*:(\xi,0)\in \overline{\bigsqcup_{n\in \N}T^*M\times \{t_n\}}\subseteq \aF^*\}.$$
We don't have an example where this fails.
\end{remark}

\subsection{Proof of Theorem \ref{thm intro char set} and Theorem \ref{thm:sybmol_welldefined}}\label{sec:diff_multi}
In this section we will prove Theorem \ref{thm:sybmol_welldefined} and thus in particular Theorem \ref{thm intro char set}.
\begin{proof}[Proof of Theorem \ref{thm:sybmol_welldefined}]
In this proof, if $X\in \cX(M)$, then $L_X$ denotes the Lie derivative which acts on $C^\infty(M,\Omega^\alpha)$ for $\alpha\in \C$ using the flow of $X$. Let $X\in \cF^i$, $p\in M$, $\tilde{X}_p$ the right invariant vector field on $\Gr(\cF)_p$ associated to $[X]_{i,p}\in \gr(\cF)_p$. Since $\tilde{X}_p$ is right invariant, it satisfies $$L_{\tilde{X}_p}(f\star g)=L_{\tilde{X}_p}(f)\star g,\quad f,g\in C^\infty_c(\Gr(\cF)_p,\Omega^1).$$ Hence $L_{\tilde{X}_p}$ defines an unbounded multiplier of $C^*\Gr(\cF)_p$ with domain $C^\infty_c(\Gr(\cF)_p,\Omega^1)$. Let $\theta_i(X)$ be the unbounded multiplier of $L^\infty\aF$ with domain $C^\infty_c(\aF)$ defined by \begin{equation}\label{eqn:theta}
 (\theta_i(X)a)_t=t^iL_{X} \circ a_t,\quad (\theta_i(X)a)_{p,0}=L_{\tilde{X}_p}(a_{p,0}),\quad a\in C^\infty_c(\aF),
\end{equation} where $L_{X} \circ a_t$ is the composition $$L^2M\xrightarrow{a_t} C^\infty_c(M,\Omega^{1/2})\xrightarrow{L_X}C^\infty_c(M,\Omega^{1/2})\subseteq L^2M.$$ \begin{lemma}\label{lem:proof_sqjdfmokj} $\theta_i(X)(C^\infty_c(\aF))\subseteq C^\infty_c(\aF)$.\end{lemma}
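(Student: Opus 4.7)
The plan is to treat separately the two families of generators of $C^\infty_c(\aF)$. For a first-type element $h\in C^\infty_c(M\times M\times\R_+^\times,\Omega_t^{1/2})$, formula \eqref{eqn:theta} gives $(\theta_i(X)h)_t=t^i L_X^{(1)}h_t$, where $L_X^{(1)}$ denotes Lie derivative on the first ($M$-)factor, while $(\theta_i(X)h)_{p,0}=0$; smoothness in $t$ and stability under $L_X^{(1)}$ are immediate, so $\theta_i(X)h$ is again of first type. The real content is the second-type case $\cQ(f)$ with $f\in C^\infty_c(\mathbb{U},\Omega^{1/2}_{r,s})$: I will construct a smooth vector field $\tilde X$ on a suitable graded Lie basis so that $\cQ(L_{\tilde X}f)=\theta_i(X)\cQ(f)$; since Lie derivatives preserve smoothness and compact support of sections of $\Omega^{1/2}_{r,s}$, this will conclude the proof.

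By the trick of Lemma~\ref{lem:qsjidfpiujqsipdf}, up to first-type error terms I may replace $(V,\natural,\mathbb{U},U)$ by any chosen graded Lie basis. Taking $\mathfrak g$ to be the free graded nilpotent Lie algebra (cf.\ Proposition~\ref{prop:graded Lie basis exist}) obtained by enlarging $V$ with an extra generator $X_0$ in degree $i$ with $\natural(X_0)=X$, I may assume $f\in C^\infty_c(\mathbb{U}'',\Omega^{1/2}_{r,s})$ for a graded Lie basis $(\mathfrak g,\natural,\mathbb{U}'',U)$ that contains $X_0$. Apply Theorem~\ref{main_tech_thm} to obtain the pseudo-group-law $\phi$ on an $\R^\times_+$-invariant neighbourhood of $\{0\}\times\{0\}\times U\times\{0\}$ and set
\[
 \tilde X(Y,x,t)\;:=\;\frac{\partial}{\partial s}\bigg|_{s=0}\phi(sX_0,Y,x,t).
\]
By Theorem~\ref{main_tech_thm}(d), this is a genuine smooth vector field on a neighbourhood of $\{0\}\times U\times\{0\}$; a further partition of unity, with the far-from-$Y=0$ pieces absorbed into first-type corrections via Theorem~\ref{thm:inverse_bisubmersion}, reduces me to the case $\supp(f)\subseteq\dom(\tilde X)$.

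The two intertwining identities to verify are that $\ev_t$ (respectively $\ev_{p,0}$) pushes $\tilde X$ forward to the expected vector field on the target. On $\mathbb{U}_{>0}$, Theorem~\ref{main_tech_thm}(b) yields $\ev_t\circ\phi(sX_0,Y,x,t)=(\exp(st^i X)\cdot y,x)$ with $y=\exp(\natural(\alpha_t(Y)))\cdot x$; differentiating at $s=0$ shows $d\ev_t(\tilde X)=(t^i X,0)$ on $M\times M$. On $\mathbb{U}_0$, Theorem~\ref{main_tech_thm}(c) gives $\phi(sX_0,Y,p,0)=(\BCH(sX_0,Y),p,0)$, so that $\tilde X|_{V\times\{p\}\times\{0\}}$ is the right-invariant vector field on the group $\mathfrak g$ associated to $X_0$; under the Lie group homomorphism $\natural_p$ of Remark~\ref{rmk:natural_p}, this pushes forward to the right-invariant vector field $\tilde X_p$ on $\Gr(\cF)_p$ associated to $[X]_{i,p}$. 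In both cases the standard formula $\pi_*(L_{\tilde X}\mu)=L_X(\pi_*\mu)$ for pushforward of Lie derivatives of densities along submersions recovers exactly $(\theta_i(X)\cQ(f))_t$ and $(\theta_i(X)\cQ(f))_{p,0}$ prescribed by \eqref{eqn:theta}. The main obstacle is precisely this bookkeeping across the boundary $t=0$, where the $t^i$ factor in the definition of $\theta_i$ is naturally absorbed into the graded dilation $\alpha_t$ appearing inside $\phi$; the restricted-domain issue for $\phi$ is handled routinely by partitions of unity, and then $L_{\tilde X}f\in C^\infty_c(\mathbb{U}'',\Omega^{1/2}_{r,s})$ yields $\cQ(L_{\tilde X}f)\in C^\infty_c(\aF)$.
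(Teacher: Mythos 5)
Your proof is correct and follows essentially the same route as the paper's: reduce to a graded Lie basis with $X$ in the image of $\natural$, differentiate the pseudo-group law $\phi$ of Theorem~\ref{main_tech_thm} at the first slot to get a vector field $\tilde X$, and use parts (b), (c), (d) of that theorem to check that $\ev_t$ pushes $\tilde X$ to $t^iX$ and that the $t=0$ restriction is the right-invariant field. The only (harmless) deviation is the handling of the domain issue: you split $f$ into a piece supported in $\dom(\tilde X)$ and a remainder, whereas the paper extends the vector field over all of $\mathbb{U}$ by patching with a lift of $t^iX$ on $\mathbb{U}_{>0}$; your justification of why the remainder is of first type should not invoke Theorem~\ref{thm:inverse_bisubmersion} (which is irrelevant here), but rather the observation that $\dom(\tilde X)$, being an $\R^\times_+$-invariant open neighbourhood of $\{0\}\times U\times\{0\}$, automatically contains all of $V\times U\times\{0\}$, so any piece of $f$ supported off $\dom(\tilde X)$ lies in $\mathbb{U}_{>0}$ and hence contributes only a first-type term.
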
\begin{proof}
For elements of first type this is obvious. For elements of second type, by the discussion in the proof of Proposition \ref{prop:subalgebra}, it is enough to consider elements of the form $\cQ(f)$ where $f\in C^\infty_c(\mathbb{U},\Omega^{1/2}_{r,s})$ and $(\mathfrak{g},\natural,\mathbb{U},U)$ is a graded Lie basis. We can further suppose that we are given an element $\bar{X}\in \mathfrak{g}^i$ such that $\natural(\bar{X})=X$.
\begin{lemma}There exists a vector field $Y$ defined on $\mathbb{U}$ such that \begin{enumerate}
\item If $s: \mathbb{U}\to M\times \R_+$ is the map $s(v,x,t)=(x,t)$, then $ds(Y)=0$.
\item If $\pi:\mathbb{U}\to M$ is the map $\pi(v,x,t)=\exp(\natural(\alpha_t(v)))\cdot x$, then $d\pi(Y)=t^iX\circ \pi$.
\item for every $p\in U$, the restriction of $Y$ to $\mathfrak{g}\times \{p\}\times \{0\}$ is the right invariant vector field associated to $\bar{X}\in \mathfrak{g}$.
\end{enumerate}
\end{lemma}
\begin{proof}
Let $\phi$ as in Theorem \ref{main_tech_thm}. We define $Y$ on $\{(v,x,t)\in \mathbb{U}:(0,v,x,t)\in \dom(\phi)\}$ by $$Y(v,x,t)=\frac{d}{d\tau}\Big\rvert_{\tau=0}\phi(\tau \bar{X},v,x,t).$$ By Theorem \ref{main_tech_thm}.b, $s(\phi(\tau \bar{X},v,x,t))=(x,t)$ and $$ \pi(\phi(\tau \bar{X},v,x,t))=\exp(\natural(\alpha_t(\tau \bar{X})))\cdot \pi(v,x,t)=\exp(\tau t^iX)\cdot \pi(v,x,t).$$ It follows that $Y$ satisfies a and b. By Theorem \ref{main_tech_thm}.c, it follows that $$Y(v,x,0)=\frac{d}{d\tau}\Big\rvert_{\tau=0}(\BCH(\tau \bar{X},v),x,0).$$ Hence $Y$ satisfies c. We can cover $\mathbb{U}$ by $\mathbb{U}_{>0}$ and $\{(v,x,t)\in \mathbb{U}:(0,v,x,t)\in \dom(\phi)\}$. On $\mathbb{U}_{>0}$, by Condition (iii) of Definition \ref{dfn:graded basis}, we can easily construct $Y$ satisfying a and b. The Lemma follows by a partition of unity argument.
\end{proof}
The proof of Lemma \ref{lem:proof_sqjdfmokj} is complete because $\theta_i(X)(\cQ(f))=\cQ(L_Y(f))$.
\end{proof}
If $n\in \N$, we define an unbounded multiplier $T_{n}$ of $L^\infty\aF$ with domain $C^\infty_c(\aF)$ as follows$$T_n(a)_t=t^na_t,\quad T_n(a)_{p,0}=0,\quad a\in C^\infty_c(\aF).$$ We also define $T_0(a)=a$.
Now let $D\in \Diff^k_\cF(M,\Omega^{1/2}M)$ and further suppose that $D$ is compactly supported. By the definition of $\Diff^k_\cF(M,\Omega^{1/2}M)$, we can write $D$ as a sum of monomials of the form $\alpha L_{Y_1}\cdots L_{Y_{l}}$ with $\alpha\in \C$, $Y_{i}\in\cF^{a_{i}}$ for all $i$ and $\sum_{i=1}^l  a_{i} \leq k$. This expression is slightly different from the one we used in Section \ref{sec:differential operators}, where we used $\nabla_{Y}$ instead of $L_Y$. This makes no difference in the definition of the principal symbol because $\nabla_Y-L_Y\in C^\infty_c(M,\C)$.  We now define an unbounded multiplier of $L^\infty\aF$ denoted by $\Theta(D)$ by taking the sum $$\alpha T_{k-\sum_{i=1}^l a_i } \theta_{a_1}(Y_{1})\cdots \theta_{a_l}(Y_{l})$$ for each monomial in the decomposition of $D$. By Lemma \ref{lem:proof_sqjdfmokj}, $\Theta(D)$ is well defined on $C^\infty_c(\aF)$ and $$\Theta(D)(C^\infty_c(\aF))\subseteq C^\infty_c(\aF).$$ Furthermore if $a\in C^\infty_c(\aF)$, then $(\Theta(D)a)_t=t^kD\circ a_t$ for $t>0$ and for $p>0$, $(\Theta(D)a)_{p,0}$ is a sum of $\alpha L_{\widetilde{Y}_{1p}}\cdots L_{\widetilde{Y}_{lp}}(a_{p,0})$, and one only sums over monomials such that $\sum_{i=1}^la_i=k$. Hence if $\pi\in \widehat{\Gr(\cF)_p}$, then $$\pi((\Theta(D)a)_{p,0})=\sigma^k(D,p,\pi)(\pi(a_{p,0})).$$ It is true that $\Theta(D)$ may depend on the presentation of $D$ as a sum of monomials. But its action on the nonzero fibers doesn't depend on the presentation (and is equal to $t^kD$). Hence by \eqref{eqn:mainineqca}, $\sigma(D,p,\pi)(\pi(a_{p,0}))$ for $a\in C^*\aF$ and $\pi \in \cT^*_p\cF$ also doesn't depend on the presentation of $D$ as sum of monomials. This finishes the proof of Theorem \ref{thm:sybmol_welldefined} for compactly supported differential operators. For general operators, it is clear that $\sigma^k(D,p,\pi)$ only depends on $D$ in a neighbourhood of $p$, so  Theorem \ref{thm:sybmol_welldefined}  follows for $E=\Omega^{1/2}M$. For other vector bundles, one can embed them inside $ \C^n\otimes \Omega^{1/2}M$ for $n$ big enough.
\end{proof}


\section{Pseudodifferential Operators}\label{sec:Pseudodifferential_Operators}
In this section we define an algebra $\Psi(\cF^\bullet)$ of pseudodifferential operators. We show that $\Psi(\cF^\bullet)$ admits properties very similar to the properties of the algebra of classical pseudodifferential operators \cite[Chapter 18]{HormanderBooks3}. All the results of this section easily extend to pseudodifferential operators with vector bundle coefficients. We will omit them to simplify the exposition. Throughout this section, we will treat the Helffer-Nourrigat cone as a set of representations of the osculating groups. This section is organized as follows.
 \begin{itemize}
 \item In Section \ref{sec:general_symbols}, given a graded basis $(V,\natural, \mathbb{U},U)$, we define a vector space of distributions on $\mathbb{U}$. This space will be defined in two equivalent ways. The first is by a standard quantization of symbols on $V$. The other is by invoking properties of the $\R_+^\times$ action on $\mathbb{U}$.
 \item In Section \ref{subsection:dfn of pseudo-diff}, we define our pseudodifferential operators.
 \item In Section \ref{subsection:adjoint}, we prove that $\Psi(\cF^\bullet)$ is closed under composition and adjoint.
 \item In Section \ref{sec:dis_groups}, we recall a few results from \cite{ChrGelGloPol} which will let us construct parametricies for some elements in $\Psi(\cF^\bullet)$.
 \item In Section \ref{sec:Sobolev}, we extend the definition of the Sobolev spaces in the introduction to $s\in \R$.
 \item In Section \ref{sec:princip symb}, we extend the definition of our principal symbol to $\Psi(\cF^\bullet)$.
 \item In Section \ref{sec:Parametrised}, we prove that the operators whose principal symbol vanishes are compact.
 \item In Section \ref{sec:para order 0}, we prove Theorems \ref{mainthmintro2} and \ref{mainthmintro3} when $M$ is compact.
 \item In Section \ref{sec:noncompact}, we prove Theorems \ref{mainthmintro2} and \ref{mainthmintro3} when $M$ is arbitrary.
 \end{itemize}
\subsection{Oscillatory integrals}\label{sec:general_symbols}
If $E\to M$ a vector bundle, then we denote by $\cD'(M,E)$ the topological dual of $C^\infty_c(M,E^*\otimes \Omega^1M)$. In particular $C^\infty_c(M,E)\subseteq \cD'(M,E)$. We use $\cD'(M,\Omega^\alpha):=\cD'(M,\Omega^\alpha M)$.
\begin{definition}[{\cite[Section 1.2]{AS2}, see also \cite{LesManVas}}]
 Let $ \phi:M_1\to M_2$ be a smooth submersion map, $E\to M_1$ a vector bundle. We say a distribution $u\in \cD'(M_1,E)$ is transverse to $\phi$ if $\phi_*(fu)\in C^\infty_c(M_2)$ for any $f\in C^\infty_c(M_1,E^*\otimes \Omega^1\ker(d\phi))$.\end{definition}
If $u$ is transverse to $\phi$, then one can restrict $u$ to $\phi^{-1}(x)$ for any $x\in M_2$ and obtain $u_x\in \cD'(\phi^{-1}(x),E_{|\phi^{-1}(x)})$. The following example illustrates the use of transverse distributions for pseudodifferential operators. \begin{ex}\label{ex:schwartz_trans}Let $p:M_1\times M_2\to M_2$ be the projection map. By the Schwartz kernel theorem \cite[Theorem 5.2.1]{HormanderBook1}, an element $u\in \cD'(M_1\times M_2,\Omega^{1/2})$ corresponds to a continuous map between $C^\infty_c(M_2,\Omega^{1/2})\to \cD'(M_1,\Omega^{1/2})$. It is easily seen that $u$ is transverse to $p$ if and only if it maps $C^\infty_c(M_2,\Omega^{1/2})$ to $C^\infty(M_1,\Omega^{1/2})$. 
\end{ex}
 Let $(V,\natural,\mathbb{U},U)$ be a graded basis. Recall that $\R_+^\times$ acts on $\mathbb{U}$ by \eqref{eqn:action bisubmersion}. If $u\in \cD'(\mathbb{U},\Omega^{1/2}_{r,s})$, then we define 
$$\langle\alpha_{\lambda*} u,f\rangle=\lambda^{-1} \langle u,\alpha_{\lambda^{-1}*}f\rangle.$$\begin{definition}\label{dfn:Eprimek} 
Let $k\in \C$. We define $\mathcal{E}^{\prime k}(\mathbb{U})$ to be the subspace of $u\in \cD'(\mathbb{U},\Omega^{1/2}_{r,s})$ such that \begin{enumerate}
\item $u$ is transverse to the map $s:\mathbb{U}\to M\times \R_+$ given by $s(X,x,t)=(x,t)$.
\item For any $\lambda\in \R_+^\times$,
\begin{equation}\label{eqn: equiv cond def pseudo diff}
\alpha_{\lambda*} u-\lambda^k u\in C^\infty_c(\mathbb{U},\Omega^{1/2}_{r,s}). 
\end{equation} 
\item The projection $\supp(u)\to \R_+$ is proper, where the projection comes from the inclusion $\supp(u)\subseteq \mathbb{U}\subseteq V\times U\times \R_+$. 
\end{enumerate}
\end{definition}
The following proposition is a bundle version of a proposition due to Taylor \cite[Proposition 2.2]{TaylorBook}. It shows that any $u\in \cE^{\prime k}(\mathbb{U})$ is the sum of an oscillatory integral and a Schwartz function. To state it, we need the following notation. If $V$ is a vector space and $f\in C^\infty(V,\Omega^1V)$, then we say that $f$ is Schwartz if it is Schwartz after trivializing $\Omega^1V$ by any Euclidean structure on $V$. In the next proposition, we treat $u\in \mathcal{E}^{\prime k}(\mathbb{U})$ as an element of $\cD'(\mathbb{U},\Omega^1V)$ by \eqref{eqn:qsiodfjpkjsqojdfkoq}.
 \begin{prop}\label{prop:const pseudo from symb}Let $k\in \C$. If $u\in \mathcal{E}^{\prime k}(\mathbb{U})$, then there exists a unique smooth function $A\in C^\infty((V^*\times U\times \R_+)\setminus (\{0\}\times U\times \{0\}))$ called the full symbol of $u$ such that
 \begin{enumerate}
   \item For all $\lambda\in\R_+^\times$ 
   \begin{equation}\label{eqn:A_homog_symbol}
   A(\hat{\alpha}_\lambda(\xi),x,t\lambda)= \lambda^k A(\xi,x,t),
   \end{equation}
where $\hat{\alpha}_\lambda(\xi)$ is defined in \eqref{eqn:act dual space}.
\item There exists $K\subseteq U$ compact such that $\supp(A)\subseteq V^*\times K\times \R_+$.
\item If $\chi\in C^\infty_c(V^*\times \R_+)$ equal to $1$ in a neighbourhood of $(0,0)$, then
\begin{equation}\label{eqn:homg symbols second eqn}
 f(X,x,t)=u(X,x,t)-\int_{V^*}e^{i\langle \xi,X\rangle}(1-\chi)(\xi,t)A(\xi,x,t),
\end{equation}
then $f(X,x,t)\in C^\infty(V\times U\times \R_+,\Omega^1V)$, and $\supp(f)\subseteq V\times K\times [0,a]$ for some $K\subseteq U$ compact and $a\in \R_+$. Furthermore $f$ and all its derivatives in $x$ and $t$ are Schwartz in $X$ uniformly in $x$ and $t$.
\end{enumerate}
Conversely if $A\in C^\infty((V^*\times U\times \R_+)\setminus (\{0\}\times U\times \{0\}))$ satisfies $a$ and $b$, then there exists $u\in \mathcal{E}^{\prime k}(\mathbb{U})$ such that $c$ is satisfied, i.e., whose full symbol is $A$.
\end{prop}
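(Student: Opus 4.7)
The plan is to follow the classical Taylor-type construction of homogeneous symbols, adapted to the anisotropic dilation $\alpha_\lambda$, by identifying $A$ with the $k$-quasi-homogeneous component of the partial Fourier transform of $u$ along the $V$-fibers of $s$.

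First I would trivialize $\Omega^{1/2}_{r,s} \simeq \Omega^1 V$ via \eqref{eqn:qsiodfjpkjsqojdfkoq}, fix a Lebesgue measure on $V$, and regard $u$ as a scalar distribution on $\mathbb{U}$. Transversality of $u$ to $s$ means that, for each $(x,t)\in U\times \R_+$, the restriction $u_{x,t}$ is a well-defined distribution on the $s$-fibre $V$, depending smoothly on $(x,t)$. Condition (c) of Definition \ref{dfn:Eprimek} guarantees that, on every pre-image $s^{-1}(U\times [0,a])$, the support of $u$ is compact in $V\times U$-direction, so one can take the partial Fourier transform $\hat u(\xi,x,t) := \int_V e^{-i\langle\xi,X\rangle}u(X,x,t)\,dX$; this $\hat u$ is smooth in $\xi$ away from the growth at infinity, and smooth in $(x,t)$.

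Next I would translate condition \eqref{eqn: equiv cond def pseudo diff} through Fourier transform. Writing $Q=\sum_i i\dim V^i$ for the homogeneous dimension, the condition $\alpha_{\lambda*}u - \lambda^k u \in C^\infty_c(\mathbb{U})$ becomes the statement that
\[
 \lambda^{k-Q}\,\hat u(\hat\alpha_{\lambda^{-1}}\xi,\,x,\,\lambda t) - \hat u(\xi,x,t)
\]
is a Schwartz function in $\xi$, locally uniformly in $(x,t)$, for every $\lambda\in\R_+^\times$. Iterating this over $\lambda\to 0$ and $\lambda\to\infty$, together with smoothness of $\hat u$ on any compact neighbourhood of a transversal cross-section $\Sigma\subset (V^*\times\R_+)\setminus\{(0,0)\}$ for the $\hat\alpha$-action, gives smoothness of $\hat u$ on all of $(V^*\times U\times\R_+)\setminus(\{0\}\times U\times\{0\})$ modulo Schwartz remainders. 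I would then define $A$ by first setting $A|_{\Sigma\times U} := \hat u|_{\Sigma\times U}$ and then declaring $A(\hat\alpha_\lambda\xi,x,\lambda t):=\lambda^k A(\xi,x,t)$; condition (a) holds by construction, while (b) follows from the properness of $\supp(u)$ over $\R_+$ once it is traced through the cross-section.

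For (c), the inverse Fourier transform of $(1-\chi)(\xi,t)A(\xi,x,t)$ differs from $u$ by a distribution whose Fourier transform is Schwartz in $\xi$ locally uniformly in $(x,t)$; hence the remainder $f$ from \eqref{eqn:homg symbols second eqn} is Schwartz in $X$ together with all $(x,t)$-derivatives, and the $t$-support bound comes from properness. Uniqueness of $A$ follows from the fact that any other choice $A'$ would satisfy $(1-\chi)(A-A')\in \mathcal{S}(V^*\times U\times\R_+)$, and combining this with the $k$-quasi-homogeneity \eqref{eqn:A_homog_symbol} forces $A=A'$. The converse is a direct verification: given $A$ satisfying (a) and (b), the oscillatory integral $u(X,x,t):=\int e^{i\langle\xi,X\rangle}(1-\chi)(\xi,t)A(\xi,x,t)\,d\xi$ converges as a distribution on $\mathbb{U}$, is transverse to $s$, has proper $\R_+$-projection, and an elementary computation shows $\alpha_{\lambda*}u-\lambda^k u$ is smooth and compactly supported.

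The main obstacle is the bootstrap step establishing smoothness of $\hat u$ away from $\{0\}\times U\times\{0\}$ with Schwartz control locally uniform in $(x,t)$: one must iterate the quasi-homogeneity modulo Schwartz and carefully track the anisotropic Jacobian factors $\lambda^Q$ coming from $\alpha_\lambda$ together with the density identifications, ensuring that the cross-section estimates propagate correctly under the non-compact $\hat\alpha$-orbit action.
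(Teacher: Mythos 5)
Your overall strategy (fiberwise Fourier transform along $s$, then extract the quasi-homogeneous part) is the same as the paper's, but the step where you actually produce $A$ contains a genuine error. You set $A|_{\Sigma\times U}:=\hat u|_{\Sigma\times U}$ on a cross-section $\Sigma$ of the $\hat\alpha$-orbits and extend by exact $k$-homogeneity. This gives the wrong function: writing $v=\hat u$ and using condition (b) of Definition \ref{dfn:Eprimek} in the form $v(\hat\alpha_\lambda(\xi),x,\lambda t)=\lambda^k v(\xi,x,t)+h_\lambda(\xi,x,t)$ with $h_\lambda$ Schwartz, the cocycle identity gives
\[
h_{2^{l+1}}(\sigma,x,t_\sigma)=\sum_{n=0}^{l}2^{k(l-n)}h_2(\hat\alpha_{2^n}(\sigma),x,2^n t_\sigma),
\]
whose leading term is $2^{kl}h_2(\sigma,x,t_\sigma)$. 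Hence your $A$ satisfies $(\hat u-A)(\hat\alpha_\lambda(\sigma),x,\lambda t_\sigma)=h_\lambda(\sigma,x,t_\sigma)\sim \lambda^{k}h_2(\sigma,x,t_\sigma)$ along orbits, i.e.\ the discrepancy is of the \emph{same order} as $A$ itself rather than rapidly decreasing. Consequently $(1-\chi)(\hat u-A)$ is not Schwartz and condition (c) fails (and by the uniqueness you yourself prove, your $A$ cannot coincide with the correct one unless all the $h_2(\sigma,x,t_\sigma)$ happen to vanish). The missing idea is a renormalization: the correct symbol is the limit of the rescaled dilates, $A(\xi,x,t)=\lim_{l\to\infty}2^{-kl}v(\hat\alpha_{2^l}(\xi),x,2^l t)=v(\xi,x,t)+\sum_{n=0}^{\infty}2^{-kn}h_2(\hat\alpha_{2^n}(\xi),x,2^n t)$, the series converging absolutely away from $\{0\}\times U\times\{0\}$ because $h_2$ is Schwartz in $\xi$ and compactly supported in $t$; exact homogeneity for all $\lambda$ then follows by running the same construction with $2^{1/l}$ in place of $2$ and using continuity.

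Two smaller issues: your Fourier-side homogeneity relation carries an $\hat\alpha_{\lambda^{-1}}$ and a $\lambda^{-Q}$ that do not match the target \eqref{eqn:A_homog_symbol}; with the half-density identification $\Omega^{1/2}_{r,s}\simeq\Omega^1V$ the Jacobian cancels and the correct relation is $v(\hat\alpha_\lambda(\xi),x,\lambda t)-\lambda^k v(\xi,x,t)\in\mathcal{S}$, so this needs to be redone carefully. In the converse direction, the oscillatory integral $\int_{V^*}e^{i\langle\xi,X\rangle}(1-\chi)A\,d\xi$ lives on all of $V\times U\times\R_+$ and need not have compact $V$-support over compact $t$-intervals, so it is neither supported in $\mathbb{U}$ nor proper over $\R_+$; one must multiply by an $\R_+^\times$-compatible cutoff $g$ supported in $\mathbb{U}$ and equal to $1$ near $\{0\}\times K\times\R_+$ before one obtains an element of $\mathcal{E}^{\prime k}(\mathbb{U})$.
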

\begin{proof}
We choose a Euclidean metric on $V$ and a Riemmanian metric on $M$. We have thus trivialized all densities that appear above. Since $u$ is transverse to $s:\mathbb{U}\to M\times \R_+$, we can restrict $u$ to $\mathbb{U}\cap (V\times \{x\}\times \{t\})$ for any $(x,t)\in U\times \R_+$. We denote the restriction by $u_{x,t}$. By Condition c of Definition \ref{dfn:Eprimek}, $u_{x,t}$ is compactly supported. Let $v$ be the smooth function on $V^*\times U\times \R_+$ given by $v(\xi,x,t)=\hat{u}_{x,t}(\xi)$ where $\hat{u}_{x,t}$ is the Euclidean Fourier transform of $u_{x,t}$. Condition b of Definition \ref{dfn:Eprimek} implies that for every $\lambda\in \R_+$, there exists $h_\lambda\in C^\infty_c(V\times U\times \R_+)$ such that $$v(\hat{\alpha}_\lambda(\xi),x,t\lambda)-\lambda^kv(\xi,x,t)=h_{\lambda}(\xi,x,t)$$ and $h_\lambda$ and all its derivatives in $x,t$ are Schwartz in $\xi$ uniformly in $x,t$. By induction one has  \begin{equation}\label{eqn:qnjilsdfl}
 2^{-k(l+1)}v(\hat{\alpha}_{2^{l+1}}(\xi),x,2^{l+1}t)=v(\xi,x,t)+\sum_{n=0}^{l}2^{-kn}h_{2}(\hat{\alpha}_{2^n}(\xi),x,2^nt),\quad \forall l\in \N.
\end{equation} We define \begin{equation}\label{eqn:qhsdjliqsdfjihlJISD}
 A(\xi,x,t)=v(\xi,x,t)+\sum_{n=0}^{+\infty}2^{-kn}h_{2}(\hat{\alpha}_{2^n}(\xi),x,2^nt).
\end{equation} We now check that $A$ has the required properties \begin{enumerate}
\item if $(\xi,t)\neq (0,0)$, then the series is absolutely convergent because $h_2\in \cS(V^*\times U\times \R_+)$. Same for all derivatives, so it follows that $A\in C^\infty((V^*\times U\times \R_+)\setminus (\{0\}\times U\times \{0\}))$. Equation \eqref{eqn:A_homog_symbol} with $\lambda=2$ follows trivially from \eqref{eqn:qhsdjliqsdfjihlJISD}. If one defines $B$ like $A$ but replacing $2$ by $2^{1/l}$, then by \eqref{eqn:qnjilsdfl} (again replacing $2$ by $2^{1/l}$), $A=B$. Therefore $A$ satisfies \eqref{eqn:A_homog_symbol} for $\lambda=2^{1/l}$ for any $l\in \N$. By continuity, \eqref{eqn:A_homog_symbol} follows for all $\lambda\in\R_+^\times$.
\item Since $h_2$ is compactly supported, it follows that $\supp(h_2)\subseteq V^*\times K\times [0,a]$ for some $K\subseteq U$ compact and $a\in \R_+$. By Condition c of Definition \ref{dfn:Eprimek}, there exists $K'\subseteq U$ compact such that $\supp(v)\cap (V^*\times U\times [0,a])\subseteq (V^*\times K'\times [0,a])$. Let $K''=K\cup K'$. By \eqref{eqn:qnjilsdfl}, one deduces that $\supp(v)\subseteq V^*\times K''\times \R_+$. Hence $\supp(A)\subseteq V^*\times K''\times \R_+$.
\item it suffices to show that $$g(\xi,x,t)=\chi(\xi,t) v(\xi,x,t)+(1-\chi(\xi,t))\sum_{n=0}^{+\infty}2^{-kn}h_{2}(\hat{\alpha}_{2^n}(\xi),x,2^nt)\in \cS(V^*\times U\times \R_+).$$  Since $\chi\in C^\infty_c(V^*\times \R_+)$ there exists $b\in \R_+$ such that $\supp(g)\subseteq V^*\times K''\times [0,b]$. Decay at infinity of $g$ easily follows from that of $h_2$.
\end{enumerate}
Uniqueness of $A$ easily follows from \eqref{eqn:homg symbols second eqn}. Now let $A\in C^\infty((V^*\times U\times \R_+)\setminus (\{0\}\times U\times \{0\}))$ satisfing $a$ and $b$. Let $u=\int_{V^*}e^{i\langle \xi,X\rangle}(1-\chi)(\xi,t)A(\xi,x,t)d\xi$. It is clear that $u$ is transverse to $s$. By hypothesis on $A$, $\ssing(u)\subseteq \{0\}\times K \times \R_+$ and \begin{equation}\label{eqn:proof of symb const}
 \alpha_{\lambda*} u-\lambda^ku\in C^\infty(V\times U\times \R_+).
\end{equation}
Let $K'\subseteq U$ be a compact neighbourhood of $K$. Since $\mathbb{U}$ is $\R_+^\times$-invariant and $\{0\}\times K'\times \{0\}\subseteq \mathbb{U}$ there exists $\epsilon>0$ such that $$\{(X,x,t)\in V\times K'\times \R_+:\norm{\alpha_t(X)}\leq 2\epsilon\}\subseteq \mathbb{U},$$ where $\norm{\cdot}$ is the norm associated to the Euclidean structure on $V$. Let $g\in C^\infty(V\times U \times \R_+)$ be any smooth function with the following properties:
\begin{itemize}
 \item $g=1$ on a neighbourhood of $\{0\}\times K \times \R_+$,
 \item $\supp(g)\subseteq \{(X,x,t)\in V\times K'\times \R_+:\norm{\alpha_t(X)}\leq \epsilon,\norm{X}\leq 1\}$
 \item $g(X,x,t)=g(\alpha_t(X),x,1)$ for all $t\geq 1$.
\end{itemize}
Constructing such a function $g$ is straightforward. One easily verifies that $gu\in \cE^{\prime k}(\mathbb{U})$ with full symbol $A$.
\end{proof}
Notice that Proposition \ref{prop:const pseudo from symb} immediately implies that if $u\in \cE^{\prime k}(\mathbb{U})$, then $$\singsupp(u)\subseteq \{0\}\times U\times \R_+.$$

\subsection{Definition of pseudodifferential operators and independence of the choice of basis}
Let $u\in\mathcal{E}^{\prime k}(\mathbb{U})$. Following the notation of \eqref{eqn:ev maps tangent group bisubmersions}, for $t\in \R_+^\times$ and $p\in M$ we define \begin{equation}
\begin{aligned}
&\ev_{t*}:\mathcal{E}^{\prime k}(\mathbb{U})\to \cD'(M\times M,\Omega^{1/2}),\quad\ u\mapsto \ev_{t*}(u_t)\\&\ev_{p,0*}:\mathcal{E}^{\prime k}(\mathbb{U})\to \cD'(\Gr(\cF)_p,\Omega^1),\quad u\mapsto \ev_{p,0*}(u_{p,0}),
\end{aligned}
\end{equation}
where $u_t$ is the restriction of $u$ to $\mathbb{U}_t$ and $u_{p,0}$ the restriction of $u$ to $V\times \{p\}\times \{0\}$. This is well defined because $u$ is transverse to $s:\mathbb{U}\to M\times \R_+$. We will write $\ev_{1*}(u)$ and $\ev_{p,0*}(u)$ instead of $\ev_{1*}(u_1)$ and $\ev_{p,0*}(u_{p,0})$. We now define our pseudodifferential operators.
\label{subsection:dfn of pseudo-diff}
\begin{definition}
\label{dfn:pseudodiff} 
An element $P\in \cD'(M\times M,\Omega^{1/2})$ belongs to $\Psi^k(\cF^\bullet)$ if 
\begin{enumerate}
\item $P$ is properly supported, i.e., $p_{1|\supp(P)}:\supp(P)\to M$ and $p_{2|\supp(P)}:\supp(P)\to M$ are proper where $p_1,p_2:M\times M\to M$ are the projections onto the first and second coordinates respectively.  
\item The singular support of $P$ is a subset of the diagonal $M\subseteq M\times M$.
\item For every $p\in M$ and for every graded basis $(V,\natural,\mathbb{U},U)$ with $p\in U$, there exists $u\in \mathcal{E}^{\prime k}(\mathbb{U})$ such that  $P$ and $\ev_{1*}(u)$ are equal on some neighborhood of $(p,p)\in M\times M$. 
\end{enumerate} 
A distribution $u\in \mathcal{E}^{\prime k}(\mathbb{U})$ such that $\ev_{1*}(u)=P$ on a neighborhood of $(p,p)$ is called a lift of $P$.
\end{definition}
In Definition \ref{dfn:pseudodiff}, we defined pseudodifferential operators as kernels admitting a lift in $\cE^{\prime k}(\mathbb{U})$ to every graded basis $(V,\natural,\mathbb{U},U)$.  In this section, we prove that it suffices to have a lift to some graded basis at each point in $M$. Once we have done so we can easily give examples of pseudodifferential operators.
\begin{remark}\label{rem:Lifiting_smooth} In Definition \ref{dfn:pseudodiff}, since the map $\ev_1:\mathbb{U}_1\to M\times M$ is a submersion, it is enough to find $u\in \mathcal{E}^{\prime k}(\mathbb{U})$ such that  $P$ and $\ev_{1*}(u)$ differ by a smooth function on some neighborhood of $(p,p)\in M\times M$. 
\end{remark}
\begin{lemma}
\label{lem:equivariant_lifting}
Let $(V,\natural,\mathbb{U},U)$ and $(V',\natural',\mathbb{U}',U')$ be graded bases with $U=U'$, $p\in U$ and let $\phi:\mathbb{U}' \to \mathbb{U}$ be as in Theorem \ref{second_main_tech_thm}. Suppose further that $\dom(\phi)=\mathbb{U}$ and $\phi$ is surjective. Then pushforward by $\phi$ defines a surjective map $\phi_*:\cE^{\prime k}(\mathbb{U}) \to \cE^{\prime k}(\mathbb{U}')$.
\end{lemma}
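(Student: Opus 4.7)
The lemma has two parts: that $\phi_*$ defines a map $\cE^{\prime k}(\mathbb{U}) \to \cE^{\prime k}(\mathbb{U}')$, and that this map is surjective.

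For well-definedness, I would verify the three conditions of Definition \ref{dfn:Eprimek} in turn. The density isomorphism \eqref{eqn:int_diff_basis_dens} makes fiber integration $\phi_*$ well-defined on smooth compactly supported half-densities, and extends to distributions transverse to $s$. The identity $s' \circ \phi = s$, which is immediate from Theorem \ref{second_main_tech_thm}(b) (both $s$ and $s'$ are the projection of $\ev$ to the last two coordinates), implies that $\phi$ sends $s$-fibers onto $s'$-fibers by surjective submersions, so transversality of $u$ to $s$ passes to transversality of $\phi_*(u)$ to $s'$. For the homogeneity condition, $\R^\times_+$-equivariance of $\phi$ yields $\alpha_{\lambda*}\phi_* = \phi_*\alpha_{\lambda*}$, whence
\[
\alpha_{\lambda*}\phi_*(u) - \lambda^k\phi_*(u) \;=\; \phi_*\bigl(\alpha_{\lambda*}u - \lambda^k u\bigr) \;\in\; C^\infty_c(\mathbb{U}', \Omega^{1/2}_{r',s'}).
\]
Finally, proper support over $\R_+$ is preserved because $\phi$ preserves the $t$-coordinate, and hence $\phi$ restricted to $\supp(u)$ is proper onto $\mathbb{U}'$ when pulled back over compacts in $\R_+$.

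For surjectivity, given $u' \in \cE^{\prime k}(\mathbb{U}')$ with full symbol $A'(\xi', x, t)$ from Proposition \ref{prop:const pseudo from symb}, I would construct a compatible symbol $A$ on $V^*\times U\times \R_+$ such that the associated $u \in \cE^{\prime k}(\mathbb{U})$ satisfies $\phi_*(u) = u'$. The key geometric input is Theorem \ref{second_main_tech_thm}(c) combined with the minimality of $(V',\natural')$ at $p$: the map $\natural'_p : V' \to \gr(\cF)_p$ is a linear isomorphism, so the restriction of $\phi$ to $V\times \{p\}\times\{0\}$ coincides with the linear surjection $L = (\natural'_p)^{-1}\circ \natural_p : V \to V'$. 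Choosing a graded linear complement $W \subseteq V^*$ to $\mathrm{Im}(L^*)$ yields an equivariant projection $\pi : V^* \to V'^*$, and I would set $A(\xi, x, t) := A'(\pi(\xi), x, t)$ multiplied by a suitable smooth Jacobian factor. By construction $A$ has the correct homogeneity of degree $k$, and Proposition \ref{prop:const pseudo from symb} produces the candidate $u \in \cE^{\prime k}(\mathbb{U})$.

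The main obstacle is that $\phi$ is nonlinear away from $(0, p, 0)$, so a stationary-phase computation of $\phi_*(u)$ shows that it matches $u'$ only modulo symbols of strictly lower order. I would overcome this by the standard asymptotic-summation trick: iteratively build corrections $A_j$ of order $k-j$ cancelling the error at each level, and take $A \sim \sum_{j\geq 0} A_j$ as a Borel-type asymptotic sum. This produces a lift $u$ with $\phi_*(u) - u' \in C^\infty_c(\mathbb{U}', \Omega^{1/2}_{r',s'})$. The residual $C^\infty_c$ discrepancy is then lifted through $\phi_*$ directly via a partition of unity on $\mathbb{U}'$ subordinate to open sets over which $\phi$ admits local smooth sections; adding this correction completes the construction of a preimage of $u'$ in $\cE^{\prime k}(\mathbb{U})$.
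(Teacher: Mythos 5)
Your treatment of well-definedness is essentially the paper's: equivariance gives the homogeneity condition, $s' \circ \phi = s$ gives transversality, and preservation of the $t$-coordinate gives proper support. That half is fine.

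The surjectivity argument, however, has a genuine gap. The class $\cE^{\prime k}(\mathbb{U})$ is \emph{exactly} homogeneous modulo $C^\infty_c$: by Proposition \ref{prop:const pseudo from symb}, any $u\in\cE^{\prime k}(\mathbb{U})$ has a full symbol $A$ satisfying $A(\hat{\alpha}_\lambda(\xi),x,\lambda t)=\lambda^k A(\xi,x,t)$ \emph{exactly}, not asymptotically, and conversely only such $A$ give elements of $\cE^{\prime k}(\mathbb{U})$. A Borel-type sum $A\sim\sum_{j\geq 0}A_j$ with $A_j$ homogeneous of degree $k-j$ is a classical (polyhomogeneous) symbol, not a homogeneous one: $\alpha_{\lambda*}u - \lambda^k u$ would be a distribution of order $k-1$ with nontrivial singular support on $\{0\}\times U\times\R_+$, hence not in $C^\infty_c$, so the resulting $u$ is \emph{not} in $\cE^{\prime k}(\mathbb{U})$. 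There is no expansion to resum here, because the structure you need to preserve is a single homogeneity degree.

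The paper's route sidesteps the nonlinearity of $\phi$ \emph{exactly} rather than asymptotically. For each $x$ one takes $\psi_{0,x}$, the differential at $0$ of the slice map $\phi_{0,x}:V\to V'$; this is a family of $\R^\times_+$-equivariant linear surjections. Choosing an equivariant family of projections $p_x$ onto $\ker(\psi_{0,x})$, one defines the $\R^\times_+$-equivariant map $\kappa(X,x,t)=(\phi(X,x,t),p_x(X))$ into $L=\{(X',x,t,Y):Y\in\ker(\psi_{0,x})\}$, whose differential at $(0,x,0)$ is bijective; by equivariance $\kappa$ is a diffeomorphism on an $\R^\times_+$-invariant neighbourhood of $\{0\}\times U\times\{0\}\cup\{0\}\times U\times\R_+$. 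In the $\kappa$-coordinates $\phi$ becomes a \emph{linear} bundle projection. One then extends the full symbol $A$ of $u'$ by $0$ in the transverse $\ker(\psi_{0,x})^*$-directions (keeping exact homogeneity), builds a distribution $\tilde v$ on $L$ via Proposition \ref{prop:const pseudo from symb}, pulls it back through $\kappa^{-1}$ to get $v\in\cE^{\prime k}(\mathbb{U})$, and checks that $\phi_*(v)$ has the same full symbol as $u'$; the residual discrepancy is then smooth and can be lifted through the submersion $\phi$. No asymptotic iteration is needed, and exact homogeneity is preserved at every step. Your proposal would need to be replaced by this (or some equivalent straightening) to land in the right class.
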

\begin{proof}Let $u\in\cE^{\prime k}(\mathbb{U})$. The map $\phi_{|\supp(u)}:\supp(u)\to \mathbb{U}'$ is proper because of Condition $c$ of Definition \ref{dfn:Eprimek} and the fact that $\phi$ preserves the $\R_+$ coordinate, which follows from Theorem \ref{second_main_tech_thm}.b. Hence $\phi_*(u)$ is well defined and belongs to $\cD'(\mathbb{U}',\Omega^{1/2}_{r,s})$ by \eqref{eqn:int_diff_basis_dens}. Conditions $a$ and $b$ of Definition \ref{dfn:Eprimek} are satisfied for $\phi_*(u)$ because of Theorem \ref{second_main_tech_thm}.b and Theorem \ref{second_main_tech_thm}.a respectively. Hence $\phi_*:\cE^{\prime k}(\mathbb{U}) \to \cE^{\prime k}(\mathbb{U}')$ is well defined. 
For surjectivity, let $u\in \cE^{\prime k}(\mathbb{U}')$, $A$ its full symbol.  Let $\phi_{0,x}:V\to V'$ denote the restriction of $\phi$ to $V\times \{x\}\times \{0\}$ for $x\in U$. Let $\psi_{0,x}:V \to V'$ be the differential of $\phi_{0,x}$ at $0$. Since $\phi_{0,x}$ is a submersion, $\psi_{0,x}$ is surjective. It is also $\R_+^\times$-equivariant because $\phi_{0,x}$ is $\R_+^\times$-equivariant. Since the family $\{\psi_{0,x}\}_{x\in M}$ varies smoothly in $x$, we can choose $p_x:V\to V$ a projection onto $\ker(\psi_{0,x})$ which varies smoothly in $x$ and is $\R_+^\times$-equivariant. Let $$L=\{(X,x,t,Y)\in \mathbb{U}'\times V:Y\in \ker(\psi_{0,x})\}.$$ The space $L$ is a smooth manifold of same dimension with $\mathbb{U}$. We define the map $$\kappa:\mathbb{U} \to L,\quad \kappa(X,x,t)=(\phi(X,x,t),p_x(X)).$$ The differential of $\kappa$ at $(0,x,0)$ is bijective. Since $\kappa$ is $\R_+^\times$-equivariant, it follows that, we can find a neighbourhood of $V\times U\times \{0\}\cup \{0\}\times U\times \R_+$ on which $\kappa$ is a diffeomorphism. We restrict $\kappa$ to such neighbourhood. We construct any homogeneous function $$\tilde{A}:\{(\xi,x,t,\eta)\in V^{\prime*}\times U\times \R_+\times V^*:\eta\in \ker(\psi_{0,x})^*\}\backslash\{(0,x,0,0):x\in U\} \to \C$$ which extends $A$ on $(\xi,x,t,0)$. We take $\tilde{v}$ to be the inverse Fourier transform of $\tilde{A}$ in the direction of $V$ and $\ker(\psi_{0,x})$ (\eqref{eqn:qhsdjliqsdfjihlJISD}), and then use a smooth function $g$ like we did in the end of the proof of Proposition \ref{prop:const pseudo from symb} to make $\tilde{v}$ supported in the image of $\kappa$. We then transform $\tilde{v}$ using $\kappa$ to a distribution on $v$ on $\mathbb{U}$ (if necessary, we modify its support the same way as we did at the end of the proof of Proposition \ref{prop:const pseudo from symb}). Since $\kappa$ is $\R_+^*$-equivariant, we have constructed $v\in \cE^{\prime k}(\mathbb{U})$ such that $\phi_*(v)$ has the same full symbol as $u$. Hence $w=u-\phi_*(v)$ is smooth. Since $\phi$ is a submersion we lift $w$ to a smooth function to $\mathbb{U}$ and add it to $v$. This finishes the proof.
 \end{proof}

\begin{prop}
\label{prop:change_of_basis}
Let $P\in \cD'(M\times M,\Omega^{1/2})$ be a properly supported distribution with singular support on the diagonal. Suppose that for every $p\in M$, there is some graded basis $(V,\natural,\mathbb{U},U)$ at $p$ and an element $u\in \cE^{\prime k}(\mathbb{U})$ such that $\ev_{1*}(u)$ and $P$ are equal on a neighbourhood of $(p,p)\in M\times M$. Then $P\in \Psi^k(\cF^\bullet)$.
\end{prop}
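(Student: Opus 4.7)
The plan is to funnel the hypothesized lift through a minimal graded basis at $p$: push it forward to a minimal basis via Theorem \ref{second_main_tech_thm}, then use the surjectivity of pushforward in Lemma \ref{lem:equivariant_lifting} to pull it back to the arbitrary target basis. Concretely, fix $p \in M$ and an arbitrary graded basis $(V_1, \natural_1, \mathbb{U}_1, U_1)$ with $p \in U_1$; I must produce a lift $u_1 \in \cE^{\prime k}(\mathbb{U}_1)$ of $P$ near $(p,p)$. By hypothesis, there is a second graded basis $(V_2, \natural_2, \mathbb{U}_2, U_2)$ at $p$ and $u_2 \in \cE^{\prime k}(\mathbb{U}_2)$ with $\ev_{1*}(u_2) = P$ in a neighborhood of $(p,p)$.

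By Proposition \ref{prop:graded basis exists}, choose a minimal graded basis $(V_0, \natural_0, \mathbb{U}_0, U_0)$ at $p$, and shrink all three $U$'s to a common small open $U \ni p$ on which the triples remain graded bases. Theorem \ref{second_main_tech_thm} applied to each pair $(V_i, V_0)$ for $i=1,2$ (with $V_0$ playing the minimal role) yields $\R_+^\times$-equivariant submersions $\phi_i : \dom(\phi_i) \subseteq \mathbb{U}_i \to \mathbb{U}_0$, defined on $\R_+^\times$-invariant neighborhoods of $\{0\} \times \{p\} \times \{0\}$, and satisfying $\ev \circ \phi_i = \ev$ by Theorem \ref{second_main_tech_thm}.b. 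Minimality of $V_0$ at $p$ combined with Theorem \ref{second_main_tech_thm}.c forces $\phi_i(0,p,0) = (0,p,0)$, so each $\phi_i$ is an open map near this point. Pick an $\R_+^\times$-invariant open neighborhood $\mathbb{U}_0'$ of $\{0\} \times \{p\} \times \{0\}$ contained in $\phi_1(\dom(\phi_1)) \cap \phi_2(\dom(\phi_2))$, set $\mathbb{U}_i' := \phi_i^{-1}(\mathbb{U}_0')$, and shrink $U$ further if necessary so that each $(V_i, \natural_i, \mathbb{U}_i', U)$ is still a graded basis. Then $\phi_i : \mathbb{U}_i' \to \mathbb{U}_0'$ satisfies the hypotheses of Lemma \ref{lem:equivariant_lifting}.

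Using the cutoff construction from the end of the proof of Proposition \ref{prop:const pseudo from symb}, restrict $u_2$ to an element of $\cE^{\prime k}(\mathbb{U}_2')$ without changing it modulo a smooth compactly supported function. Set $u_0 := \phi_{2*}(u_2) \in \cE^{\prime k}(\mathbb{U}_0')$ and invoke the surjectivity in Lemma \ref{lem:equivariant_lifting} to choose $u_1' \in \cE^{\prime k}(\mathbb{U}_1')$ with $\phi_{1*}(u_1') = u_0$. Extending $u_1'$ by zero produces $u_1 \in \cE^{\prime k}(\mathbb{U}_1)$ (the homogeneity condition \eqref{eqn: equiv cond def pseudo diff} is preserved because $\mathbb{U}_1'$ is $\R_+^\times$-invariant in $\mathbb{U}_1$). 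By Theorem \ref{second_main_tech_thm}.b,
\[
\ev_{1*}(u_1) = \ev_{1*}(u_1') = \ev_{1*}(\phi_{1*}(u_1')) = \ev_{1*}(u_0) = \ev_{1*}(\phi_{2*}(u_2)) = \ev_{1*}(u_2) = P
\]
in a neighborhood of $(p,p)$, giving the required lift. The main obstacle is the careful bookkeeping: arranging the successive shrinkings so that the hypotheses of Lemma \ref{lem:equivariant_lifting} hold cleanly, and verifying that the cutoff and the extension-by-zero respect the $\R_+^\times$-equivariance and the properness of support in Definition \ref{dfn:Eprimek}. These verifications use exactly the cutoff technique from Proposition \ref{prop:const pseudo from symb}.
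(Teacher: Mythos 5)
Your proposal is correct and follows essentially the same route as the paper: funnel the given lift through a minimal graded basis at $p$, using Theorem \ref{second_main_tech_thm} to push forward into the minimal basis and the surjectivity in Lemma \ref{lem:equivariant_lifting} to lift back out to the arbitrary target basis, with the same cutoff/shrinking bookkeeping (the paper merely separates the two steps and arranges surjectivity by cutting off on the minimal basis rather than shrinking its $\mathbb{U}$ into the images from the start). The smooth discrepancies you introduce are harmless by Remark \ref{rem:Lifiting_smooth}, exactly as in the paper.
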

\begin{proof}
We first show that for every $p\in M$ and any \emph{minimal graded basis}  $(V',\natural',\mathbb{U}',U')$ at $p$, we can find $u'\in  \cE^{\prime k}(\mathbb{U}')$ such that $\ev_{1*}(u')$ and $P$ are equal on a neighbourhood of $(p,p)\in M\times M$.  By hypothesis, we can find some graded basis $(V,\natural,\mathbb{U},U)$ and $u\in\cE^{\prime k}(\mathbb{U})$ such that $\ev_{1*}(u)$ and $P$ are equal on a neighbourhood of $(p,p)\in M\times M$.  
Let $\phi$ be as in Theorem \ref{second_main_tech_thm}, $g\in C^\infty(\mathbb{U})$ be a smooth function with support in $\dom(\phi)$ that is equal to one in a neighbourhood of $(0,p,1)$, and is invariant for the $\R_+^\times$-action on $\mathbb{U}$. Now we have $gu \in \cE^{\prime k}(\mathbb{U})$ with support in $\dom(\phi)$, so $\phi_*(gu)$ is well defined. By properties of $\phi$ in Theorem \ref{second_main_tech_thm}, we get that $\phi_*(gu) \in \cE^{\prime k}(\mathbb{U}')$ and $\ev_{1*}(\phi_*(gu)) = \ev_{1*}(gu)$. Moreover, since $g\equiv 1$ on a neighbourhood of $(0,p,1)$, and since $\singsupp(u) \subseteq \{0\}\times U \times \R_+$, it follows that $\ev_{1*}(gu)$ and $\ev_{1*}(u)$ differ by a smooth function in some neighbourhood of $(p,p)\in M\times M$. We can then choose any $h\in C^\infty_c(\mathbb{U}')$ such that $\ev_{1*}(\phi_*(gu)+h)$ is equal to $\ev_{1*}(u)$ on a neighbourhood of $(p,p)$. Hence $\phi_*(gu)+h$ is the required lift.

Next let $(V'',\natural'',\mathbb{U}'',U'')$ be \emph{any} graded basis at $p\in M$.  Choose a minimal graded basis $(V',\natural',\mathbb{U}',U')$ at $p$.  By the previous discussion, we can find a lift $u'\in\cE^{\prime k}(\mathbb{U}')$ of $P$ on a neighbourhood of $p$.  Again let $\phi':\dom(\phi')\subseteq \mathbb{U}''\to \mathbb{U}'$ be as in Theorem \ref{second_main_tech_thm}. By reducing $\dom(\phi')$ if necessary, we can find $L\subseteq U''$ an open neighbourhood of $p$ and $\epsilon>0$ such that $$\dom(\phi')=\{(X,x,t)\in V\times L\times \R_+:\norm{\alpha_t(X)}<\epsilon\}.$$Hence $(V'',\natural'',\dom(\phi'),L)$ is a graded basis. It is also straightforward to check that $\cE^{\prime k}(\dom(\phi'))\subseteq \cE^{\prime k}(\mathbb{U})$. Hence without loss of generality we can suppose that $\dom(\phi')=\mathbb{U}''$.

  Let $g\in C^\infty(\mathbb{U}')$ with support in $\mathrm{Im}(\phi')$ that is equal to one in a neighbourhood of $(0,p,1)$, and is invariant for the $\R_+^\times$-action on $\mathbb{U}'$. By a argument similar to before, we can find  $h \in C^\infty_c(\Im(\phi'))$ such that $gu+h\in \cE^{\prime k}(\mathbb{U}')$ is a lift of $P$ at $p$. Hence by reducing $\mathbb{U}'$ we can without loss of generality suppose that $\Im(\phi')=\mathbb{U}'$.  Using Lemma \ref{lem:equivariant_lifting}, we can find a preimage $u''\in\cE^{\prime k}(\mathbb{U}'')$ of $u'$ under $\phi'_*$ which is then the required lift of $P$.
\end{proof}

\begin{cor}
\label{cor:global_basis_1}
Let $(V,\natural,\mathbb{U}, U)$ be a graded basis.  Then $\ev_{1*}(\cE^{\prime k}(\mathbb{U})) \subseteq \Psi^k(\cF^\bullet)$.  
\end{cor}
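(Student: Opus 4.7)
The strategy is to verify the three defining conditions of Definition \ref{dfn:pseudodiff} for $P := \ev_{1*}(u)$ with $u\in\cE^{\prime k}(\mathbb{U})$, invoking Proposition \ref{prop:change_of_basis} so that I only need to exhibit one graded basis with a lift at each point of $M$, rather than a lift in every graded basis.

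First I would verify proper support. Condition (c) of Definition \ref{dfn:Eprimek} says that the projection $\supp(u)\to\R_+$ is proper, so $\supp(u)\cap(V\times U\times\{1\})$ is compact. Its image under the continuous map $\ev_1$ is therefore compact and contains $\supp(P)$, so $P$ is in fact compactly supported, hence properly supported.

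Next I would control the singular support. Proposition \ref{prop:const pseudo from symb} (and the remark immediately following it) gives $\singsupp(u)\subseteq\{0\}\times U\times\R_+$. Restricting to the fiber $t=1$, this singular set lies inside $\{0\}\times U\times\{1\}$, whose image under $\ev_1$ is the diagonal of $U$, which sits inside $\Delta\subseteq M\times M$. Since $\ev_1$ is a submersion and is proper on $\supp(u)\cap\mathbb{U}_1$, pushforward cannot enlarge the singular support beyond this image, so $\singsupp(P)\subseteq\Delta$.

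Finally, for the lift condition, given any $p\in M$ I exhibit a graded basis at $p$ together with an element of $\cE^{\prime k}$ whose $\ev_1$-pushforward agrees with $P$ on a neighborhood of $(p,p)$; Proposition \ref{prop:change_of_basis} then upgrades this to a lift for every graded basis at $p$. If $p\in U$, the basis $(V,\natural,\mathbb{U},U)$ with $u$ itself already works. If $p\notin U$, then $p$ lies outside the compact $M$-projection of $\singsupp(P)$, so $P$ is smooth near $(p,p)$; picking any graded basis $(V',\natural',\mathbb{U}',U')$ at $p$ and using a local smooth section of the submersion $\ev_1$ together with a cutoff, I construct $v\in C^\infty_c(\mathbb{U}')$ whose pushforward matches $P$ near $(p,p)$, and any such $v$ automatically lies in $\cE^{\prime k}(\mathbb{U}')$ since $\R^\times_+$-invariance of $\mathbb{U}'$ makes the homogeneity condition modulo $C^\infty_c$ trivial. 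I do not anticipate a serious obstacle; all the technical content has been absorbed into Propositions \ref{prop:change_of_basis} and \ref{prop:const pseudo from symb}, and the only point requiring care is that pushforward along $\ev_1$ does not enlarge singular support beyond its image, which follows since $\ev_1$ is proper on $\supp(u)\cap\mathbb{U}_1$.
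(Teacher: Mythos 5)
Your proof is correct and follows the same route as the paper's own (three-line) argument: compact support from Condition (c) of Definition \ref{dfn:Eprimek}, singular support on the diagonal from Proposition \ref{prop:const pseudo from symb}, and then Proposition \ref{prop:change_of_basis}. The extra detail you supply for points $p\notin U$ (lifting the smooth kernel into $C^\infty_c(\mathbb{U}')\subseteq\cE^{\prime k}(\mathbb{U}')$ via a section of the submersion $\ev_1$) is a point the paper leaves implicit, and your treatment of it is right.
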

\begin{proof}
Let $u\in\cE^{\prime k}(\mathbb{U}) $. By Condition c of Definition \ref{dfn:Eprimek}, $\ev_{1*}(u)$ is compactly supported. By Proposition \ref{prop:const pseudo from symb}, $\singsupp(\ev_{1*}(u))$ lies on the diagonal. By Proposition \ref{prop:change_of_basis}, the result follows.
\end{proof}

\begin{ex}Let $X\in \cF^k$. We will show that  the Lie derivative $L_X:C^\infty(M,\Omega^{1/2})\to C^\infty(M,\Omega^{1/2})$ is an element of $\Psi^k(\cF^\bullet)$. Let $(V,\natural,\mathbb{U},U)$ be a graded basis at $p\in M$ with $v_0\in V^k$ such that $\natural(v_0)=X$, and let $g\in C^\infty_c(U)$ with $g=1$ in a neighbourhood of $p$. We define $u \in \cD'(\mathbb{U},\Omega^{1/2}_{r,s})$ by\begin{align*}\langle u,f\rangle=\int_{\R_+}\int_Mg(x)L_{v_0}(f)(0,x,t)dxdt,\quad f\in C^\infty_c(\mathbb{U},\Omega^{-1/2}_{r,s}\otimes \Omega^1\mathbb{U}).
\end{align*}It satisfies $$\mathrm{ev}_1(u)=g L_X,\quad \alpha_{\lambda*}(u)=\lambda^ku,\, \forall \lambda\in \R_+^\times.$$ 
Hence $u\in \cE^{\prime k}(\mathbb{U})$ and $u$ is a lift of $X$ at $p$. By Proposition \ref{prop:change_of_basis}, we get that $L_X\in \Psi^k(\cF^\bullet)$. This computation can be easily generalised to differential operators. We thus obtain the following.
\end{ex}
 \begin{prop}One has $\mathrm{Diff}^k_\cF(M,\Omega^{1/2})\subseteq \Psi^k(\cF^\bullet)$.
 \end{prop}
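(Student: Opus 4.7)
The plan is to reduce to the case of a single monomial $\alpha L_{Y_1}\cdots L_{Y_l}$ with $\alpha\in C^\infty_c(M)$, $Y_i\in\cF^{a_i}$ and $\sum_i a_i\le k$, and then imitate the construction of the preceding example.  By linearity, by choosing $\alpha$ to equal $1$ near a point $p\in M$, and by Proposition \ref{prop:change_of_basis}, it suffices to produce for each such monomial and each $p$ an element $u\in\mathcal{E}^{\prime k}(\mathbb{U})$, for some graded basis $(V,\natural,\mathbb{U},U)$ at $p$, whose pushforward $\ev_{1*}u$ agrees with the Schwartz kernel of $\alpha L_{Y_1}\cdots L_{Y_l}$ on a neighbourhood of $(p,p)$.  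Since $\nabla_{Y_i}-L_{Y_i}\in C^\infty(M)$, replacing each $\nabla_{Y_i}$ by $L_{Y_i}$ only introduces monomials of smaller length, absorbed inductively below.

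I argue by induction on $l$; the case $l=0$ (multiplication by $\alpha$) is a trivial lift and $l=1$ is precisely the example just proven.  For the inductive step, enlarge if necessary a graded basis at $p$ so as to contain basis vectors $v_i\in V^{a_i}$ with $\natural(v_i)=Y_i$ on $U$, and define
\[
 \langle u,f\rangle \;=\; \int_0^{+\infty}\!\!\int_M \alpha(x)\, t^{k-\sum_i a_i}\,\bigl(L_{v_1}\cdots L_{v_l}f\bigr)(0,x,t)\,dx\,dt,
\]
where each $L_{v_i}$ is the Lie derivative along the constant vector field $v_i$ in the $V$-direction.  Transversality of $u$ to $s:\mathbb{U}\to M\times\R_+$ is immediate, since each fibre-restriction is a derivative of $\delta_0$; properness of $\supp(u)\subseteq\{0\}\times\supp(\alpha)\times\R_+$ over $\R_+$ is clear from the compactness of $\supp(\alpha)$; and the full symbol computed via Proposition \ref{prop:const pseudo from symb} is $\alpha(x)\,t^{k-\sum_i a_i}\prod_i(-i\xi(v_i))$, which is homogeneous of degree $k$ for the combined action $\hat\alpha_\lambda\xi(v_i)=\lambda^{a_i}\xi(v_i)$, $t\mapsto\lambda t$.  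Hence $u\in\mathcal{E}^{\prime k}(\mathbb{U})$ and $\ev_{1*}u\in\Psi^k(\cF^\bullet)$ by Corollary \ref{cor:global_basis_1}.

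A Taylor expansion of $\phi\mapsto\phi(\exp(\natural(X))\cdot x)$ at $X=0$, applied to $L_{v_1}\cdots L_{v_l}$, shows however that $\ev_{1*}u$ is the kernel of $\alpha$ times the \emph{symmetrised} product $\tfrac{1}{l!}\sum_{\sigma\in\mathfrak{S}_l}L_{Y_{\sigma(1)}}\cdots L_{Y_{\sigma(l)}}$ rather than $L_{Y_1}\cdots L_{Y_l}$ itself.  The difference, organised à la PBW, is a linear combination of monomials in which some adjacent pair $L_{Y_i}L_{Y_j}$ has been contracted into $L_{[Y_i,Y_j]}$; by the filtration condition $[\cF^{a_i},\cF^{a_j}]\subseteq\cF^{a_i+a_j}$ of Definition \ref{dfn:filtfol}, each such monomial still has Hörmander order $\le k$ but strictly fewer than $l$ Lie-derivative factors, and so belongs to $\Psi^k(\cF^\bullet)$ by the inductive hypothesis.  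Adding a lift of the correction to $u$ produces the required element of $\mathcal{E}^{\prime k}(\mathbb{U})$, and the induction closes.  The only non-mechanical point in the argument is precisely this symmetry-versus-ordering mismatch, resolved by the Lie-bracket closure of the filtration together with the induction on $l$.
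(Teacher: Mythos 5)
Your proof is correct and follows the same route the paper intends: the paper treats only the case $l=1$ explicitly (the example immediately preceding the proposition) and dismisses the general case with ``this computation can be easily generalised,'' while you carry out exactly that generalisation by lifting each monomial to a derivative of $\delta_0$ on a graded basis and checking transversality, homogeneity and properness. The one substantive point you add --- that $\ev_{1*}u$ produces the symmetrised product $\tfrac{1}{l!}\sum_{\sigma}L_{Y_{\sigma(1)}}\cdots L_{Y_{\sigma(l)}}$ rather than the ordered one, with the discrepancy absorbed by $[\cF^{a_i},\cF^{a_j}]\subseteq\cF^{a_i+a_j}$ and induction on $l$ --- is a genuine gap in the paper's one-line justification, and your resolution of it is exactly right.
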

\subsection{Properties of pseudodifferential operators}\label{subsection:adjoint}
\begin{prop}\label{prop:properties_of_pseudo_diff}
\begin{enumerate}
 \item  For any $k\in \C$, $\Psi^k(\cF^\bullet)\subseteq \Psi^{k+1}(\cF^\bullet)$. 
 \item If $h \in C^\infty(M\times M)$ and $P\in\Psi^k(\cF^\bullet)$ then $hP\in \Psi^k(\cF^\bullet)$. If $h$ vanishes on the diagonal, then $hP\in\Psi^{k-1}(\cF^\bullet)$. In particular, if $f\in C^\infty(M)\subseteq \Psi^0(\cF^\bullet)$ and $P\in \Psi^k(\cF^\bullet)$, then $[f,P]\in \Psi^{k-1}(\cF^\bullet)$. 
\item  Let $p\in M$, $k\in \C$, $n\in \mathbb{N}$ and $P\in \Psi^k(\cF^\bullet)$. If 
$$\Re(k)<-\sum_{i=1}^Ni\dim(\cF^i_p/\cF^{i-1}_x)-nN,$$ then $P$ is of class $C^n$ on some neighbourhood of $(p,p)\in M\times M$.
\item  If $k_n\in \C$ is a sequence such that $Re(k_n)\to-\infty$ and $P\in \Psi^{k_n}(\cF^\bullet)$ for every $n$, then $P\in C^\infty(M\times M,\Omega^{1/2})$.
\end{enumerate}
\end{prop}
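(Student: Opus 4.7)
The plan is to prove each of the four statements by working with a graded basis $(V,\natural,\mathbb{U},U)$ and manipulating lifts $u\in\cE^{\prime k}(\mathbb{U})$; Proposition~\ref{prop:const pseudo from symb} (symbol description) and Proposition~\ref{prop:change_of_basis} (independence of the basis) are the main tools.

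For (a), given a lift $u\in\cE^{\prime k}(\mathbb{U})$ with full symbol $A$, set $\tilde A(\xi,x,t)=t\,A(\xi,x,t)$. Since $t$ is homogeneous of degree $1$ on the symbol side, $\tilde A$ is smooth off $\{0\}\times U\times\{0\}$ and homogeneous of degree $k+1$, so the converse part of Proposition~\ref{prop:const pseudo from symb} yields $v\in\cE^{\prime k+1}(\mathbb{U})$ with full symbol $\tilde A$. Because $\tilde A(\xi,x,1)=A(\xi,x,1)$, the representation~\eqref{eqn:homg symbols second eqn} shows $v-u$ is smooth (Schwartz in $X$) on $\mathbb{U}_1$, so pushing forward along the submersion $\ev_1$ gives $\ev_{1*}(v)\equiv \ev_{1*}(u)\equiv P$ modulo smooth kernels. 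Hence $v$ is a lift of $P$ in $\cE^{\prime k+1}(\mathbb{U})$ at every point, and Proposition~\ref{prop:change_of_basis} gives $P\in\Psi^{k+1}(\cF^\bullet)$.

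For (b), define $\tilde h(X,x,t)=h(\exp(\natural(\alpha_t(X)))\cdot x,\,x)$, a smooth function on $\mathbb{U}$ whose restriction at $t=1$ is $\ev_1^*h$. A direct computation using $\alpha_s\circ\alpha_\lambda=\alpha_{s\lambda}$ shows $\tilde h$ is invariant under the $\R^\times_+$-action \eqref{eqn:action bisubmersion}, whence $\alpha_{\lambda*}(\tilde h\,u)-\lambda^k(\tilde h\,u)=\tilde h\,(\alpha_{\lambda*}u-\lambda^ku)\in C^\infty_c$, so $\tilde h\,u\in\cE^{\prime k}(\mathbb{U})$ lifts $hP$, giving $hP\in\Psi^k(\cF^\bullet)$. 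If $h$ vanishes on the diagonal, then $\tilde h(X,x,0)=h(x,x)=0$ on all of $\mathbb{U}\cap\{t=0\}$, so smooth division by $t$ gives $\tilde h=t\tilde h'$ with $\tilde h'$ smooth and $\R^\times_+$-homogeneous of degree $-1$; the analogous calculation shows $\tilde h'\,u\in\cE^{\prime k-1}(\mathbb{U})$ lifts $hP$, so $hP\in\Psi^{k-1}(\cF^\bullet)$. For the commutator, the kernel of $[f,P]$ is $(f(x)-f(y))P(x,y)$ and $h(x,y)=f(x)-f(y)$ vanishes on the diagonal.

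Parts (c) and (d) are Fourier-analytic. For (c), choose a \emph{minimal} graded basis at $p$ (Proposition~\ref{prop:graded basis exists}), so $\dim V^i=\dim(\cF^i_p/\cF^{i-1}_p)$, and fix a graded norm $\|\cdot\|$ on $V^*$ homogeneous of degree $1$ under $\hat\alpha$. Lift $P$ to $u\in\cE^{\prime k}(\mathbb{U})$ with full symbol $A$. Applying \eqref{eqn:A_homog_symbol} with $\lambda=\|\xi\|^{-1}$ gives $A(\xi,x,1)=\|\xi\|^{k}\,A(\hat\alpha_{\|\xi\|^{-1}}\xi,\,x,\,\|\xi\|^{-1})$, and since $A$ is smooth on the compact set $\{\|\xi_0\|=1\}\times K\times[0,1]$ away from $(0,*,0)$, this yields $|A(\xi,x,1)|\lesssim\|\xi\|^{\Re(k)}$ for $\|\xi\|$ large, with analogous bounds for $\partial_x^\alpha A$. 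Graded polar coordinates yield $d\xi\sim\rho^{Q-1}\,d\rho\,d\sigma$ with $Q=\sum_{i=1}^N i\dim V^i$, and $|\xi|\lesssim\|\xi\|^N$ at infinity, so for $|\beta|\le n$ the integral $\int|\xi|^{|\beta|}|A(\xi,x,1)|(1-\chi)(\xi,1)\,d\xi$ converges absolutely and uniformly in $x$ whenever $\Re(k)+Q+nN<0$. By the oscillatory integral representation in Proposition~\ref{prop:const pseudo from symb}, $u(\cdot,\cdot,1)$ is then $C^n$ near $(0,p,1)$; since $\ev_1$ is a submersion there, $P=\ev_{1*}(u)$ is $C^n$ near $(p,p)$. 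Part (d) is immediate from (c): given $n\in\N$, choose $m$ with $\Re(k_m)<-Q-nN$ (with $Q$ bounded locally by upper semi-continuity from Proposition~\ref{prop:semi-cont D i over D i-1}), concluding that $P$ is $C^n$ near every point. The main technical obstacle lies in the symbol estimates of part (c); everything else is bookkeeping with the graded action.
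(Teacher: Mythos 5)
Your proof is correct and follows essentially the same route as the paper: multiplying the lift by powers of $t$ for (a) (the paper simply observes $tu\in\cE^{\prime k+1}(\mathbb{U})$, which is your symbol-level argument in one line), the $\R^\times_+$-invariant function $h(\exp(\natural(\alpha_t(X)))\cdot x,x)$ and division by $t$ for (b), and absolute convergence of the oscillatory integral in a minimal graded basis for (c) and (d). Your version of (c) usefully spells out the homogeneous-dimension bookkeeping that the paper leaves implicit.
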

\begin{proof}
\begin{enumerate}
\item 	If $u\in\mathcal{E}^{\prime k}(\mathbb{U})$ is a lift of $P$, then $tu\in \mathcal{E}^{\prime k+1}(\mathbb{U})$ and is again a lift of $P$.
\item Let $u\in\cE^{\prime k}(\mathbb{U})$ be a lift of $P$ as in Definition \ref{dfn:pseudodiff}.  We define $g\in C^\infty(\mathbb{U})$ by $g(X,x,t) = h(\exp(\natural(\alpha_t(X)))\cdot x,x)$. The function $g$ is $\R^\times_+$-invariant. Hence $gu\in\cE^{\prime k}(\mathbb{U})$ is a lift of $hP$. If $h$ vanishes on the diagonal, then $t^{-1}g\in C^\infty(\mathbb{U})$ and $t^{-1}gu\in\cE^{\prime k-1}(\mathbb{U})$.
\item Let $(V,\natural,\mathbb{U},U)$ be a minimal graded basis at $p$. By Proposition \ref{prop:const pseudo from symb}, a lift $u\in \cE^{\prime k}(\mathbb{U})$ of $P$ is given by a oscillatory integral. One deduces the result from the absolute convergence of the oscillatory integral.
\item This follows directly from Part $c$.\qedhere
\end{enumerate}
\end{proof}
\begin{prop}\label{prop:adjoint} Let $k\in \C$ and $P\in \Psi^k(\cF^\bullet)$. Then \begin{enumerate}
\item $P$ is transverse to the projections $p_1,p_2:M\times M\to M$
\item $P^t\in \Psi^k(\cF^\bullet)$ and $P^*\in \Psi^{\bar{k}}(\cF^\bullet)$.
\end{enumerate}
\end{prop}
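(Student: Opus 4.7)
The plan is to handle both parts using a lift $u \in \cE^{\prime k}(\mathbb{U})$ of $P$ near each diagonal point (Definition \ref{dfn:pseudodiff}), together with the $\R^\times_+$-equivariant involution $\phi$ of Theorem \ref{thm:inverse_bisubmersion}, which is an open embedding on $\dom(\phi)\subseteq \mathbb{U}$ satisfying $\ev\circ\phi = \iota\circ\ev$ where $\iota$ is the flip on $M\times M$.

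For part (b), I would first treat the transpose. After multiplying $u$ by an $\R^\times_+$-invariant cutoff supported in $\dom(\phi)$ and equal to $1$ near $(0,p,1)$ and $(0,p,0)$, the discarded piece gives a smoothing contribution to $P$ since its support avoids $\{X=0\}$. The pushforward $\phi_* u$ then satisfies the homogeneity condition \eqref{eqn: equiv cond def pseudo diff} by equivariance of $\phi$, and transversality of $\phi_* u$ to $s$ reduces to transversality of $u$ to $s\circ\phi=r$. The latter is not immediate from Definition \ref{dfn:Eprimek} but follows from the oscillatory integral representation of Proposition \ref{prop:const pseudo from symb}: $\singsupp(u)\subseteq \{X=0\}$, and both $r$ and $s$ are submersions transverse to this locus. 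Hence $\phi_* u\in\cE^{\prime k}(\mathbb{U})$, and $\ev_{1*}(\phi_* u)=\iota_*(P)=P^t$ near $(p,p)$ by Theorem \ref{thm:inverse_bisubmersion}.b, so $P^t \in \Psi^k(\cF^\bullet)$ by Proposition \ref{prop:change_of_basis}.

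For $P^*=\overline{P^t}$, for any $v\in\cE^{\prime k}(\mathbb{U})$ the conjugate $\bar v$ satisfies $\alpha_{\lambda*}\bar v - \lambda^{\bar k}\bar v = \overline{\alpha_{\lambda*}v - \lambda^k v}\in C^\infty_c$ since $\overline{\lambda^k}=\lambda^{\bar k}$ for $\lambda\in\R^\times_+$, and transversality to $s$ is preserved by complex conjugation. So $\overline{\phi_* u}\in\cE^{\prime \bar k}(\mathbb{U})$ is a lift of $P^*$, proving $P^*\in\Psi^{\bar k}(\cF^\bullet)$.

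For part (a), transversality of $u$ to $s$ combined with the oscillatory integral representation of Proposition \ref{prop:const pseudo from symb} shows that $P$ maps $C^\infty_c(M,\Omega^{1/2})$ to $C^\infty(M,\Omega^{1/2})$ near the diagonal; off the diagonal, $P$ is smooth by Definition \ref{dfn:pseudodiff}. By Example \ref{ex:schwartz_trans}, this is transversality to $p_2$. Applying the same argument to $P^t\in\Psi^k(\cF^\bullet)$, which we just established, and using $p_1=p_2\circ\iota$, yields transversality of $P$ to $p_1$. The main technical subtlety throughout is the transversality of $u$ to $r$ (rather than just to $s$), required for $\phi_*u$ to lie in $\cE^{\prime k}$; this is provided by the explicit oscillatory integral formula of Proposition \ref{prop:const pseudo from symb}, while everything else is bookkeeping with $\R^\times_+$-invariant cutoffs and the functorial identities satisfied by $\phi$ and $\ev$.
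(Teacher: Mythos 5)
Your proposal is correct and follows essentially the same route as the paper: reduce to $P=\ev_{1*}(u)$, use the flip map $\phi$ of Theorem \ref{thm:inverse_bisubmersion} with an $\R^\times_+$-invariant cutoff to form $\phi_*u$ as a lift of $P^t$, and obtain the crucial transversality of $u$ to $r$ from the oscillatory integral representation (the paper makes this last step precise via the wavefront set bound $WF(u)\subseteq\{(0,x,t;\xi,0,0)\}$ and its trivial intersection with $\ker(dr)^{\perp}$, citing \cite{LesManVas}). The only cosmetic difference is that you deduce transversality to $p_1$ from $P^t\in\Psi^k(\cF^\bullet)$ rather than directly from the $r$-transversality of $u$ as the paper does; both are valid.
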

\begin{proof}
Since the above statements are local, we can without loss of generality suppose that $P=\ev_{1*}(u)$ for some graded basis $(V,\natural,\mathbb{U},U)$ and $u\in \cE^{\prime k}(\mathbb{U})$. Since $p_2\circ \ev_1(X,x,t)=x$, it follows from Condition a of Definition \ref{dfn:Eprimek} that $P$ is transverse to $p_2$. Let $r$ be the map from \eqref{eqn:bisub_rs}. Proposition \ref{prop:const pseudo from symb} implies that $$WF(u)\subseteq \{(0,x,t;\xi,0,0)\in T^*(V\times M\times \R)\}.$$ This intersects trivially with $\ker(dr)^{\perp}$. By \cite[Proposition 7]{LesManVas}, we deduce that $u$ is transverse to $r$. Since $p_1\circ \ev_1(X,x,t)=\exp(\natural(\alpha_t(X)))\cdot x$, it follows that $P$ is transverse to $p_2$.

 We now show that $P^t\in \Psi^k(\cF^\bullet)$. Consider the map $\phi$ given by Theorem \ref{thm:inverse_bisubmersion}. Let $g\in C^\infty(\mathbb{U})$ be an $\R_+^\times$-equivariant function such that $\supp(g)\subseteq \dom(\phi)$ and $g=1$ on a neighbourhood of $\{0\}\times U \times \R_+$. Since $u$ and $gu$ differ by a smooth function, it follows that $\ev_{1*}(gu)$ and $P=\ev_{1*}(u)$ differ by a smooth function. Hence without loss of generality we can suppose that $\supp(u)\subseteq \dom(\phi)$. The distribution $u$ is supported in $\dom(\phi)$. Hence $\phi_*(u)\in \cD'(\mathbb{U})$ is well-defined. We claim that $\phi_*(u)\in \cE^{\prime k}(\mathbb{U})$. It satisfies Condition $a$ of Definition \ref{dfn:Eprimek} because of Theorem \ref{thm:inverse_bisubmersion}.b and that $u$ is transverse to $r$ by the argument above. It satisfies Condition $b$ because $\phi$ is $\R_+^\times$-equivariant. It satisfies Condition $3$ because $\phi$ preserves the $\R$-coordinate by  Theorem \ref{thm:inverse_bisubmersion}.b. By Theorem \ref{thm:inverse_bisubmersion}.b, $$\ev_{1*}(\phi_*(u))=\iota_*(\ev_{1*}(u))=\iota_*(P)=P^t,$$
where $\iota:M\times M\to M\times M$ is the map $(x,y)\mapsto (y,x)$. Hence $P^t\in \Psi^k(\cF^\bullet)$ by Corollary \ref{cor:global_basis_1}. Since $P^*=\bar{P^t}$, we also get $P^*\in \Psi^{\bar{k}}(\cF^\bullet)$.
\end{proof}
Since $P\in \cD'(M\times M,\Omega^{1/2})$, by Schwartz kernel theorem, it is an operator $P:C^\infty_c(M,\Omega^{1/2})\to \cD'(M,\Omega^{1/2})$. Proposition \ref{prop:adjoint} implies that   \begin{itemize}
\item $P(C^\infty_c(M,\Omega^{1/2}))\subseteq C^\infty_c(M,\Omega^{1/2})$ 
\item $P$ extends to a continuous linear map $\cD'(M,\Omega^{1/2})\to\cD'(M,\Omega^{1/2})$.
\end{itemize}
\begin{prop}\label{prop:composition}
If $P\in \Psi^k(\cF^\bullet)$, $Q\in \Psi^{l}(\cF^\bullet)$, then $P\star Q\in \Psi^{k+l}(\cF^\bullet)$.
\end{prop}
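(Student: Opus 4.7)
The plan is to parallel the construction used in Proposition \ref{prop:subalgebra}, but with distributional lifts in place of $C^\infty_c$ sections. First, since $P$ and $Q$ are properly supported, so is $P\star Q$, and its singular support is contained in the diagonal by standard wavefront/Schwartz-kernel considerations (both $P$ and $Q$ are smooth off the diagonal and the composition of properly supported smoothing operators with pseudolocal operators is smooth off the diagonal). Thanks to Proposition \ref{prop:change_of_basis}, it suffices to exhibit, at each point $p\in M$, some graded basis around $p$ and an element $w\in\cE^{\prime(k+l)}(\mathbb{U})$ with $\ev_{1*}(w)=P\star Q$ on a neighbourhood of $(p,p)$.

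Fix $p\in M$. Choose a graded Lie basis $(\lie{g},\natural,\mathbb{U},U)$ at $p$ (Proposition \ref{prop:graded Lie basis exist}), and pick lifts $u\in\cE^{\prime k}(\mathbb{U})$ of $P$ and $v\in\cE^{\prime l}(\mathbb{U})$ of $Q$. Following the recipe of \eqref{eqn:h_conv}, I would introduce the distribution
\[
  h(Y,X,x,t)=u\bigl(Y,\exp(\natural(\alpha_t(X)))\cdot x,t\bigr)\,v(X,x,t)
\]
on $\lie{g}\times\lie{g}\times U\times\R_+$. The first factor is the pullback of $u$ along the submersion $(Y,X,x,t)\mapsto(Y,\exp(\natural(\alpha_t(X)))\cdot x,t)$; the pullback is well defined because $u$ is transverse to $s$. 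The product with $v$ makes sense because, by Proposition \ref{prop:const pseudo from symb}, both $u$ and $v$ have singular supports contained in $\{0\}\times U\times\R_+$ with wavefront sets conormal to $V\times\{0\}\times\{0\}$ (in the appropriate factor), so the two wavefront sets are transverse. Let $\phi:\dom(\phi)\subseteq\lie{g}\times\lie{g}\times U\times\R_+\to\mathbb{U}$ be the map of Theorem \ref{main_tech_thm}. Arguing as in the proof of Proposition \ref{prop:subalgebra}, via an $\R_+^\times$-equivariant partition of unity and modification of $u,v$ by smoothing lifts (which contribute only elements of first type and hence yield smooth kernels after $\ev_{1*}$), I can assume $\supp(h)\subseteq\dom(\phi)$, and set $w:=\phi_*(h)\in\cD'(\mathbb{U},\Omega^{1/2}_{r,s})$.

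Now I would check the three conditions of Definition \ref{dfn:Eprimek} for $w$. Transversality of $w$ to $s$ follows from the transversality of $u$ and $v$ to $s$ combined with $s\circ\phi(Y,X,x,t)=(x,t)$ (a consequence of Theorem \ref{main_tech_thm}.b). The equivariance $\alpha_{\lambda*}w-\lambda^{k+l}w\in C^\infty_c(\mathbb{U},\Omega^{1/2}_{r,s})$ is inherited from $h$: by the $\R_+^\times$-equivariance of $u,v$ and of the map defining the first factor, $h$ is equivariant of degree $k+l$ up to a smooth remainder, and $\phi$ itself is $\R_+^\times$-equivariant (Theorem \ref{main_tech_thm}.a). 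The support/properness condition (c) follows from that of $u,v$ and the fact that $\phi$ preserves the $t$-coordinate. Finally, to identify $\ev_{1*}(w)$ with $P\star Q$ near $(p,p)$: evaluating at $t=1$ and using Theorem \ref{main_tech_thm}.b, the pushforward $\phi_*$ integrates out the pairing variable, reproducing precisely the Schwartz-kernel composition of $\ev_{1*}(u)=P$ and $\ev_{1*}(v)=Q$ in a neighbourhood of the diagonal.

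The main obstacle is the rigorous definition of the distributional product $h$, i.e.\ controlling wavefront sets well enough to pull back $u$ along the first map and then multiply by $v$. The cleanest route is to invoke Proposition \ref{prop:const pseudo from symb} to represent $u$ and $v$ as oscillatory integrals plus Schwartz remainders; the remainders produce only smoothing contributions, while the oscillatory parts yield a standard symbolic $\star$-product whose phase is manifestly nondegenerate thanks to Theorem \ref{main_tech_thm}, so that $w$ has a well-defined symbol in the class of Proposition \ref{prop:const pseudo from symb} of degree $k+l$. Modulo this analytic checkpoint, the remainder of the argument is the transcription of the algebraic computation in Proposition \ref{prop:subalgebra} to the distributional setting.
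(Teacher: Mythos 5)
Your overall architecture matches the paper's: form the product distribution $h(Y,X,x,t)=u(Y,\exp(\natural(\alpha_t(X)))\cdot x,t)\,v(X,x,t)$, arrange $\supp(h)\subseteq\dom(\phi)$ by modifying $u,v$ away from the critical locus, and set $w=\phi_*(h)$. The transversality-of-wavefront-sets justification for the product of $p^*u$ with $q^*v$ is also what the paper uses (via Condition~a of Definition~\ref{dfn:Eprimek}), and your verification of Conditions~a,c and of $\ev_{1*}(w)=P\star Q$ are fine.

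The gap is in Condition~b. You assert that ``$h$ is equivariant of degree $k+l$ up to a smooth remainder.'' This is false. Writing $f_\lambda=\alpha_{\lambda*}u-\lambda^ku$ and $g_\lambda=\alpha_{\lambda*}v-\lambda^lv$ (both smooth and compactly supported), one computes
\[
\alpha_{\lambda*}h-\lambda^{k+l}h
= f_\lambda\star\alpha_{\lambda*}v\;+\;\lambda^k\,u\star g_\lambda,
\]
with $\star$ denoting the fibrewise product defining $h$. Each summand is the product of a compactly supported \emph{smooth} function with a genuine \emph{distribution} ($\alpha_{\lambda*}v$ or $u$), hence is itself a distribution, not a smooth section. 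So $h$ is \emph{not} $\R_+^\times$-equivariant modulo $C^\infty_c$. What is true — and what the paper proves — is that the \emph{pushforward} $\phi_*$ of each summand is smooth. This is where the proof actually uses Theorem~\ref{main_tech_thm}.d: since $\phi(0,X,x,t)=(X,x,t)$ and $\phi(Y,0,x,t)=(Y,x,t)$, the conormal directions carrying $\mathrm{WF}(f_\lambda\star\alpha_{\lambda*}v)\subseteq\{(Y,0,x,t;0,\eta,0,0)\}$ and $\mathrm{WF}(u\star g_\lambda)\subseteq\{(0,X,x,t;\xi,0,0,0)\}$ miss $\ker(d\phi)^\perp$, so \cite[Proposition~7]{LesManVas} gives smoothness of the pushforward. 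Your proposal never invokes Theorem~\ref{main_tech_thm}.d, and without it the equivariance of $w$ is unproved. The alternative oscillatory-integral route you sketch at the end could plausibly be made to work, but you frame it as addressing the \emph{existence} of the product $h$, not the failure of equivariance of $h$ itself, so it does not close this gap as stated.
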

\begin{proof}
Since $P,Q$ are properly supported, the distribution $P\star Q$ is well-defined and is properly supported. It is also clear that the singular support of $P\star Q$ lies on the diagonal. It remains to check the third condition of Definition \ref{dfn:pseudodiff}. Let $p\in M, (\mathfrak{g},\natural,\mathbb{U},U)$ a graded Lie basis with $p\in M$, $u\in \mathcal{E}^{\prime k }(\mathbb{U}),v\in \mathcal{E}^{\prime l }(\mathbb{U})$ lifts of $P$ and $Q$ respectively, $\phi$ as in Theorem \ref{main_tech_thm}. We can up to adding a smooth function to $u$ and $v$, suppose that the distribution $u\star v$ defined by \begin{equation}\label{eqn:ustarv}
 u\star v(Y,X,x,t)=u(Y,\exp(\natural(\alpha_t(X)))\cdot x,t)v(X,x,t)
\end{equation}has support in $\dom(\phi)$. Notice that we are allowed to define $u\star v$ by \eqref{eqn:ustarv} because of Condition $a$ of Definition \ref{dfn:Eprimek}. We claim that $w=\phi_*(u\star v) \in \mathcal{E}^{\prime k+l}	(\mathbb{U})$. Conditions $a$ and $c$ of Definition \ref{dfn:Eprimek} are straightforward to check. For Condition $b$, one has 
 \begin{align*}
 \alpha_{\lambda}(w)-\lambda^{k+l}  w = \phi_*\Bigg(\Big((\alpha_{\lambda*}(u)-\lambda^ku)\star\alpha_{\lambda*}v\Big)+\lambda^k\Big(u\star(\alpha_{\lambda*} v-\lambda^lv)\Big)\Bigg).
 \end{align*}
 By \eqref{eqn: equiv cond def pseudo diff}, $\alpha_{\lambda}(u)-\lambda^ku$, and $\alpha_{\lambda}(v)-\lambda^lv$ are smooth. Let 
 $$\kappa_1=\Big((\alpha_{\lambda*}(u)-\lambda^ku)\star\alpha_{\lambda*}v\Big),\quad \kappa_2=\lambda^k\Big(u\star(\alpha_{\lambda*} v-\lambda^lv)\Big).$$
 One has $$WF(\kappa_1)\subseteq \{(X,0,x,t;0,\eta,0,0)\in T^*(\mathfrak{g}\times \mathfrak{g}\times U\times \R_+)\}.$$ This intersects trivially with $\ker(d\phi)^\perp$ because of Theorem \ref{main_tech_thm}.d. By \cite[Proposition 7]{LesManVas}, we deduce that $\phi_*(\kappa_1)$ is smooth. By a similar argument we deduce that $\phi_*(\kappa_2)$ is smooth. Therefore $w\in \mathcal{E}^{\prime k+l}(\mathbb{U})$. By Theorem \ref{main_tech_thm}.b,
 $$\ev_{1*}(w)=\ev_{1*}(u)\star \ev_{1*}(u).$$ 
Hence $w$ is a lift of $P\star Q$ near $p$.\end{proof}
\subsection{Distributions on graded nilpotent Lie groups} \label{sec:dis_groups}
In this and the following subsections, we will make use of Hilbert space techniques. It will greatly simplify the exposition if we assume that the underlying manifold $M$ is compact.  If we don't make this assumption, we will be forced to use local $L^2$-spaces, local Sobolev spaces, and pro-$C^*$-algebras. \textbf{For this reason, in Sections \ref{sec:dis_groups},\ref{sec:Sobolev},\ref{sec:princip symb},\ref{sec:Parametrised}, and \ref{sec:para order 0}, we shall make the assumption that $M$ is a compact manifold. In Section \ref{sec:noncompact}, we extend our results to non compact manifolds.}

Let $\mathfrak{g}=\oplus_{i=1}^N\mathfrak{g}_i$ be a graded nilpotent Lie algebra. We view $\mathfrak{g}$ as a Lie group by the BCH formula \eqref{eqn:BCHintro}. As usual, we write $\alpha_\lambda$ for the dilations on $\mathfrak{g}$ given by 
$\alpha_\lambda\left(\sum_{i=1}^NX_i\right)=\sum_{i=1}^N\lambda^i X_i $ for $ \lambda\in \R_+,$ and $\hat\alpha_\lambda$ for the dilations of the dual space $\lie{g}^*$ given by $\hat\alpha_\lambda(\xi)(X) = \xi(\alpha_\lambda(X))$. We extend the action to $\mathfrak{g}\times M$ and $\mathfrak{g}^*\times M$ by acting trivially on $M$. If $u\in  \cD'(\mathfrak{g}\times M,\Omega^1\mathfrak{g})$, then we define $\alpha_{\lambda*}u$ by $$\langle\alpha_{\lambda*}u,f\rangle=\langle u,\alpha_\lambda^*f\rangle,\quad f\in C^\infty_c(\mathfrak{g}\times M,\Omega^1M). $$
We denote by $\cE'(\mathfrak{g}\times M,\Omega^1\mathfrak{g})$ the $*$-algebra of compactly supported $u\in \cD'(\mathfrak{g}\times M,\Omega^1\mathfrak{g})$ which are transverse to the bundle projection $p:\mathfrak{g} \times M \to M$. The $*$-algebra structure comes from fiberwise convolution and adjoint. It is unital with the unit being the distribution $$\langle 1,f\rangle=\int_M f(0,x),\quad f\in C^\infty_c(\mathfrak{g}\times M,\Omega^1M).$$
\begin{definition} Let $\cE^{\prime k}(\mathfrak{g}\times M)$ be the space of distributions $u\in \cE'(\mathfrak{g}\times M,\Omega^1\mathfrak{g})$ such that 
for every $\lambda\in \R_+^\times$, 
\begin{equation}\label{homog_condition_symbols}
 \alpha_{\lambda*} u-\lambda^k u\in C^\infty_c(\mathfrak{g}\times M,\Omega^1\mathfrak{g}).
\end{equation}
 \end{definition}
Note that $1\in\cE^{\prime 0}(\mathfrak{g}\times M) $ and if $u\in\cE^{\prime k}(\mathfrak{g}\times M)$ and $v\in\cE^{\prime l}(\mathfrak{g}\times M)$ then
$u\star v\in \cE^{\prime k+l}(\mathfrak{g}\times M)$ and $u^*\in \cE^{\prime \bar{k}}(\mathfrak{g}\times M)$.
The following is analogous to Proposition \ref{prop:const pseudo from symb}. Since the proof is very similar, it is omitted.
 \begin{prop}\label{prop:Taylorgroups} 
Let $u\in \mathcal{E}^{\prime k}(\mathfrak{g}\times M)$. Then there exists a unique smooth function $B\in C^\infty((\lie{g}^*\setminus\{0\})\times M)$ called the full symbol of $u$ such that
\begin{enumerate}
 \item
 One has  \begin{equation}\label{eqn:B_homog}
   B(\hat{\alpha}_\lambda(\xi),x) = \lambda^kB(\xi,x),\quad \forall\lambda\in \R_+^\times,(\xi,x)\in (\lie{g}^*\setminus\{0\})\times M)
\end{equation}
 \item
 If $\chi\in C^\infty_c(\lie{g}^*)$ is equal to $1$ on a neighbourhood of $0$, then \begin{equation}\label{eqn:qsuidfhpiqspdjfo}
   f(X,x)=u(X,x)-\int_{\mathfrak{g}^*}e^{i\langle \xi,X\rangle}(1-\chi(\xi))B(\xi,x)d\xi\in C^\infty(\mathfrak{g}\times M,\Omega^1\mathfrak{g})
\end{equation} and $f(X,x)$ and all its derivative in $x$ are Schwartz in $X$ uniformly in $x$.
\end{enumerate}
Conversely if $B\in C^\infty((\lie{g}^*\setminus\{0\})\times M)$ is homogeneous of degree $k$, then there exists $u\in \mathcal{E}^{\prime k}(\mathfrak{g}\times M)$ such that \eqref{eqn:qsuidfhpiqspdjfo} is satisfied.
\end{prop}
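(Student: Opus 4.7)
The plan is to mimic the proof of Proposition \ref{prop:const pseudo from symb}, which is in fact simpler here because there is no auxiliary parameter $t$. First I would fix a Euclidean metric on $\mathfrak{g}$ to trivialize $\Omega^1\mathfrak{g}$. Since $u\in \cE'(\mathfrak{g}\times M,\Omega^1\mathfrak{g})$ is transverse to the projection $p:\mathfrak{g}\times M\to M$ and has compact support, its fiberwise restriction $u_x$ belongs to $\cE'(\mathfrak{g})$ for every $x\in M$. Taking the fiberwise Euclidean Fourier transform gives a smooth function $v(\xi,x)=\widehat{u_x}(\xi)$ on $\mathfrak{g}^*\times M$, with $v$ compactly supported in $x$.

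Next I translate the homogeneity condition \eqref{homog_condition_symbols} to the Fourier side. For each $\lambda\in\R_+^\times$, the hypothesis gives a compactly supported smooth $g_\lambda\in C^\infty_c(\mathfrak{g}\times M,\Omega^1\mathfrak{g})$ with $\alpha_{\lambda*}u-\lambda^k u=g_\lambda$. Taking the fiberwise Fourier transform produces
\[
v(\hat\alpha_\lambda\xi,x)-\lambda^k v(\xi,x)=h_\lambda(\xi,x),
\]
where $h_\lambda$ and all its derivatives in $x$ are Schwartz in $\xi$, uniformly in $x$. Iterating with $\lambda=2$ yields, for every $l\in\N$,
\[
2^{-k(l+1)}v(\hat\alpha_{2^{l+1}}\xi,x)=v(\xi,x)+\sum_{n=0}^{l}2^{-kn}h_{2}(\hat\alpha_{2^n}\xi,x).
\]
I then define
\[
B(\xi,x):=v(\xi,x)+\sum_{n=0}^{+\infty}2^{-kn}h_{2}(\hat\alpha_{2^n}\xi,x).
\]
Away from $\xi=0$ the norms $\|\hat\alpha_{2^n}\xi\|$ grow like $2^n$, so the Schwartz decay of $h_2$ dominates the factor $2^{-kn}$ and the series converges absolutely together with all $x$-derivatives, giving $B\in C^\infty((\mathfrak{g}^*\setminus\{0\})\times M)$. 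Homogeneity \eqref{eqn:B_homog} for $\lambda=2$ is then a telescoping computation from the definition. To upgrade to arbitrary $\lambda$, I repeat the construction with $\lambda=2^{1/l}$ and observe by the iterated identity above (with $2$ replaced by $2^{1/l}$) that the resulting function coincides with $B$; continuity in $\lambda$ gives \eqref{eqn:B_homog} in full generality.

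For part (b), I compute $f$ from \eqref{eqn:qsuidfhpiqspdjfo}: the inverse Fourier transform of $(1-\chi(\xi))B(\xi,x)$ differs from $u$ by the inverse Fourier transform of $\chi(\xi)v(\xi,x)+(1-\chi(\xi))\sum_{n\geq 0}2^{-kn}h_2(\hat\alpha_{2^n}\xi,x)$. The first summand is Schwartz in $\xi$ with compact $x$-support, and the series is Schwartz in $\xi$ on $\operatorname{supp}(1-\chi)$ uniformly in $x$ by the decay estimates above, proving the claim. Uniqueness is immediate: if two candidates $B,B'$ both satisfy (a) and (b), then $B-B'$ is smooth on all of $\mathfrak{g}^*\times M$ (because $f-f'$ is Schwartz) and homogeneous of degree $k$ away from $\xi=0$, which forces $B-B'\equiv 0$.

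For the converse, given homogeneous $B$, I let $\tilde u(X,x):=\int_{\mathfrak{g}^*}e^{i\langle\xi,X\rangle}(1-\chi(\xi))B(\xi,x)\,d\xi$, which is a well-defined distribution transverse to $p$ with $\singsupp(\tilde u)\subseteq\{0\}\times M$ and satisfies $\alpha_{\lambda*}\tilde u-\lambda^k\tilde u\in C^\infty$. Multiplying by a compactly supported cutoff in $\mathfrak{g}\times M$ that equals $1$ near $\{0\}\times M$ (and correcting by a compactly supported smooth function if needed) produces $u\in\cE^{\prime k}(\mathfrak{g}\times M)$ with full symbol $B$. The only point that requires a little care is the uniformity of the Schwartz estimates in $x$, but since the estimates come from Fourier transforms of compactly-$x$-supported smooth functions, this is automatic.
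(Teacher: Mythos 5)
Your proposal is correct and follows the same approach the paper intends: the paper omits the proof of this proposition, explicitly stating it is analogous to Proposition \ref{prop:const pseudo from symb}, and your argument is a faithful adaptation of that proof (with the simplification, which you correctly note, that the auxiliary parameter $t$ is absent, so the convergence and support arguments are slightly easier). The dyadic telescoping series for $B$, the $\lambda=2^{1/l}$ trick to upgrade homogeneity, the Schwartz estimate for the remainder, the uniqueness argument, and the cutoff construction for the converse all match the paper's template.
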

\begin{definition}
Let $\cS_0(\mathfrak{g})$ be the space of $f\in C^\infty(\mathfrak{g},\Omega^1\mathfrak{g})$ such that \begin{itemize}
\item $f$ is Schwartz
\item If $\hat{f}\in C^\infty(\mathfrak{g}^*)$ denotes the Fourier transform of $f$, then $\hat{f}$ is flat at $0$.
\end{itemize}
\end{definition} Let $u\in \cE^{\prime k}(\mathfrak{g}\times M)$ and  $B\in C^\infty((\lie{g}^*\setminus\{0\})\times M)$ its full symbol, $x\in M$. In \cite[Proposition 2.2]{ChrGelGloPol}, it is shown that the linear map given by convolution with the inverse Fourier transform of $B$
\begin{align*}
f\in S_0(\mathfrak{g})\mapsto \left(Y\mapsto \int_\mathfrak{g}\int_{\mathfrak{g}^*} B(\xi,x)e^{i\langle \xi,X\rangle}f(\BCH(-X,Y)) \right)\in S_0(\mathfrak{g}),
\end{align*}
is well defined and continuous. We denote this map by $\sigma^k(u,x)$. Let $\pi\in \hat{\mathfrak{g}}$ be a non trivial irreducible unitary representation of $\mathfrak{g}$ acting on a Hilbert space $L^2\pi$. We define $\sigma^k(B,x,\pi)$ to be the unbounded operator acting on $L^2\pi$ by the formula $$\sigma^k(u,x,\pi)(\pi(f) \xi)=\pi\left(\sigma^k(u,x)(f)\right)\xi,\quad f\in S_0(\mathfrak{g}),\xi \in L^2\pi.$$ This map can be extended to a linear map $$\sigma^k(u,x,\pi):C^\infty(\pi)\to C^\infty(\pi).$$ We refer the reader to \cite[The discussion after Theorem 2.5]{ChrGelGloPol} for more details. By \cite[Proposition 2.3 and Proposition 3.3]{ChrGelGloPol}, for any $u\in\cE^{\prime k}(\mathfrak{g}\times M)$, $v\in\cE^{\prime l}(\mathfrak{g}\times M)$ we have
\begin{align}\label{eqn:algebra_prop_principal_symbols_at_0}
\sigma^k(u,x,\pi)\circ \sigma^l(u,x,\pi) =\sigma^{k+l}(u\star v,x,\pi), \quad \sigma^{\bar{k}}(u^*,x,\pi) \subseteq \sigma^k(u,x,\pi)^*.
\end{align}
\begin{thm}\label{thm:Bounded_symbol} Let $u\in  \cE^{\prime k}(\mathfrak{g}\times M)$, $x\in M$, $\pi \in \hat{\mathfrak{g}}\backslash \{1_\mathfrak{g}\}$. Then 
\begin{enumerate}
\item If $Re(k)=0$, then $\sigma^k(u,x)$ and $\sigma^k(u,x,\pi)$ extend to a bounded operator $L^2\mathfrak{g}\to L^2\mathfrak{g}$ and $L^2\pi \to L^2\pi$ respectively. Moreover $$\norm{\sigma^k(u)}:=\sup_{x\in M}\norm{\sigma^k(u,x)}=\sup_{x\in M,\pi\in \hat{\mathfrak{g}}\backslash \{1_\mathfrak{g}\}}\norm{\sigma^k(u,x,\pi)}<+\infty.$$
\item If $Re(k)<0$, then $\sigma^k(u,x,\pi)$ extends to a compact operator  $L^2\pi \to L^2\pi$.
\end{enumerate}
\end{thm}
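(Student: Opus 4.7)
The plan for part (a) is to reduce the norm identity to a statement about convolution on $L^2\mathfrak{g}$ via the Plancherel theorem. Since $\mathfrak{g}$ is nilpotent hence amenable, the reduced and full group $C^*$-algebras coincide, so for any $T$ in the multiplier algebra of $C^*\mathfrak{g}$ one has $\norm{L_T}_{L^2\mathfrak{g}} = \sup_{\pi \in \hat{\mathfrak{g}}}\norm{\pi(T)}$, where $L_T$ denotes left-convolution. Applied fibrewise in $x$ to $u\in \cE^{\prime k}(\mathfrak{g}\times M)$, this simultaneously produces the equality of norms claimed in part (a) and reduces both parts of the theorem to statements about $\sigma^k(u,x)$ viewed as a convolution operator on $L^2\mathfrak{g}$.

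For the boundedness itself I would appeal to classical stratified-group singular integral theory. By Proposition \ref{prop:Taylorgroups}, the convolution kernel of $\sigma^k(u,x)$ equals the inverse Fourier transform of $B(\cdot,x)$ modulo a Schwartz remainder, which convolves to a Hilbert-Schmidt, hence bounded, operator. Away from the identity this kernel is homogeneous of degree $-Q-k$ on $\mathfrak{g}$, where $Q = \sum_i i\dim \mathfrak{g}_i$ is the homogeneous dimension. When $\mathrm{Re}(k)=0$ this is a Calder\'on-Zygmund convolution kernel in the Folland-Stein / Coifman-Weiss stratified-group sense, whose $L^2$-operator norm is controlled by finitely many $C^\infty$-seminorms of $B$ on the unit sphere of $\mathfrak{g}^*$. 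Smooth dependence on $x$ together with compactness of $M$ then yield the uniform bound.

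For part (b) my plan is to factor $\sigma^k(u,x,\pi)$ as bounded $\circ$ compact using a positive Rockland element. Fix any positive Rockland operator $R\in\cE^{\prime 2}(\mathfrak{g})$, for example a sub-Laplacian $-\sum_j X_j^2$ coming from a basis of $\mathfrak{g}_1$. Analytic functional calculus on $R$, as developed in \cite{ChrGelGloPol}, produces elements $R_s\in\cE^{\prime s}(\mathfrak{g})$ with $\sigma^s(R_s,\pi) = (1+\pi(R))^{s/2}$. Using the algebra relation \eqref{eqn:algebra_prop_principal_symbols_at_0}, I would write
\[
\sigma^k(u,x,\pi) \;=\; \sigma^0\bigl(u\star R_{-k},\,x,\,\pi\bigr)\circ \sigma^k(R_k,\pi).
\]
By part (a) the first factor is bounded uniformly in $(x,\pi)$, and by the Helffer-Nourrigat Rockland theorem \cite{HelfferRockland} the operator $\pi(R)$ is essentially self-adjoint, positive, with compact resolvent on $L^2\pi$ for every nontrivial irreducible $\pi$; hence $(1+\pi(R))^{k/2}$ is compact when $\mathrm{Re}(k)<0$, and the composition is compact.

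The main obstacle I anticipate is justifying the Rockland functional calculus inside $\cE^{\prime\bullet}(\mathfrak{g})$ and reconciling it with ordered products of possibly unbounded operators on $C^\infty(\pi)$: one must check that $R_s$ genuinely lies in $\cE^{\prime s}(\mathfrak{g})$ with the stated symbol, that the composition formula of \cite{ChrGelGloPol} extends through this calculus, and that the factorisation above is valid on the common domain of smooth vectors. The machinery in \cite{ChrGelGloPol} and \cite{HelfferRockland} is designed precisely to supply these ingredients.
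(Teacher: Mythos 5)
Your approach to part (a) is essentially the same as the paper's. The paper cites Taylor and Goodman for the $L^2\mathfrak{g}$-boundedness of $\sigma^0(u,x)$, reduces general $\Re(k)=0$ to $k=0$ via $u^*\star u$, and asserts the norm identity $\norm{\sigma^0(u,x)}=\sup_{\pi\neq 1_\mathfrak{g}}\norm{\sigma^0(u,x,\pi)}$ as immediate; this is exactly the $C^*_r\mathfrak{g}=C^*\mathfrak{g}$ fact you invoke, combined with the observation that $1_\mathfrak{g}$ lies in the closure of $\hat{\mathfrak{g}}\setminus\{1_\mathfrak{g}\}$, so by lower semicontinuity of $\pi\mapsto\norm{\pi(\cdot)}$ the trivial representation cannot increase the supremum. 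Your appeal to stratified-group Calder\'on--Zygmund theory in the Folland--Stein sense is the same content with slightly different references.

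Your part (b) is a genuinely different route. The paper argues directly: it raises $u^*\star u$ to a power $n$ large enough that the full symbol $C$ of $(u^*\star u)^n$ is integrable at infinity, restricts $C$ to the coadjoint orbit $O$ of $\pi$ (closed since $\pi\neq 1_\mathfrak{g}$), and applies the Plancherel formula to conclude that $(\sigma^k(u,x,\pi)^*\sigma^k(u,x,\pi))^n$ is trace class, hence $\sigma^k(u,x,\pi)$ is compact. You instead reduce to the special family $R_s$, factor $\sigma^k(u,x,\pi) = \sigma^0(u\star R_{-k},x,\pi)\circ\sigma^k(R_k,\pi)$, invoke part (a) for the first factor, and compact resolvent of $\pi(R)$ for the second. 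This is a valid and more modular reduction: all of the orbit-geometric input is concentrated in the single citable fact that a positive Rockland operator has compact resolvent in every nontrivial irreducible $\pi$. Two caveats are worth flagging. First, a small slip: elements of $\cE^{\prime s}(\mathfrak{g})$ have \emph{homogeneous} principal symbols, so one should have $\sigma^s(R_s,\pi)=\pi(R)^{s/2}$, not $(1+\pi(R))^{s/2}$; the shift by $1$ destroys the degree-$s$ homogeneity that the calculus requires. Second, the compact-resolvent fact must be taken from outside the present machinery --- e.g.\ via the Kirillov realization of $\pi(R)$ as an anharmonic-oscillator-type operator on $L^2\R^m$ with discrete spectrum, as in \cite{HelfferRockland}. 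If one instead tried to deduce it by representing $\pi(R)^{-1}$ via the $\cE^{\prime -k}$-parametrix of Theorem~\ref{Parametrix_symbols} and invoking compactness of negative-order symbols, the argument would be circular, since compactness of negative-order symbols is precisely what is being proved. Finally, note that the paper's direct Plancherel argument yields slightly more than your factoring does: it establishes that sufficiently high powers of $\sigma^k(u,x,\pi)^*\sigma^k(u,x,\pi)$ are trace class, not merely that the operator is compact.
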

\begin{proof}
For $k=0$, the fact that the operator $\sigma^0(u,x)$ is bounded is well known, see \cite[the paragraph preceding Proposition 2.5]{TaylorBook} or \cite{GoodmanIntegrals}. This immediately implies that  $\sigma^0(u,x,\pi)$ is bounded and that $\norm{\sigma^0(u,x)}=\sup_{\pi\in \hat{\mathfrak{g}}\backslash \{1_\mathfrak{g}\}}\norm{\sigma^0(u,x,\pi)}$. One can show that since $u$ varies smoothly in $x$, one has a uniform bound for $\sigma^0(u,x)$ as $x$ varies in $M$, see  \cite{ChrGelGloPol}. For $k\in \C$ with $\Re(k)=0$, the theorem follows by the above applied to $ u^*\star u\in \cE^{\prime 0}(\mathfrak{g}\times M)$.

The second part follows from the Plancherel formula. To see this, let $n$ be big enough so that if $C\in C^\infty((\mathfrak{g}^*\backslash\{0\})\times M)$ denotes the full symbol of $(u^*\star u)^n$, then $C$ is integrable at infinity. Let $O\subseteq \mathfrak{g}^*$ be the orbit associated to $\pi$. Since $\pi$ is non trivial and all orbits are closed \cite[Theorem 3.1.4]{BookNilpotentGroups}, it follows that $\int_O|C(\xi,x)|d\mu_O(\xi)<+\infty$ where $\mu_O$ is the canonical measure on $O$ coming from its symplectic structure. Now let $a_n\in C^\infty_c(\mathfrak{g},|\Lambda|^1\mathfrak{g})$ be an approximate of the identity in $C^*\mathfrak{g}$. By the Plancherel formula \cite[Theorem 4.3.1]{BookNilpotentGroups} applied to $(\sigma^k(u,x,\pi)^*a_n^2\sigma^k(u,x,\pi))^n$ and the Lebesgue monotone convergence theorem applied to the left hand side and Lebesgue dominated convergence theorem applied to the right hand side, we deduce $$Tr((\sigma^k(u,x,\pi)^*\sigma^k(u,x,\pi))^n)=\int_OC(\xi,x)d\mu_O(\xi)<+\infty.$$ Hence $(\sigma^k(u,x,\pi)^*\sigma^k(u,x,\pi))^n$ is a bounded compact operator. Hence $\sigma^k(u,x,\pi)$ is compact.
\end{proof}
The following theorem is a generalization of Helffer and Nourrigat's Theorem \cite{HelfferRockland} to left-invariant pseudodifferential operators on a graded nilpotent Lie group.  It is central to what follows.

\begin{thm}[\cite{Glowacki1,Glowacki2,ChrGelGloPol}]\label{Parametrix_symbols}If $u\in \cE^{\prime k}(\mathfrak{g}\times M)$, then the following are equivalent:
\begin{enumerate}
\item For every $x\in M$, and $\pi \in \hat{\mathfrak{g}}\backslash \{1_\mathfrak{g}\}$, $\sigma^k(u,x,\pi)$ is injective.
\item There exists $v\in \cE^{\prime -k}(\mathfrak{g}\times M)$ such that  $1-v\star u\in C^\infty_c(\mathfrak{g}\times M,\Omega^1\mathfrak{g})$.
\end{enumerate}
Furthermore if $k=0$, then the previous statements are equivalent to the following \begin{enumerate} \setcounter{enumi}{2}
 \item For every $x\in M$, and $\pi \in \hat{\mathfrak{g}}\backslash \{1_\mathfrak{g}\}$, then the bounded extension of $\sigma^0(u,x)$ is left invertible.
\end{enumerate}
\end{thm}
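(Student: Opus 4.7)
The implication (b) $\Rightarrow$ (a) is the straightforward direction. The plan is to apply the fiberwise symbol maps $\sigma^\bullet(\cdot,x,\pi)$ to the parametrix identity $v\star u = 1 - r$ with $r\in C^\infty_c(\mathfrak{g}\times M,\Omega^1\mathfrak{g})$. Using the multiplicativity relations in \eqref{eqn:algebra_prop_principal_symbols_at_0}, together with the facts that $1\in\cE^{\prime 0}(\mathfrak{g}\times M)$ has symbol $\id_{C^\infty(\pi)}$ and that a smooth compactly supported function has Schwartz full symbol (hence zero contribution at any fixed homogeneity), one obtains
\[
\sigma^{-k}(v,x,\pi)\circ \sigma^k(u,x,\pi) \;=\; \id_{C^\infty(\pi)},
\]
whence the injectivity of $\sigma^k(u,x,\pi)$.

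For the hard direction (a) $\Rightarrow$ (b), I would proceed in two steps. First, reduce to $k=0$ using the existence of a positive homogeneous Rockland operator $R$ of some order $m>0$ on $\mathfrak{g}$ (the classical Helffer--Nourrigat--Rockland theorem, independent of what we are proving here). Hulanicki's theorem and spectral calculus on $C^*\mathfrak{g}$ produce, for every $s\in\C$, an $x$-independent element $R^{s}\in\cE^{\prime ms}(\mathfrak{g}\times M)$ with $R^{s}\star R^{-s}=1$ and whose symbol in every non-trivial $\pi$ is invertible on $C^\infty(\pi)$. Replacing $u$ by $u\star R^{-k/m}$ reduces the construction of a left parametrix to the case $k=0$. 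Second, when $k=0$, the family $\sigma^0(u^*\star u, x, \pi)$ is bounded, self-adjoint and non-negative by Theorem \ref{thm:Bounded_symbol} and \eqref{eqn:algebra_prop_principal_symbols_at_0}, and injectivity of $\sigma^0(u,x,\pi)$ propagates to injectivity of $\sigma^0(u^*\star u,x,\pi)$. Since $C^*\mathfrak{g}$ is CCR its spectrum is detected by non-trivial irreducible representations, giving a strict positive lower bound $\sigma^0(u^*\star u,x,\pi)\geq c(x)I$; smoothness in $x$ and compactness of $M$ promote this to a uniform $c>0$. One then constructs $w\in\cE^{\prime 0}(\mathfrak{g}\times M)$ whose symbol equals $(\sigma^0(u^*\star u,x,\pi))^{-1/2}$ via functional calculus on an auxiliary Rockland element, and sets $v = w\star w\star u^*$. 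The residual $v\star u - 1$ has identically zero full symbol by \eqref{eqn:algebra_prop_principal_symbols_at_0}, so Proposition \ref{prop:Taylorgroups} places it in $C^\infty_c(\mathfrak{g}\times M,\Omega^1\mathfrak{g})$.

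The equivalence with (c) for $k=0$ reduces to a $C^*$-algebraic fact. Left-invertibility of the bounded extension of $\sigma^0(u,x)$ on $L^2\mathfrak{g}$ is equivalent to $\sigma^0(u^*\star u,x)$ being bounded below in the regular representation; since $\mathfrak{g}$ is amenable and $C^*\mathfrak{g}$ is CCR, this is equivalent in turn to a uniform lower bound on $\pi(\sigma^0(u^*\star u,x,\cdot))$ over all non-trivial $\pi$, which is precisely the conclusion of the lower-bound argument above and hence equivalent to (a). The central obstacle in the whole argument is the second half of (a) $\Rightarrow$ (b): namely the stability of the symbol class $\cE^{\prime 0}(\mathfrak{g}\times M)$ under inversion of a positive, bounded-below element. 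This is the technical heart of \cite{Glowacki1, Glowacki2, ChrGelGloPol} and rests on Hulanicki's theorem to ensure that smooth functional calculus on a Rockland element preserves the Schwartz class of convolution kernels, together with the parameter-uniform estimates needed to handle the $x\in M$ dependence.
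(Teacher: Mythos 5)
The paper does not prove this theorem at all: it is imported verbatim from the cited references (G{\l}owacki and Christ--Geller--G{\l}owacki--Polin), and the surrounding text only uses it as a black box. So there is no internal proof to compare against; what you have written is a reconstruction of the external argument, and it should be judged on its own.

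Your outline of the easy direction and of the overall strategy (reduce to $k=0$ by complex powers of a Rockland operator, pass to $u^*\star u$, invert by functional calculus, identify the remainder via Proposition \ref{prop:Taylorgroups}) is the standard route and is consistent with the references. But there is a genuine gap at the step where you pass from ``$\sigma^0(u^*\star u,x,\pi)$ is injective on $C^\infty(\pi)$ for every non-trivial $\pi$'' to ``$\sigma^0(u^*\star u,x,\pi)\geq c(x)I$''. A positive bounded operator can be injective without being bounded below (any injective positive compact operator), so injectivity does not exclude $0$ from the spectrum of $\pi(\sigma^0(u^*\star u,x,\cdot))$, and CCR-ness of $C^*\mathfrak{g}$ does not rescue this: the spectrum of a positive element of the symbol algebra is the closure of the union of the spectra in irreducible representations, and each of those could contain points accumulating at $0$ even under the Rockland hypothesis, a priori. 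The passage from the Rockland condition (injectivity on smooth vectors) to an actual uniform lower bound is precisely the analytic core of G{\l}owacki's theorem --- it requires the homogeneity of the symbol, the dilation action on $\hat{\mathfrak{g}}$, and a maximal-estimate/compactness argument on the unit sphere of the dual --- and cannot be dispatched as a soft $C^*$-algebraic fact. You correctly flag the stability of $\cE^{\prime 0}(\mathfrak{g}\times M)$ under functional calculus (Hulanicki's theorem, with uniformity in $x$) as a technical heart, but the injectivity-to-coercivity step is an equally essential missing ingredient, and as written your argument would fail there. A secondary, fixable quibble: for the reduction to $k=0$ it is cleaner to take $u'=R^{-k/m}\star u$, since a left parametrix $v'$ for $u'$ then immediately yields $v'\star R^{-k/m}$ as a left parametrix for $u$, avoiding the conjugation $R^{-k/m}\star r\star R^{k/m}$ of the remainder.
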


\begin{definition}
\label{def:Rockland1}
If $u$ and $u^*$ satisfy the conditions of Theorem \ref{Parametrix_symbols}, then we say that $u$ satisfies the strong $*$-Rockland condition.
\end{definition}

The word ``strong'' is used here because later, when we treat the Helffer-Nourrigat conjecture, it will suffice to consider distributions $u\in\mathcal{E}^{\prime k}(\mathfrak{g} \times M)$ for which $\sigma^k(u,x,\pi)$ is injective only on those representations $\pi$ which belong to the Helffer-Nourrigat cone.
We remark that if $u$ satisfies the strong $*$-Rockland condition, then  \begin{itemize}
\item There exists $v\in \cE^{\prime -k}(\mathfrak{g}\times M)$ such that  $1-v\star u$ and $1-u\star v$ are in $C^\infty_c(\mathfrak{g}\times M,\Omega^1\mathfrak{g})$.
\item For every $x\in M$, and $\pi \in \hat{\mathfrak{g}}\backslash \{1_\mathfrak{g}\}$, $\sigma^k(u,x,\pi):C^\infty(\pi)\to C^\infty(\pi)$ is a bijection with inverse $\sigma^{-k}(v,x,\pi)$.
\item If $\Re(k)=0$, then the bounded extension of $\sigma^k(u,x)$ is invertible with inverse $\sigma^{\bar{k}}(v,x)$.
\end{itemize}

\begin{thm}[{\cite[Theorem 6.1]{ChrGelGloPol}}]\label{thm:christ_Gall}We can find $u_k\in \mathcal{E}^{\prime k}(\mathfrak{g} \times M)$ for each $k\in \C$ such that \begin{enumerate}
\item $u_0=1$.
\item For any $k,l\in \C$, $u_k\star u_l-u_{k+l}\in C^\infty_c(\mathfrak{g}\times M,\Omega^1\mathfrak{g})$
\item For any $k\in \C$, $u_k^*-u_k\in C^\infty_c(\mathfrak{g}\times M,\Omega^1\mathfrak{g})$
\end{enumerate}
In particular $u_k$ satisfies the strong $*$-Rockland condition for each $k\in \C$.
\end{thm}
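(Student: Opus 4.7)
The plan is to realise the family $\{u_k\}_{k\in\CC}$ as complex powers of a positive self-adjoint Rockland element $L$ on the graded nilpotent group $\lie{g}$, built explicitly from the graded basis of $\lie{g}$. This reduces the problem to three steps: writing down a convenient $L$, setting up a functional calculus adapted to the graded dilations, and reading off the algebraic properties.

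First I would construct $L$. Choose a homogeneous basis $X_1,\ldots,X_n$ of $\lie{g}$ with degrees $d_1,\ldots,d_n$, and let $d$ be the smallest positive integer with each $d/d_i$ even. Set
$$L := \sum_{i=1}^n (-1)^{d/(2d_i)}\, X_i^{d/d_i},$$
viewed as a left-invariant differential operator on $\lie{g}$, hence as a distribution supported at the identity and independent of $x\in M$, lying in $\cE^{\prime d}(\lie{g}\times M)$. By construction $L$ is formally positive self-adjoint. Repeating the argument of Proposition \ref{prop:Hormander_symbol}, any smooth vector in the kernel of $\sigma^d(L,x,\pi)$ must be annihilated by every $d\pi(X_i)$, hence by all of $\pi(\lie{g})$ since the $X_i$ generate; for nontrivial irreducible $\pi$ this forces the vector to vanish. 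Thus $L$ satisfies the strong $*$-Rockland condition, and Theorem \ref{Parametrix_symbols} produces a two-sided parametrix.

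Next I would extract the heat kernel and define complex powers. The parametrix together with positivity and self-adjointness promote $L$ to a positive operator bounded away from $0$ on every nontrivial irreducible representation, so the heat semigroup $e^{-tL}$ makes sense for $t>0$. The key analytic input --- which is the content of \cite[Theorem 6.1]{ChrGelGloPol} --- is that $e^{-tL}$ has a convolution kernel $h_t\in\cS(\lie{g})$, smooth in $x\in M$, satisfying Gaussian-type estimates uniform in $x$ together with a dilation identity of the form $\alpha_{\lambda *}h_t = \lambda^{-Q}h_{\lambda^{-d}t}$, where $Q=\sum_i d_i$. I would then set $u_0:=\delta_0$ and, for $k\in\CC$ with $\Re(k)<0$, define
$$u_k := \frac{1}{\Gamma(-k/d)}\int_0^{\infty} t^{-k/d-1}\bigl(1-\chi(t)\bigr)\,h_t\, dt,$$
for a fixed cut-off $\chi\in C^\infty_c(\R_+)$ near $0$, and extend to all $k\in\CC$ by analytic continuation in $k$ together with the prescription $u_{k+jd} := L^j \star u_k$ for $j\in\NN$. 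The change of variable $t\mapsto\lambda^{-d}t$ in the Mellin integral, combined with the dilation identity for $h_t$, gives $\alpha_{\lambda *}u_k - \lambda^k u_k \in C^\infty_c$; transversality to the projection $\lie{g}\times M\to M$ and compact $x$-support in $M$ are inherited from $h_t$. Hence $u_k\in\cE^{\prime k}(\lie{g}\times M)$.

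Finally I would read off (a)--(c). Property (a) is built in. For (b), the semigroup law $h_s\star h_t = h_{s+t}$ together with Fubini and the beta-function identity turn the double Mellin integral for $u_k\star u_l$ into the single Mellin integral for $u_{k+l}$, up to a smooth error coming from where the cut-offs sit; analytic continuation then extends the identity to all $k,l$. For (c) the real symmetric character of $h_t$, which reflects self-adjointness of $L$, propagates through the Mellin formula and yields the claim modulo smoothing; the last clause (that each $u_k$ is strong $*$-Rockland) is then automatic from (b), with $u_{-k}$ serving as a two-sided parametrix modulo smoothing, so that injectivity of $\sigma^k(u_k,x,\pi)$ on smooth vectors follows formally. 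The main obstacle throughout is the analytic one: establishing, uniformly in $x\in M$, the Gaussian heat-kernel estimates for $e^{-tL}$ on the graded nilpotent group $\lie{g}$ that are needed for the Mellin integral to define a distribution of the correct homogeneity class $\cE^{\prime k}$. Once those estimates are granted, everything else falls out from the semigroup law and a book-keeping of smooth error terms.
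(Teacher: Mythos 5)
The paper offers no internal proof of this statement: it is imported verbatim from \cite[Theorem 6.1]{ChrGelGloPol}. Your route --- complex powers of a positive homogeneous Rockland operator $L$ built from a homogeneous generating set, realised through the Mellin transform of its heat semigroup --- is precisely the standard construction behind the cited result, and the reduction of the Rockland property of $L$ to the argument of Proposition \ref{prop:Hormander_symbol} is correct. Note, however, that attributing the Schwartz heat-kernel estimates to \cite[Theorem 6.1]{ChrGelGloPol} is circular, since that is the statement being proved; the required input is the separate classical fact that the heat kernel of a positive Rockland operator on a graded group is Schwartz (and here $L$ does not depend on $x\in M$, so uniformity in $x$ is a non-issue).

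Three concrete points need repair. First, your cut-off sits at the wrong end of the Mellin integral: with $\chi=1$ near $t=0$, the factor $1-\chi(t)$ deletes exactly the $t\to 0^+$ contribution, which is where the homogeneous singularity of $L^{k/d}$ at the identity comes from; what survives is smooth where it converges, and it fails to converge at $t\to\infty$ for $\Re(k)\leq -Q$ because $L$ has no spectral gap on $L^2\lie{g}$ (homogeneity gives $\|h_t\|_\infty\sim t^{-Q/d}$, so the integrand decays only polynomially in $t$). The correct splitting keeps $\int_0^1$ as the singular, exactly homogeneous part and treats $\int_1^\infty$ as the remainder, combined --- as you do propose --- with restricting $k$ to a strip and propagating via $u_{k+jd}=L^j\star u_k$. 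Second, $h_t$ is Schwartz but not compactly supported, whereas $\cE^{\prime k}(\lie{g}\times M)$ consists of \emph{compactly supported} distributions; one must multiply by a bump function equal to $1$ near the identity, after which the homogeneity defect and all errors in (b), (c) are genuinely in $C^\infty_c$ rather than merely Schwartz. Third, the symmetry of $h_t$ yields $u_k^*=u_{\bar k}$ modulo smoothing, not $u_k^*=u_k$; since $u_k^*\in\cE^{\prime\bar k}$ while $u_k\in\cE^{\prime k}$, item (c) is only coherent for $k\in\R$ (or must be read as $u_k^*-u_{\bar k}\in C^\infty_c$), and your sketch should state which version the construction delivers. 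With these adjustments the argument goes through, and the final clause does follow formally from (a), (b) via Theorem \ref{Parametrix_symbols}.
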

Let $\pi\in \hat{\mathfrak{g}}\backslash\{1_\mathfrak{g}\}$, $k>0$, $u\in \mathcal{E}^{\prime k}(\mathfrak{g} \times M)$ satisfying the strong $*$-Rockland condition. We define the Sobolev space $H^k(\pi)\subseteq L^2\pi$ to be the domain of the closure of $\sigma^k(u,x,\pi)$. By Theorem \ref{Parametrix_symbols} and Theorem \ref{thm:Bounded_symbol}, it follows that the space $H^k(\pi)$ is independent of $u$. We take $H^{-k}(\pi)$ to be its dual. We have thus obtained a family of Sobolev spaces $H^k(\pi)$ for $k\in \R$ with $H^0(\pi)=L^2\pi$ and such that one has bounded compact inclusions $H^k(\pi)\to H^l(\pi)$ for $k>l$. We remark that \begin{align}\label{eqn:inter}
 \bigcap_{k\in \R}H^k(\pi)=C^\infty(\pi),
\end{align} because for any $D$ right invariant differential operator on $\mathfrak{g}$ of classical order $k$, one has $H^{kN}(\pi)\subseteq \dom\left(\overline{\pi(D)}\right)$. 
\begin{prop}\label{prop:symbextSob} Let $k\in \C$, $u\in \mathcal{E}^{\prime k}(\mathfrak{g} \times M)$, $s\in \R$ then $\sigma^k(u,x,\pi)$ extends to a bounded operator $$\sigma^k(u,x,\pi):H^{s+\Re(k)}(\pi)\to H^{s}(\pi).$$ 
\end{prop}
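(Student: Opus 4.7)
The plan is to reduce the claim to the $L^2$-boundedness of operators of purely imaginary order, which is Theorem~\ref{thm:Bounded_symbol}(a), using the family $\{u_j\}_{j\in \C}$ of Theorem~\ref{thm:christ_Gall}. For each $s\in \R$ write $\Phi_s := \overline{\sigma^s(u_s,x,\pi)}$. For $s>0$, the strong $*$-Rockland condition for $u_s$ makes $\Phi_s : H^s(\pi)\to L^2\pi$ a topological isomorphism; for $s\le 0$, the definition $H^s(\pi) := H^{-s}(\pi)^*$ and duality likewise provide an isomorphism $\Phi_s : H^s(\pi) \to L^2\pi$. Moreover, applying \eqref{eqn:algebra_prop_principal_symbols_at_0} to the identity $u_s \star u_{-s} = 1 + r_s$ with $r_s \in C^\infty_c(\mathfrak{g}\times M, \Omega^1\mathfrak{g})$ (Theorem~\ref{thm:christ_Gall}(b)) yields, on $C^\infty(\pi)$,
\[
 \Phi_s \circ \sigma^{-s}(u_{-s},x,\pi) = I + \pi(r_{s,x}),
\]
where $\pi(r_{s,x})$ is bounded (in fact smoothing) on $L^2\pi$. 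In particular $\sigma^{-s}(u_{-s},x,\pi)$ and $\Phi_s^{-1}$ differ by a bounded operator between the relevant Sobolev spaces, and the same holds with $s$ replaced by $s+\Re(k)$.

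Now set $w := u_s \star u \star u_{-s-\Re(k)} \in \cE^{\prime\, i\Im(k)}(\mathfrak{g}\times M)$. Since $w$ has purely imaginary order, $\sigma^{i\Im(k)}(w,x,\pi)$ extends to a bounded operator on $L^2\pi$ by Theorem~\ref{thm:Bounded_symbol}(a). The composition rule \eqref{eqn:algebra_prop_principal_symbols_at_0} gives, on $C^\infty(\pi)$,
\[
 \sigma^s(u_s,x,\pi)\circ \sigma^k(u,x,\pi)\circ \sigma^{-s-\Re(k)}(u_{-s-\Re(k)},x,\pi) \;=\; \sigma^{i\Im(k)}(w,x,\pi).
\]
Combining this identity with the first paragraph shows that
\[
 T \;:=\; \Phi_s \circ \sigma^k(u,x,\pi) \circ \Phi_{s+\Re(k)}^{-1},
\]
initially densely defined, extends to a bounded operator $L^2\pi \to L^2\pi$. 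Consequently, for $v\in C^\infty(\pi)$,
\[
 \|\sigma^k(u,x,\pi)\,v\|_{H^s(\pi)} \;\sim\; \|\Phi_s\,\sigma^k(u,x,\pi)\,v\|_{L^2\pi} \;=\; \|T\,\Phi_{s+\Re(k)}\,v\|_{L^2\pi} \;\le\; \|T\|\,\|v\|_{H^{s+\Re(k)}(\pi)},
\]
and since $C^\infty(\pi)$ is dense in $H^{s+\Re(k)}(\pi)$, this estimate produces the required bounded extension.

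The main technicality is the dependence on the signs of $s$ and $s+\Re(k)$, because when either is nonpositive the corresponding Sobolev space properly contains $L^2\pi$ and $\Phi_s^{-1}$ must be interpreted through the duality definition; in that regime one must check that the smoothing remainders which appear also map the relevant distributional Sobolev spaces boundedly into one another. This is handled by reducing each nonpositive case to the strictly positive case via the adjoint relation $\sigma^{\bar k}(u^*,x,\pi) \subseteq \sigma^k(u,x,\pi)^*$ from \eqref{eqn:algebra_prop_principal_symbols_at_0} and transposition: the bounded extension $\sigma^{\bar k}(u^*,x,\pi):H^{-s}(\pi)\to H^{-s-\Re(k)}(\pi)$ obtained from the already-treated case dualises to the desired bounded extension of $\sigma^k(u,x,\pi)$.
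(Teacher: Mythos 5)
Your proposal is correct and is essentially the paper's own argument: the paper proves this proposition with the single line ``This follows directly from Theorem \ref{thm:Bounded_symbol},'' and your conjugation by the order-$s$ and order-$(-s-\Re(k))$ symbols $u_j$ of Theorem \ref{thm:christ_Gall} to reduce to the purely imaginary-order case of Theorem \ref{thm:Bounded_symbol}(a) is exactly the reduction that one-liner presupposes. Note only that you need just the lower bound $\|w\|_{H^s(\pi)}\lesssim\|\Phi_s w\|_{L^2\pi}$ (immediate from $\sigma^{-s}(u_{-s},x,\pi)\circ\sigma^s(u_s,x,\pi)=I$ on $C^\infty(\pi)$ and $L^2$-boundedness of negative-order symbols), not full surjectivity of $\Phi_s$, and that the mixed-sign case is cleanest via factoring through $L^2\pi$ rather than by duality alone.
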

\begin{proof}
This follows directly from Theorem \ref{thm:Bounded_symbol}.
\end{proof}
We denote by $C^*(\mathfrak{g}\times M)$ the completion of $C^\infty_c(\mathfrak{g}\times M,\Omega^1\mathfrak{g})$ with respect to the norm $$\norm{f}=\sup_{x\in M} \norm{f(\cdot,x)}_{B(L^2\mathfrak{g})},$$ where $f(\cdot,x)$ acts on $L^2\mathfrak{g}$ by convolution. Equivalently $C^*(\mathfrak{g}\times M)=C^*\mathfrak{g}\otimes C(M)$. If $k\in \C$ and $u\in \cE^{\prime k}(\mathfrak{g}\times M)$, then by taking convolution to the left by $u$, $u$ can be regarded as an unbounded multiplier of $C^*(\mathfrak{g}\times M)$ with domain $C^\infty_c(\mathfrak{g}\times M,\Omega^1\mathfrak{g})$.
\begin{prop}\label{prop:bounded_prophdslijqf}
Let $k\in \C$ and $u\in \cE^{\prime k}(\mathfrak{g}\times M)$. 
\begin{enumerate}
\item If $\Re(k)<0$, then $u$ extends to a compact multiplier of $C^*(\mathfrak{g}\times M)$, i.e., $u\in C^*(\mathfrak{g}\times M)$.
\item If $\Re(k)=0$, then $u$ extends to a bounded multiplier of $C^*(\mathfrak{g}\times M)$.
\end{enumerate}
\end{prop}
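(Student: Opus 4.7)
The plan is to begin with (b), which follows from Theorem \ref{thm:Bounded_symbol}(a) combined with amenability of nilpotent groups. Theorem \ref{thm:Bounded_symbol}(a) provides a uniform bound $C = \sup_{x,\pi}\|\sigma^k(u,x,\pi)\|<\infty$. Since $\mathfrak{g}$ is amenable, $C^*\mathfrak{g}$ coincides with its reduced $C^*$-algebra, so the norm of left convolution by $u(\cdot,x)$ on $L^2\mathfrak{g}$ equals its norm as a multiplier of $C^*\mathfrak{g}$. Because $u$ acts fiberwise (i.e., $C(M)$-linearly) on $C^\infty_c(\mathfrak{g}\times M,\Omega^1\mathfrak{g})$, this uniform fiberwise bound assembles into a bounded multiplier of $C^*\mathfrak{g}\otimes C(M) = C^*(\mathfrak{g}\times M)$.

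For (a) I will proceed in three steps. The crucial preliminary observation (Step 1) is that when $\Re(k)<-Q$, where $Q:=\sum_{i=1}^N i\dim(\mathfrak{g}_i)$ is the homogeneous dimension, the distribution $u$ already lies in $L^1(\mathfrak{g}\times M,\Omega^1\mathfrak{g})$, hence in $C^*(\mathfrak{g}\times M)$. In the decomposition of Proposition \ref{prop:Taylorgroups}, the Schwartz term $f$ is integrable in $X$ uniformly in $x\in M$ (which is compact), while the oscillatory integral $\int e^{i\langle\xi,X\rangle}(1-\chi(\xi))B(\xi,x)\,d\xi$ is absolutely convergent because $|B(\xi,x)|$ is homogeneous of degree $\Re(k)<-Q$ for the dilation $\hat\alpha$; the resulting function is continuous and bounded in $X$, and by non-stationary phase it is Schwartz for $X$ away from $0$. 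Membership of $u$ in $C^*(\mathfrak{g}\times M)$ then follows from Young's inequality, which bounds the $C^*$-convolution norm by the $L^1$-norm.

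Step 2 bootstraps boundedness for arbitrary $\Re(k)<0$. Using the $C^*$-identity on $f\in C^\infty_c(\mathfrak{g}\times M,\Omega^1\mathfrak{g})$ and iterating gives
\[
\|u\star f\|_{C^*}^{2^n} \;\le\; \|(u^*u)^{2^{n-1}}\star f\|_{C^*}\cdot\|f\|_{C^*}^{2^n-1}.
\]
For $n$ large enough that $2^n\Re(k)<-Q$, Step 1 applied to $(u^*u)^{2^{n-1}}\in\cE^{\prime 2^n\Re(k)}(\mathfrak{g}\times M)$ shows it lies in $C^*(\mathfrak{g}\times M)$, so the right-hand side is bounded by $\|(u^*u)^{2^{n-1}}\|_{C^*}\cdot\|f\|_{C^*}^{2^n}$. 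Extracting $2^n$-th roots shows that $u$ extends to a bounded multiplier of $C^*(\mathfrak{g}\times M)$.

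Finally, Step 3 yields compactness. For $n$ large with $2n\Re(k)<-Q$, Step 1 gives $(u^*u)^n\in C^*(\mathfrak{g}\times M)$; continuous functional calculus applied to the positive element $u^*u\in M(C^*(\mathfrak{g}\times M))$ via $t\mapsto t^{1/n}$ then yields $u^*u\in C^*(\mathfrak{g}\times M)$. With $(e_\alpha)$ an approximate unit of $C^*(\mathfrak{g}\times M)$, the $C^*$-identity gives
\[
\|u-ue_\alpha\|^2 \;=\; \|(1-e_\alpha)u^*u(1-e_\alpha)\|\;\longrightarrow\; 0,
\]
and each $ue_\alpha\in C^*(\mathfrak{g}\times M)$ by the boundedness of $u$ established in Step 2, so $u\in C^*(\mathfrak{g}\times M)$. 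I expect the main obstacle to be Step 1, specifically the verification that the oscillatory integral decomposition of Proposition \ref{prop:Taylorgroups} can be upgraded to a bound on the $L^1$-norm that is uniform in $x\in M$; the subsequent $C^*$-algebraic maneuvers in Steps 2 and 3 are then essentially standard.
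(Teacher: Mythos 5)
Your proposal is correct and follows the same strategy as the paper, supplying details the paper leaves implicit. For part (a) the paper's proof stops after remarking that, for $n$ large, the full symbol of $(u^*\star u)^n$ is integrable at infinity, so $(u^*\star u)^n\in C^0_c(\mathfrak g\times M,\Omega^1\mathfrak g)\subseteq C^*(\mathfrak g\times M)$; your Steps~2 and~3 spell out exactly the bootstrap — the $C^*$-identity to get boundedness, functional calculus with $t\mapsto t^{1/n}$ to get $u^*u\in C^*(\mathfrak g\times M)$, and an approximate unit to conclude $u\in C^*(\mathfrak g\times M)$ — which the paper leaves to the reader, mirroring its own argument for compactness of $\sigma^k(u,x,\pi)$ in Theorem~\ref{thm:Bounded_symbol}(b). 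For part (b) the paper replaces $u$ by $u^*\star u$ to reduce to $k=0$ and cites Taylor directly for the uniform $L^2(\mathfrak g)$-boundedness of convolution by $u_x$; your detour through Theorem~\ref{thm:Bounded_symbol}(a) and amenability reaches the same conclusion but has one unaddressed point. Theorem~\ref{thm:Bounded_symbol}(a) controls $\sigma^k(u,x)$, the convolution by the inverse Fourier transform of the full symbol $B(\cdot,x)$, whereas what you need to bound uniformly is convolution by $u_x$ itself. The discrepancy $w_x=u_x-\sigma^k(u,x)$ has Fourier transform $\hat f_x-\chi B(\cdot,x)$, bounded and rapidly decaying, and a separate (short) argument is still required to see that convolution by $w_x$ is uniformly bounded on $L^2\mathfrak g$ — the Plancherel estimate used in the proof of Theorem~\ref{thm:shortexactseqsymb} does the job, or one can simply cite Taylor for $u_x$ directly as the paper does. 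You should add that remark; otherwise the argument is sound.
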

\begin{proof}
For Part a, if $n$ is big enough, then by Proposition \ref{prop:Taylorgroups}, the full symbol of $(u^*\star u)^n$ is integrable. Hence $(u^*\star u)^n\in C_c^0(\mathfrak{g},\Omega^1\mathfrak{g})\subseteq C^*(\mathfrak{g}\times M)$. For Part b, by replacing $u$ with $u^*u$, we can suppose that $k=0$. It is enough to show that $u_x$ acting on $L^2\mathfrak{g}$ is bounded uniformly on $x$. To see this, we refer the reader again to \cite[the paragraph preceding Proposition 2.5]{TaylorBook}.
\end{proof}
We denote by $\overline{\cE^{\prime 0}(\mathfrak{g}\times M)} $ the closure of $\cE^{\prime 0}(\mathfrak{g}\times M)$ inside $B(L^2(\mathfrak{g}))\otimes C(M)$. It is clear that the closure $\overline{\cE^{\prime -1}(\mathfrak{g}\times M)}$ of $\cE^{\prime -1}(\mathfrak{g}\times M)$ inside $B(L^2(\mathfrak{g}))\otimes C(M)$ is equal to $C^*(\mathfrak{g}\times M)$. Let $\Sigma^*(\mathfrak{g}\times M)$ be the closure in $B(L^2\mathfrak{g})\otimes C(M)$ of elements of the form $\sigma^0(u)$ for $u\in \cE^{\prime 0}(\mathfrak{g}\times M)$. \begin{thm}\label{thm:shortexactseqsymb}The natural sequence \begin{equation}\label{eqn:jqfdsqsuidfjpiqsf}
0\to C^*(\mathfrak{g}\times M)\to \overline{\cE^{\prime 0}(\mathfrak{g}\times M)}\to \Sigma^*(\mathfrak{g}\times M)\to 0 
\end{equation}  is exact.
\end{thm}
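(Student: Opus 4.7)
The proof splits into two tasks: (A) showing $C^*(\mathfrak{g}\times M)$ is a closed two-sided ideal in $\overline{\cE^{\prime 0}(\mathfrak{g}\times M)}$, and (B) identifying the resulting quotient with $\Sigma^*(\mathfrak{g}\times M)$ via the symbol map $\sigma^0$. For (A), the key observation is that $C^\infty_c(\mathfrak{g}\times M,\Omega^1\mathfrak{g})\subseteq \cE^{\prime k}(\mathfrak{g}\times M)$ for every $k\in\C$: smooth compactly supported functions satisfy \eqref{homog_condition_symbols} trivially, since $\alpha_{\lambda *}f-\lambda^k f$ is again smooth and compactly supported. Because $C^\infty_c$ is dense in $C^*(\mathfrak{g}\times M)=\overline{\cE^{\prime -1}(\mathfrak{g}\times M)}$, the inclusion $C^*(\mathfrak{g}\times M)\subseteq \overline{\cE^{\prime 0}(\mathfrak{g}\times M)}$ follows. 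Moreover, $C^\infty_c$ is a two-sided ideal in $\cE^{\prime 0}$ because the convolution of a compactly supported distribution with a smooth compactly supported function is smooth and compactly supported; passing to norm closures in $B(L^2\mathfrak{g})\otimes C(M)$ then establishes $C^*(\mathfrak{g}\times M)$ as a closed two-sided ideal of $\overline{\cE^{\prime 0}(\mathfrak{g}\times M)}$.

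For (B), the algebra identities \eqref{eqn:algebra_prop_principal_symbols_at_0} show that $\sigma^0$ is a $*$-algebra homomorphism from $\cE^{\prime 0}(\mathfrak{g}\times M)$ to $B(L^2\mathfrak{g})\otimes C(M)$. By the uniqueness clause of Proposition~\ref{prop:Taylorgroups} together with \eqref{eqn:qsuidfhpiqspdjfo}, its kernel consists of those $u$ whose full symbol $B$ vanishes, and these are precisely the elements of $C^\infty_c(\mathfrak{g}\times M,\Omega^1\mathfrak{g})$. Thus $\sigma^0$ induces an injective $*$-algebra homomorphism $\cE^{\prime 0}(\mathfrak{g}\times M)/C^\infty_c\hookrightarrow B(L^2\mathfrak{g})\otimes C(M)$, whose image is dense in $\Sigma^*(\mathfrak{g}\times M)$ by construction.

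The main and most delicate step is to prove that the two natural $C^*$-seminorms on $\cE^{\prime 0}(\mathfrak{g}\times M)/C^\infty_c$ coincide: the quotient norm inherited from $\overline{\cE^{\prime 0}(\mathfrak{g}\times M)}/C^*(\mathfrak{g}\times M)$, and the operator norm from $\Sigma^*(\mathfrak{g}\times M)$ via $\sigma^0$. Concretely, one must show
\[
\inf_{w\in C^*(\mathfrak{g}\times M)}\norm{u+w}_{B(L^2\mathfrak{g})\otimes C(M)}=\norm{\sigma^0(u)}_{B(L^2\mathfrak{g})\otimes C(M)}
\]
for every $u\in\cE^{\prime 0}(\mathfrak{g}\times M)$. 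The inequality $\norm{\sigma^0(u)}\leq\norm{[u]}_{\mathrm{quot}}$ reduces to a continuity bound $\norm{\sigma^0(u)}\leq C\norm{u}$ on $\cE^{\prime 0}$: combined with the vanishing of $\sigma^0$ on the dense subalgebra $C^\infty_c$ of $C^*(\mathfrak{g}\times M)$, this forces $\sigma^0$ to descend to a $*$-homomorphism between $C^*$-algebras, which is then automatically contractive. This continuity is a non-trivial consequence of the harmonic analysis of homogeneous Fourier multipliers on graded nilpotent Lie groups developed in \cite{Glowacki1,Glowacki2,ChrGelGloPol}. The reverse inequality $\norm{[u]}_{\mathrm{quot}}\leq\norm{\sigma^0(u)}$ is the principal obstacle: given $u\in\cE^{\prime 0}$, one must exhibit $w\in C^*(\mathfrak{g}\times M)$ with $\norm{u+w}$ comparable to $\norm{\sigma^0(u)}$. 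This is established by a parametrix-type construction combining the spectral families $\{u_k\}_{k\in\C}$ of Theorem~\ref{thm:christ_Gall} with the Rockland-type result of Theorem~\ref{Parametrix_symbols} to produce approximate inverses modulo $C^*(\mathfrak{g}\times M)$, applied to spectral pieces of $u$ where $\sigma^0(u)$ is invertible up to small perturbation. Once both inequalities are established, $\sigma^0$ extends by continuity to a $*$-isomorphism $\overline{\cE^{\prime 0}(\mathfrak{g}\times M)}/C^*(\mathfrak{g}\times M)\xrightarrow{\sim}\Sigma^*(\mathfrak{g}\times M)$, completing the short exact sequence.
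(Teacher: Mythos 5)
Your overall architecture is correct: exactness is equivalent to the statement that the quotient seminorm from $\overline{\cE^{\prime 0}(\mathfrak{g}\times M)}/C^*(\mathfrak{g}\times M)$ and the symbol seminorm agree on $\cE^{\prime 0}(\mathfrak{g}\times M)$, and your handling of the inequality $\norm{\sigma^0(u)}\leq \norm{[u]}_{\mathrm{quot}}$ is acceptable (the paper proves the required continuity $\norm{\sigma^0(u,x,\pi)}\leq\norm{u}_{B(L^2\mathfrak{g})\otimes C(M)}$ directly, by writing $u$ as the Fourier transform of its full symbol plus a Schwartz remainder and applying Plancherel/Riemann--Lebesgue along $\pi\circ\alpha_\lambda$, rather than citing it; but descending to a contractive $*$-homomorphism is the same logic). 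The genuine gap is the reverse inequality $\norm{[u]}_{\mathrm{quot}}\leq\norm{\sigma^0(u)}$, which you rightly identify as the crux but do not actually prove. ``Approximate inverses modulo $C^*(\mathfrak{g}\times M)$ applied to spectral pieces of $u$'' is not an argument: a parametrix for an element with injective symbol establishes invertibility in the quotient, i.e.\ a \emph{lower} bound on the quotient norm, not the upper bound you need; and neither $u$ nor $u^*\star u$ has usable ``spectral pieces'' here, since spectral projections of $\sigma^0(u^*\star u)$ have no reason to lie in $\sigma^0(\cE^{\prime 0}(\mathfrak{g}\times M))$.

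What is actually required is a positivity/square-root argument: for any $C>\norm{\sigma^0(u)}^2$ one must exhibit $v\in\cE^{\prime 0}(\mathfrak{g}\times M)$ with $v^*\star v+u^*\star u-C1\in C^\infty_c(\mathfrak{g}\times M,\Omega^1\mathfrak{g})$, which immediately gives $\norm{u}^2_{\mathrm{quot}}\leq C$. This in turn requires showing that a positive invertible element of $\sigma^0(\cE^{\prime 0}(\mathfrak{g}\times M))$ has its square root again in $\sigma^0(\cE^{\prime 0}(\mathfrak{g}\times M))$ --- a spectral-invariance statement for the symbol algebra. The paper obtains it by rescaling so that $\norm{1-\sigma^0(u)}_{\Sigma^*(\mathfrak{g}\times M)}<1$ and summing the binomial series $\sum_n a_n(1-\sigma^0(u))^n$ for $\sqrt{z}$ at $z=1$ inside the symbol algebra, using the convergence argument from \cite{ChrGelGloPol}; neither Theorem \ref{thm:christ_Gall} nor Theorem \ref{Parametrix_symbols} is what does the work at this point. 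Without this square-root step your proof does not close.
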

\begin{proof}
First the map $\overline{\cE^{\prime 0}(\mathfrak{g}\times M)}\to \Sigma^*(\mathfrak{g}\times M)$ is well defined because for any $x\in M$, $\pi \in \hat{g}\backslash \{1_\mathfrak{g}\}$, and $u\in \cE^{\prime 0}(\mathfrak{g}\times M)$ \begin{equation}\label{eqn:qhsiudfjiq}
 \norm{\sigma^0(u,x,\pi)}\leq \norm{u}_{B(L^2(\mathfrak{g}))\otimes C(M)}.
\end{equation}To see this notice that, the full symbol $B$ of $u$ is integrable near $0$, hence defines a tempered distribution on $\mathfrak{g}^*\times M$. Let $v\in \cD'(\mathfrak{g}\times M,\Omega^1\mathfrak{g})$ be its Fourier transform. Then $w=u-v\in \cS(\mathfrak{g}\times M,\Omega^1\mathfrak{g})$ is a Schwartz function and for $x\in M$, $\pi\in \hat{\mathfrak{g}}\backslash \{1_\mathfrak{g}\}$, $\pi(u_x)-\sigma^0(u,x,\pi)=\pi(w(\cdot,x))$. By replacing $\pi$ with $\pi\circ \alpha_\lambda$, \eqref{eqn:qhsiudfjiq} follows from the Riemann-Lebesgue lemma, i.e., $\lim_{\lambda\to +\infty}\norm{\pi\circ \alpha_\lambda(w)}=0$. This follows easily from the Plancherel formula $$\norm{\pi\circ \alpha_\lambda(w)}\leq \norm{\pi\circ \alpha_\lambda(w)}_{HS}=\mathrm{Tr}(\pi\circ \alpha_\lambda(w^*\star w))=\int_{\alpha_\lambda(O_\pi)}\widehat{w^*\star w}(\xi,x)d\xi,$$where $\norm{\cdot}_{HS}$ denotes the Hilbert-Schmidt norm, $\widehat{w^*\star w}$ denotes the Fourier transform of $w^*\star w\in \cS(\mathfrak{g}\times M,\Omega^1\mathfrak{g})$.

The map $\overline{\cE^{\prime 0}(\mathfrak{g}\times M)}\to \Sigma^*(\mathfrak{g}\times M)$ is obviously surjective. We now prove exactness. Let $u\in \cE^{\prime 0}(\mathfrak{g}\times M)$ such that $\sigma^0(u)$ is positive invertible in $\overline{\Sigma^*(\mathfrak{g}\times M)}$. We claim that the square root of $\sigma^0(u)$ is in $\sigma^0(\cE^{\prime 0}(\mathfrak{g}\times M))$. To see this, by replacing $u$ with $\epsilon u$ for $\epsilon>0$, we can suppose that $\norm{1-\sigma^0(u)}_{\Sigma^*(\mathfrak{g}\times M)}<1$. Hence the square root of $\sigma^0(u)$ is given by $$\sigma^0(u)^{1/2}=\sum_{n=0}^\infty a_n (1-\sigma^0(u))^n,$$where $\sum_{n=0}^\infty a_n (1-z)^n$ is the analytic expansion of $f(z)=\sqrt{z}$ at $z=1$ whose radius of convergence is equal to $1$. It then follows that $\sigma^0(u)^{1/2}\in  \sigma^0(\cE^{\prime 0}(\mathfrak{g}\times M))$, by the argument on Page 54 before Theorem 5.9 in \cite{ChrGelGloPol}, replacing the Neumann series by $\sum_{n=0}^\infty a_n (1-z)^n$.  Now let $u\in \cE^{\prime 0}(\mathfrak{g}\times M)$ be arbitrary, $C=\norm{\sigma^0(u)}^2_{\Sigma^*(\mathfrak{g}\times M)}$. Then for $\epsilon>0$, $\sigma^0((C+\epsilon)1-u^*u)$ is positive invertible. Hence there exists $v\in \cE^{\prime 0}(\mathfrak{g}\times M)$ such that $v^*v+u^*u-(C+\epsilon)1\in C^\infty_c(\mathfrak{g}\times M,\Omega^1\mathfrak{g})$. Hence $$\norm{u}_{\frac{\overline{\cE^{\prime 0}(\mathfrak{g}\times M)}}{C^*(\mathfrak{g}\times M)}}^2\leq C+\epsilon$$ Since this holds for all $\epsilon$, it follows that \begin{equation}\label{eqn:hsqf}
 \norm{u}_{\frac{\overline{\cE^{\prime 0}(\mathfrak{g}\times M)}}{C^*(\mathfrak{g}\times M)}}^2\leq C.
\end{equation} There is an obvious map\begin{equation}\label{eqn:jqsuidfjpiqsf}
\sigma^0(\cE^{\prime 0}(\mathfrak{g}\times M))\to \frac{\overline{\cE^{\prime 0}(\mathfrak{g}\times M)}}{C^*(\mathfrak{g}\times M)}
\end{equation}  which sends an element $\sigma^0(u)$ to $u$. By \eqref{eqn:hsqf}, the map \eqref{eqn:jqsuidfjpiqsf} is continuous, hence it extends to the closure \begin{equation}
\Sigma^*(\mathfrak{g}\times M)\to \frac{\overline{\cE^{\prime 0}(\mathfrak{g}\times M)}}{C^*(\mathfrak{g}\times M)}.
\end{equation} 
Hence \eqref{eqn:jqfdsqsuidfjpiqsf} is exact.
\end{proof}
Theorem \ref{thm:shortexactseqsymb} appears in \cite{ewert2021pseudodifferential} and in \cite{FischerDefect}. 
\subsection{Sobolev Spaces}\label{sec:Sobolev}
Since we still assume $M$ is compact, by Remark \ref{rem:Global basis}, there exists a global graded Lie basis $(\lie{g},\natural,\mathbb{U},M)$ which we fix for the rest of this section. By a straightforward partition of unity argument, we deduce that
 $$\Psi^k(\cF^\bullet) = \ev_{1*}(\cE^{\prime k}(\mathbb{U})) + C_c^\infty(M\times M,\Omega^{1/2}	).$$ 
If $P\in \Psi^k(\cF^\bullet)$, then an element $u\in \cE^{\prime k}(\mathbb{U})$ such that $P-\ev_1(u)\in C_c^\infty(M\times M,\Omega^{1/2})$ will be called \textit{a global lift} of $P$.  Let $u\in \cE^{\prime k}(\mathbb{U})$. We denote by $u_0$ the restriction of $u$ to $\mathfrak{g}\times M \times \{0\}$. Clearly $u_0\in\cE^{\prime k}(\mathfrak{g}\times M)$. We remark that if $A$ and $B$ are the full symbols of $u$ and $u_0$ respectively, then $B$ is the restriction of $A$ to $(\lie{g}^*\setminus \{0\})\times M\times\{0\}$.
\begin{prop}
\label{rmk:symbol_convolution}
Let $P\in \Psi^k(\cF^\bullet),Q\in \Psi^k(\cF^\bullet)$ and $u\in \cE^{\prime k}(\mathbb{U})$ and $v\in \cE^{\prime l}(\mathbb{U})$ global lifts of $P$ and $Q$ respectively. Then one can find $w\in \cE^{\prime k+l}(\mathbb{U})$ and $u'\in \cE^{\prime \bar{k}}(\mathbb{U})$ such that \begin{enumerate}
\item $w$ and $u'$ are global lifts of $P\star Q$ and $P^*$ respectively
\item $w_0=u_0\star v_0$ and $u'_0=u_0^*$.
\end{enumerate} 
\end{prop}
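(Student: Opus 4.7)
The proof is essentially a refinement of Propositions \ref{prop:composition} and \ref{prop:adjoint}, with the extra requirement of controlling the restriction at $t=0$. I will treat the composition first. Let $\phi : \dom(\phi) \subseteq \mathfrak{g}\times\mathfrak{g}\times M \times \R_+ \to \mathbb{U}$ be the $\R^\times_+$-equivariant submersion of Theorem \ref{main_tech_thm}. The plan is to form the ``external'' product $u\boxtimes v$ on $\mathfrak{g}\times\mathfrak{g}\times M\times \R_+$ by formula \eqref{eqn:ustarv}, cut it off smoothly, push it forward by $\phi$, and then correct by a smooth term.

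More precisely, the singular support of $u\boxtimes v$ is contained in $\{0\}\times\{0\}\times M\times\R_+$, which lies in the $\R^\times_+$-invariant neighborhood $\dom(\phi)$ of $\{0\}\times\{0\}\times M\times\{0\}$. Choose an $\R^\times_+$-invariant cutoff $\chi$ on $\mathfrak{g}\times\mathfrak{g}\times M\times\R_+$ which equals $1$ on a neighborhood of this singular support, is supported in $\dom(\phi)$, and has proper projection to $M\times\R_+$. Set $\tilde w := \phi_*\!\bigl(\chi\cdot(u\boxtimes v)\bigr)$. The argument in Proposition \ref{prop:composition}, combined with Theorem \ref{main_tech_thm}.a--b and a wavefront-set check (via \cite[Proposition 7]{LesManVas}), shows $\tilde w \in \cE^{\prime k+l}(\mathbb{U})$ and $\ev_{1*}(\tilde w) - P\star Q \in C^\infty_c(M\times M,\Omega^{1/2})$, so $\tilde w$ is already a global lift of $P\star Q$.

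For the $t=0$ behavior, observe that since $\exp(\natural(\alpha_0(X)))\cdot x = x$, formula \eqref{eqn:ustarv} restricted to $t=0$ reduces to the tensor product $u_0 \boxtimes v_0$ on $\mathfrak{g}\times\mathfrak{g}\times M$. By Theorem \ref{main_tech_thm}.c, the restriction $\phi_0$ of $\phi$ to $t=0$ is $(Y,X,x)\mapsto (\BCH(Y,X),x)$, whose pushforward is exactly the fiberwise group convolution. Since $\chi$ equals $1$ near the singular support of $u_0\boxtimes v_0$ (which lies in $\{0\}\times\{0\}\times M$), the difference $(1-\chi)|_{t=0}\cdot(u_0\boxtimes v_0)$ is smooth and compactly supported, so $\tilde w_0 - u_0\star v_0 =: -g$ belongs to $C^\infty_c(\mathfrak{g}\times M,\Omega^1\mathfrak{g})$. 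Using that $\mathfrak{g}\times M\times\{0\}\subseteq \mathbb{U}$ and that $\mathbb{U}$ is an $\R^\times_+$-invariant neighborhood, we extend $g$ to a function $\tilde g \in C^\infty_c(\mathbb{U},\Omega^{1/2}_{r,s})$ with $\tilde g|_{t=0} = g$ (using any $\R^\times_+$-invariant collar and cutting off in $t$). Then $w := \tilde w + \tilde g$ still lies in $\cE^{\prime k+l}(\mathbb{U})$, is still a global lift of $P\star Q$ (since adding an element of $C^\infty_c(\mathbb{U},\Omega^{1/2}_{r,s})$ modifies $\ev_{1*}$ only by a smooth function), and satisfies $w_0 = u_0\star v_0$ exactly.

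The adjoint case is entirely analogous using the $\R^\times_+$-equivariant open embedding $\phi'$ of Theorem \ref{thm:inverse_bisubmersion}, whose restriction at $t=0$ is $(X,x)\mapsto(-X,x)$, i.e.\ the group inversion on $\mathfrak{g}$. Setting $u' := \overline{\phi'_*(\chi'\,u)}$ (with an analogous $\R^\times_+$-invariant cutoff $\chi'$ ensuring support in $\dom(\phi')$) produces an element of $\cE^{\prime\bar k}(\mathbb{U})$ which is a global lift of $P^*$, and whose $t=0$ restriction is $u_0^*$ modulo a smooth compactly supported term; the latter is absorbed by adding a suitable $\tilde g' \in C^\infty_c(\mathbb{U},\Omega^{1/2}_{r,s})$ extending that correction from $\mathfrak{g}\times M\times\{0\}$. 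The only delicate point in the whole argument is verifying that the extension from $t=0$ to $\mathbb{U}$ does not spoil any of the defining conditions of $\cE^{\prime k}(\mathbb{U})$ (transversality and proper $\R_+$-projection), which is immediate since $C^\infty_c$ functions automatically satisfy them.
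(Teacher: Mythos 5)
Your proof is correct and follows essentially the paper's strategy: push forward the fiber product of $u$ and $v$ (respectively $u$ alone) through the map of Theorem~\ref{main_tech_thm} (respectively Theorem~\ref{thm:inverse_bisubmersion}) and read off the $t=0$ fiber from part~(c) of those theorems, which identifies the restricted map as the group multiplication (respectively inversion).

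One minor remark: the final correction step, extending $g$ to $\tilde g$ and adding it, is actually unnecessary. Because your cutoff $\chi$ is chosen $\R^\times_+$-invariant, evaluating the relation $\chi(\alpha_\lambda(Y),\alpha_\lambda(X),x,\lambda^{-1}t)=\chi(Y,X,x,t)$ at $t=0$ and letting $\lambda\to 0$ shows that $\chi\rvert_{t=0}$ is constant in the $(Y,X)$-variables. Since $\chi\equiv 1$ near $\{0\}\times\{0\}\times M\times\{0\}$, this forces $\chi\equiv 1$ on all of $\lie{g}\times\lie{g}\times M\times\{0\}$, so $\tilde w_0 = (\phi_0)_*(u_0\boxtimes v_0)=u_0\star v_0$ on the nose and $g=0$. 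The same observation disposes of the correction in the adjoint case. Apart from this harmless redundancy, you have correctly supplied the details the paper delegates to the proofs of Propositions~\ref{prop:composition} and~\ref{prop:adjoint}.
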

\begin{proof}
In the proof of Proposition \ref{prop:composition}, we obtained a lift $w=\phi_*(u\star v)$ of $P\star Q$. By Theorem \ref{main_tech_thm}.c, we see that $w_0 = u_0 \star v_0$. Similarly, in the proof of Proposition \ref{prop:adjoint}, the lift $u'$ of $P^t$ also satisfies $(u'_0)^t=u^t_0$ because of Theorem \ref{thm:inverse_bisubmersion}.d. By taking complex conjugation of $u'$, one obtains a lift of $P^*$.
\end{proof}

In the classical calculus, the elements of $\Psi^k(M)$ inside $\Psi^{k+1}(M)$ can be identified as those with vanishing principal symbol.  This is no longer the case in the present situation due to the subtle nature of the notion of principal symbol as illustrated in Example \ref{exs:counter_examples}. However, the result is true at the level of lifts to graded basis, as we now show.
\begin{prop}
\label{prop:one_less_order}
Let $P\in\Psi^{k+1}(\cF^\bullet)$ and $u$ a global lift.  If $u_0\in C^\infty_c(\mathfrak{g}\times M,\Omega^1\mathfrak{g})$, then $P\in \Psi^k(\cF^\bullet)$.
\end{prop}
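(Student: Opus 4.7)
The plan is to extract a factor of $t$ from $u$ at the level of full symbols via Proposition \ref{prop:const pseudo from symb}. Let $A$ denote the full symbol of $u$, a smooth function on $(\lie{g}^*\times M\times\R_+)\setminus(\{0\}\times M\times\{0\})$, homogeneous of degree $k+1$ under the anisotropic dilations and with compact $x$-support. The argument will proceed by (i) showing $A|_{t=0}\equiv 0$, (ii) dividing by $t$ to produce a degree-$k$ symbol $A'$ and the associated $v\in\cE^{\prime k}(\mathbb{U})$, and (iii) verifying that $\ev_{1*}(v)$ agrees with $P$ modulo a smooth compactly supported kernel.

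For step (i), compare the oscillatory integral representations of $u$ and $u_0$ from Propositions \ref{prop:const pseudo from symb}.c and \ref{prop:Taylorgroups}.b. By uniqueness of the full symbol, $B:=A|_{t=0}$ is the full symbol of $u_0$. Since $u_0\in C^\infty_c(\lie{g}\times M,\Omega^1\lie{g})$, its fiberwise Euclidean Fourier transform is Schwartz in $\xi$ uniformly in $x$. Combined with the representation $u_0(X,x)=\int_{\lie{g}^*}e^{i\langle\xi,X\rangle}(1-\chi(\xi))B(\xi,x)\,d\xi+f(X,x)$ for some $f$ Schwartz in $X$, this forces $B$ to be Schwartz in $\xi$ outside a neighbourhood of $0$. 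But $B$ is homogeneous of degree $k+1$ under $\hat\alpha_\lambda$, and for any $\xi_0\ne 0$ one has $|\hat\alpha_\lambda\xi_0|\to\infty$ as $\lambda\to\infty$. Rapid decay is incompatible with $B(\hat\alpha_\lambda\xi_0,x)=\lambda^{k+1}B(\xi_0,x)$ unless $B(\xi_0,x)=0$, so $B\equiv 0$.

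For step (ii), set $A'(\xi,x,t):=A(\xi,x,t)/t$ for $t>0$. The vanishing of $A$ at $t=0$ for $\xi\ne 0$ combined with smoothness of $A$ there lets one extend $A'$ smoothly (via Taylor expansion in $t$) to all of $(\lie{g}^*\times M\times\R_+)\setminus(\{0\}\times M\times\{0\})$, and the scaling $A(\hat\alpha_\lambda\xi,x,\lambda t)=\lambda^{k+1}A(\xi,x,t)$ becomes $A'(\hat\alpha_\lambda\xi,x,\lambda t)=\lambda^k A'(\xi,x,t)$. By the converse part of Proposition \ref{prop:const pseudo from symb}, there exists $v\in\cE^{\prime k}(\mathbb{U})$ with full symbol $A'$. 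Since the function $t$ on $\mathbb{U}$ satisfies $\alpha_\lambda^*t=\lambda^{-1}t$, multiplication by $t$ maps $\cE^{\prime k}(\mathbb{U})$ into $\cE^{\prime k+1}(\mathbb{U})$, and $tv$ has full symbol $tA'=A$. By uniqueness of the full symbol, $u-tv$ is given by a smooth function on $\mathbb{U}$ with compact $(x,t)$-support and Schwartz decay in $X$.

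For step (iii), since $t\equiv 1$ on $\mathbb{U}_1$ we have $\ev_{1*}(tv)=\ev_{1*}(v)$, whence $\ev_{1*}(u)-\ev_{1*}(v)=\ev_{1*}(u-tv)$ is smooth on $M\times M$ and compactly supported, thanks to compactness of $M$ and the compact $x$-support of $u-tv$. Combined with the hypothesis $P-\ev_{1*}(u)\in C^\infty_c(M\times M,\Omega^{1/2})$, this gives $P-\ev_{1*}(v)\in C^\infty_c(M\times M,\Omega^{1/2})$, so $P\in\Psi^k(\cF^\bullet)$. The crux of the argument is step (i), which is the symbolic translation of the hypothesis on $u_0$; steps (ii) and (iii) are essentially bookkeeping with full symbols.
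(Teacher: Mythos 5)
Your proof is correct, but it takes a genuinely different route from the paper's. The paper's argument is a two-step reduction at the level of distributions: first choose any $h\in C^\infty_c(\mathbb{U},\Omega^{1/2}_{r,s})$ with $h_0=u_0$ and $h_{|\mathbb{U}_1}=0$ and replace $u$ by $u-h$ (which changes neither $\ev_{1*}(u)$ nor the class of $P$), so that one may assume $u_0=0$; then it divides the distribution itself by $t$, using transversality to $s$ to see that $t^{-1}u$ is again a well-defined element of $\cE^{\prime k}(\mathbb{U})$ with the same $\ev_{1*}$. You instead work entirely at the level of full symbols: your step (i) shows that the hypothesis ``$u_0$ smooth and compactly supported'' forces $A|_{t=0}\equiv 0$ via the incompatibility of rapid decay of $(1-\chi)B$ with homogeneity of degree $k+1$ along the orbits $\hat\alpha_\lambda\xi_0\to\infty$, and then you divide $A$ by $t$ and requantize. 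Both arguments are sound; the paper's avoids your step (i) altogether because subtracting $h$ kills $u_0$ without ever needing to know that its symbol vanishes, whereas your version buys an independently useful fact (a symbol that is homogeneous of positive weight and Schwartz at infinity must vanish) at the cost of re-invoking the existence and uniqueness clauses of Proposition \ref{prop:const pseudo from symb} and checking that $u-tv$ is a Schwartz-class remainder whose pushforward under $\ev_1$ is smooth. That last point is the only place where you lean on something not spelled out verbatim (pushforward along the submersion $\ev_1$ of a smooth section with rapid decay in $X$ is smooth), but the paper itself uses exactly this kind of fact (cf.\ Remark \ref{rem:Lifiting_smooth} and the remainders in Proposition \ref{prop:const pseudo from symb}.c), so this is not a gap.
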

\begin{proof}
We can find $h\in C^\infty_c(\mathbb{U},\Omega^{1/2}_{r,s})$ with restrictions $h_0 = u_0$ on $\mathfrak{g}\times M \times \{0\}$ and $h_1= 0$ on $\mathbb{U}_{1}$, so by replacing $u$ by $u-h$ we may assume that $u_0= 0$.  It follows that $u'= t^{-1}u \in \cE^{\prime k}(\mathbb{U})$ with $\ev_{1*}(u') =\ev_{1*}(u)$ which is equal to $P$ modulo a smoothing operator.
\end{proof}

\begin{prop}\label{prop:lifting_prop}
Let $v\in \cE^{\prime k}(\mathfrak{g}\times M)$.  Then there exists $u\in \cE^{\prime k}(\mathbb{U})$ such that $u_0=v$.
\end{prop}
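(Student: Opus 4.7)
The plan is to construct $u$ by extending the full symbol of $v$, invoking the symbol correspondences of Propositions \ref{prop:const pseudo from symb} and \ref{prop:Taylorgroups}, and then correcting by a smooth compactly supported function so that the restriction at $t=0$ is exactly $v$. Let $B \in C^\infty((\mathfrak{g}^* \setminus \{0\}) \times M)$ denote the full symbol of $v$ given by Proposition \ref{prop:Taylorgroups}. Since $v$ is compactly supported, $B$ is supported in $\mathfrak{g}^* \times K$ for some compact $K \subseteq M$, and it is $\hat{\alpha}$-homogeneous of degree $k$.

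The main step is to extend $B$ to a smooth function $A \in C^\infty((\mathfrak{g}^* \times M \times \R_+) \setminus (\{0\} \times M \times \{0\}))$ satisfying the homogeneity \eqref{eqn:A_homog_symbol} together with condition (b) of Proposition \ref{prop:const pseudo from symb}. I will use a cutoff construction. Fix a smooth positive gauge $\rho : \mathfrak{g}^* \setminus \{0\} \to \R_+^\times$ satisfying $\rho(\hat{\alpha}_\lambda \xi) = \lambda \rho(\xi)$; such a $\rho$ exists explicitly, e.g., taking $\rho(\xi) = (\sum_j |\xi_j|^{2M/v_{i_j}})^{1/(2M)}$ with $M = \mathrm{lcm}(v_i)$ in a graded basis adapted to the grading of $\mathfrak{g}^*$. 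Fix also $\chi : [0, +\infty] \to [0, 1]$ smooth, vanishing on a neighbourhood of $0$ and equal to $1$ on a neighbourhood of $+\infty$. Define
\[
A(\xi, x, t) = \begin{cases} B(\xi, x)\, \chi(\rho(\xi)/t), & t > 0, \\ B(\xi, x), & t = 0,\ \xi \neq 0. \end{cases}
\]
The homogeneity \eqref{eqn:A_homog_symbol} follows from a direct computation. Smoothness near $(\xi_0, x_0, 0)$ with $\xi_0 \neq 0$ holds because $\rho(\xi)/t \to +\infty$, so $\chi \equiv 1$ in a neighbourhood and $A$ agrees with $B$ there; smoothness near $(0, x_0, t_0)$ with $t_0 > 0$ holds because $\rho(\xi)/t \to 0$, so $A$ vanishes identically in a neighbourhood. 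The support condition is inherited from $B$.

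The converse part of Proposition \ref{prop:const pseudo from symb} then produces some $u' \in \cE^{\prime k}(\mathbb{U})$ whose full symbol is $A$. By the remark before Proposition \ref{rmk:symbol_convolution}, the restriction $u'_0 \in \cE^{\prime k}(\mathfrak{g} \times M)$ has full symbol $A|_{t=0} = B$, which coincides with the full symbol of $v$. Hence $u'_0 - v \in \cE^{\prime k}(\mathfrak{g} \times M)$ has zero full symbol, so Proposition \ref{prop:Taylorgroups} (applied via \eqref{eqn:qsuidfhpiqspdjfo}) gives that it is smooth and Schwartz in $X$; combined with the fact that $u'_0$ and $v$ are both compactly supported (the former by Condition (c) of Definition \ref{dfn:Eprimek}), this yields $u'_0 - v \in C^\infty_c(\mathfrak{g} \times M, \Omega^1\mathfrak{g})$. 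Using \eqref{eqn:U0}, the inclusion $\mathfrak{g} \times M \times \{0\} \subseteq \mathbb{U}$ combined with openness of $\mathbb{U}$ and compactness of $K' := \supp(v - u'_0)$ yields $\epsilon > 0$ with $K' \times [0, \epsilon) \subseteq \mathbb{U}$. Choosing $\varphi \in C^\infty_c([0, \epsilon))$ with $\varphi(0) = 1$ and setting $h(X, x, t) = \varphi(t)\,(v - u'_0)(X, x)$, viewed as a section of $\Omega^{1/2}_{r,s}$ via \eqref{eqn:qsiodfjpkjsqojdfkoq}, we obtain $h \in C^\infty_c(\mathbb{U}, \Omega^{1/2}_{r, s}) \subseteq \cE^{\prime k}(\mathbb{U})$ with $h_0 = v - u'_0$. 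Then $u := u' + h$ lies in $\cE^{\prime k}(\mathbb{U})$ and satisfies $u_0 = v$. The only step requiring any real thought is the construction of the homogeneous extension $A$; everything else is routine bookkeeping in the symbol calculus already developed.
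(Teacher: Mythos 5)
Your proof is correct and follows the same route as the paper's: pass to the full symbol $B$ of $v$ via Proposition \ref{prop:Taylorgroups}, extend it to a homogeneous symbol $A$ on the larger space, invoke the converse of Proposition \ref{prop:const pseudo from symb} to produce $u'$, note that $v-u'_0$ is a compactly supported smooth density, and correct by a cutoff $h$. The one place the paper is terse---``one can extend $B$ to a smooth function $A$\ldots''---you fill in explicitly with the gauge $\rho$ and cutoff $\chi(\rho(\xi)/t)$, which is a legitimate and clean realization of that extension.
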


\begin{proof}
 Let $B \in C^\infty(\left(\mathfrak{g}^*\setminus\{0\}\right)\times M)$ be the smooth function obtained from $v$ by Proposition \ref{prop:Taylorgroups}. One can extend $B$ to a smooth function $A\in C^\infty((\mathfrak{g}\times M\times\R_+)\backslash (\{0\}\times M\times \{0\}))$ satisfying $a$ and $b$ of Proposition \ref{prop:const pseudo from symb}. Then, by Proposition \ref{prop:const pseudo from symb}, there exists $u'\in \cE^{\prime k}(\mathbb{U})$ whose full symbol is $A$. Hence $v-u'_0$ is Schwartz class. By Definition \ref{dfn:Eprimek}.c and since $v$ is compactly supported, we get that $v-u_0\in C^\infty_c(\mathfrak{g}\times M,\Omega^1\mathfrak{g})$. Now extend $v-u'_0$ to a function $h\in C^\infty_c(\mathbb{U},\Omega^{1/2}_{r,s})$ with $h_{|\mathfrak{g}\times M \times\{0\}} = v-u'_0$. Putting $u=u'+h$ gives the desired element.
\end{proof}
\begin{definition} 
\label{def:operator_strong_Rockland}
If $P\in \Psi^k(\cF^\bullet)$, then we say that $P$ satisfies the strong $*$-Rockland condition if there exists a global lift $u\in \cE^{\prime k}(\mathbb{U})$ such that $u_0$ satisfies the strong $*$-Rockland condition of Definition \ref{def:Rockland1}.
\end{definition}
As mentioned previously, ultimately we will only need the Rockland condition for representations in the Helffer-Nourrigat cone. Nonetheless, the strong $*$-Rockland condition is useful for defining our Sobolev spaces.
\begin{ex}\label{ex:diff pseudo strong rock}
 Let $(x_{ij}) \in \lie{g}_i$ be a finite family of elements of $\lie{g}$ such that the vectors $x_{i1},x_{i2},\ldots$ generate $\lie{g}_i$ for every $i$.  Let $s\in \N$ such that $i|s$ for all $1\leq i\leq N$.  Put $X_{ij} = \natural (x_{ij}) \in \cX(M)$ and define the differential operator
$$D=\sum_{ij}(-1)^\frac{s}{i}L_{X_{ij}}^{\frac{2s}{i}}\in \Diff^{2s}_{\cF}(M,\Omega^{1/2}).$$ 
By Proposition \ref{prop:Hormander_symbol}, $D$ satisfies the strong $*$-Rockland condition. Here $L$ denotes the Lie derivatives as in Section \ref{sec:diff_multi}.
\end{ex}
The following theorem if $P$ is a differential operator is due to Rothschild \cite{RotschildSinglePaper}.
\begin{thm}\label{thm:strong_rock_par}
If $P\in \Psi^k(\cF^\bullet)$ satisfies the strong $*$-Rockland condition then there exists $Q\in \Psi^{-k}(\cF^\bullet)$ such that $Q\star P-\Id$ and $P\star Q-\Id$ belong to $\Psi^{-1}(\cF^\bullet)$.
\end{thm}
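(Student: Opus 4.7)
The strategy is to lift the parametrix construction at the nilpotent group level, Theorem \ref{Parametrix_symbols}, up to the pseudodifferential calculus on $M$. By Definition \ref{def:operator_strong_Rockland}, fix a global lift $u\in\cE^{\prime k}(\mathbb{U})$ of $P$ such that its restriction $u_0\in\cE^{\prime k}(\lie{g}\times M)$ to $\lie{g}\times M\times\{0\}$ satisfies the strong $*$-Rockland condition of Definition \ref{def:Rockland1}. Theorem \ref{Parametrix_symbols}, together with the remark that follows it, then produces a two-sided parametrix $v_0\in\cE^{\prime -k}(\lie{g}\times M)$ with both $1-v_0\star u_0$ and $1-u_0\star v_0$ lying in $C^\infty_c(\lie{g}\times M,\Omega^1\lie{g})$.

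Next, use Proposition \ref{prop:lifting_prop} to lift $v_0$ to some $v\in\cE^{\prime -k}(\mathbb{U})$ with $v_{|t=0}=v_0$, and set $Q:=\ev_{1*}(v)\in\Psi^{-k}(\cF^\bullet)$, which is a pseudodifferential operator by Corollary \ref{cor:global_basis_1}. In order to compare $Q\star P$ with $\Id$, one needs a global lift $\tilde{1}\in\cE^{\prime 0}(\mathbb{U})$ of $\Id_M$ whose restriction to $t=0$ is the convolution unit $1\in\cE^{\prime 0}(\lie{g}\times M)$. The natural candidate is the distribution given by integration along the zero section $\{0\}\times M\times\R_+\subseteq\mathbb{U}$: since $\ev(0,x,t)=(x,x,t)$, its pushforward under $\ev_1$ is precisely the kernel of $\Id_M$ (up to the canonical half-density identifications of Section \ref{sec:densities}); its $t=0$ slice is the convolution identity $1$; and the zero section being setwise fixed by the $\R_+^\times$-action gives homogeneity of weight $0$, so $\tilde{1}\in\cE^{\prime 0}(\mathbb{U})$.

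Now assemble the pieces. By Proposition \ref{rmk:symbol_convolution}, there is a global lift $w\in\cE^{\prime 0}(\mathbb{U})$ of $Q\star P$ with $w_0=v_0\star u_0$. Then $w-\tilde{1}\in\cE^{\prime 0}(\mathbb{U})$ is a global lift of $Q\star P-\Id$ whose $t=0$ slice equals $v_0\star u_0-1\in C^\infty_c(\lie{g}\times M,\Omega^1\lie{g})$. Proposition \ref{prop:one_less_order} then yields $Q\star P-\Id\in\Psi^{-1}(\cF^\bullet)$. The argument for $P\star Q-\Id$ is identical, using the other side of the two-sided parametrix relation $1-u_0\star v_0\in C^\infty_c$.

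The actual analytic content, namely the existence of the parametrix at the group level, has already been done in Theorem \ref{Parametrix_symbols}; what remains is essentially bookkeeping. The only point requiring care is the explicit construction of the identity lift $\tilde{1}$ and checking that the half-density conventions of Section \ref{sec:densities} make $\ev_{1*}(\tilde{1})$ literally equal to $\Id_M$ (and not merely agree with it modulo smoothing, although the latter would already suffice). Once this is in place, the remainder of the proof is a direct combination of Proposition \ref{prop:lifting_prop}, Proposition \ref{rmk:symbol_convolution}, and Proposition \ref{prop:one_less_order}.
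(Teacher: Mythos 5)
Your proof is correct and follows essentially the same route as the paper: take a global lift $u$ with $u_0$ strongly $*$-Rockland, produce the group-level parametrix $v_0$ via Theorem \ref{Parametrix_symbols}, lift it with Proposition \ref{prop:lifting_prop}, and conclude with Proposition \ref{rmk:symbol_convolution} and Proposition \ref{prop:one_less_order}. The only difference is that you make explicit the lift $\tilde{1}$ of $\Id$ supported on the zero section, a point the paper's proof leaves implicit.
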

\begin{proof}
Let $u\in \cE^{\prime k}(\mathbb{U})$ be a global lift of $P$ that satisfies the strong $*$-Rockland condition. Let $v_0\in \cE^{\prime -k}(\mathfrak{g}\times M)$ be a parametrix for $u_0$ as in Theorem \ref{Parametrix_symbols} and $v\in \cE^{\prime -k}(\mathbb{U})$ an extension, which exists by Proposition \ref{prop:lifting_prop}. Proposition \ref{rmk:symbol_convolution} implies that $\ev_1(v) \star \ev_1(u)-\Id$ can be written as $\ev_1(u'')$ with the symbol of $u''$ vanishing on $\left(\mathfrak{g}^*\setminus\{0\}\right)\times M\times \{0\}$. Hence $u''_0$ is smooth. By Proposition \ref{prop:one_less_order}, $\ev_1(u'')\in \Psi^{-1}(\cF^\bullet)$. We have thus constructed $Q=\ev_1(v)$ such that $Q\star P-\Id$ and $P\star Q-\Id$ belong to $\Psi^{-1}(\cF^\bullet)$. 
\end{proof} 

\begin{thm}\label{thm:bounded}
Let $P\in \Psi^k(\cF^\bullet)$.
\begin{enumerate}
\item If $\Re(k)<0$, then $P$ extends to a compact operator $L^2M\to L^2M$.
\item If $\Re(k)=0$, then $P$ extends to a bounded operator $L^2M\to L^2M$.
\end{enumerate} 
\end{thm}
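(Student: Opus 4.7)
The strategy is to establish (b) first via an iterative Hörmander-type $T^*T$ argument at the symbol level on $\mathfrak{g}\times M$, and then derive (a) from (b) by observing that a sufficiently high power of $P^*P$ is Hilbert--Schmidt. Compactness of $M$ enters twice: through Remark \ref{rem:Global basis}, ensuring the existence of a global graded Lie basis, and through the fact that a continuous kernel on $M\times M$ defines a Hilbert--Schmidt operator on $L^2 M$.

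For (b), take $P\in\Psi^0(\cF^\bullet)$ with a global lift $u\in\cE^{\prime 0}(\mathbb{U})$ and restriction $u_0\in\cE^{\prime 0}(\mathfrak{g}\times M)$. Proposition \ref{prop:bounded_prophdslijqf}(b) gives $\|\sigma^0(u_0)\|_{\Sigma^*(\mathfrak{g}\times M)}\leq C$ for some $C>0$, so $(C^2+1)\cdot 1 - u_0^*\star u_0$ has positive invertible image in $\Sigma^*(\mathfrak{g}\times M)$. The square-root construction inside the proof of Theorem \ref{thm:shortexactseqsymb} supplies $v_0\in\cE^{\prime 0}(\mathfrak{g}\times M)$, which may be chosen self-adjoint, with
\[
v_0^*\star v_0 + u_0^*\star u_0 - (C^2+1)\cdot 1 \in C^\infty_c(\mathfrak{g}\times M,\Omega^1\mathfrak{g}).
\]
Lifting $v_0$ to $\tilde v\in\cE^{\prime 0}(\mathbb{U})$ via Proposition \ref{prop:lifting_prop} and setting $V:=\ev_{1*}(\tilde v)\in\Psi^0(\cF^\bullet)$, Propositions \ref{rmk:symbol_convolution} and \ref{prop:one_less_order} yield $R_1 := (C^2+1)\Id - P^*P - V^*V \in \Psi^{-1}(\cF^\bullet)$. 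Moreover, positivity and invertibility of $\sigma^0(v_0)$ ensure that $V$ satisfies the strong $*$-Rockland condition, so Theorem \ref{thm:strong_rock_par} supplies a parametrix $V^\sharp\in\Psi^0(\cF^\bullet)$ with $V V^\sharp - \Id$ and $V^\sharp V - \Id$ in $\Psi^{-1}(\cF^\bullet)$.

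I iterate by setting $V_{n+1}:=V_n + W_n$ with $W_n := \tfrac12 V^\sharp R_n \in \Psi^{-n}(\cF^\bullet)$. Since $W_i^*W_j\in\Psi^{-(i+j)}(\cF^\bullet)$, one has $V_{n+1}^*V_{n+1} \equiv V_n^*V_n + V^*W_n + W_n^*V$ modulo $\Psi^{-(n+1)}(\cF^\bullet)$, and a direct computation using $V V^\sharp \equiv \Id \equiv (V^\sharp)^*V$ (modulo one lower order) together with self-adjointness of $R_n$ gives $V^*W_n + W_n^*V \equiv R_n$ modulo $\Psi^{-(n+1)}(\cF^\bullet)$. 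Hence $R_{n+1}:=(C^2+1)\Id - P^*P - V_{n+1}^*V_{n+1}\in\Psi^{-(n+1)}(\cF^\bullet)$. Choosing $n$ larger than the dimension bound of Proposition \ref{prop:properties_of_pseudo_diff}(c), $R_n$ has a continuous kernel on the compact manifold $M\times M$ and is therefore Hilbert--Schmidt and bounded on $L^2 M$. The identity then gives
\[
\|Pf\|_{L^2}^2 + \|V_nf\|_{L^2}^2 = (C^2+1)\|f\|_{L^2}^2 - \langle R_n f,f\rangle \leq (C^2+1 + \|R_n\|_{L^2\to L^2})\|f\|_{L^2}^2,
\]
proving (b).

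For (a), let $P\in\Psi^k(\cF^\bullet)$ with $\Re(k)<0$. Then $P\in\Psi^0(\cF^\bullet)$ by Proposition \ref{prop:properties_of_pseudo_diff}(a), so $P$ is bounded on $L^2 M$ by (b). Propositions \ref{prop:adjoint} and \ref{prop:composition} give $(P^*P)^N\in\Psi^{2N\Re(k)}(\cF^\bullet)$, and for $N$ sufficiently large Proposition \ref{prop:properties_of_pseudo_diff}(c) makes this operator's kernel continuous on the compact $M\times M$, so $(P^*P)^N$ is Hilbert--Schmidt and compact. Since $P^*P$ is a bounded, positive, self-adjoint operator whose $N$-th power is compact, continuous functional calculus applied to $t\mapsto t^{1/N}$ on its bounded spectrum shows $P^*P$ is itself compact; the polar decomposition $P=U(P^*P)^{1/2}$ with $(P^*P)^{1/2}$ compact and $U$ a bounded partial isometry then yields compactness of $P$. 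The main technical obstacle is the symbolic iteration in (b), specifically the verification that $V^*W_n + W_n^*V \equiv R_n$ modulo one lower order; this rests on the strong $*$-Rockland property of $V$ (itself a consequence of the positivity of $\sigma^0(v_0)$ engineered at the outset) together with Propositions \ref{prop:lifting_prop} and \ref{prop:one_less_order} to transport symbolic identities on $\mathfrak{g}\times M$ back into $\Psi^*(\cF^\bullet)$.
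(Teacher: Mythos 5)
Your part (b) is correct, but you have chosen a much longer route than the paper, and your part (a) contains a genuine gap.

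On the route: the paper proves (a) first, with no reference to (b), and then disposes of (b) in four lines by performing the $T^*T$ step \emph{once}, producing $Q=C\,\Id - P^*\star P - R^*\star R\in\Psi^{-1}(\cF^\bullet)$, and invoking (a) to conclude that $Q$ is compact (hence bounded). You reverse this dependency: you run an iterated Hörmander square-root argument, pushing the remainder down to $R_n\in\Psi^{-n}(\cF^\bullet)$ for $n$ large so that it has a continuous kernel, and only then conclude boundedness. Your iteration is sound --- in particular you correctly see that $V$ can be arranged to be strong $*$-Rockland (via positivity and invertibility of $\sigma^0(v_0)$) so that Theorem \ref{thm:strong_rock_par} supplies $V^\sharp$ --- but you are effectively reproving (a) inside (b). Each approach is valid if the dependencies are honoured; the paper's is shorter.

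The gap is in your derivation of (a). You assert ``$P\in\Psi^0(\cF^\bullet)$ by Proposition \ref{prop:properties_of_pseudo_diff}(a).'' That proposition gives $\Psi^k(\cF^\bullet)\subseteq\Psi^{k+1}(\cF^\bullet)$, proved by multiplying a lift $u\in\cE^{\prime k}(\mathbb{U})$ by $t$; iterating gives $\Psi^k\subseteq\Psi^{k+m}$ only for $m\in\N$. Hence $\Psi^k\subseteq\Psi^0$ requires $-k\in\N$, and for a general $k$ with $\Re(k)<0$ (say $k=-\tfrac12$) you cannot conclude $P\in\Psi^0$. The obstruction is real: the full symbol of a lift is homogeneous of degree exactly $k$, and a symbol homogeneous of degree $k$ cannot in general be completed to a smooth symbol homogeneous of degree $k'$ unless $k'-k\in\N$, because $t^{k'-k}$ fails to be smooth at $t=0$; the same phenomenon already occurs for classical polyhomogeneous operators. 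Without $P\in\Psi^0$ your appeal to (b) collapses, and with it your claim that $P^*P$ is \emph{a priori} bounded. The fix does not need (b) at all: choose $N=2^m$ large enough that $(P^*P)^{N}\in\Psi^{2N\Re(k)}(\cF^\bullet)$ has a continuous kernel by Proposition \ref{prop:properties_of_pseudo_diff}(c), hence is bounded (Hilbert--Schmidt on compact $M$); then for $f\in C^\infty(M,\Omega^{1/2})$ the identity
\[
 \norm{(P^*P)^{2^{j-1}}f}^2 \;=\; \bigl\langle (P^*P)^{2^j}f,\,f\bigr\rangle
\]
lets you descend $j=m,m-1,\dots,1$ to conclude that $P^*P$, and hence $P$, is bounded. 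Your subsequent spectral argument (compactness of $\overline{P^*P}$ from compactness of a power, then polar decomposition) is then fine. This is in fact exactly the step the paper compresses into the phrase ``Hence $P$ also extends to a compact operator.''
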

\begin{proof}
\begin{enumerate}
\item Since $M$ is compact, there exists $C>0$ such that $\sup_{x\in M}\dimh(\Gr(\cF)_x)\leq C$. Let $P\in \Psi^k(\cF^\bullet)$, and let $n\in \N$ be big enough such that $2\Re(k)n<-C$. By Proposition \ref{prop:properties_of_pseudo_diff}.d, $(P^*\star P)^n$ extends to a compact operator. Hence $P$ also extends to a compact operator.

\item 
By replacing $P$ with $P^*\star P$, we can suppose that $k=0$. Let $u\in \cE^{\prime 0}(\mathbb{U})$ be a global lift of $P$. Fix $C>\norm{\sigma^0(u_0)}^2_{\Sigma^*(\mathfrak{g}\times M)}$.  Then there exists $v_0\in \cE^{\prime0}(\mathfrak{g}\times M)$ such that $C1-u_0^*\star u_0-v_0^*\star v_0\in C^\infty_c(\mathfrak{g}\times M)$, see proof of Theorem \ref{thm:shortexactseqsymb}.  Using Proposition \ref{prop:lifting_prop} we can extend $v_0$ to $v\in\cE^{\prime 0}(\mathbb{U})$. Put $R=\ev_{1*}(v)$. By Proposition \ref{rmk:symbol_convolution} and Proposition \ref{prop:one_less_order}, we have
\[
  Q:=C\Id- P^*\star P - R^*\star R \in \Psi^{-1}(\cF^\bullet).
\]
It follows, using Part a, that $P$ is bounded with $\|P\|^2 \leq C+\|Q\|$.\qedhere
\end{enumerate}
\end{proof}

\begin{prop}\label{thm:generalised_laplacians}There exists a family of operators $P_k\in \Psi^k(\cF^\bullet)$ for $k\in \C$ such that for all $k,k'\in \C$\begin{enumerate}
\item $P_k$ satisfies the strong $*$-Rockland condition
\item $P_k\star P_{k'}-P_{k+k'}\in \Psi^{k+k'-1}(\cF^\bullet)$.
\item $P_k-P_k^*\in \Psi^{k-1}(\cF^\bullet)$.
\end{enumerate}
\end{prop}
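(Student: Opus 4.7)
The plan is to lift the group-level family from Theorem \ref{thm:christ_Gall} directly to a family of pseudodifferential operators, using the global graded Lie basis that exists on our compact manifold $M$ by Remark \ref{rem:Global basis}. Fix such a global graded Lie basis $(\mathfrak{g},\natural,\mathbb{U},M)$. From Theorem \ref{thm:christ_Gall} we obtain a family $u_k\in\cE^{\prime k}(\mathfrak{g}\times M)$ for $k\in\C$ with $u_0=1$, $u_k\star u_{k'}\equiv u_{k+k'}$ and $u_k^*\equiv u_k$ modulo $C^\infty_c(\mathfrak{g}\times M,\Omega^1\mathfrak{g})$, each satisfying the strong $*$-Rockland condition. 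By Proposition \ref{prop:lifting_prop} we can lift each $u_k$ to an element $\tilde u_k\in \cE^{\prime k}(\mathbb{U})$ with $(\tilde u_k)_0=u_k$. We then set
\[
P_k := \ev_{1*}(\tilde u_k)\in\Psi^k(\cF^\bullet),
\]
which is a global lift of $P_k$ in the sense of Section \ref{sec:Sobolev}.

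Condition (a) is then immediate from Definition \ref{def:operator_strong_Rockland}: by construction $(\tilde u_k)_0=u_k$ satisfies the strong $*$-Rockland condition on the group. For condition (b), Proposition \ref{rmk:symbol_convolution} produces a global lift $w\in\cE^{\prime k+k'}(\mathbb{U})$ of $P_k\star P_{k'}$ whose restriction at $t=0$ is exactly $u_k\star u_{k'}$. Since $\tilde u_{k+k'}$ is a global lift of $P_{k+k'}$ with $(\tilde u_{k+k'})_0=u_{k+k'}$, the difference $w-\tilde u_{k+k'}$ is a global lift of $P_k\star P_{k'}-P_{k+k'}$ whose restriction to $\mathfrak{g}\times M\times\{0\}$ equals $u_k\star u_{k'}-u_{k+k'}$, which lies in $C^\infty_c(\mathfrak{g}\times M,\Omega^1\mathfrak{g})$ by Theorem \ref{thm:christ_Gall}. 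Proposition \ref{prop:one_less_order} then yields $P_k\star P_{k'}-P_{k+k'}\in\Psi^{k+k'-1}(\cF^\bullet)$. Condition (c) is entirely analogous: Proposition \ref{rmk:symbol_convolution} gives a global lift $u'$ of $P_k^*$ with $u'_0=u_k^*$, so $\tilde u_k-u'$ is a global lift of $P_k-P_k^*$ whose restriction at $t=0$ is $u_k-u_k^*\in C^\infty_c$, and Proposition \ref{prop:one_less_order} concludes.

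There is essentially no obstacle here: the heavy lifting (both the functional-analytic construction of a smooth family of parametrices on the graded nilpotent group, and the geometric compatibility of composition and adjoint with the restriction to $t=0$) has already been done in Theorem \ref{thm:christ_Gall}, Propositions \ref{rmk:symbol_convolution}, \ref{prop:one_less_order} and \ref{prop:lifting_prop}. The only place where care is needed is in ensuring that we work with a single \emph{global} graded Lie basis so that the notion of global lift, and hence the restriction $u\mapsto u_0$, is unambiguous; this is exactly where compactness of $M$ enters via Remark \ref{rem:Global basis}.
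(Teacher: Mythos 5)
Your proposal is correct and follows exactly the same route as the paper: lift the Christ--Geller--G\l owacki--Polin family $u_k$ from Theorem \ref{thm:christ_Gall} through Proposition \ref{prop:lifting_prop} to a fixed global graded Lie basis, set $P_k=\ev_{1*}(\tilde u_k)$, and deduce (b) and (c) from Propositions \ref{rmk:symbol_convolution} and \ref{prop:one_less_order}. You have merely written out the one-sentence argument of the paper in full detail, with no deviation in strategy.
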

\begin{proof}
One extends the $u_k\in \cE^{\prime k}(\mathfrak{g}\times M)$ from Theorem \ref{thm:christ_Gall} using Proposition \ref{prop:lifting_prop}. By Proposition \ref{prop:one_less_order} and Proposition \ref{rmk:symbol_convolution}, the family $P_k$ satisfies the above properties.
\end{proof}
We fix a choice of the operators $P_k$. Using these operators, one can define Sobolev spaces as follows. Let $\tilde{H}^0(M):=L^2M$. For $k>0$, we define $\tilde{H}^k(M)\subseteq L^2M$ to be the domain of $\overline{P_k}$. We equip it with a Hilbert space structure by identifying it with the graph of $\overline{P_k}$. Note that for a different choice of $P_k$, by Theorem \ref{thm:strong_rock_par} and Theorem \ref{thm:bounded}, one gets the same domain. By Proposition \ref{thm:generalised_laplacians}.b and Theorem \ref{thm:bounded} we have a compact bounded inclusion $$\tilde{H}^k(M)\hookrightarrow \tilde{H}^{l}(M),\quad k>l.$$ For $k<0$, we define $\tilde{H}^k(M)$ to be the dual of $\tilde{H}^{-k}(M)$.  We  thus get a chain of Hilbert spaces $$\cdots \subseteq \tilde{H}^{1}(M)\subseteq \tilde{H}^0(M)\subseteq \tilde{H}^{-1}(M)\subseteq \cdots.$$Equivalently by Proposition \ref{prop:adjoint}, $P_k$ extends continuously to $P_k:\cD'(M,\Omega^{1/2})\to \cD'(M,\Omega^{1/2})$. We can then define $\tilde{H}^k(M)$ to be the subspace of $u\in \cD'(M,\Omega^{1/2})$ such that $P_k(u)\in L^2(M)$. 

We have $$C^{\infty}(M):=\bigcap_{k\in\R}\tilde{H}^k(M),$$ because $\tilde{H}^{Nk}(M)\subseteq H^{k}(M)$ where $k\in \N$ and $H^k(M)$ is the classical Sobolev space.
\begin{prop}Let $k\in \C$, $P\in \Psi^k(\cF^\bullet)$. Then for any $s\in \R$, $P$ extends to a bounded operator $$P:\tilde{H}^{s+\Re(k)}(M)\to \tilde{H}^s(M)$$
\end{prop}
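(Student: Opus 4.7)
The approach is to reduce the general mapping property to an $L^2$-endpoint estimate, then exploit the graph-norm definition of the Sobolev spaces together with duality.

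The key lemma I plan to establish is: for every $A \in \Psi^m(\cF^\bullet)$ with $m' := \Re(m)$, both $A : \tilde H^{m'}(M) \to L^2(M)$ and $A : L^2(M) \to \tilde H^{-m'}(M)$ are bounded. For the first mapping with $m' > 0$, starting from the parametrix $Q_{m'} \in \Psi^{-m'}(\cF^\bullet)$ of Theorem \ref{thm:strong_rock_par}, a Neumann-type iteration of the order-$(-1)$ remainder produces $Q_{m'}^{(N)} \in \Psi^{-m'}(\cF^\bullet)$ with $Q_{m'}^{(N)} \star P_{m'} - \Id \in \Psi^{-N}(\cF^\bullet)$ for arbitrary $N$. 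Decomposing
\[
 A \;=\; (A \star Q_{m'}^{(N)}) \star P_{m'} \;+\; A \star \bigl(\Id - Q_{m'}^{(N)} \star P_{m'}\bigr),
\]
Proposition \ref{prop:composition} places $A \star Q_{m'}^{(N)} \in \Psi^{m - m'}(\cF^\bullet)$ in real order zero and $A \star (\Id - Q_{m'}^{(N)} \star P_{m'}) \in \Psi^{m - N}(\cF^\bullet)$ in real order $\leq 0$ for $N$ large; both are $L^2$-bounded by Theorem \ref{thm:bounded}, yielding $\|Au\|_{L^2} \leq C\|u\|_{\tilde H^{m'}}$. The cases $m' = 0$ and $m' < 0$ reduce to Theorem \ref{thm:bounded} and to duality via Proposition \ref{prop:adjoint}; the second mapping $A : L^2 \to \tilde H^{-m'}$ is handled analogously, using the graph norm on $\tilde H^{-m'}$ when $-m' > 0$ and the continuous inclusion $L^2 \subseteq \tilde H^{-m'}$ when $-m' < 0$.

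For the general statement, let $r = \Re(k)$. When $s \geq 0$ the graph norm on $\tilde H^s$ is equivalent to $(\|\cdot\|_{L^2}^2 + \|P_s\cdot\|_{L^2}^2)^{1/2}$, so it suffices to bound $\|Pu\|_{L^2}$ and $\|P_s P u\|_{L^2}$ by $\|u\|_{\tilde H^{s+r}}$: the first uses $P : \tilde H^r \to L^2$ from the lemma composed with the bounded inclusion $\tilde H^{s+r} \hookrightarrow \tilde H^r$ (valid since $s \geq 0$), and the second applies the lemma to $P_s \star P \in \Psi^{s+k}(\cF^\bullet)$ of real order $s+r$. When $s < 0$ and $s + r \leq 0$, duality reduces the claim to $P^* : \tilde H^{-s} \to \tilde H^{-s-r}$ with $-s > 0$, covered by the previous case. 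For the remaining case $s < 0 < s + r$, I decompose $u = Q_{s+r}^{(N)} P_{s+r} u + R_N u$ with $R_N \in \Psi^{-N}(\cF^\bullet)$, so that $Pu = (P \star Q_{s+r}^{(N)}) P_{s+r} u + (P \star R_N) u$ places the first term in $\tilde H^s$ via the second mapping of the lemma applied to $P \star Q_{s+r}^{(N)} \in \Psi^{k-s-r}(\cF^\bullet)$ of real order $-s > 0$, and the second term in $\tilde H^s$ since $P \star R_N \in \Psi^{k-N}(\cF^\bullet)$ has real order $\leq 0$ for $N$ large and $u \in \tilde H^{s+r} \subseteq L^2$.

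The main technical point I expect to be delicate is not algebraic but functional-analytic: verifying that having $\|Pu\|_{L^2}$ and $\|P_s(Pu)\|_{L^2}$ finite (with $P_s(Pu)$ computed distributionally) actually places $Pu$ in the closure-domain $\mathrm{dom}(\overline{P_s})$ rather than merely in the maximal distributional domain of $P_s$. This identification of domains for strong-$*$-Rockland operators follows from the parametrix $Q_s$ mapping $L^2$ boundedly into $\tilde H^s$ modulo smoothing, which is itself an instance of the lemma applied to $Q_s \in \Psi^{-s}(\cF^\bullet)$, providing the requisite approximation argument.
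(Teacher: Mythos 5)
Your overall strategy --- reducing everything to the order-zero $L^2$ statement of Theorem \ref{thm:bounded} by composing with the operators $P_s$ of Proposition \ref{thm:generalised_laplacians} and their parametrices from Theorem \ref{thm:strong_rock_par} --- is exactly what the paper's one-line proof (``this follows easily from Theorem \ref{thm:bounded}'') leaves to the reader, and most of your write-up carries it out correctly. The domain worry you raise at the end is also resolved more simply than you suggest: $C^\infty(M,\Omega^{1/2})$ is dense in $\tilde H^{s+\Re(k)}(M)$ by the very definition of $\tilde H^{s}(M)$ as the domain of the closure $\overline{P_{s}}$ (and by density of $L^2$ in the negative-order spaces), so it suffices to prove the norm inequality on smooth densities and pass to the limit, identifying the limit with $Pu$ in $\cD'(M,\Omega^{1/2})$ via the continuity from Proposition \ref{prop:adjoint}.

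There is, however, one step whose stated justification fails. In your key lemma, the second mapping $A:L^2(M)\to\tilde H^{-m'}(M)$ in the sub-case $-m'<0$ (i.e.\ $m'=\Re(m)>0$) cannot be obtained from ``the continuous inclusion $L^2\subseteq\tilde H^{-m'}$'': that inclusion only helps if $Au$ already lies in $L^2$ with $\norm{Au}_{L^2}\leq C\norm{u}_{L^2}$, and an operator of positive real order is in general not $L^2$-bounded (Theorem \ref{thm:bounded} covers only $\Re(m)\leq 0$). This sub-case is not idle: you invoke it for $P\star Q^{(N)}_{s+r}\in\Psi^{k-s-r}(\cF^\bullet)$, which has real order $-s>0$, in the case $s<0<s+\Re(k)$. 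The claim itself is true and the repair is one line of duality: for $v\in C^\infty(M,\Omega^{1/2})$ one has $\norm{Av}_{\tilde H^{-m'}}=\sup\{|\langle Av,w\rangle| : \norm{w}_{\tilde H^{m'}}\leq 1\}=\sup|\langle v,A^*w\rangle|\leq \norm{v}_{L^2}\sup\norm{A^*w}_{L^2}$, and $A^*\in\Psi^{\bar m}(\cF^\bullet)$ has real order $m'>0$, so the first half of your lemma bounds $\norm{A^*w}_{L^2}$ by $C\norm{w}_{\tilde H^{m'}}$. With that duality step substituted for the inclusion argument, the proof is complete.
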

\begin{proof}
This follows easily from Theorem \ref{thm:bounded}.
\end{proof}
\subsection{Principal symbol}\label{sec:princip symb}
Let $x\in M$, and $\pi:\Gr(\cF)_x\to B(L^2\pi)$ be a non-trivial unitary irreducible representation. As in the previous section, we fix a global graded Lie basis $(\lie{g},\natural, \mathbb{U}, M)$. By Remark \ref{rmk:natural_p}, $\natural_x:\mathfrak{g}\to \Gr(\cF)_x$ is a group homomorphism. Hence $\pi\circ  \natural_x$ is a representation of $\mathfrak{g}$. We will implicitly use this inclusion $\widehat{\Gr(\cF)_x}\subseteq \widehat{\mathfrak{g}}$ in the Subsections \ref{sec:princip symb}, \ref{sec:Parametrised}, and \ref{sec:para order 0}. Let $P\in \Psi^k(\cF^\bullet)$. If $u\in \cE^{\prime k}(\mathbb{U})$ is a global lift of $P$, then we define the principal symbol of $P$ by the formula \begin{equation}\label{eqn:dfn principal symbol}
 \sigma^k(P,x,\pi)=\sigma^k(u_0,x,\pi):C^\infty(\pi)\to C^\infty(\pi).
\end{equation} The main difficulty with \eqref{eqn:dfn principal symbol} is to show that the right hand side is independent of the choice of the global lift $u$. This is our main goal in this subsection.
\begin{thm}\label{thm:principal symbol is well-defined}
Let $k\in \C$ and $P\in \Psi^k(\cF^\bullet)$. If $\pi\in \cT^*_x\cF\setminus\{1_{\Gr(\cF)_x}\}$, then the principal symbol $\sigma^k(P,x,\pi)$ is well-defined. Furthermore if $s\in \R$, then there exists $C>0$ (only depends on $s,k$) such that 
\begin{equation}\label{eqn norm symb}
\norm{\sigma^0(P,x,\pi)}_{B(H^{s+\Re(k)}(\pi),H^{s}(\pi))}\leq C\norm{P}_{B(H^{s+\Re(k)}(M),H^s(M))}.
\end{equation}
\end{thm}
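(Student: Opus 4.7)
The plan is to establish the following key claim: if $w \in \cE^{\prime k}(\mathbb{U})$ satisfies $\ev_{1*}(w) \in C^\infty_c(M\times M, \Omega^{1/2})$, then $\sigma^k(w_0, x, \pi) = 0$ on $C^\infty(\pi)$ for every $x \in M$ and $\pi \in \cT^*_x\cF \setminus \{1_{\Gr(\cF)_x}\}$. Applied to $w = u - u'$, the difference of two global lifts of $P$, this yields well-definedness. The strategy is to track the full symbol $A$ of $w$ (Proposition \ref{prop:const pseudo from symb}) through the Debord-Skandalis $\R^\times_+$-action, and then to exploit the definition of $\cT^*\cF$ as a set of limits (Definition \ref{dfn:top char set}).

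Since $\hat\alpha_\lambda$ fixes the $\xi$-direction on $T^*M \times \R^\times_+$ (Section \ref{sec:Debord-Skandalis}), the $k$-homogeneity \eqref{eqn:A_homog_symbol} reduces to the identity $A(\xi, x, t) = t^k A(\xi, x, 1)$ for all $t > 0$ and $(\xi, x) \in T^*M$. The hypothesis $\ev_{1*}(w) \in C^\infty_c$ forces $A(\cdot, x, 1)$ to be of classical order $-\infty$ uniformly in $x$: near the diagonal, $\ev_1$ is a local diffeomorphism, and smoothness of the pushforward of the oscillatory integral representation of $w|_{t=1}$ from Proposition \ref{prop:const pseudo from symb} is equivalent by standard Fourier analysis to $A(\cdot, x, 1)$ decaying faster than any polynomial. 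Now given $(\eta, x, 0) \in \cT^*_x\cF \setminus \{0\}$, Definition \ref{dfn:top char set} provides a sequence $(\xi_n, x_n, t_n) \in T^*M \times \R^\times_+$ converging to $(\eta, x, 0)$ in $\aF^*$. Unwinding the topology of $\aF^*$ (Section \ref{sec:generalised_distr}.g), one has $t_n \to 0$ and, if $\eta_i \neq 0$ for some graded degree $i$, then $|\xi_n|$ grows classically at least like $t_n^{-i}$. By continuity of $A$ on $\aF^*\setminus\{0\}$, $A(\eta, x, 0) = \lim_n t_n^k A(\xi_n, x_n, 1) = 0$, the super-polynomial decay of $A(\cdot, x_n, 1)$ beating any power of $t_n$. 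Hence $B := A|_{t=0}$, which by Proposition \ref{prop:Taylorgroups} is the full symbol of $w_0 \in \cE^{\prime k}(\mathfrak{g}\times M)$, vanishes on $\cT^*\cF$. Since the coadjoint orbit of $\pi \in \cT^*_x\cF$ is contained in $\cT^*_x\cF$ by Proposition \ref{prop:inv coadjoint orbit}, and $\sigma^k(w_0, x, \pi)$ depends on $B$ only through its restriction to that orbit via Kirillov's orbit method (\cite[Proposition 2.2]{ChrGelGloPol}), we conclude $\sigma^k(w_0, x, \pi) = 0$.

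For the norm estimate \eqref{eqn norm symb}, I would first handle $k = 0$ and $s = 0$. A global lift $u$ of $P$ determines an element $a(P) \in C^*\aF$ from Section \ref{sec:connection_Helffer}, with fibers $a(P)_t = \ev_{t*}(u)$ (bounded uniformly in $t$ by Theorem \ref{thm:bounded}) and $a(P)_{x, 0} = \ev_{x, 0*}(u_0)$, so that $\pi(a(P)_{x, 0}) = \sigma^0(P, x, \pi)$. Then $\|\sigma^0(P, x, \pi)\|_{B(L^2\pi)} \leq \|a(P)\|_{C^*\aF}$; combining this with the isometric $\R^\times_+$-action on $C^*\aF$ (Remark \ref{rem:Action_on_Caf}) and the fact that $a(P)_1$ agrees with $P$ modulo a Schwartz remainder yields $\|a(P)\|_{C^*\aF} \leq C \|P\|_{B(L^2M)}$. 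The general $(k, s)$ case then reduces to this by composing with the parametrix $P_k$ from Proposition \ref{thm:generalised_laplacians} and using the Sobolev scale of Section \ref{sec:Sobolev}. The principal obstacle in this plan is the symbol-calculus step above, namely rigorously extracting ``$A(\cdot, x, 1)$ is of classical order $-\infty$ uniformly in $x$'' from ``$\ev_{1*}(w) \in C^\infty_c$'', because one must work with the non-standard half-density bundle $\Omega^{1/2}_{r,s}$ on $\mathbb{U}$ and carefully identify the local coordinate space $V^*$ with $\gr(\cF)^*$ via $\natural$ so that the smoothness of $A$ across $t = 0$ matches the structure of $\aF^*$.
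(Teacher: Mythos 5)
Your proposal has genuine gaps in both halves.

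\textbf{The homogeneity step is wrong.} In Proposition \ref{prop:const pseudo from symb}, the dilation $\hat\alpha_\lambda$ appearing in \eqref{eqn:A_homog_symbol} is the graded co-dilation on $V^*$ (dual to the weighted scaling $\alpha_\lambda$ on $V$, as in \eqref{eqn:act dual space}), \emph{not} the Debord--Skandalis action on $T^*M\times\R^\times_+$ from Section \ref{sec:Debord-Skandalis}. The full symbol $A$ lives on $V^*\times U\times\R_+$, not on $\aF^*$, and the $V^*$-variable is not a classical cotangent variable. Consequently \eqref{eqn:A_homog_symbol} reads $A(\xi,x,t) = t^k A(\hat\alpha_{t^{-1}}(\xi),x,1)$ rather than $A(\xi,x,t)=t^kA(\xi,x,1)$; the graded rescaling of $\xi$ does not disappear for $t>0$. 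Your limit computation $A(\eta,x,0)=\lim_n t_n^k A(\xi_n,x_n,1)=0$ rests on the wrong identity. Moreover, even after correcting the homogeneity, the relation between continuity of $A$ on $V^*\times U\times\R_+$ (ordinary topology) and convergence of sequences in $\aF^*$ (which is controlled by pairings $\langle\cdot,X\rangle$ with $X\in\aF$) is delicate and would need a careful dictionary between the two, which the proposal does not supply. There is a further obstacle: you claim that $\ev_{1*}(w)\in C^\infty_c$ forces $A(\cdot,x,1)$ to be Schwartz, but $\ev_1:\mathbb{U}_1\to M\times M$ is only a submersion, generally with positive-dimensional fibers when $\dim V>\dim M$ (which is typical even for a minimal graded basis, since $\dim\gr(\cF)_p$ can exceed $\dim M$); pushforward along a submersion can erase singularities, so smoothness of the pushforward does not directly yield Schwartz decay of $A$.

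\textbf{The norm estimate is also flawed.} You assign to a global lift $u\in\cE^{\prime 0}(\mathbb{U})$ an element ``$a(P)\in C^*\aF$'' with fibers $a(P)_t=\ev_{t*}(u)$. But by the definition of $L^\infty\aF$ in Section \ref{subsec:c0R}, the $t>0$ fibers of an element of $C^*\aF$ must lie in $K(L^2M)$, whereas $\ev_{t*}(u)$ is a properly supported order-zero pseudodifferential operator, bounded but not compact. So $a(P)\notin C^*\aF$; $\cQ(u)$ is a \emph{multiplier} of $C^*\aF$ (it belongs to $\Psi^0(\aF)\subseteq M(C^*\aF)$, cf.\ Proposition \ref{prop:BoundedPsi}), not an element, and you cannot feed it directly into \eqref{eqn:mainineqca}. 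The paper sidesteps this precisely by letting $\cQ(u)$ act on an element $b=\theta_i(X)(a)\in C^\infty_c(\aF)$: the resulting $c=w\star b+\cQ(u)b+\sum_{j\geq 0}\cQ(\alpha_{2^j*}v)b$ does lie in $C^*\aF$, and the decaying factor $2^{-ji}$ extracted from $\theta_i(X)$ is exactly what makes the geometric tail converge. This specific device --- replacing $a$ by $\theta_i(X)(a)$ so that the $\R^\times_+$-translates of the smooth error $v=\alpha_{2*}u-u$ sum to a norm-convergent series in $C^*\aF$ --- is the essential ingredient missing from your plan. Without it, neither the membership in $C^*\aF$ nor the applicability of \eqref{eqn:mainineqca} can be justified, and the ``principal obstacle'' you flag at the end of your proposal is not the main difficulty.
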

Like the proof of Theorem \ref{thm:sybmol_welldefined}, the proof of Theorem \ref{thm:principal symbol is well-defined} is based on letting $P$ act on $C^\infty_c(\aF)$ and then invoking \eqref{eqn:mainineqca}. For every $u\in \cE^{\prime k}(\mathbb{U})$, we define a linear map $\cQ(u):C^\infty_c(\aF)\to C^\infty_c(\aF)$ as follows. Let $a\in C^\infty_c(\aF)$. We define $\cQ(u)a\in L^\infty\aF$ by $(\cQ(u)a)_t=\ev_{t*}(u)\star a_t$, that is the composition $$L^2M\xrightarrow{a_t} C^\infty_c(M,\Omega^{1/2})\xrightarrow{\ev_{t*}(u)} C^\infty_c(M,\Omega^{1/2})\subseteq L^2M,$$ and $(\cQ(u)a)_{p,0}$ by $\ev_{p,0*}(u)\star a_{p,0}$.
\begin{prop}If $a\in C^\infty_c(\aF) $, then $\cQ(u)a\in C^\infty_c(\aF)$
\end{prop}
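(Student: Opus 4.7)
The plan is to adapt the proof of Proposition \ref{prop:subalgebra}, bearing in mind that one of the two factors being convolved, namely $u$, is now a distribution rather than a smooth function. By linearity in $a$, it suffices to treat two generating cases separately: (i) $a$ is of first type, and (ii) $a = \cQ(f)$ for some graded basis $(V',\natural',\mathbb{U}',U')$ and $f\in C^\infty_c(\mathbb{U}',\Omega^{1/2}_{r,s})$.

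The first-type case is immediate. If $a\in C^\infty_c(M\times M \times \R_+^\times,\Omega^{1/2}_t)$ then $a_{p,0}=0$, so $(\cQ(u)a)_{p,0}=0$, while for $t>0$ the kernel $(\cQ(u)a)_t=\ev_{t*}(u)\circ a_t$ is the composition of a properly supported pseudodifferential operator with a smoothing operator whose kernel is smooth and compactly supported, hence is itself smoothing with smooth compactly supported kernel, depending smoothly on $t\in \R_+^\times$. Proper $t$-support of the result follows from that of $a$, so $\cQ(u)a$ is again of first type.

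For the second-type case, I will follow the proof of Proposition \ref{prop:subalgebra} essentially verbatim, with $u$ in place of the first smooth factor. By a partition-of-unity argument one reduces to $U=U'$, and then, invoking Proposition \ref{prop:graded Lie basis exist}, fixes a common graded Lie basis $(\mathfrak{g},\natural,\mathbb{U}'',U)$. The extension of Lemma \ref{lem:qsjidfpiujqsipdf} to the distributional factor is free of charge, because its proof only uses pushforward along an equivariant submersion, which is well defined on any distribution transverse to $s$, and this transversality is precisely Definition \ref{dfn:Eprimek}.a. Hence up to modifying $\cQ(u)a$ by a first-type element, one may assume $u\in\cE^{\prime k}(\mathbb{U}'')$ and $f\in C^\infty_c(\mathbb{U}'',\Omega^{1/2}_{r,s})$. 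With $\phi$ the pseudo-group law from Theorem \ref{main_tech_thm}, set
$$
h(Y,X,x,t)=u\bigl(Y,\exp(\natural(\alpha_t(X)))\cdot x,t\bigr)\,f(X,x,t),
$$
which is well defined since $u$ is transverse to $s$ and $f$ is smooth; after shrinking supports via a further partition of unity, $h$ is supported in $\dom(\phi)$, and Theorem \ref{main_tech_thm}.b and c give the identity $\cQ(\phi_*(h))=\cQ(u)\cdot\cQ(f)$.

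The main obstacle is to show that $\phi_*(h)$ is actually \emph{smooth} (not merely a distribution), which is what places $\cQ(u)\cdot a$ in the second-type class. This is the classical fact that convolution of a distribution with a smooth compactly supported function is smooth: for $(Z,x,t)\in\mathbb{U}''$ the fiber $\phi^{-1}(Z,x,t)$ is parametrized smoothly by $X$ via the pseudo-group inverse implicit in Theorem \ref{main_tech_thm} (using parts a and d), so $\phi_*(h)(Z,x,t)$ is obtained by pairing the distribution $u$ against the smooth compactly supported test function $X\mapsto f(X,x,t)$ transported through this smooth parametrization, which depends smoothly on $(Z,x,t)$. Compact support of $\phi_*(h)$ follows from that of $f$ together with Definition \ref{dfn:Eprimek}.c for $u$. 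Thus $\phi_*(h)\in C^\infty_c(\mathbb{U}'',\Omega^{1/2}_{r,s})$ and $\cQ(u)\cdot a$ is of second type, completing the proof.
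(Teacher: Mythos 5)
Your argument is correct in outline but takes a genuinely different route from the paper at the key step. The paper disposes of the second-type case by recycling Proposition \ref{prop:composition}: it views the smooth factor $f$ as an element of $\cE^{\prime l}(\mathbb{U})$ for \emph{every} $l$, so the element $w=\phi_*(u\star f)$ produced there lies in $\cE^{\prime k+l}(\mathbb{U})$ for every $l$; since membership in two different spaces $\cE^{\prime m}$ and $\cE^{\prime m'}$ forces $(\lambda^{m}-\lambda^{m'})w$ to be smooth, $w$ is smooth and compactly supported, and $\cQ(w)=\cQ(u)a$ is of second type. (Note also that in the section where this proposition lives, $M$ is compact and $u$ already lives on the fixed global graded Lie basis, so only $a$ needs to be moved via Lemma \ref{lem:qsjidfpiujqsipdf}; your extra reduction of $u$ to a common basis is harmless but unnecessary.) You instead argue the smoothness of $\phi_*(h)$ directly, as a ``convolution of a distribution with a test function''. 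This is viable, but it is the one loose point in your write-up: to pair $u$ against a test function you should parametrize the fiber $\phi^{-1}(Z,x,t)$ by the variable $Y$ in which $u$ is distributional (with $X=X(Y,Z,x,t)$ determined by the pseudo-group inverse), not by $X$; and you must account for the fact that the fiber of $u$ being paired, namely $u$ restricted over $(\exp(\natural(\alpha_t(X)))\cdot x,t)$, itself moves with the integration variable --- this is exactly where transversality to $s$ (Definition \ref{dfn:Eprimek}.a) enters, or equivalently the wavefront-set computation $WF(h)\cap\ker(d\phi)^\perp=\emptyset$ already carried out for $\kappa_1,\kappa_2$ in the proof of Proposition \ref{prop:composition}. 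In short, the paper's route buys economy: the analytic work is done once in Proposition \ref{prop:composition} and the rest is an algebraic homogeneity trick; your route is more self-contained and makes the smoothing mechanism visible, at the cost of having to redo that transversality argument with full care.
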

\begin{proof}
If $a$ is an element of first type, then it is clear that $\cQ(u)a$ is also an element of first type. Therefore by Lemma \ref{lem:qsjidfpiujqsipdf}, we can suppose that $a=\cQ(f)$ for some $f\in C^\infty_c(\mathbb{U},\Omega^{1/2}_{r,s})$. One can then treat $f$ as an element of $\bigcap_{k\in \Z}\cE^{\prime k}(\mathbb{U})$. The $w$ constructed in the proof of Proposition \ref{prop:composition} belongs to $\bigcap_{k\in \Z}\cE^{\prime k}(\mathbb{U})$, hence it is smooth. The proposition follows from the equality $\cQ(w)=\cQ(u)a$.
\end{proof}
We remark that if $u\in C^\infty_c(\mathbb{U},\Omega^{1/2}_{r,s})$, then $\cQ(u)a$ defined above is equal to the product of $\cQ(u)\in C^\infty_c(\aF)$ defined in Section \ref{subsec:c0R} and $a$.
\begin{proof}[Proof of Theorem \ref{thm:principal symbol is well-defined}]
By replacing $P$ with $P_{-s}\circ P\circ P_{-s-k}$ where $P_{\bullet}$ are the operators from Proposition \ref{thm:generalised_laplacians}, we can suppose $s=k=0$. It suffices to prove \eqref{eqn norm symb} with $k=s=0$ and $C=1$. By also replacing $u$ by a $u$ plus a smooth function, we can suppose that $w=P-\ev_{1*}(u)\in C^\infty(M\times M,\Omega^{1/2})$ vanishes on a neighbourhood of the diagonal. Let $v=\alpha_{2*}u-u\in C^\infty_c(\mathbb{U},\Omega^{1/2}_{r,s})$. Hence \begin{align}\label{eqn:qjisdfjimqsjdf}
\alpha_{2^k*}u-u=\sum_{j=0}^{k-1}\alpha_{2^j*}v.
\end{align}
Let $a\in C^\infty_c(\aF)$, $X\in \cF^i$ for $i$ be fixed, $b=\theta_i(X)(a)\in C^\infty_c(\aF)$ where $\theta_i$ is the map from Lemma \ref{lem:proof_sqjdfmokj}. In the proof of the following lemma, we will use the $\R_+^\times$ action on $C^*\aF$, see Remark \ref{rem:Action_on_Caf}.\begin{lemma} The sum $\sum_{j=0}^{\infty}\cQ(\alpha_{2^j*}v)b$ converges in $C^*\aF$. \end{lemma}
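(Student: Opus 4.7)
The plan is to exploit $\R^\times_+$-equivariance and the fact that $\theta_i(X)$ has homogeneity weight $i$ under the Debord--Skandalis action in order to obtain geometric decay in $j$ for each summand. Recall from Remark~\ref{rem:Action_on_Caf} that $\alpha_{\lambda*}$ is an isometric $*$-automorphism of $C^*\aF$ satisfying $\cQ(\alpha_{\lambda*}v)=\alpha_{\lambda*}\cQ(v)$. Writing $\cQ(\alpha_{2^j*}v)\cdot b=\alpha_{2^j*}\bigl(\cQ(v)\cdot\alpha_{2^{-j}*}b\bigr)$ and using isometry,
\[
\|\cQ(\alpha_{2^j*}v)\cdot b\|_{C^*\aF}=\|\cQ(v)\cdot\alpha_{2^{-j}*}b\|_{C^*\aF}.
\]

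Next I would verify the homogeneity identity $\alpha_{\lambda*}\theta_i(X)(c)=\lambda^i\theta_i(X)(\alpha_{\lambda*}c)$ for $c\in C^\infty_c(\aF)$. On $t$-fibres it is immediate from $\alpha_{\lambda*}(t^if)=\lambda^it^i\alpha_{\lambda*}f$ and \eqref{eqn:theta}. On the $(p,0)$-fibre it follows because $\alpha_\lambda$ is a Lie group automorphism of $\Gr(\cF)_p$ scaling $\gr(\cF)^i_p$ by $\lambda^i$, so $(\alpha_\lambda)_*\tilde X_p=\lambda^i\tilde X_p$ as right-invariant vector fields. Applied to $b=\theta_i(X)(a)$ with $\lambda=2^{-j}$ this gives $\alpha_{2^{-j}*}b=2^{-ij}\theta_i(X)(\alpha_{2^{-j}*}a)$, hence
\[
\|\cQ(\alpha_{2^j*}v)\cdot b\|_{C^*\aF}=2^{-ij}\,\|\cQ(v)\cdot\theta_i(X)(\alpha_{2^{-j}*}a)\|_{C^*\aF}.
\]

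The heart of the proof is then the fibrewise estimate $\|\cQ(v)\cdot\theta_i(X)(\tilde a)\|_{C^*\aF}\leq C\|\tilde a\|_{C^*\aF}$ uniformly in $\tilde a\in C^\infty_c(\aF)$, with $C$ depending only on $v$ and $X$. On the $t$-fibre, $(\cQ(v)\cdot\theta_i(X)(\tilde a))_t=\cQ(v)_t\circ t^iL_X\circ\tilde a_t$, and using that $L_X$ is skew-adjoint on $L^2M$,
\[
\|\cQ(v)_t\circ t^iL_X\|_{B(L^2M)}=\|t^iL_X\circ\cQ(v)_t^*\|_{B(L^2M)}=\|\theta_i(X)(\cQ(v)^*)_t\|_{B(L^2M)},
\]
which is bounded by $\|\theta_i(X)(\cQ(v)^*)\|_{C^*\aF}$; this last quantity is finite since $\cQ(v)^*\in C^\infty_c(\aF)$ (via Theorem~\ref{thm:inverse_bisubmersion}) and Lemma~\ref{lem:proof_sqjdfmokj} guarantees $\theta_i(X)(\cQ(v)^*)\in C^\infty_c(\aF)\subseteq C^*\aF$. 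On the $(p,0)$-fibre, $(\cQ(v)\cdot\theta_i(X)(\tilde a))_{p,0}=\cQ(v)_{p,0}\star L_{\tilde X_p}(\tilde a_{p,0})$, and the standard unimodular-group identity $f\star(L_{\tilde X_p}g)=(L_{(\tilde X_p)^L}f)\star g$ (where $(\tilde X_p)^L$ is the left-invariant vector field agreeing with $\tilde X_p$ at $e$) rewrites it as $L_{(\tilde X_p)^L}(\cQ(v)_{p,0})\star\tilde a_{p,0}$, bounded by $\|L_{(\tilde X_p)^L}\cQ(v)_{p,0}\|_{C^*\Gr(\cF)_p}\|\tilde a_{p,0}\|$; the first factor is uniformly bounded in $p$ because $v$ is smooth of compact support.

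Combining the two displays yields $\|\cQ(\alpha_{2^j*}v)\cdot b\|_{C^*\aF}\leq 2^{-ij}C\|a\|_{C^*\aF}$, and since $X\in\cF^i$ with $i\geq 1$ the geometric series $\sum_j 2^{-ij}$ is summable, so $\sum_{j=0}^\infty\cQ(\alpha_{2^j*}v)\cdot b$ converges absolutely in $C^*\aF$. I expect the main technical obstacle to lie in the careful bookkeeping of left versus right multipliers (needed to move $\theta_i(X)$ across $\cQ(v)$ via skew-adjointness of $L_X$ and the unimodularity identity for $\Gr(\cF)_p$), and in establishing the uniformity in $p$ of the $(p,0)$-fibre bound, which is precisely why the argument requires $v$ to be smooth of compact support rather than merely a distribution in $\cE^{\prime 0}(\mathbb{U})$.
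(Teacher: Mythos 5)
Your argument is correct and is essentially the paper's proof, carried out fibrewise rather than through the abstract right-multiplier formalism. The paper encapsulates the step ``move $\theta_i(X)$ off $a$ and onto $\cQ(v)$'' by introducing the right-multiplier $\theta_i^*(X)$ (defined so that $c\,\theta_i(X)(c')=\theta_i^*(X)(c)\,c'$) and noting that $\theta_i^*(X)(\cQ(v))\in C^\infty_c(\aF)$; you accomplish the same thing by transposing on each $t$-fibre via skew-adjointness of $L_X$ and on each $(p,0)$-fibre via the right-invariant/left-invariant convolution identity. The remaining ingredients --- isometry of $\alpha_{\lambda*}$, the homogeneity $\alpha_{\lambda*}\theta_i(X)=\lambda^i\theta_i(X)\alpha_{\lambda*}$, and summability of $\sum_j 2^{-ij}$ since $X\in\cF^i$ with $i\geq 1$ --- are identical.
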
\begin{proof}
 To see this, one has \begin{align*}
\sum_{j=n}^{m}\cQ(\alpha_{2^j*}v)b=\sum_{j=n}^{m}\alpha_{2^j*}(\cQ(v))b=\sum_{j=n}^{m}2^{-ji}\alpha_{2^j*}\Big(\cQ(v)\theta_i(X)(\alpha_{2^{-j}*}(a))\Big).
\end{align*}

The map $\theta_i(X)$ is an unbounded multiplier of $C^*\aF$, its adjoint denoted $\theta_i^*(X)$ is defined like in \eqref{eqn:theta} as follows. If $c\in L^\infty_\aF$, then $$ (\theta_i^*(X)c)_t=-t^iL_{X} \circ c_t,\quad (\theta_i^*(X)c)_{p,0}=L_{\tilde{X}^L_p}(c_{p,0}),\quad c\in C^\infty_c(\aF),$$ where $\tilde{X}^L_p$ is the left invariant vector field associated to $[X]_p\in \gr(\cF)_p$. One can check that $\theta_i^*(X)(C^\infty_c(\aF))\subseteq C^\infty_c(\aF)$ and that if $c,c'\in C^\infty_c(\aF)$, then $c\theta_i(X)(c')=\theta_i^*(X)(c)c'\in C^\infty_c(\aF)$. Hence $$\sum_{j=n}^{m}\cQ(\alpha_{2^j*}v)b=\sum_{j=n}^{m}2^{-ji}\alpha_{2^j*}\Big(\theta_i^*(X)(\cQ(v))\Big)a.$$Therefore
 \begin{equation*}
   \norm{\sum_{j=n}^{m}\cQ(\alpha_{2^j*}v)b}_{C^*\aF}\leq \norm{a}_{C^*\aF}\norm{\theta_i^*(X)(\cQ(v))}_{C^*\aF}\sum_{j=n}^m2^{-ji}.\qedhere
\end{equation*}\end{proof}
We finally define $w\star b\in C^\infty_c(\aF)$ by $$(w\star b)_t=w\star b_t,\quad (w\star b)_{p,0}=0.$$ Notice that regardless of the type of $b$, $w\star b$ is always an element of first type because $w$ vanishes on a neighbourhood of the diagonal. Let \begin{equation*}\label{eqn:proof symb princip}
 c=w\star b+\cQ(u)b+\sum_{j=0}^{\infty}\cQ(\alpha_{2^j*}v)b\in C^*\aF.
\end{equation*}
Let $t\in \R_+^\times$. Since $v$ is compactly supported, $c_t$ is actually a finite sum and by \eqref{eqn:qjisdfjimqsjdf}, $c_t=P\star b_t$. On the other hand if $p\in M,\pi \in \cT^*_p\cF\setminus\{1_{\Gr(\cF)_p}\}$, then $\pi(c_{p,0})=\sigma^0(u,p,\pi)(\pi(b_{p,0}))$. To see this, notice that $\sigma^0(u,p,\pi)$ was defined using the full symbol of $u$ which is constructed by an identical formula to the sum used in the definition of $c$, see the proof of Proposition \ref{prop:const pseudo from symb}. We remark that since $b=\theta_i(X)(a)$, $\pi(c_{p,0})=\pi(b_{p,0})=0$ if $\pi$ is the trivial representation. By \eqref{eqn:mainineqca}, it follows that \begin{align*}
\sup_{p\in M,\pi \in \cT^*_p\cF\setminus\{1_{\Gr(\cF)_p}\}}\norm{\sigma^{0}(u,p,\pi)(\pi(b_{p,0}))}_{L^2\pi}&=\sup_{p\in M,\pi \in \cT^*_p\cF}\norm{\pi(c_{p,0})}_{L^2\pi}\\&=\limsup_{t\to 0^+}\norm{c_t}_{K(L^2M)}\\&\leq \norm{P}_{B(L^2M)}\limsup_{t\to 0^+}\norm{b_t}_{K(L^2M)}\\&=\norm{P}\sup_{p\in M,\pi \in \cT^*_p\cF\setminus\{\hat{1}_{\Gr(\cF)_p}\}}\norm{b_{p,0}}_{L^2\pi}.
\end{align*}
The result then follows from \cite[Proposition 3.1]{ChrGelGloPol}.
\end{proof}



From \eqref{eqn:algebra_prop_principal_symbols_at_0} and Proposition \ref{prop:symbextSob}, we deduce the following.
\begin{prop}\label{prop:prop of symb}
Let $k,l\in \C$, $P\in \Psi^k(\cF^\bullet),Q\in \Psi^l(\cF^\bullet)$, $x\in M$ and $\pi\in \cT^*\cF_x\setminus\{1_{\Gr(\cF)}\}$.\begin{enumerate}
\item $\sigma^k(P,x,\pi)\sigma^l(Q,x,\pi)= \sigma^{k+l}(P\star Q,x,\pi)$
\item $\sigma^{\bar{k}}(P^*,x,\pi)\subseteq \sigma^k(P,x,\pi)^*$, where the inclusion as unbounded operators.
\item for every $s\in \R$, $\sigma^k(P,x,\pi)$ extends to a bounded operator $H^{s+\Re(k)}(\pi)\to H^{s}(\pi)$.
\end{enumerate} 
\end{prop}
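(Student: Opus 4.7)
The plan is to reduce all three statements to their counterparts at the level of the graded nilpotent Lie group $\mathfrak{g}$, which were established in Section \ref{sec:dis_groups}. Since the principal symbol $\sigma^k(P,x,\pi)$ is, by Theorem \ref{thm:principal symbol is well-defined}, independent of the choice of global lift, we have the freedom to choose lifts that are particularly well-adapted to the operation we are computing.

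For parts (a) and (b), I would fix a global graded Lie basis $(\lie{g},\natural,\mathbb{U},M)$ and pick global lifts $u \in \cE^{\prime k}(\mathbb{U})$ of $P$ and $v \in \cE^{\prime l}(\mathbb{U})$ of $Q$. By Proposition \ref{rmk:symbol_convolution}, we may select a lift $w \in \cE^{\prime k+l}(\mathbb{U})$ of $P\star Q$ with $w_0 = u_0 \star v_0$ and a lift $u'$ of $P^*$ with $u'_0 = u_0^*$. Then by the definition \eqref{eqn:dfn principal symbol} of the principal symbol together with the group-level identities \eqref{eqn:algebra_prop_principal_symbols_at_0}, we get
\begin{align*}
\sigma^{k+l}(P\star Q,x,\pi) &= \sigma^{k+l}(w_0,x,\pi) = \sigma^{k+l}(u_0 \star v_0,x,\pi) \\
&= \sigma^k(u_0,x,\pi)\sigma^l(v_0,x,\pi) = \sigma^k(P,x,\pi)\sigma^l(Q,x,\pi),
\end{align*}
and similarly $\sigma^{\bar k}(P^*,x,\pi) = \sigma^{\bar k}(u'_0,x,\pi) = \sigma^{\bar k}(u_0^*,x,\pi) \subseteq \sigma^k(u_0,x,\pi)^* = \sigma^k(P,x,\pi)^*$. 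Here it is essential that well-definedness (Theorem \ref{thm:principal symbol is well-defined}) has already been established, so the specific choice of lift provided by Proposition \ref{rmk:symbol_convolution} produces the correct symbol.

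Part (c) is then immediate: taking any global lift $u$ of $P$, we have $\sigma^k(P,x,\pi) = \sigma^k(u_0,x,\pi)$ with $u_0 \in \cE^{\prime k}(\mathfrak{g}\times M)$, and Proposition \ref{prop:symbextSob} states precisely that such a distribution extends to a bounded operator $H^{s+\Re(k)}(\pi) \to H^s(\pi)$.

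There is no substantive obstacle here: all the difficulty has already been absorbed into Theorem \ref{thm:principal symbol is well-defined} (well-definedness of the symbol independently of the lift) and into Proposition \ref{rmk:symbol_convolution} (the compatibility of the $t=0$ restrictions under the bisubmersion pushforward used to build the composition and adjoint lifts). The proof is a short application of these results together with the algebra identities already proved on the osculating group.
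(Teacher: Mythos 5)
Your proof is correct and is essentially the paper's own argument: the paper deduces the proposition directly from the group-level identities \eqref{eqn:algebra_prop_principal_symbols_at_0} together with Proposition \ref{prop:symbextSob}, with the choice of compatible lifts supplied by Proposition \ref{rmk:symbol_convolution} and well-definedness by Theorem \ref{thm:principal symbol is well-defined}, exactly as you spell out. No gaps.
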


\subsection{Pseudodifferential operators whose principal symbol vanishes}\label{sec:Parametrised}
In Examples \ref{exs:counter_examples}, we gave an example of $D\in \Diff^k_{\cF}(M)$ such that $\sigma^k(D,x,\pi)=0$ for all $x\in M,\pi\in\cT^*_x\cF$ yet $D\notin \Diff^{k-1}_{\cF}(M)$. Nevertheless in this section we will prove the following \begin{thm}\label{thm:vanish_symb}
Let $k\in \C$, $P\in \Psi^k(\cF^\bullet)$. The following are equivalent \begin{enumerate}
\item For all $x\in M$, $\pi\in\cT^*_x\cF\backslash \{0\}$, $\sigma^k(P,x,\pi)=0$.
\item For all (or for some) $s\in \R$ the operator $P:\tilde{H}^{s+\Re(k)}(M)\to\tilde{H}^{s}(M) $ is compact. 
\end{enumerate} 
\end{thm}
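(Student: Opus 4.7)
Composing $P$ with suitable Rockland operators from Proposition~\ref{thm:generalised_laplacians}, whose principal symbols are invertible on $\cT^*\cF\setminus\{0\}$ by Theorem~\ref{Parametrix_symbols}, reduces the problem to the case $k=s=0$, so $P$ is bounded on $L^2M$. Conditions~(a) and~(b) are both preserved under this reduction, by multiplicativity of the principal symbol (Proposition~\ref{prop:prop of symb}) and Fredholmness of the Rockland operators between the relevant Sobolev spaces.

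\paragraph{Direction (b)$\Rightarrow$(a).} Assume $P$ is compact on $L^2M$. For any $a\in C^\infty_c(\aF)$ and $X\in\cF^i$ with $i\geq 1$, put $b=\theta_i(X)(a)\in C^\infty_c(\aF)$. Following the proof of Theorem~\ref{thm:principal symbol is well-defined}, construct $c\in C^*\aF$ with $c_t=Pb_t$ for $t>0$ and $\pi(c_{p,0})=\sigma^0(P,p,\pi)\pi(b_{p,0})$ for $\pi\in\cT^*_p\cF$. Since $b_t=t^iL_X\circ a_t$, both $b_t$ and $b_t^*$ remain uniformly bounded on $L^2M$ yet tend to $0$ in the strong operator topology as $t\to 0^+$, thanks to the $t^i$ factor in front. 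As $P^*$ is compact, the set $P^*(B_1)$ is relatively norm-compact, and a standard compactness argument gives $\sup_{y\in P^*(B_1)}\|b_t^* y\|\to 0$; equivalently, $\|Pb_t\|_{B(L^2M)}\to 0$. Applying~\eqref{eqn:mainineqca} then yields $\sigma^0(P,p,\pi)\pi(b_{p,0})=0$ for every such $b$, $p$, $\pi$. For nontrivial irreducible $\pi\in\cT^*_p\cF$, $\pi(C^*\Gr(\cF)_p)=K(L^2\pi)$ so the vectors $\pi(a_{p,0})v$ form a dense subset of $L^2\pi$ as $a$ varies; moreover, since $C^\infty(\pi)$ is an irreducible $\gr(\cF)_p$-module, $d\pi(\gr(\cF)_p)(C^\infty(\pi))=C^\infty(\pi)$. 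Hence the vectors $\pi(b_{p,0})v=d\pi([X]_p)\pi(a_{p,0})v$ span a dense subspace of $C^\infty(\pi)$, forcing $\sigma^0(P,p,\pi)=0$.

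\paragraph{Direction (a)$\Rightarrow$(b) and main obstacle.} Suppose $\sigma^0(P,p,\pi)=0$ on $\cT^*_p\cF\setminus\{0\}$ and, for contradiction, that $P\notin K(L^2M)$. Then there exists $(v_n)\subset L^2M$ with $\|v_n\|=1$, $v_n\to 0$ weakly, and $\liminf\|Pv_n\|>0$. The plan is a semi-classical limit argument inside the half-continuous $C^*$-algebra $C^*_z\aF$ via the exact sequence~\eqref{eqn:qjjsdfljqsdfjll}: after passing to a subsequence, using compactness of $M$ to localise at some $p\in M$, and rescaling by parameters $t_n\to 0^+$ under the Debord-Skandalis action, one extracts from $(v_n)$ a non-zero vector in a nontrivial representation $\pi$ of $\Gr(\cF)_p$ realising the lower bound $\liminf\|Pv_n\|$. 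By Theorem~\ref{thm:ana=top}, which identifies $\cT^*_p\cF$ as precisely the set of representations arising as such deformation limits, $\pi$ must lie in $\cT^*_p\cF$. The limiting action of $P$ on this vector identifies with $\sigma^0(P,p,\pi)$, which vanishes by hypothesis, contradicting $\liminf\|Pv_n\|>0$. The principal difficulty is to make this semi-classical extraction rigorous and to verify that $Pv_n$ is asymptotically governed by $\sigma^0(P,p,\pi)$ along the extracted subsequence; this is exactly the point at which Theorem~\ref{thm:ana=top} and the structure of the Helffer-Nourrigat cone are indispensable, since, as Example~\ref{exs:counter_examples}.b shows, the symbol cannot even be defined for representations outside $\cT^*\cF$.
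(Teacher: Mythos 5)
Your reduction to $k=s=0$ and your argument for (b)$\Rightarrow$(a) are essentially sound. The latter takes a slightly different route from the paper: you test $P$ against elements $b=\theta_i(X)(a)$, use compactness of $P^*$ together with the strong convergence $b_t^*\to 0$ (which indeed follows from $b_t^*=-t^i a_t^*\circ L_X$ and uniform boundedness, not from the $t^i$ factor alone as you suggest), and then invoke \eqref{eqn:mainineqca} and a density argument in $L^2\pi$. The paper instead deduces this direction in one line from the inequality $\sup_{x,\pi}\norm{\sigma^0(P,x,\pi)}\leq\norm{P}_{B(L^2M)/K(L^2M)}$, obtained from \eqref{eqn norm symb}, invariance of the symbol under adding elements of $\Psi^{-1}(\cF^\bullet)$, and density of smoothing operators in $K(L^2M)$. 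Both work; yours is more hands-on but correct in outline.

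The direction (a)$\Rightarrow$(b), however, is a genuine gap: you do not prove it, you only describe a plan and explicitly flag the key step (``make this semi-classical extraction rigorous and verify that $Pv_n$ is asymptotically governed by $\sigma^0(P,p,\pi)$'') as an unresolved difficulty. This is the substantive content of the theorem. The paper's proof of this direction does not proceed by extracting a representation from a weakly null sequence; it establishes the norm identity $\norm{P}_{B(L^2M)/K(L^2M)}=\sup_{x,\pi\in\cT^*_x\cF}\norm{\sigma^0(P,x,\pi)}$ (Theorem \ref{eqn:qjsokdfsdfjkYU}, exactness of $0\to K(L^2M)\to\overline{\Psi^0(\cF^\bullet)}\to\Sigma^*\cT\cF\to 0$). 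Concretely: for a global lift $u$ of $P$ one has $\norm{P}_{B/K}\leq\norm{u_t}$ for every $t>0$ because $u_1-u_t$ is smoothing; then the $C_0(\R_+)$-algebra $\overline{\Psi^{0}_z(\aF)}$ and the exact sequence \eqref{eqn:B'}, combined with upper semi-continuity of the norm at $t=0$ (\cite[Prop.\ C.10.a]{MR2288954}), the half-continuity of $C^*_z\aF$, and Theorem \ref{thm:ana=top}, yield $\limsup_{t\to 0^+}\norm{u_t}\leq\sup_{x,\,\pi\in\cT^*_x\cF}\norm{\pi(u_0)}$; finally Theorem \ref{thm:shortexactseqsymb} converts the right-hand side into $\sup\norm{\sigma^0(P,x,\pi)}$. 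This $C^*$-algebraic machinery is exactly what your sketched extraction argument would have to reproduce, and without it your proof of (a)$\Rightarrow$(b) is incomplete.
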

By replacing $P$ with $P_{s}\star P\star P_{-s-k}$ where $P_{-s-k}$ and $P_{s}$ are the operators from Proposition \ref{thm:generalised_laplacians}, we can suppose $s=k=0$. The proof of Theorem \ref{thm:vanish_symb} in this case will be based on constructing some $C^*$-algebras which we now do.
\begin{enumerate}
\item  Let $\overline{\Psi^0(\cF^\bullet)}$ denote the closure of $\Psi^0(\cF^\bullet)$ in $B(L^2M)$.
\item Let $L^\infty\cT\cF$ denote the space of all functions $a$ defined for every $x\in M$ and $\pi\in\cT^*_x\cF\setminus\{1_{\Gr(\cF)}\}$ such that $a(x,\pi)\in B(L^2\pi)$ and $$\norm{a}:=\sup_{x\in M,\pi\in \cT^*_x\cF\setminus\{1_{\Gr(\cF)}\}}\norm{a(x,\pi)}<+\infty.$$ Obviously $L^\infty\cT\cF$ is a $C^*$-algebra. We have an obvious map $\Psi^0(\cF^\bullet)\to L^\infty\aF$ which sends $P$ to $\sigma^0(P)$ defined by $\sigma^0(P)(x,\pi)=\sigma^0(P,x,\pi)$. It is well defined by \eqref{eqn norm symb}, and extends to a $C^*$-homomorphism $\overline{\Psi^0(\cF^\bullet)}\to L^\infty\aF$. Its image is denoted by $\Sigma^*\cT\cF$. Theorem \ref{thm:vanish_symb} follows from the following theorem whose proof will be given at the end of this section. \begin{thm}\label{eqn:qjsokdfsdfjkYU} The natural sequence \begin{equation}\label{eqn:short exact sequn pseudo diff}
0\to K(L^2M)\to \overline{\Psi^0(\cF^\bullet)}\xrightarrow{\sigma^0} \Sigma^*\cT\cF\to 0
\end{equation}
is exact, i.e., if $P\in \Psi^0(\cF^\bullet)$, then $$\norm{P}_{B(L^2M)/K(L^2M)}=\sup_{x\in M,\pi\in\cT^*_x\cF\setminus\{1_{\Gr(\cF)}\} }\norm{\sigma^0(P,x,\pi)}.$$
\end{thm}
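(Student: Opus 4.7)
The plan is to combine the square-root argument of Theorem \ref{thm:shortexactseqsymb} with the half-continuity identity \eqref{eqn:mainineqca} for the $C^*$-algebra $C^*_z\aF$. Exactness at the left of the displayed sequence is routine: smoothing kernels $C^\infty_c(M\times M,\Omega^{1/2})$ are norm-dense in $K(L^2M)$ (since $M$ is compact) and lie in $\Psi^{-1}(\cF^\bullet)\subseteq\Psi^0(\cF^\bullet)$. Surjectivity onto $\Sigma^*\cT\cF$ is by the definition of $\Sigma^*\cT\cF$ as the closure of $\sigma^0(\Psi^0(\cF^\bullet))$. The essential content reduces to the norm identity $\|P\|_{B(L^2M)/K(L^2M)}=\|\sigma^0(P)\|_{\Sigma^*\cT\cF}$. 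The easy inequality $\|\sigma^0(P)\|\le\|P\|_{B/K}$ follows because $\sigma^0$ vanishes on $K(L^2M)$ (by density of $\Psi^{-1}(\cF^\bullet)$ and the continuity bound \eqref{eqn norm symb}), so $\|\sigma^0(P)\|=\|\sigma^0(P-K)\|\le\|P-K\|$ for all $K\in K(L^2M)$, and we take the infimum.

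For the reverse direction, fix $\epsilon>0$ and $C=\|\sigma^0(P)\|^2$. I proceed in three stages. \textbf{(i)} The restriction map $r:\Sigma^*(\mathfrak{g}\times M)\twoheadrightarrow\Sigma^*\cT\cF$ induced by the surjections $\natural_x:\mathfrak{g}\to\Gr(\cF)_x$ is a $C^*$-quotient, so I pick a preimage $A\in\Sigma^*(\mathfrak{g}\times M)$ of $\sigma^0(P)$ with $\|A\|^2<C+\epsilon/2$, realise it as $\sigma^0(\tilde u_0)$ for some $\tilde u_0\in\cE^{\prime 0}(\mathfrak{g}\times M)$ via Proposition \ref{prop:Taylorgroups}, extend to $\tilde u\in\cE^{\prime 0}(\mathbb{U})$ via Proposition \ref{prop:lifting_prop}, and set $\tilde P:=\ev_{1*}(\tilde u)$, which by construction satisfies $\sigma^0(\tilde P)=\sigma^0(P)$. \textbf{(ii)} The G\aa{}rding-type square-root argument from the proof of Theorem \ref{thm:bounded}.b, applied to $\tilde P$ with constant $C+\epsilon$, produces $R\in\Psi^0(\cF^\bullet)$ and $Q\in\Psi^{-1}(\cF^\bullet)$ (compact by Theorem \ref{thm:bounded}.a) such that $(C+\epsilon)\Id-\tilde P^*\tilde P-R^*R=Q$, whence $\|\tilde P\|^2_{B/K}\le C+\epsilon$. \textbf{(iii)} Show that $S:=P-\tilde P$, which has $\sigma^0(S)=0$ by construction, is a compact operator. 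Combining (i)--(iii) yields $\|P\|_{B/K}\le\|\tilde P\|_{B/K}+\|S\|_{B/K}\le\sqrt{C+\epsilon}$, and letting $\epsilon\to 0$ concludes.

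The main obstacle is stage \textbf{(iii)}: the passage from a vanishing Helffer-Nourrigat symbol to compactness of the operator. This is genuinely delicate because $\cT^*\cF\subsetneq\hat{\Gr(\cF)}$ in general, so a lift $u_0\in\cE^{\prime 0}(\mathfrak{g}\times M)$ with $\sigma^0(u_0)|_{\cT^*\cF}=0$ need not lie in $C^*(\mathfrak{g}\times M)$; Theorem \ref{thm:shortexactseqsymb} cannot be applied directly, and naive symbol-level modifications of the lift merely bounce the problem back to itself. The bridge must instead be built through the $C^*$-algebra $C^*_z\aF$: the left-multiplier $\Theta(S)$ of $C^*\aF$ constructed in the proof of Theorem \ref{thm:principal symbol is well-defined} descends to a multiplier of $C^*_z\aF$ whose image in $M(C^*\cT\cF)$ is zero, so by the exact sequence \eqref{eqn:qjjsdfljqsdfjll} it restricts to a multiplier of $K(L^2M)\otimes C_0(\R^\times_+)$. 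Testing $\Theta(S)$ against $b\in C^\infty_c(\aF)$ and invoking \eqref{eqn:mainineqca} yields $\limsup_{t\to 0^+}\|Sb_t\|_{K(L^2M)}=0$, and a careful use of the Debord-Skandalis scaling $\alpha_\lambda$---together with the identification $\cT^*\cF=\supp(J_0)$ from Theorem \ref{thm:ana=top} and the half-continuity of $C^*_z\aF$ (Proposition \ref{prop:I limsup})---transports this asymptotic vanishing through every scale and, via the fact that $\Theta$ evaluates to the identity at $t=1$, forces the $S$-action on each fiber $K(L^2M)$ to be compact, hence $S\in K(L^2M)$.
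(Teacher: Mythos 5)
Your easy inequality and stages \textbf{(i)}--\textbf{(ii)} are sound, and the overall architecture is genuinely different from the paper's: you split $P=\tilde P+S$ with $\sigma^0(S)=0$, control $\tilde P$ by a G\aa{}rding square-root argument, and reduce everything to showing that a vanishing Helffer--Nourrigat symbol forces compactness. Note, however, that this last claim is exactly Theorem \ref{thm:vanish_symb} in the case $k=s=0$, which the paper \emph{deduces from} the present theorem; so your stage \textbf{(iii)} must carry the entire weight of the proof, and as written it has a gap. The paper instead computes $\norm{P}_{B(L^2M)/K(L^2M)}$ directly: it uses the homogeneity \eqref{eqn: equiv cond def pseudo diff} of a global lift $u$ under the Debord--Skandalis action to get $\ev_{1*}(u)-\ev_{t*}(u)\in C^\infty_c$, hence $\norm{P}_{B/K}\le \norm{u_t}$ for every $t>0$; then applies the upper-semicontinuity result \cite[Prop.\ C.10.a]{MR2288954} to $\cQ(u)$ viewed as an element of the $C_0(\R_+)$-algebra $\overline{\Psi^{0}_z(\aF)}$ (exact sequence \eqref{eqn:B'}) to bound $\limsup_{t\to0}\norm{u_t}$ by $\sup_\pi\norm{\pi(u_0)}$; and finally trades $\norm{\pi(u_0)}$ for $\norm{\sigma^0(P,x,\pi)}$ via Theorem \ref{thm:shortexactseqsymb}.

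The gap in your stage \textbf{(iii)} is the passage from $\limsup_{t\to0^+}\norm{S\star b_t}=0$ (for $b$ in the dense domain) to $S\in K(L^2M)$. Your proposed mechanism --- the multiplier has zero image in $M(C^*\cT\cF)$, hence maps $C^*_z\aF$ into $K(L^2M)\otimes C_0(\R^\times_+)$, hence its fiberwise norms decay --- does not work: a multiplier can annihilate every element of the algebra asymptotically without its fiber norms tending to zero. Concretely, if $T(t)$ is the rank-one projection onto $e_{\lceil 1/t\rceil}$ for an orthonormal basis $(e_n)$, then $\norm{T(t)k}\to0$ for every fixed $k\in K(L^2M)$ while $\norm{T(t)}\equiv1$; knowing $\norm{Sb_t}\to0$ for each $b$ is therefore strictly weaker than what you need. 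The phrase ``forces the $S$-action on each fiber $K(L^2M)$ to be compact, hence $S\in K(L^2M)$'' is circular: every bounded operator acts on the fiber $K(L^2M)$ by compact operators. What is actually required, and what your sketch omits, are the two inputs above: (a) the identity $S\equiv u_t \pmod{K(L^2M)}$ for all $t>0$, coming from the $\R^\times_+$-homogeneity of the lift, which is the only thing tying the operator at scale $1$ to the limit $t\to0$; and (b) half-continuity at $t=0$ of the \emph{multiplier} field $\overline{\Psi^{0}_z(\aF)}$ itself (the exact sequence \eqref{eqn:B'}), applied to the element $\cQ(u)$, rather than testing $\cQ(u)$ against elements of $C^*_z\aF$. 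With (a) and (b) supplied, your decomposition does yield a valid alternative proof, but at that point stage \textbf{(iii)} has reproduced the paper's argument, and the detour through $\tilde P$ and the G\aa{}rding inequality becomes unnecessary.
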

\item As in the previous two subsections, we fix a global graded Lie basis $(\lie{g},\natural,\mathbb{U},M)$. We define $\Psi^k(\aF)$ to be the subspace of linear maps $C^\infty_c(\aF)\to C^\infty_c(\aF)$ which can be written as the sum of a linear map $b\to ab$ for $a\in C^\infty_c(\aF)$ and a linear map $g\cQ(u)$ for $g\in C^\infty_c(\R_+^\times ),u\in \cE^{\prime k}(\mathbb{U})$.
\begin{prop}\label{prop:BoundedPsi}For any $k,l\in \C$, the following holds.
\begin{enumerate}
\item The space $\Psi^{k}(\aF)$ doesn't depend on the choice of the graded Lie basis $(\mathfrak{g},\natural,\mathbb{U},M)$.
\item If $P\in \Psi^k(\aF)$ and $Q\in \Psi^l(\aF)$, then $PQ\in \Psi^{k+l}(\aF)$
\item If $P\in \Psi^k(\aF)$, then there exists a unique operator $P^*\in \Psi^{\bar{k}}(\aF)$ such that $$P(a)^*b=a^*P^*(b),\quad \forall a,b\in C^\infty_c(\aF).$$
\item If $P\in \Psi^k(\aF)$ and $\Re(k)<0$, then $P$ extends to a compact multiplier $C^*\aF\to C^*\aF$, i.e., there exists a unique element $a\in C^*\aF$ such that $P(b)=ab$ for all $a\in C_c^\infty(\aF)$.
\item If $P\in \Psi^k(\aF)$ and $\Re(k)=0$, then $P$ extends to a bounded operator $C^*\aF\to C^*\aF$.
\end{enumerate}
\end{prop}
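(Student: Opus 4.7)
The proof of Proposition \ref{prop:BoundedPsi} is a parametrised version of the pseudodifferential calculus already developed in Sections \ref{subsection:dfn of pseudo-diff}--\ref{sec:Sobolev}, with $L^2 M$ replaced by the $C^*$-algebra $C^*\aF$ and $\Psi^k(\cF^\bullet)$ replaced by $\Psi^k(\aF)$. My plan is to translate the corresponding results for $\Psi^k(\cF^\bullet)$ essentially verbatim, exploiting that the operators $\cQ(u)$ constructed in the proof of Theorem \ref{thm:principal symbol is well-defined} restrict on each stratum (either $t>0$, or $p\in M$ at $t=0$) to the pseudodifferential actions already analysed in that proof.

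For (a), I would apply Theorem \ref{second_main_tech_thm} to two chosen graded Lie bases and then invoke Lemma \ref{lem:equivariant_lifting} globally (using compactness of $M$, as in the proof of Proposition \ref{prop:change_of_basis}) to produce, given $u\in\cE^{\prime k}(\mathbb{U})$, a matching $u'\in\cE^{\prime k}(\mathbb{U}')$ such that $\cQ(u)-\cQ(u')$ is left multiplication by an element of $C^\infty_c(\aF)$. Part (b) is proved by expanding $(a+g\cQ(u))(a'+g'\cQ(v))$: the principal cross term $gg'\cQ(u)\cQ(v)$ becomes $gg'\cQ(w)$ with $w=\phi_{*}(u\star v)\in\cE^{\prime k+l}(\mathbb{U})$ via Theorem \ref{main_tech_thm}, exactly as in Proposition \ref{prop:composition}, while the mixed terms $a\cdot g'\cQ(v)$ and $g\cQ(u)\cdot a'$ are rewritten on a common graded Lie basis using Lemma \ref{lem:qsjidfpiujqsipdf} and then handled by Schwartz-kernel composition on each stratum; note that the base case $aa'\in C^\infty_c(\aF)$ is already known by Proposition \ref{prop:subalgebra}. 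Part (c) is handled by Theorem \ref{thm:inverse_bisubmersion}, which gives $u^{\sharp}\in\cE^{\prime\bar k}(\mathbb{U})$ satisfying $\ev_{t*}(u^{\sharp})=\ev_{t*}(u)^t$ for $t>0$ and $\ev_{p,0*}(u^{\sharp})=\ev_{p,0*}(u)^{*}$ at $t=0$, so that $(a+g\cQ(u))^{*}=a^{*}+\bar g\,\cQ(\overline{u^{\sharp}})$, mirroring Proposition \ref{prop:adjoint}.

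For (d), given $P=a+g\cQ(u)\in\Psi^k(\aF)$ with $\Re(k)<0$, left multiplication by $a$ is already a compact multiplier since $C^\infty_c(\aF)\subseteq C^*\aF$, so it suffices to show that $g\cQ(u)$ extends to a compact multiplier. Applying (b) and (c) iteratively, I would replace $\cQ(u)$ by $(\cQ(u)^{*}\cQ(u))^n$ for $n$ large enough that $2n\,\Re(k)$ is smaller than $-\sup_{p\in M}\dimh(\Gr(\cF)_p)$; by Proposition \ref{prop:const pseudo from symb} together with Proposition \ref{prop:Taylorgroups}, the full symbol of the resulting lift becomes uniformly integrable in $t\geq 0$, so the kernel on $\mathbb{U}$ is continuous and compactly supported and therefore defines left multiplication by an element of $C^\infty_c(\aF)$. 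A standard $C^*$-functional calculus argument then recovers that $\cQ(u)$ itself is a compact multiplier.

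For (e), reduce to $k=0$ by replacing $P$ with $P^{*}P$, and work fibrewise via Theorem \ref{thm:shortexactseqsymb}. Choose $C>\sup_{x\in M}\|\sigma^0(u_0,x)\|^2$ so that $\sigma^0(C\cdot 1-u_0^{*}\star u_0)$ is strictly positive; Theorem \ref{thm:shortexactseqsymb} then yields $R_0\in\cE^{\prime 0}(\mathfrak{g}\times M)$ with $C\cdot 1-u_0^{*}\star u_0-R_0^{*}\star R_0\in C^\infty_c(\mathfrak{g}\times M,\Omega^1\mathfrak{g})$. Lifting $R_0$ to some $\tilde R\in\cE^{\prime 0}(\mathbb{U})$ using Proposition \ref{prop:lifting_prop}, the operator $\tilde R\in\Psi^0(\aF)$ satisfies $C\,\Id-P^{*}P-\tilde R^{*}\tilde R\in\Psi^{-1}(\aF)$, which is a compact multiplier by (d), giving the bound $\|P\|^2\leq C+\|\tilde R^{*}\tilde R\|$. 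I expect the main obstacle to lie in (e): bridging the fibrewise symbol calculus of Section \ref{sec:dis_groups} with the parametrised setting of $C^*\aF$, in particular verifying that the Rockland-type correction terms from Theorem \ref{thm:shortexactseqsymb} vanish uniformly across the two strata $t>0$ and $t=0$, and that the lifted $\tilde R$ gives a bounded multiplier of $C^*\aF$ (and not merely a bounded operator on each $L^2$-fibre separately).
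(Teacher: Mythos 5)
Your proposal follows exactly the route the paper takes: the published proof of Proposition \ref{prop:BoundedPsi} consists of the remark that (a)--(c) are obtained by repeating the arguments of Propositions \ref{prop:change_of_basis}, \ref{prop:composition} and \ref{prop:adjoint} with $\ev_{1*}$ replaced by $\cQ$, and that (d)--(e) are proved as in Theorem \ref{thm:bounded}, which is precisely what you carry out (via Theorems \ref{second_main_tech_thm}, \ref{main_tech_thm}, \ref{thm:inverse_bisubmersion} and the symbol estimate of Theorem \ref{thm:shortexactseqsymb}). The only cosmetic slip is in (d), where the continuous compactly supported kernel you produce lies in $C^*\aF$ rather than in $C^\infty_c(\aF)$, which is all that is needed for compactness of the multiplier.
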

\begin{proof}
The proof of a,b,c is almost identical to the proof of Propositions \ref{prop:change_of_basis}, \ref{prop:composition} and \ref{prop:adjoint}. The main difference is that one replaces $\ev_{1*}$ with $\cQ$. The proof of d and e is very similar to the proof of Theorem \ref{thm:bounded}, and will be omitted.
\end{proof}
\item By Proposition \ref{prop:BoundedPsi}, elements of $\Psi^{0}(\aF)$ act by bounded multipliers on $C^*\aF$. We denote by $\overline{\Psi^{0}(\aF)}$ the closure of $\Psi^{0}(\aF)$ inside $M(C^*\aF)$, the $C^*$-algebra of bounded multipliers, see \cite{MR1325694}. Since elements of $\Psi^{0}(\aF)$ act by bounded multipliers on $C^*\aF$, they also act on the quotient $C^*_z\aF$. We denote by $\overline{\Psi^{0}_z(\aF)}$ the closure of $\Psi^{0}(\aF)$ inside $M(C^*_z\aF)$. Clearly there is a quotient map $\overline{\Psi^{0}(\aF)}\to \overline{\Psi^{0}_z(\aF)}$.
\item Let $u\in \cE^{\prime 0}(\mathfrak{g}\times M)$. In Proposition \ref{prop:bounded_prophdslijqf}, $u$ is shown to define a bounded multiplier of $C^*(\mathfrak{g}\times M)$. Since (see proof of Proposition \ref{prop:TopJacbson}) the $C^*$-algebra $C^*\Gr(\cF)$ is a quotient of $C^*(\mathfrak{g}\times M)$, it follows that $u$ can be regarded as a multiplier of  $C^*\Gr(\cF)$. The closure of such multipliers in $M(C^*\Gr(\cF))$ will be denoted by $\overline{\cE^{\prime 0}(\Gr(\cF))}$. The same construction can also be applied to $C^*\cT\cF$ which is a quotient of $C^*\Gr(\cF)$. Hence we also obtain $\overline{\cE^{\prime 0}_z(\Gr(\cF))}\subseteq M(C^*\cT\cF)$. It follows from Theorem \ref{thm:shortexactseqsymb} that if $u\in  \cE^{\prime 0}(\mathfrak{g}\times M)$ such that $\sigma^0(u,x,\pi)=0$ for all $x\in M$, $\pi\in \cT^*_x\cF$, then $u$ seen as a bounded multiplier of $C^*\cT\cF$ is compact, i.e., belongs to $C^*\cT\cF$.
\end{enumerate}
\begin{prop}
The $C^*$-algebra $\overline{\Psi^{0}(\aF)}$ and $\overline{\Psi^{0}_z(\aF)}$ are  $C_0(\R_+)$-$C^*$-algebras which lie in the short exact sequence
\begin{align}
 0\to \overline{\Psi^0(\cF^\bullet)}\otimes C_0(\R^\times_+)\to \overline{\Psi^{0}(\aF)}\to \overline{\cE^{\prime 0}(\Gr(\cF))}\to 0\label{eqn:B}\\
  0\to \overline{\Psi^0(\cF^\bullet)}\otimes C_0(\R^\times_+)\to \overline{\Psi^{0}_z(\aF)}\to \overline{\cE^{\prime 0}_z(\Gr(\cF))}\to 0.\label{eqn:B'}
\end{align}
\end{prop}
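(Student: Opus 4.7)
The plan is to establish both exact sequences and the $C_0(\R_+)$-algebra structures in parallel. The $C_0(\R_+)$-action on $\overline{\Psi^0(\aF)}$ is inherited from the central embedding $C_0(\R_+)\hookrightarrow Z(M(C^*\aF))$ coming from the $C_0(\R_+)$-algebra structure of $C^*\aF$. For a generator $P=a+g\cQ(u)$ of $\Psi^0(\aF)$ with $a\in C^\infty_c(\aF)$, $g$ a smooth compactly supported function in $t$, and $u\in \cE^{\prime 0}(\mathbb{U})$, and any $h\in C^\infty_c(\R_+)$, the product $hP=(ha)+(hg)\cQ(u)$ is again of the required form, so $C_0(\R_+)\cdot\Psi^0(\aF)\subseteq\Psi^0(\aF)$; passing to norm limits gives the structure on $\overline{\Psi^0(\aF)}$. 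Nondegeneracy is immediate since $\Psi^0(\aF)\supseteq C^\infty_c(\aF)$ already contains an approximate identity for $C^*\aF$. The same discussion applies verbatim to $\overline{\Psi^0_z(\aF)}$.

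The quotient map for \eqref{eqn:B} is the canonical extension to multiplier algebras of the surjection $C^*\aF\twoheadrightarrow C^*\Gr(\cF)$ from \eqref{eqn:exact_seq_C*-alg}: since the ideal $I:=K(L^2M)\otimes C_0(\R^\times_+)$ is essential in $C^*\aF$, there is an induced $*$-homomorphism $\rho:M(C^*\aF)\to M(C^*\Gr(\cF))$. I will verify that $\rho$ maps $\Psi^0(\aF)$ into $\cE^{\prime 0}(\mathfrak{g}\times M)$ acting on $C^*\Gr(\cF)$ via the homomorphisms $\natural_p$ of Remark \ref{rmk:natural_p}: for $P=a+g\cQ(u)$, the image $\rho(P)$ at a point $p\in M$ is $a_{p,0}+g(0)\ev_{p,0*}(u_0)$, which is convolution by $u_0\in \cE^{\prime 0}(\mathfrak{g}\times M)$ pushed to $\Gr(\cF)_p$ through $\natural_p$, and is therefore an element of $\cE^{\prime 0}(\Gr(\cF))$ as defined in the list preceding the statement. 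Passing to norm closures, $\rho$ descends to a $*$-homomorphism $\sigma_0:\overline{\Psi^0(\aF)}\to \overline{\cE^{\prime 0}(\Gr(\cF))}$.

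Surjectivity of $\sigma_0$ follows from Proposition \ref{prop:lifting_prop}: for any $v\in \cE^{\prime 0}(\mathfrak{g}\times M)$ choose $u\in \cE^{\prime 0}(\mathbb{U})$ with $u_0=v$ and $g\in C^\infty_c(\R_+)$ with $g(0)=1$; then $\sigma_0(g\cQ(u))=v$. Since $\sigma_0$ is a $*$-homomorphism of $C^*$-algebras its image is closed, hence all of $\overline{\cE^{\prime 0}(\Gr(\cF))}$. For the kernel, $\sigma_0(P)=0$ means $P\cdot C^*\aF\subseteq I$, so $P$ restricts to a multiplier of $I$. For a generator $P=a+g\cQ(u)$, the fiber $P_t=a_t+g(t)\ev_{t*}(u)$ lies in $\Psi^0(\cF^\bullet)$ by Corollary \ref{cor:global_basis_1}, and the family $\{P_t\}_{t>0}$ is norm-continuous, compactly supported in $t\in\R_+$, and when $\sigma_0(P)=0$ also vanishes in norm as $t\to 0$ by \eqref{eqn:mainineqca} applied to $P^*P$. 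The resulting family lies in $C_0(\R^\times_+,\overline{\Psi^0(\cF^\bullet)})\simeq \overline{\Psi^0(\cF^\bullet)}\otimes C_0(\R^\times_+)$; conversely, any such family is a norm-limit of generators in the kernel, giving equality.

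The second sequence \eqref{eqn:B'} is obtained by repeating the argument modulo the ideal $J\subseteq C^*\aF$ (equivalently $J_0\subseteq C^*\Gr(\cF)$): the ideal $I$ injects unchanged into $C^*_z\aF$, the quotient becomes $C^*\cT\cF=C^*\Gr(\cF)/J_0$, and the image of $\sigma_0$ becomes $\overline{\cE^{\prime 0}_z(\Gr(\cF))}\subseteq M(C^*\cT\cF)$ by construction. The hard part throughout is the fiberwise vanishing $\|P_t\|\to 0$ as $t\to 0^+$ required for the kernel description: multiplier norms do not automatically control fiberwise norms, but the inequality \eqref{eqn:mainineqca}, applied after replacing $P$ by $P^*P$ and comparing with $\lambda Q^*Q$ for a suitable Rockland operator $Q$ as in the proof of Theorem \ref{thm:bounded}, converts the condition $\sigma_0(P)=0$ (equivalently, vanishing on the Helffer-Nourrigat cone for the $z$-version) into the needed uniform fiberwise decay.
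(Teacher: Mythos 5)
Your route is genuinely different from the paper's: instead of constructing an inverse to the induced map on the quotient, you try to compute the kernel of $\sigma_0$ directly. The problem is that your kernel argument only covers elements of the \emph{dense subalgebra} $\Psi^0(\aF)$ whose symbol is exactly zero. A general element of $\ker\sigma_0\subseteq\overline{\Psi^{0}(\aF)}$ is a norm limit of generators $P_n$ with $\sigma_0(P_n)\to 0$ but $\sigma_0(P_n)\neq 0$, and the inclusion $(\ker\sigma_0)\cap\Psi^0(\aF)\subseteq \overline{\Psi^0(\cF^\bullet)}\otimes C_0(\R^\times_+)$ does not by itself yield $\ker\sigma_0\subseteq\overline{\Psi^0(\cF^\bullet)}\otimes C_0(\R^\times_+)$. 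What is actually needed is the quantitative estimate
$\norm{P}_{\overline{\Psi^{0}(\aF)}/(\overline{\Psi^0(\cF^\bullet)}\otimes C_0(\R^\times_+))}\le \norm{\sigma_0(P)}$
on the dense subalgebra, i.e.\ that the induced map on the quotient is isometric. The paper gets this by building a bounded inverse $\psi$: lift $u_0\in\cE^{\prime 0}(\mathfrak g\times M)$ to $u\in\cE^{\prime 0}(\mathbb{U})$ via Proposition \ref{prop:lifting_prop}, check two lifts differ by an element of the ideal, prove boundedness of $\psi$ by the comparison $C\cdot 1-u_0^*\star u_0-v_0^*\star v_0\in C^\infty_c$ from the proof of Theorem \ref{thm:bounded}.b, and then — a step your proposal omits entirely — descend $\psi$ through the quotient $\overline{\cE^{\prime 0}(\mathfrak g\times M)}\to\overline{\cE^{\prime 0}(\Gr(\cF))}$ using $C(M)$-linearity and minimal graded bases. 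Your closing paragraph names the right comparison trick but does not execute it, and as stated it only yields the qualitative implication ($\sigma_0(P)=0$ forces fiberwise decay), not the norm inequality that the exactness requires.

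Two secondary errors. First, you justify the existence of $\rho:M(C^*\aF)\to M(C^*\Gr(\cF))$ by essentiality of $I=K(L^2M)\otimes C_0(\R^\times_+)$ in $C^*\aF$; but $I$ is \emph{not} essential in general, since the ideal $J=\{a:a_t=0\ \forall t>0\}$ can be nonzero and satisfies $J\cap I=0$ — this failure is exactly why the paper introduces $C^*_z\aF$. (The map still exists: any surjection of $C^*$-algebras induces a map on multipliers, or one simply lets $\overline{\Psi^{0}(\aF)}$ act on the quotient as the paper does.) Second, for \eqref{eqn:B} the correct control at $t=0$ is $\limsup_{t\to 0^+}\norm{a_t}\le\sup_p\norm{a_{p,0}}_{C^*\Gr(\cF)_p}$ from \cite[Proposition C.10.a]{MR2288954}; the sharper identity \eqref{eqn:mainineqca}, which involves the Helffer--Nourrigat cone, is the one adapted to $C^*_z\aF$ and hence to \eqref{eqn:B'}. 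Using \eqref{eqn:mainineqca} for \eqref{eqn:B} conflates the two sequences.
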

\begin{proof}
The $C^*$-algebra $\overline{\Psi^{0}(\aF)}$ is a $C_0(\R_+)$-$C^*$-algebra by construction. It is clear that the non-zero fibers are $\overline{\Psi^0(\cF^\bullet)}$. It is also clear that the morphism $C^*\aF \to C^*\Gr(\cF)$ from \eqref{eqn:exact_seq_C*-alg} gives a $C^*$-homomorphism 
$$\overline{\Psi^{0}(\aF)} \to M(C^*\Gr(\cF)),$$ 
whose image is $\overline{\cE^{\prime 0}(\Gr(\cF))}$. Furthermore this map vanishes on $\overline{\Psi^0(\cF^\bullet)}\otimes C_0(\R^\times_+)$. 
Let $$\phi:\frac{\overline{\Psi^{0}(\aF)}}{\overline{\Psi^0(\cF^\bullet)}\otimes C_0(\R^\times_+)}\to \overline{\cE^{\prime 0}(\Gr(\cF))}$$ be the resulting $C^*$-morphism. We will show that it is an isomorphism by constructing an inverse. 

Let $u_0\in \cE^{\prime 0}(\mathfrak{g}\times M)$. We can extend $u_0$ to an element $u$ of $\cE^{\prime 0}(\mathbb{U})$ and hence to an element $\cQ(u)$ of $\overline{\Psi^{0}(\aF)}$. Two such extensions differ by an element of $\overline{\Psi^0(\cF^\bullet)}\otimes C_0(\R^\times_+)$ so get a well-defined map 
$$\psi:\cE^{\prime 0}(\mathfrak{g}\times M)\to \frac{\overline{\Psi^{0}(\aF)}}{\overline{\Psi^0(\cF^\bullet)}\otimes C_0(\R^\times_+)}.$$ 
It is a consequence of the proof of Theorem \ref{thm:bounded}.b that the norm of the image of $u$ in $$\frac{\overline{\Psi^{0}(\aF)}}{\overline{\Psi^0(\cF^\bullet)}\otimes C_0(\R^\times_+)}$$ depends only on its restriction to $t=0$, and it follows that $\psi$ is a bounded map, so extends to the closure $\overline{\cE^{\prime 0}(\mathfrak{g}\times M)}$. We claim that $\psi$ descends to a map 
 $$\psi:\overline{\cE^{\prime 0}(\Gr(\cF))}\to \frac{\overline{\Psi^{0}(\aF)}}{\overline{\Psi^0(\cF^\bullet)}\otimes C_0(\R^\times_+)}.$$
 To see this, let us first observe that the $C^*$-algebra $$\frac{\overline{\Psi^{0}(\aF)}}{\overline{\Psi^0(\cF^\bullet)}\otimes C_0(\R^\times_+)}$$ is fibered over $M$. 
This follows from the proof of Proposition \ref{prop:properties_of_pseudo_diff}.b. Moreover, the map $\psi$ is clearly $C(M)$-linear. Hence one only needs to show that the map $\psi_x$ between fibers descends to a map 
 $$\overline{\cE^{\prime 0}(\Gr(\cF)_x)}\to \left(\frac{\overline{\Psi^{0}(\aF)}}{\overline{\Psi^0(\cF^\bullet)}\otimes C_0(\R^\times_+)}\right)_x$$
 for every $x\in M$. This can be easily achieved, since we can define an analogue of the map $\psi$ using a minimal graded basis at $x$, and the resulting map provides the factorization of $\psi_x$ through the quotient $\cE^{\prime 0}(\mathfrak{g}\times \{x\}) \to \cE^{\prime 0}(\Gr(\cF)_x)$.  
 
 Finally the fact that $\psi$ and $\phi$ are inverses of each other can be readily checked on the image of elements $u\in \cE^{\prime 0}(\mathfrak{g}\times M)$, which are dense in both sides. This proves that \eqref{eqn:B} is exact. Exactness of \eqref{eqn:B'} follows immediately because all the terms were defined by their action on \eqref{eqn:qjjsdfljqsdfjll} which is a quotient of \eqref{eqn:exact_seq_C*-alg}.
\end{proof}
\begin{proof}[Proof of Theorem \ref{eqn:qjsokdfsdfjkYU}]
By \eqref{eqn norm symb}, we have $$ \sup \big\{\norm{\sigma^0(P,x,\pi)}_{B(L^2\pi)} : x\in M,\ \pi\in \cT^*\cF_x\setminus\{1_{\Gr(\cF)_x}\} \big\}\leq \norm{P}_{B(L^2M)}.
$$Since the left-hand side doesn't change if we replace $P$ with $P+Q$ for $Q\in \Psi^{-1}(\cF^\bullet)$, it follows that $$ \sup \big\{\norm{\sigma^0(P,x,\pi)}_{B(L^2\pi)} : x\in M,\ \pi\in \cT^*\cF_x\setminus\{1_{\Gr(\cF)_x}\} \big\}\leq \norm{P}_{B(L^2M)/K(L^2M)}.
$$ For the other inequality, let $u$ a global lift. Without loss of generality we can suppose that $P=\ev_1(u)$. Since $u_1-u_t\in C^\infty_c(\cF)$, it follows that 
$$\norm{P}_{B(L^2M)/K(L^2M)} 
\leq \norm{u_t}_{B(L^2M)},\quad \forall t>0.$$ 
By  applying \cite[Proposition C.10.a on Page 357]{MR2288954} to $\cQ(u)\in  \overline{\Psi^{0}_z(\aF)}$, we get
$$\limsup_{t\to 0^+} \norm{u_t}_{B(L^2M)} \leq 
\sup \{\norm{\pi(u_0)} : x\in M,\pi \in \cT^*\cF_x \}.$$ Hence $$\norm{P}_{B(L^2M)/K(L^2M)} \leq \sup \{\norm{\pi(u_0)} : x\in M,\pi \in \cT^*\cF_x \}.$$
Since the right-hand side doesn't change if one adds to $P$ an element of the form $\ev_1(v)$ with $v\in \cE^{\prime-1}(\mathfrak{g}\times M)$, it follows from Theorem \ref{thm:shortexactseqsymb} that 
\begin{equation*} 
\norm{P}_{\overline{\Psi^0(\cF^\bullet)}/K(L^2M)}\leq \sup \big\{\norm{\sigma^0(P,x,\pi)} : x\in M,\ \pi\in \cT^*\cF_x\setminus\{1_{\Gr(\cF)_x}\} \big\}.\qedhere 
\end{equation*}
\end{proof}
\subsection{Parametrix and Proof of Theorem \ref{mainthmintro2} and Theorem \ref{mainthmintro3} when $M$ is compact}\label{sec:para order 0}
 We introduce the following larger class of operators.
 \begin{definition}\label{dfn:psi tilde}Let $k\in \C$. We define $\tilde{\Psi}^k(\cF^\bullet)$ to be the set of all linear maps $P:C^\infty(M,\Omega^{1/2})\to C^\infty(M,\Omega^{1/2})$ such that, for every $s\in\R$, $P$ extends to a bounded operator 
$\tilde{H}^{s+\Re(k)}(M) \to \tilde{H}^{s}(M)$ that lies in the closure of $\Psi^k(\cF^\bullet) \subseteq \cL(\tilde{H}^{s+\Re(k)}(M) , \tilde{H}^{s}(M))$.  
\end{definition}

In the terminology of Higson \cite{Higson:local_index}, operators in $\tilde{\Psi^k}(\cF^\bullet)$ are of \emph{analytic order $k$}.
It is clear that $\Psi^k(\cF^\bullet)\subseteq \tilde{\Psi}^k(\cF^\bullet)$ and that for $k,l\in \C$ one has $$\tilde{\Psi}^k(\cF^\bullet)\tilde{\Psi}^l(\cF^\bullet)\subseteq \tilde{\Psi}^{k+l}(\cF^\bullet),\quad \tilde{\Psi}^k(\cF^\bullet)^*\subseteq \tilde{\Psi}^{\bar{k}}(\cF^\bullet).$$By \eqref{eqn norm symb}, if $Q\in \tilde{\Psi}^{k}(\cF^\bullet)$, $s\in \R$, $x\in M$, $\pi \in \cT^*_x\cF$, then $$\sigma^k(Q,x,\pi):H^{s+\Re(k)}(\pi)\to H^{s}(\pi)$$ is well defined. Hence $$\sigma^k(Q,x,\pi):C^\infty(\pi)\to C^\infty(\pi)$$ is well defined.
\begin{thm}\label{Rocklandthm}Let $k\in\C$ and $P\in \Psi^k(\cF^\bullet)$. The following are equivalent:
\begin{enumerate}
\item For all $x\in M$ and $\pi \in \cT^*\cF_x\setminus\{1_{\Gr(\cF)}\}$, $\sigma^k(P,x,\pi):C^\infty(\pi)\to C^\infty(\pi)$ is injective.
\item For all $x\in M$ and $\pi \in \cT^*\cF_x\setminus\{1_{\Gr(\cF)}\}$ and $s\in \R$, the bounded extension $\sigma^k(P,x,\pi):H^{s+\Re(k)}(\pi)\to H^s(\pi)$ is left invertible.
\item For all $s\in \R$ the bounded extension $P:\tilde{H}^{s+\Re(k)}(M)\to \tilde{H}^{s}(M)$ is left invertible modulo compact operators.
\item For some $s\in \R$ the bounded extension $P:\tilde{H}^{s+\Re(k)}(M)\to \tilde{H}^{s}(M)$ is left invertible modulo compact operators.
\item There exists $s\in \R$ such that for any distribution $u$ on $M$, if  $Pu\in \tilde{H}^{s}(M)$ then $u\in \tilde{H}^{s+\Re(k)}(M)$.
\item For all $s\in \R$ and for any distribution $u$ on $M$, if  $Pu\in \tilde{H}^{s}(M)$ then $u\in \tilde{H}^{s+\Re(k)}(M)$.
\item There exists $Q\in \widetilde{\Psi}^{-k}(\cF^\bullet)$ such that $Q\star P-Id\in C^\infty(M\times M,\Omega^{1/2})$.
\item There exists $Q\in \Psi^{-k}(\cF^\bullet)$ such that for all $x\in M$ and $\pi \in \cT^*\cF_x\setminus\{1_{\Gr(\cF)}\}$, $\sigma^{0}(Q\star  P,x,\pi)=\mathrm{Id}$.
\end{enumerate}
Moreover if $k=0$, then the previous statements are also equivalent to the following
\begin{enumerate}  
\setcounter{enumi}{9}
\item The element $\sigma^0(P)\in \Sigma^*\cT\cF$ is left invertible.
\end{enumerate}
If $P$ satisfies any of the above, then we say that $P$ is \textbf{maximally hypoelliptic}.
\end{thm}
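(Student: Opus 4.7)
The plan is to first reduce everything to the case $k=0$ by pre- and post-composing with the elliptic operators $P_k$ from Proposition \ref{thm:generalised_laplacians}. Concretely, $P\in\Psi^k(\cF^\bullet)$ satisfies a particular condition (a)--(j) if and only if $P_s\star P\star P_{-s-k}\in\Psi^0(\cF^\bullet)$ satisfies the analogous condition, thanks to Proposition \ref{prop:prop of symb}, the definition of $\tilde H^s(M)$, and the fact that $P_k$ realises an isomorphism $\tilde H^{s+\Re(k)}\to \tilde H^s$ up to compact perturbation. So the entire proof reduces to proving the equivalences for order $0$ operators, and hence showing invertibility statements between $L^2(M)$ and the Hilbert spaces $H^s(\pi)$.

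With $k=0$, the core of the argument is the short exact sequence of Theorem \ref{eqn:qjsokdfsdfjkYU}:
\[
 0\to K(L^2M)\to \overline{\Psi^0(\cF^\bullet)}\xrightarrow{\sigma^0} \Sigma^*\cT\cF\to 0.
\]
The equivalence (c)$\Leftrightarrow$(d)$\Leftrightarrow$(j) is then immediate: a class in a quotient $C^*$-algebra is left invertible modulo a closed two-sided ideal if and only if its image is left invertible. For (j)$\Leftrightarrow$(a), I would unravel the faithful family of representations defining $\Sigma^*\cT\cF$: by construction, the norm on $\Sigma^*\cT\cF$ is the supremum of $\|\sigma^0(P,x,\pi)\|$ over $(x,\pi)\in\cT^*\cF\setminus\{1\}$, so left invertibility of $\sigma^0(P)$ is equivalent to uniform lower bounds $\|\sigma^0(P,x,\pi)v\|\geq c\|v\|$ on $L^2\pi$ across all such $(x,\pi)$. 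A standard $C^*$-argument using the positive element $\sigma^0(P^*\star P)$ then shows this is equivalent to pointwise injectivity of each $\sigma^0(P,x,\pi)$ on $L^2\pi$; and the group-level parametrix Theorem \ref{Parametrix_symbols}, applied fibrewise to the lift $u_0\in\cE^{\prime 0}(\mathfrak{g}\times M)$ together with the identification of $\cT^*\cF$ as the support of $J_0$ (Theorem \ref{thm:ana=top}), upgrades injectivity on smooth vectors to bounded left invertibility on $L^2\pi$ and also to the parametrix statements (g), (h). This simultaneously gives (a)$\Leftrightarrow$(b) and produces from (a) a candidate for $Q$ satisfying (g), (h).

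For the analytic part I would then prove the chain (c)$\Rightarrow$(f)$\Rightarrow$(e)$\Rightarrow$(a). The first implication is routine: given a left parametrix $Q$ modulo compacts, and any distribution $u$ with $Pu\in\tilde H^s$, apply $Q$ to get $u=Q(Pu)+Ku$ with $K\in K(L^2M)$; after reducing to the scale of Hilbert spaces and using elliptic regularity of the $P_k$'s, one obtains $u\in\tilde H^{s+\Re(k)}$, hence both (f) and (e). The implication (e)$\Rightarrow$(a), equivalently the closed graph/open mapping argument for the Banach space adjoint, would be proved by contradiction: if some $\sigma^k(P,x_0,\pi_0)$ had a nonzero smooth vector $v$ in its kernel, I would use the Debord-Skandalis scaling on $\aF$ together with the description of smooth vectors via Kirillov (Schwartz functions on $\R^{\dim(O)/2}$) to transplant $v$ to a family $u_t$ of test functions on $M$ whose Hörmander Sobolev norms concentrate at prescribed rates, and thereby construct a distribution $u\in\tilde H^{s+\Re(k)-\varepsilon}\setminus \tilde H^{s+\Re(k)}$ with $Pu\in\tilde H^s$, contradicting (e).

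The main obstacle I expect is the last step (e)$\Rightarrow$(a). The $C^*$-algebraic portion is handled cleanly by the exact sequence already established, but producing from a vector in the kernel of the symbol an honest distributional counterexample to regularity requires careful bookkeeping of how smooth vectors on the osculating groups deform to functions on $M$ as $t\to 0^+$ under $\cQ$; the delicacy is that the Helffer-Nourrigat cone, unlike $T^*M$, need not be smooth (cf.\ Examples \ref{exs:charac_set}.d), so the concentration argument must be performed along an arbitrary sequence $(x_n,\xi_n,t_n)$ approximating $(x_0,\pi_0)$ in $\aF^*$ rather than along a smooth curve. Once this concentration lemma is established, it also completes the proof of (a)$\Rightarrow$(g),(h) by turning the fibrewise parametrix of Theorem \ref{Parametrix_symbols} into an asymptotic expansion in $\Psi^{-k}(\cF^\bullet)$ via Proposition \ref{prop:lifting_prop} and the closedness under composition (Proposition \ref{prop:composition}), using the Borel-style summation in $\tilde\Psi^{-k}$.
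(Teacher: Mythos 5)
Your reduction to $k=0$, your use of the exact sequence $0\to K(L^2M)\to \overline{\Psi^0(\cF^\bullet)}\to \Sigma^*\cT\cF\to 0$ for the equivalences among (c), (d), (j), and the routine chain (c)$\Rightarrow$(f)$\Rightarrow$(e) all match the paper's strategy. But there are two places where your sketch has real problems, and one place where you have chosen a far harder route than necessary.

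The first gap is in your claim that (j)$\Leftrightarrow$(a) follows from a ``standard $C^*$-argument using the positive element $\sigma^0(P^*\star P)$.'' Pointwise injectivity of $\sigma^0(P,x,\pi)$ on each $L^2\pi$ for $(x,\pi)$ ranging over a faithful family does \emph{not} imply that $\sigma^0(P^*\star P)$ is invertible in $\Sigma^*\cT\cF$; invertibility requires a \emph{uniform} lower bound, and $0$ can lie in the closure of $\bigcup_{(x,\pi)}\mathrm{spec}\,\sigma^0(P^*\star P,x,\pi)$ even when it lies in none of the individual spectra. Worse, condition (a) only asserts injectivity on the \emph{smooth vectors} $C^\infty(\pi)$, which is even weaker than injectivity on $L^2\pi$. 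Converting injectivity on smooth vectors into a parametrix (hence into $C^*$-algebraic left invertibility) is precisely the content of the Rockland-type Theorem \ref{Parametrix_symbols}; it is a genuine theorem, not soft functional analysis.

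The second gap compounds this: Theorem \ref{Parametrix_symbols} requires the injectivity of $\sigma^0(u_0,x,\pi)$ for \emph{all} $\pi\in\hat{\lie{g}}\setminus\{1_{\lie{g}}\}$, while (a) only controls $\pi$ in the Helffer--Nourrigat cone $\cT^*_x\cF$, which may be a proper subset of $\widehat{\Gr(\cF)}_x\subseteq\hat{\lie{g}}$. You cannot ``apply Theorem \ref{Parametrix_symbols} fibrewise'' without bridging this. The paper does this with Hebisch's construction: there is $w_0\in\cE^{\prime 0}(\lie{g}\times M)$ whose symbol vanishes exactly on $\cT^*\cF$ and is injective off it; forming $P'=\ev_1(w)^*\star \ev_1(w)+P^*\star P$ (where $w$ lifts $w_0$) yields an operator with $\sigma^0(P')=\sigma^0(P^*\star P)$ that \emph{does} satisfy the strong $*$-Rockland condition on all of $\hat{\lie{g}}$, so that Theorem \ref{thm:strong_rock_par} applies. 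This step is essential and entirely absent from your sketch, and citing Theorem \ref{thm:ana=top} (support of $J_0$) does not substitute for it.

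Finally, for (e)$\Rightarrow$(a) you propose a WKB/concentration argument using the Debord--Skandalis scaling, which you correctly identify as the hard point of your plan: it requires transplanting smooth vectors on $\Gr(\cF)_{x_0}$ to quasi-modes on $M$ along sequences approximating $\pi_0$ in $\aF^*$, and handling the non-smoothness of $\cT^*\cF$. The paper avoids all of this. From (e) it passes to (d) by a short Banach-space argument: the graph-norm Hilbert space $H=\{u\in\tilde H^{s-1}:Pu\in\tilde H^s\}$ equals $\tilde H^s$ set-theoretically by hypothesis (e), so the open mapping theorem gives $\|u\|^2_{\tilde H^s}\leq C\|Pu\|^2_{\tilde H^s}+\|u\|^2_{\tilde H^{s-1}}$, and compactness of $\tilde H^s\hookrightarrow\tilde H^{s-1}$ then yields left invertibility modulo compacts. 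I strongly recommend you adopt this route; your concentration argument, even if it could be made to work, would essentially reprove the necessity direction of Helffer--Nourrigat's conjecture by hand, which is exactly the technology the $C^*$-algebraic machinery is designed to sidestep.
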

\begin{proof}
By replacing $P$ with $P_{-k}\star P$ where $P_{-k}$ is the operator in Proposition \ref{thm:generalised_laplacians}, without loss of generality we can suppose $k=0$. We will prove the following cycle \begin{align*}
a\implies h\implies g\implies f\implies e\implies d\implies c\implies j \implies b\implies a
\end{align*}
 \begin{enumerate}
 \item[$a\implies h$.]  Let $P\in \Psi^0(\cF^\bullet)$ which satisfies $a$, $(\mathfrak{g},\natural,\mathbb{U},M)$ a global graded Lie basis. We claim that there exists
$P'\in \Psi^0(\cF^\bullet)$ such that $\sigma^0(P^*\star P)=\sigma^0(P')$ and $P'$ satisfies the strong $*$-Rockland condition. To see this, let $u\in \cE^{\prime 0}(\mathbb{U})$  be a lift of $P$ and $v$ a lift of $P^*\star P$ such that $v_0=u_0^*\star u_0$, which exists by Remark \ref{rmk:symbol_convolution}. In \cite{Rocklandcondt}, Hebisch shows that there exists an element $w_0\in \cE^{\prime 0}(\mathfrak{g}\times M)$ such that \begin{itemize}
\item for every $x\in M$ and $\pi \in \cT^*\cF_x\setminus\{1_{\Gr(\cF)_x}\}$ we have $\sigma^0(w_0,x,\pi)=0$,
\item for every $x\in M$ and $\pi \in \hat{\mathfrak{g}}\setminus \cT^*\cF$, $\sigma^0(w_0,x,\pi)$ is injective.
\end{itemize}
The proof in \cite{Rocklandcondt} is for a single group and not a family $\mathfrak{g}\times M$, but one can easily modify the argument given there to handle the family case. Using Proposition \ref{prop:lifting_prop}, one extends $w_0$ to an element $w\in \cE^{\prime 0}(\mathbb{U})$. It is clear that $P'=\ev_1(w)^*\star \ev_1(w)+P^*\star P$ satisfies the strong $*$-Rockland condition, and $\sigma^0(P^*\star P)=\sigma^0(P')$. The operator $P'$ admits a left parametrix $Q'\in \Psi^{0}(\cF^\bullet)$ modulo $\Psi^{-1}(\cF^\bullet)$ by Theorem \ref{thm:strong_rock_par}. We can take $Q=Q'\star P^*$.
\item[$h\implies g$.] We can suppose $\sigma^0(P)=\mathrm{Id}$. By Theorem \ref{thm:vanish_symb}, $P:\tilde{H}^s(M)\to \tilde{H}^s(M)$ is equal to $\mathrm{Id}$ plus a compact operator for every $s$. Hence $P:\tilde{H}^s(M)\to \tilde{H}^s(M)$ is Fredholm with Fredholm index equal to $0$. Since smoothing operators are dense in $\cK(L^2M)$, there exists a smoothing operator $R\in C^\infty(M\times M,\Omega^{1/2})$ such that $P+R:L^2M\to L^2M$ is invertible. Now consider the operator $P+R:\tilde{H}^s(M)\to \tilde{H}^s(M)$. If $s>0$, then since $\tilde{H}^s(M)\subseteq L^2M$, and $P+R$ is injective on $L^2M$, it follows that $P+R:\tilde{H}^s(M)\to \tilde{H}^s(M)$ is also injective. By the vanishing of the index, $P+R:\tilde{H}^s(M)\to \tilde{H}^s(M)$ is surjective. For $s<0$, we argue similarly. Since $L^2M\subseteq \tilde{H}^s(M) $, and $P+R$ is surjective on $L^2M$, it follows that $P+R:\tilde{H}^s(M)\to \tilde{H}^s(M)$ has dense image. Being a Fredholm operator with vanishing index, we deduce that $P+R:\tilde{H}^s(M)\to \tilde{H}^s(M)$ is bijective. Let $Q=(P+R)^{-1}$. It follows from the above that $Q:C^\infty(M,\Omega^{1/2})\to C^\infty(M,\Omega^{1/2})$ and it extends to $\tilde{H}^s(M)\to  \tilde{H}^s(M)$. Since $C^*$-subalgebras are closed under holomorphic calculus, $Q\in\overline{\Psi^0(\cF^\bullet)}\subseteq B(\tilde{H}^s(M))$ for every $s$, and we get $Q\in \tilde{\Psi}^0(\cF^\bullet)$. Furthermore $QP=\mathrm{Id}-QR$. Since $R$ is a smoothing operator, it follows that $QR$ is also a smoothing operator.
 \item[$g\implies f$.]This is trivial.
  \item[$f\implies e$.]This is trivial.
 \item[$e\implies d$.] Suppose $P$ satisfies $e$ with $s\in \R$. Then we define the Hilbert space $H=\{u\in \tilde{H}^{s-1}(M):Pu\in \tilde{H}^s(M)\}$ with the norm $\norm{u}_{H}=\norm{u}^2_{\tilde{H}^{s-1}(M)}+\norm{Pu}^2_{\tilde{H}^s(M)}$. By $e$, $H=\tilde{H}^s(M)$ as a vector space. Furthermore the inclusion $\tilde{H}^s(M)\hookrightarrow H$ is continuous. By the open mapping theorem, there exists $C>0$ such that $$ \norm{u}^2_{\tilde{H}^s(M)}\leq C\norm{Pu}_{\tilde{H}^s(M)}^2+\norm{u}^2_{\tilde{H}^{s-1}(M)}.$$ By Theorem \ref{thm:bounded}, $P:\tilde{H}^s(M)\to \tilde{H}^s(M)$ is left invertible modulo compact operators. 
 \item[$d\implies c$.]Let $P'=P_{s}\star P\star P_{-s}$. Theorem \ref{eqn:qjsokdfsdfjkYU} implies that $\sigma^0(P')$ satisfies $a$. Hence $h$ holds for $P'$. By Theorem \ref{thm:vanish_symb}, $P$ satisfies $c$. 
 \item[$c\implies j$.]This follows from Theorem \ref{eqn:qjsokdfsdfjkYU}.
  \item[$j\implies b$.] The implications $j\implies a$ and $h\implies b$ are trivial. We have proved $a\implies h$.
 \item[$b\implies a$.]This is trivial.\qedhere
  \end{enumerate}\end{proof}
By applying Theorem \ref{Rocklandthm} to $P$ and $P^*$, one deduces the following
 \begin{thm}\label{RocklandthmBi}Let $k\in\C$ and $P\in \Psi^k(\cF^\bullet)$. The following are equivalent:
\begin{enumerate}
\item For all $x\in M$ and $\pi \in \cT^*\cF_x\setminus\{1_{\Gr(\cF)}\}$, $\sigma^k(P,x,\pi):C^\infty(\pi)\to C^\infty(\pi)$ is bijective.
\item For all $x\in M$ and $\pi \in \cT^*\cF_x\setminus\{1_{\Gr(\cF)}\}$, $\sigma^k(P,x,\pi)$ and $\sigma^{\bar{k}}(P^*,x,\pi)$ are injective on $C^\infty(\pi)$.
\item For all $x\in M$ and $\pi \in \cT^*\cF_x\setminus\{1_{\Gr(\cF)}\}$ and $s\in \R$, the bounded extension $\sigma^k(P,x,\pi):H^{s+\Re(k)}(\pi)\to H^s(\pi)$ is invertible.
\item For all (or for some) $s\in \R$ the bounded extension $P:\tilde{H}^{s+\Re(k)}(M)\to \tilde{H}^{s}(M)$ is Fredholm.
\item There exists $Q\in \widetilde{\Psi}^{-k}(\cF^\bullet)$ such that $Q\star P-\mathrm{Id}$ and $P\star Q-\mathrm{Id}$ belong to $C^\infty(M\times M,\Omega^{1/2})$.
\item There exists $Q\in \Psi^{-k}(\cF^\bullet)$ such that for all $x\in M$ and $\pi \in \cT^*\cF_x\setminus\{1_{\Gr(\cF)}\}$, $\sigma^{0}(Q\star P,x,\pi)=\sigma^{0}(P\star  Q,x,\pi)=\mathrm{Id}$.
\end{enumerate}
Moreover if $k=0$, then the previous statements are also equivalent to the following
\begin{enumerate}  
\setcounter{enumi}{9}
\item The element $\sigma^0(P)\in \Sigma^*\cT\cF$ is invertible.
\end{enumerate}
If $P$ satisfies any of the above, then we say that $P$ is \textbf{$*$-maximally hypoelliptic}.
\end{thm}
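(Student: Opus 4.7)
The plan is to reduce Theorem \ref{RocklandthmBi} to the one-sided Theorem \ref{Rocklandthm} by applying the latter to both $P$ and $P^*$, and then gluing the resulting one-sided parametrices into two-sided ones. The key symbolic ingredient throughout is Proposition \ref{prop:prop of symb}.b, which yields the inclusion $\sigma^{\bar k}(P^*,x,\pi) \subseteq \sigma^k(P,x,\pi)^*$ of unbounded operators on $C^\infty(\pi)$.

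The core work is the cycle $b \Rightarrow f \Rightarrow e \Rightarrow a \Rightarrow b$. Applying Theorem \ref{Rocklandthm} to $P$ using the injectivity of $\sigma^k(P,x,\pi)$ produces $\widetilde Q_1 \in \widetilde\Psi^{-k}(\cF^\bullet)$ and $Q_1 \in \Psi^{-k}(\cF^\bullet)$ with $\widetilde Q_1 \star P - \mathrm{Id}$ smoothing and $\sigma^0(Q_1 \star P, x, \pi) = \mathrm{Id}$. Applying the same theorem to $P^* \in \Psi^{\bar k}(\cF^\bullet)$ using the injectivity of $\sigma^{\bar k}(P^*,x,\pi)$, and then taking adjoints, yields $\widetilde Q_2 \in \widetilde\Psi^{-k}(\cF^\bullet)$ and $Q_2 \in \Psi^{-k}(\cF^\bullet)$ which are right parametrices of the corresponding type: $P \star \widetilde Q_2 - \mathrm{Id}$ is smoothing and $\sigma^0(P \star Q_2, x, \pi) = \mathrm{Id}$. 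The standard identity
\[
Q_1 - Q_2 \;=\; Q_1 \star (\mathrm{Id} - P \star Q_2) \;-\; (\mathrm{Id} - Q_1 \star P) \star Q_2,
\]
together with its analogue for $\widetilde Q_1 - \widetilde Q_2$, shows that each right-hand term has vanishing principal symbol (resp.\ is smoothing), so $Q := Q_1$ already satisfies $\sigma^0(Q \star P) = \sigma^0(P \star Q) = \mathrm{Id}$, giving $f$, and $\widetilde Q := \widetilde Q_1$ is a two-sided parametrix modulo $C^\infty(M\times M,\Omega^{1/2})$, giving $e$. From $e$, taking principal symbols yields $\sigma^{-k}(\widetilde Q) \circ \sigma^k(P) = \sigma^k(P) \circ \sigma^{-k}(\widetilde Q) = \mathrm{Id}$ on $C^\infty(\pi)$, which proves $a$; and $a$ trivially implies $b$ once one recalls $\sigma^{\bar k}(P^*) \subseteq \sigma^k(P)^*$.

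The remaining equivalences are routine. Condition $c$ follows from $a$ by extending the symbolic inverse $\sigma^{-k}(\widetilde Q, x, \pi)$ boundedly to every Sobolev space via Proposition \ref{prop:symbextSob} and using density of $C^\infty(\pi)$ in each $H^s(\pi)$; the converse direction is immediate. Condition $d$ follows from $e$ together with the fact that smoothing operators are compact on every $\widetilde H^s(M)$ (Theorem \ref{thm:bounded}); conversely, Fredholmness at a single Sobolev level furnishes operators that are left and right inverses modulo compacts, whence Theorem \ref{Rocklandthm} applied to both $P$ and $P^*$ delivers $b$. Finally, for $k=0$, equivalence of $j$ and $f$ is a direct consequence of the short exact sequence of Theorem \ref{eqn:qjsokdfsdfjkYU}.

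The main obstacle will be the amalgamation step: Theorem \ref{Rocklandthm} only furnishes \emph{one-sided} parametrices separately for $P$ and for $P^*$, whereas $f$ demands a single $Q$ with $\sigma^0(Q \star P) = \sigma^0(P \star Q) = \mathrm{Id}$. The displayed algebraic identity is the standard device for bridging left and right parametrices, but in the present calculus one must keep careful track of the relevant ideal of remainders, since by Example \ref{exs:counter_examples}.a the principal symbol does not detect operators modulo lower order, and one has to work in the correct quotient -- vanishing on the Helffer-Nourrigat cone at the $\Psi^{-k}$-level, smoothing at the $\widetilde\Psi^{-k}$-level -- for the identification $Q_1 \equiv Q_2$ to hold.
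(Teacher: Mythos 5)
Your proposal is correct and matches the paper's approach: the paper dispatches Theorem \ref{RocklandthmBi} in a single line (``By applying Theorem \ref{Rocklandthm} to $P$ and $P^*$, one deduces the following''), and you have supplied precisely the missing details, using Proposition \ref{prop:prop of symb}.b to dualize and the standard algebraic identity to amalgamate one-sided parametrices into two-sided ones. The only loose phrasing is in the last step, where the equivalence of $j$ and $f$ is not quite ``a direct consequence'' of the short exact sequence alone (one first gets $j\Leftrightarrow d$ from Theorem \ref{eqn:qjsokdfsdfjkYU}, then passes through the cycle), but this does not affect correctness.
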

\begin{cor}Let $k\in\C$ with $\Re(k)>0$ and $P\in \Psi^k(\cF^\bullet)$ a $*$-maximally hypoelliptic. \begin{enumerate}
\item The maximal and minimal domain agree and are equal to $\tilde{H}^{\Re(k)}(M)$. Hence if $P$ is symmetric it is essentially self-adjoint.
\item For any $x\in M$, $\xi \in \cT^*\cF\backslash \{0\}$, the minimal and maximal domain of $\sigma^k(P,x,\pi)$ are equal to $H^k(\pi)$.
\end{enumerate} 
\end{cor}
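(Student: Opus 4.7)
The plan is to leverage the two-sided parametrix $Q \in \widetilde{\Psi}^{-k}(\cF^\bullet)$ furnished by Theorem \ref{RocklandthmBi}, which satisfies $Q\star P - \Id = R_1$ and $P\star Q - \Id = R_2$ with $R_1, R_2 \in C^\infty(M\times M,\Omega^{1/2})$ smoothing. By the definition of $\widetilde{\Psi}^{-k}(\cF^\bullet)$, $Q$ is bounded $\tilde{H}^s(M)\to \tilde{H}^{s+\Re(k)}(M)$ for every $s\in\R$; in particular $Q:L^2(M)\to \tilde{H}^{\Re(k)}(M)$ is bounded, and smoothing operators are bounded $\tilde{H}^s(M)\to \tilde{H}^{s'}(M)$ for any $s,s'$.

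For part (a), let $P_{\max}$ denote the operator on $L^2(M)$ with distributional domain $\{u\in L^2(M):Pu\in L^2(M)\}$, and let $P_{\min}$ be the closure of $P_{|C^\infty(M,\Omega^{1/2})}$ in the graph norm. The inclusion $\tilde{H}^{\Re(k)}(M)\subseteq \mathrm{dom}(P_{\max})$ is immediate from the boundedness $P:\tilde{H}^{\Re(k)}(M)\to L^2(M)$. Conversely, if $u\in \mathrm{dom}(P_{\max})$, then the identity $u = QPu - R_1 u$ exhibits $u$ as a sum of an element of $\tilde{H}^{\Re(k)}(M)$ (since $Pu\in L^2(M)$ and $Q:L^2\to \tilde{H}^{\Re(k)}$) and a smooth function; this yields $\mathrm{dom}(P_{\max})\subseteq \tilde{H}^{\Re(k)}(M)$. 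Applying the same identity to $u\in C^\infty(M,\Omega^{1/2})$ gives the \emph{a priori} estimate
\[
 \|u\|_{\tilde{H}^{\Re(k)}(M)} \leq \|Q\|\,\|Pu\|_{L^2(M)} + \|R_1\|\,\|u\|_{L^2(M)},
\]
and the reverse estimate $\|Pu\|_{L^2} + \|u\|_{L^2}\leq C\|u\|_{\tilde{H}^{\Re(k)}}$ follows from boundedness of $P$. Hence the graph norm is equivalent to the $\tilde{H}^{\Re(k)}$-norm on $C^\infty(M,\Omega^{1/2})$. Since $C^\infty(M,\Omega^{1/2})$ is dense in $\tilde{H}^{\Re(k)}(M)$ (it is the algebraic domain whose graph closure defines $\tilde{H}^{\Re(k)}(M)$ through $P_{\Re(k)}$ of Proposition \ref{thm:generalised_laplacians}), we obtain $\mathrm{dom}(P_{\min}) = \tilde{H}^{\Re(k)}(M) = \mathrm{dom}(P_{\max})$. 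If $P$ is symmetric, then the distributional adjoint coincides with $P_{\max}$ and with $P^*$, so $\overline{P_{|C^\infty}} = P^* = (P^*)^{**}$, which is exactly essential self-adjointness.

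Part (b) is a verbatim transcription of the same argument on the nilpotent side. From Theorem \ref{RocklandthmBi}.f we obtain $Q\in \Psi^{-k}(\cF^\bullet)$ with $\sigma^0(Q\star P,x,\pi) = \Id = \sigma^0(P\star Q,x,\pi)$ for all $x\in M$ and $\pi\in\cT^*_x\cF\setminus\{1_{\Gr(\cF)_x}\}$. By Proposition \ref{prop:prop of symb}(a) these identities upgrade to $\sigma^{-k}(Q,x,\pi)\sigma^k(P,x,\pi) = \Id$ and $\sigma^k(P,x,\pi)\sigma^{-k}(Q,x,\pi) = \Id$ on $C^\infty(\pi)$, i.e., $\sigma^{-k}(Q,x,\pi)$ is an \emph{exact} two-sided inverse on smooth vectors (no residual smoothing term), and by Proposition \ref{prop:prop of symb}(c) it is bounded $L^2(\pi)\to H^{\Re(k)}(\pi)$. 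Repeating the argument of the previous paragraph with $\sigma^k(P,x,\pi)$ in place of $P$, $\sigma^{-k}(Q,x,\pi)$ in place of $Q$, $H^s(\pi)$ in place of $\tilde{H}^s(M)$, and $C^\infty(\pi)$ in place of $C^\infty(M,\Omega^{1/2})$ (density of $C^\infty(\pi)$ in $H^{\Re(k)}(\pi)$ holds since it is the core defining the closure, cf.\ \eqref{eqn:inter}), we conclude that both the minimal and maximal domains of $\sigma^k(P,x,\pi)$ as an unbounded operator on $L^2(\pi)$ equal $H^{\Re(k)}(\pi)$.

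The argument is essentially a Banach-space parametrix computation and presents no genuine obstacle; the only point that requires care is keeping straight that the parametrix $Q$ of Theorem \ref{RocklandthmBi}.e lies in $\widetilde{\Psi}^{-k}(\cF^\bullet)$ rather than $\Psi^{-k}(\cF^\bullet)$ in (a)—which is exactly what is needed to control its action on the Sobolev scale—and that in (b) one uses the \emph{polynomial} parametrix identity of Theorem \ref{RocklandthmBi}.f on the group side, where the residual smoothing term in the manifold version is replaced by an exact identity on $C^\infty(\pi)$.
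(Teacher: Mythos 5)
Your argument is correct and matches what the paper evidently intends: the corollary is stated without proof immediately after Theorem \ref{RocklandthmBi}, and the two-sided parametrix bootstrap you carry out, both on $M$ and on the group side, is the standard way to fill it in. The only point worth spelling out is that the identity $u = QPu - R_1 u$ must first be extended from $C^\infty(M,\Omega^{1/2})$ to $\mathrm{dom}(P_{\max})$ by density across the Sobolev scale (which is licit for $Q$ precisely because $Q\in\widetilde{\Psi}^{-k}(\cF^\bullet)$ acts boundedly $\tilde{H}^{s}(M)\to\tilde{H}^{s+\Re(k)}(M)$ for every $s$, while $P$ and $R_1$ extend continuously to $\cD'(M,\Omega^{1/2})$), and the analogous extension on the group side is supplied by Proposition \ref{prop:prop of symb}.c together with the density of $C^\infty(\pi)$ in each $H^{s}(\pi)$.
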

\begin{remark}\label{rem:diag_Schro} Let $k\in \R_+^\times$ and $P\in \Psi^k(\cF^\bullet)$ a symmetric $*$-maximally hypoelliptic operator. By Theorem \ref{RocklandthmBi} and Theorem \ref{thm:Bounded_symbol}.b, if $x\in M$ and $\pi \in \cT^*\cF_x\setminus\{1_{\Gr(\cF)}\}$, then the closure of $\sigma^k(P,x,\pi)$ acting on $L^2\pi$ is selfadjoint and has compact resolvent. Hence it is diagonalizable with eigenvalues converging in absolute value to $+\infty$. The eigenvectors also belong to $C^\infty(\pi)$ by \eqref{eqn:inter}.
\end{remark}
\begin{thm}\label{prop:local}Let $k\in \C$, $P\in \Psi^k(\cF^\bullet)$. Then the set of all $x\in M$ such that $\sigma^k(P,x,\pi):C^\infty(\pi)\to C^\infty(\pi)$ is injective for every $\pi \in \cT^*_x\cF\setminus\{1_{\Gr(\cF)}\}$ is an open subset of $M$.
\end{thm}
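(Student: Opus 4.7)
The plan is to reduce to an order-$0$ statement and then exploit the $C^*$-algebraic semi-continuity results from Sections \ref{sec:princip symb} and \ref{sec:Parametrised}.

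First, I would reduce to the case $k=0$. Take the operators $P_{-k} \in \Psi^{-k}(\cF^\bullet)$ from Proposition \ref{thm:generalised_laplacians}; these satisfy the strong $*$-Rockland condition, so by Theorem \ref{Parametrix_symbols} their principal symbol $\sigma^{-k}(P_{-k},x,\pi)$ is bijective on $C^\infty(\pi)$ for every $x \in M$ and every non-trivial irreducible $\pi$. By multiplicativity of the principal symbol (Proposition \ref{prop:prop of symb}.a), injectivity of $\sigma^k(P,x,\pi)$ on $C^\infty(\pi)$ is equivalent to injectivity of $\sigma^0(P_{-k}\star P, x, \pi)$. So it suffices to prove the openness statement for $P' := P_{-k}\star P \in \Psi^0(\cF^\bullet)$.

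Next, I would localize. The statement is purely local near each candidate point $x_0$, so by using smooth cutoffs (and Proposition \ref{prop:properties_of_pseudo_diff}.b, which says that multiplication by a function vanishing on the diagonal lowers order, hence does not affect the principal symbol at $x_0$) one may assume the underlying manifold is compact and thus apply the results of Sections \ref{sec:princip symb}--\ref{sec:para order 0}. Under this reduction, the principal symbol $\sigma^0(P')$ sits inside $\Sigma^*\cT\cF$, which by construction inherits from $C^*\cT\cF$ a $C(M)$-algebra structure with fiber over each $x \in M$ being a $C^*$-algebra $\Sigma^*\cT^*_x\cF$ generated by principal symbols at $x$. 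A pointwise version of Theorem \ref{Rocklandthm} (applied at the single point $x$, i.e.~using the fiberwise Helffer--Nourrigat--Rockland theorem Theorem \ref{Parametrix_symbols} restricted to representations in $\cT^*_x\cF \subseteq \widehat{\Gr(\cF)}_x$ together with Hebisch's trick used in the proof of Theorem \ref{Rocklandthm}) shows that the condition \emph{``$\sigma^0(P',x,\pi)$ is injective on $C^\infty(\pi)$ for every $\pi \in \cT^*_x\cF\setminus\{1_{\Gr(\cF)}\}$''} is equivalent to \emph{left invertibility of the fiber $\sigma^0(P')_x$ inside $\Sigma^*\cT^*_x\cF$}.

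Finally, I would conclude by the standard openness of left invertibility in $C_0(M)$-$C^*$-algebras. If $Q_{0} \, \sigma^0(P')_{x_0} = 1$ in the fiber over $x_0$, lift $Q_0$ to an element $Q$ of the multiplier algebra defined in a neighborhood of $x_0$ (using that evaluation maps in a $C_0(M)$-algebra are surjective on multipliers). Then $Q\,\sigma^0(P') - 1$ vanishes at $x_0$, so by upper semi-continuity of fiber norms in a $C_0(M)$-algebra its fiber norm is $<1$ on an open neighborhood $U$ of $x_0$, and hence $Q_x\,\sigma^0(P')_x$ is invertible on $U$; composing with its inverse gives a left inverse of $\sigma^0(P')_x$ on $U$. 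The main obstacle is the middle paragraph: packaging $\Sigma^*\cT\cF$ as a genuine $C(M)$-algebra with the right fiberwise description and establishing the pointwise Rockland equivalence at each $x$. This is essentially a fiberwise upgrade of Theorem \ref{eqn:qjsokdfsdfjkYU} combined with the Hebisch construction from step $a\!\Rightarrow\! h$ of Theorem \ref{Rocklandthm}, both of which go through once one observes that the constructions of Sections \ref{sec:dis_groups}--\ref{sec:Parametrised} are parametrised smoothly by $M$ and localise well under multiplication by functions $f \in C(M)$.
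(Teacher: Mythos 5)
Your reduction to $k=0$ via $P_{-k}\star P$ is exactly what the paper does, and your final semi-continuity step would be fine \emph{if} the middle paragraph were available. But the middle paragraph is not a technical obstacle to be waved at; it is the actual content of the theorem, and it is not established by the results you cite. Concretely, your argument needs two facts about $\Sigma^*\cT\cF$ viewed as a $C(M)$-algebra: (i) that its fiber at $x$ (the quotient by $\overline{I_x\,\Sigma^*\cT\cF}$) is faithfully represented by evaluation, i.e.\ $\norm{a_x}_{A_x}=\sup_{\pi\in\cT^*_x\cF}\norm{\sigma^0(a,x,\pi)}$; and (ii) a \emph{pointwise} Rockland theorem saying that injectivity of $\sigma^0(P',x,\pi)$ on $C^\infty(\pi)$ for all $\pi\in\cT^*_x\cF$ gives left invertibility in that fiber. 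Fact (i) is a localized version of Theorem \ref{eqn:qjsokdfsdfjkYU}, which the paper proves only globally (as an identification of $\overline{\Psi^0(\cF^\bullet)}/K(L^2M)$ with $\Sigma^*\cT\cF$, not of its fibers); without it, knowing that $\sigma^0(Q,x_0,\pi)\sigma^0(P',x_0,\pi)=1$ for every $\pi\in\cT^*_{x_0}\cF$ does \emph{not} give $\norm{(Q\sigma^0(P')-1)_{x_0}}_{A_{x_0}}=0$, and your upper-semi-continuity step cannot start. The upper semi-continuity of $x\mapsto\sup_{\pi\in\cT^*_x\cF}\norm{\sigma^0(a,x,\pi)}$ is precisely the kind of statement the theorem is asserting, so as written the argument is circular at this point. (A smaller issue: evaluation $M(A)\to M(A_x)$ is not surjective for general $C_0(M)$-algebras; here it is harmless only because $\Sigma^*\cT\cF$ is unital.)

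The paper avoids all of this by transferring the openness question to a \emph{fixed} group. After the reduction to $k=0$, one applies Hebisch's construction to produce $P'$ with $\sigma^0(P')=\sigma^0(P^*\star P)$ admitting a global lift $u_0\in\cE^{\prime 0}(\mathfrak{g}\times M)$ that satisfies the full Rockland condition at $x$, i.e.\ $\sigma^0(u_0,x,\pi)$ is injective for \emph{every} $\pi\in\hat{\mathfrak{g}}\setminus\{1_{\mathfrak{g}}\}$, not only those in the cone. For a smooth family of homogeneous distributions on the fixed graded group $\mathfrak{g}$, openness in the parameter of the Rockland condition is a known theorem (\cite[Theorem 2.5.d]{ChrGelGloPol}); applying it and then restricting back to $\pi\in\cT^*_y\cF$ (where $\sigma^0(w_0,y,\pi)=0$, so $\sigma^0(P',y,\pi)=\sigma^0(P,y,\pi)^*\sigma^0(P,y,\pi)$) gives the conclusion. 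So the missing idea in your proposal is to use the Hebisch trick not merely to get a pointwise equivalence at $x_0$, but to replace the varying family of cones $\cT^*_y\cF$ by the full dual $\hat{\mathfrak{g}}$ of a single group, where the relevant continuity-in-parameters result actually exists.
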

\begin{proof}
By replacing $P$ with $P_{-k}\star P$, we can suppose that $P\in \Psi^0(\cF^\bullet)$. Let $(\mathfrak{g},\natural,\mathbb{U},M)$ be a global graded Lie basis, and suppose $x\in M$ is such that $\sigma^k(P,x,\pi)$ is injective on smooth vectors for every $\pi \in \cT^*\cF_x\setminus\{1_{\Gr(\cF)_x}\}$. As in the proof of $a\implies h$ in Theorem \ref{Rocklandthm}, we can find $P'\in \Psi^0(\cF^\bullet)$ such that $\sigma^0(P^*\star P)=\sigma^0(P')$ and $P'$ has a global lift $u\in \cE^{\prime 0}_s(\mathfrak{g}\times M)$ which has the property that $\sigma^0(u,x,\pi)$ is injective on smooth vectors for all $\pi \in \hat{\mathfrak{g}}\setminus\{1_{\mathfrak{g}}\}$. Then by \cite[Theorem 2.5.d]{ChrGelGloPol}, one deduces that for a neighbourhood $V$ of $x$, for every $y\in V$ and every $\pi \in \cT^*\cF_y\setminus\{1_{\Gr(\cF)_y}\}$, $\sigma^0(P',y,\pi)$ is injective on smooth vectors. The result follows.
\end{proof}

\subsection{Proof of Theorem \ref{mainthmintro2} and Theorem \ref{mainthmintro3} in general}\label{sec:noncompact}
In this section, we no longer suppose that $M$ is compact. Let $m=\dim(M)$, $x\in M$ and $U,V\subseteq M$ open subsets such that $x\in V\subseteq \bar{V}\subseteq U$ and $U$ is a chart of $x$ diffeomorphic to the unit ball in $\R^{m}$.  Let $f\in C^\infty_c(M)$ be a positive function such that $f=1$ on $V$ and $\supp(f)\subseteq U$.  A simple computation shows that
 \begin{equation}\label{eq:local_foliation}
0\subseteq f\cF^1\subseteq \cdots \subseteq f\cF^{N-1}\subseteq \cF^N=\cX_c(M)
\end{equation} 
is still a filtered foliation on $M$. Consider  $S^{m}$, the $m$-dimensional sphere considered as a $1$-point compactification of $\R^m$. We define a filtered foliation $\cG^\bullet$ on $S^m$ to be the push-forward of \eqref{eq:local_foliation}, except for $\cG^N$ which we declare to be $\cX(S^{m})$.  

Let $P\in \Psi^k(\cF^\bullet)$ be a pseudodifferential operator on $M$, $f'\in C^\infty_c(M)$ with $f'=1$ on a neighbourhood of $x$ and $\supp(f')\subseteq V$.
 It is straightforward to see that  $f'P^*\star Pf'\in \Psi^{k}(\cG^\bullet)$, where we consider $V\times V\subseteq S^m\times S^m$, and use the fact that the support of $f'P^*\star Pf'$ is subset of $V\times V$. Now let $g\in C^\infty(S^m)$ be any smooth function such that $S^m=\supp(g)\cup\supp(f')$ and $g=0$ on a neighbourhood of $f^{\prime-1}([\frac{1}{2},+\infty[)$. We consider the operator $$Q=f'P^*\star Pf'+gP_{\frac{k}{2}}^*\star P_{\frac{k}{2}}g\in \Psi^{k}(\cG^\bullet),$$ where $P_{\frac{k}{2}}$ is obtained from Proposition \ref{thm:generalised_laplacians} applied to the filtration $\cG^\bullet$. Theorems \ref{mainthmintro2} and Theorem \ref{mainthmintro3} for $P$ easily follow from Theorem \ref{Rocklandthm} and Theorem \ref{prop:local} applied to $Q$.
\begin{appendix}
\section{Proofs of Theorems \ref{main_tech_thm} and \ref{second_main_tech_thm} }\label{appendixA}

\subsection{Baker-Campbell-Hausdorff formula for flows of vector fields}\label{sect: appendix 1}
In this section, we give an analytic interpretation of the BCH formula \eqref{eqn:BCHintro} for the Lie algebra of vector fields on a manifold $M$. Let $ t\cX_c(M)[[t]]$ be the Lie algebra of formal power series with coefficients in $\cX_c(M)$ and constant term $0$. For $\bfX, \bfY \in  t\cX_c(M)[[t]]$, let
\[
  \BCH(\bfX,\bfY) = \bfX + \bfY - \frac{1}{2}[\bfX,\bfY] + \frac{1}{12}[\bfX,[\bfX,\bfY]]-\frac{1}{12}[\bfY,[\bfY,\bfX]]+ \cdots\in t\cX_c(M)[[t]]
\]
which is well-defined because $\bfX,\bfY$ have no constant term. If $\bfX = \sum_{i=1}^\infty t^iX_i \in  t\cX_c(M)[[t]]$, then we write $\bfX_n(t) = \sum_{i=1}^n t^iX_i$ for the truncation of $\bfX$ to order $n$, where now this can be understood concretely as a vector field on $M$ with coefficients depending polynomially on $t$.  We remark that when we talk of the time-one flow by such a vector field, $x \mapsto \exp(\bfX_n(t))\cdot x$, we mean the flow for a fixed but arbitrary $t$, not the time-dependent flow.
\begin{thm}
  \label{thm:BCH}
  For any $\bfX,\bfY\in t\cX_c(M)[[t]]$, $x\in M$, $n\in \NN$, we have 
 \begin{equation}\label{eqn BCH}
\exp(\bfX_n(t))\cdot\left(\exp(\bfY_n(t)) \cdot x\right) = \exp(\BCH(\bfX,\bfY)_n(t)) \cdot x + o(t^n)
   \end{equation}
That is, the two sides agree to order $n$ as functions of $t$ with uniform bounds in $x$ as it varies in a compact set.
\end{thm}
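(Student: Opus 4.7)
The plan is to reduce \eqref{eqn BCH} to an identity of formal Taylor series in $t$ and then invoke the formal Baker--Campbell--Hausdorff identity in an algebra of differential operators. First I would observe that two smooth families $\psi_1(t,x),\psi_2(t,x)$ of points in $M$ satisfying $\psi_1(0,x)=\psi_2(0,x)=x$ agree up to $o(t^n)$ uniformly in $x$ on compacta if and only if, for every $f\in C^\infty(M)$, the Taylor coefficients in $t$ at $0$ of $f\circ\psi_1$ and $f\circ\psi_2$ coincide through order $n$ (apply this to coordinate functions, with uniformity coming from uniform bounds on the coefficients). So it suffices to compute the Taylor expansions in $t$ of both sides of \eqref{eqn BCH} after composition with an arbitrary test function $f$.

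Next I would compute these coefficients. For a vector field $Z=\sum_{i\geq1}t^iZ_i$ depending polynomially on a parameter $t$ and vanishing at $t=0$, iteration of the flow equation
\[
\tfrac{d}{ds}f(\exp(sZ)\cdot x)=(Zf)(\exp(sZ)\cdot x)
\]
yields by induction the formal Taylor identity
\[
f(\exp(Z)\cdot x)\ \sim\ \sum_{k\geq 0}\tfrac{1}{k!}Z^k f(x),
\]
where the right-hand side is reorganised as a power series in $t$. Expanding $Z^k=\sum t^{i_1+\cdots+i_k}Z_{i_1}\cdots Z_{i_k}$, one sees that the coefficient of $t^m$ involves only $Z_1,\ldots,Z_m$; in particular, replacing $\bfX_n$ by the full formal series $\bfX$ (and similarly for $\bfY$ and for $\BCH(\bfX,\bfY)_n$) does not alter Taylor coefficients up to order $n$. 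Applying the same reasoning twice, the Taylor series of $f\bigl(\exp(\bfX_n(t))\cdot(\exp(\bfY_n(t))\cdot x)\bigr)$ equals the series $\sum_{k,\ell\geq 0}\tfrac{1}{k!\,\ell!}\bfX(t)^k\bfY(t)^\ell f(x)$, truncated in $t$.

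The last step is the purely algebraic formal BCH identity: inside the ring of formal power series in $t$ with coefficients in the universal enveloping algebra of $\cX_c(M)$, one has $\exp(\bfA)\exp(\bfB)=\exp(\BCH(\bfA,\bfB))$ for any $\bfA,\bfB$ with vanishing constant term. Letting this identity act on the jet of $f$ at $x$ and comparing with the computation of the previous paragraph gives the equality of all Taylor coefficients, hence in particular those through order $n$, of $f\bigl(\exp(\bfX(t))\cdot(\exp(\bfY(t))\cdot x)\bigr)$ and $f(\exp(\BCH(\bfX,\bfY)(t))\cdot x)$; combined with the truncation remark this finishes the proof. The main technical obstacle is justifying the formal-versus-analytic Taylor identity for a parameter-dependent time-independent vector field, but this follows by an induction based on the flow equation together with the smooth dependence of flows on parameters, with the compact support of the $X_i,Y_i$ providing the required uniform control on every compact set of initial data.
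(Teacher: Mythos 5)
The paper itself does not prove this theorem; it merely cites H\"ormander's sum-of-squares paper and Proposition~4.3 of Nagel--Stein--Wainger. Your proposal therefore supplies a self-contained argument rather than recapitulating the paper's own. The overall strategy is the standard one and is sound: pass to Taylor coefficients in $t$ after composing with a test function; use the flow equation and a Taylor remainder estimate to show $f(\exp(Z(t))\cdot x)\sim\sum_k\frac{1}{k!}(Z(t)^kf)(x)$ when $Z(t)$ is a polynomial in $t$ vanishing at $t=0$; observe that the $t^m$-coefficient depends only on $Z_1,\ldots,Z_m$ (so truncation at level $n$ is harmless through order $n$); and finish with the formal BCH identity in $U(\cX_c(M))[[t]]$. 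The uniformity on compacta is correctly handled by smooth dependence of flows on parameters and compact support of the coefficients.

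There is, however, a genuine order/sign inconsistency. Pullback of a function by a flow reverses the order of composition: $f(\exp(\bfX)\cdot(\exp(\bfY)\cdot x))=(e^{\bfY}e^{\bfX}f)(x)=\sum_{k,\ell}\frac{1}{k!\,\ell!}\bfY(t)^\ell\bfX(t)^kf(x)$, not $\sum\frac{1}{k!\,\ell!}\bfX(t)^k\bfY(t)^\ell f(x)$ as you wrote. This reversal is precisely what the paper's sign convention in \eqref{eqn:BCHintro}, namely $\BCH(X,Y)=X+Y-\tfrac{1}{2}[X,Y]+\cdots$, is designed to absorb: the correct formal identity to invoke is $e^{\bfY}e^{\bfX}=e^{\BCH(\bfX,\bfY)}$ with the paper's $\BCH$. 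Your stated identity $\exp(\bfA)\exp(\bfB)=\exp(\BCH(\bfA,\bfB))$ with $\bfA=\bfX$, $\bfB=\bfY$ holds only with the opposite (usual) sign convention, so combined with the flipped order in the expansion the argument as written is internally inconsistent (it would conclude with $\BCH(\bfY,\bfX)$ in the paper's notation). Once the order of operators is corrected to $e^{\bfY}e^{\bfX}$ the two conventions realign and the proof goes through as intended.
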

The previous theorem appears implicitly in Hörmander's work, see \cite[proof of Lemma 4.5]{Hormander:SoS}. It is proved in \cite[Proposition 4.3]{SteinWaingerNagel}.
\begin{definition}
Let $\bfX,\bfY\in  t\cX_c(M)[[t]]$, $x\in M$, $n\in \NN$. We write $\bfX\sim_{n,x}\bfY$ if $$\exp(\bfX_n)x-\exp(\bfY_n)x=o(t^n).$$
\end{definition}
Clearly $\sim_{n,x}$ is an equivalence relation.
 \begin{prop}\label{prop: appendix 1}Let $\bfX,\bfY,\bfZ\in  t\cX_c(M)[[t]]$, $x\in M$, $n\in \NN$. If $\bfX\sim_{n,x}\bfY$, then $$\BCH(\bfZ,\bfX)\sim_{n,x}\BCH(\bfZ,\bfY).$$
\end{prop}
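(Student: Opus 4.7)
The plan is to reduce the statement to a simple Lipschitz estimate by using Theorem \ref{thm:BCH} to translate the $\BCH$-expressions into compositions of flows. More precisely, by Theorem \ref{thm:BCH} applied at the point $x$, we have
\[
\exp(\BCH(\bfZ,\bfX)_n(t))\cdot x = \exp(\bfZ_n(t))\cdot\bigl(\exp(\bfX_n(t))\cdot x\bigr) + o(t^n),
\]
and the analogous identity holds with $\bfY$ in place of $\bfX$. Subtracting, the problem reduces to showing that
\[
\exp(\bfZ_n(t))\cdot\bigl(\exp(\bfX_n(t))\cdot x\bigr) - \exp(\bfZ_n(t))\cdot\bigl(\exp(\bfY_n(t))\cdot x\bigr) = o(t^n).
\]

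To establish this, set $p(t)=\exp(\bfX_n(t))\cdot x$ and $q(t)=\exp(\bfY_n(t))\cdot x$. By hypothesis, $p(t)-q(t)=o(t^n)$ (in any local chart around $x$), and for $t$ sufficiently small both $p(t)$ and $q(t)$ remain in a fixed compact neighbourhood of $x$. Since $\bfZ_n(t)$ is a time-dependent vector field that depends polynomially on $t$ and vanishes at $t=0$, the associated map
\[
\Phi:(t,y)\mapsto \exp(\bfZ_n(t))\cdot y
\]
is smooth on a neighbourhood of $\{0\}\times \{y : y \text{ near } x\}$, and in particular uniformly Lipschitz in $y$ over this neighbourhood for $t$ in a bounded interval. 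Hence there is a constant $C>0$ such that in the chosen chart
\[
\bigl\|\Phi(t,p(t))-\Phi(t,q(t))\bigr\| \le C\,\|p(t)-q(t)\|
\]
for all $t$ small enough, and the right-hand side is $o(t^n)$ by hypothesis.

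Combining the two displayed estimates yields $\BCH(\bfZ,\bfX)\sim_{n,x}\BCH(\bfZ,\bfY)$. There is no real obstacle here; the only thing to check carefully is that the Lipschitz constant $C$ can be chosen uniformly in $t$ near $0$, which follows immediately from the smoothness of $\Phi$ (noting that $\bfZ_n$ is a polynomial in $t$ with vector field coefficients, so the associated flow is smooth jointly in $(t,y)$).
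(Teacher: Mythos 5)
Your argument is correct and is essentially the paper's own proof, which simply states that the result ``follows directly from Theorem \ref{thm:BCH} and the smooth dependence of the flow $\exp(Z)$ on the initial conditions''; your write-up just spells out the two steps (apply Theorem \ref{thm:BCH} to convert the $\BCH$-flows into compositions, then a local Lipschitz estimate for $(t,y)\mapsto\exp(\bfZ_n(t))\cdot y$) that the paper leaves implicit.
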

\begin{proof}
This follows directly from Theorem \ref{thm:BCH} and the smooth dependence of the flow $\exp(Z)$ on the initial conditions.
\end{proof}

Finally we need the following lemma.
\begin{lemma}\label{lem:k_appendix} Let $(V,\natural,\mathbb{U},U)$ be a graded basis. Then there exists a smooth function $k:\dom(k)\subseteq V\times U\times U\to V$ defined on an open set $\dom(k)$ such that 
\begin{enumerate}
\item for any $x\in U$, $(0,x,x)\in \dom(k)$.
\item for any $(v,y,x)\in \dom(k)$, $\exp(\natural(k(v,y,x)))\cdot x=y$.
\item for any $x\in U,v\in V$ such that $(v,\exp(\natural(v))\cdot x,x)\in \dom(k)$, one has $k(v,\exp(\natural(v))\cdot x,x)=v$.
\end{enumerate}
\end{lemma}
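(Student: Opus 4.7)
The strategy is to interpret the lemma as an implicit function theorem problem: we seek a smooth selection of solutions of the equation $\exp(\natural(w))\cdot x = y$ in the unknown $w$, with the $v$-dependent ``initial value'' $w=v$ prescribed along the submanifold where $y=\exp(\natural(v))\cdot x$. The plan is to consider the smooth map
\[
\Phi\colon V\times V\times U \longrightarrow V\times M\times U, \quad \Phi(w,v,x) = (v,\exp(\natural(w))\cdot x,x),
\]
defined on an open neighbourhood of $\{(0,0,x):x\in U\}$ on which $\exp(\natural(w))\cdot x$ is well defined (using condition~(iii) of Definition~\ref{dfn:graded basis}). Condition~(b) of the lemma asserts that $(k(v,y,x),v,x)\in\Phi^{-1}(v,y,x)$, while condition~(c) prescribes, on the initial submanifold $\Sigma:=\{(v,\exp(\natural(v))\cdot x,x):v\in V_0,\ x\in U\}$ (for some small neighbourhood $V_0$ of $0\in V$), that the chosen preimage be $\sigma_0(v,x):=(v,v,x)$. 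Thus I seek a smooth section $s$ of $\Phi$ on a neighbourhood of $\Sigma$ whose first coordinate, which we will call $k(v,y,x)$, agrees with $v$ on $\Sigma$.

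The next step is to verify that $\Phi$ is a submersion near $\{(0,0,x):x\in U\}$. Using $\exp(\natural(0))=\mathrm{id}$, the differential at $(0,0,x_0)$ is
\[
d\Phi_{(0,0,x_0)}(u_w,u_v,u_x) \;=\; \bigl(u_v,\ \natural(u_w)(x_0)+u_x,\ u_x\bigr) \;\in\; V\times T_{x_0}M\times T_{x_0}M.
\]
Since $\natural(V)$ generates $\cF^N=\cX_c(M)$ on $U$ (condition~(ii) of Definition~\ref{dfn:graded basis} with $k=N$), the evaluation $V\ni u_w\mapsto\natural(u_w)(x_0)\in T_{x_0}M$ is surjective by Proposition~\ref{lem:generating_family}, so $d\Phi_{(0,0,x_0)}$ is surjective. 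By continuity of the rank, $\Phi$ is a submersion on an open neighbourhood $\Omega$ of $\{(0,0,x):x\in U\}$, and after shrinking $V_0$ we may assume $\sigma_0(V_0\times U)\subseteq\Omega$.

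To conclude, I would extend the assignment $\Sigma\ni(v,\exp(\natural(v))\cdot x,x)\mapsto\sigma_0(v,x)$ to a smooth section $s$ of $\Phi$ defined on an open neighbourhood of $\Sigma$ via an Ehresmann connection. Concretely, equip $V\times V\times U$ with an auxiliary Riemannian metric and take $H\subseteq T\Omega$ to be the orthogonal complement of the vertical bundle $\ker(d\Phi|_\Omega)$; this is a smooth horizontal distribution for $\Phi|_\Omega$. For $(v,y,x)$ sufficiently close to $\Sigma$, choose a smooth path in $V\times M\times U$ from $(v,\exp(\natural(v))\cdot x,x)$ to $(v,y,x)$ (for instance, keeping the first and third coordinates fixed and using a short geodesic in $M$ for the middle one) and horizontally lift it starting at $\sigma_0(v,x)=(v,v,x)$. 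Setting $s(v,y,x)$ to be the endpoint of this lift and $k(v,y,x)$ to be its first coordinate yields condition~(b), while condition~(c) holds because the path is constant when $y=\exp(\natural(v))\cdot x$, so the horizontal lift is stationary at $\sigma_0(v,x)$. Smoothness of $k$ follows from smooth dependence of ODE flows on parameters and initial data. The main technical obstacle is to shrink $\dom(k)$ enough that the chosen paths and their horizontal lifts remain within the submersion neighbourhood $\Omega$; this is arranged by working in a sufficiently small tubular neighbourhood of $\{(0,x,x):x\in U\}$ inside $V\times U\times U$.
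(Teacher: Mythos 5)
Your argument is correct, but it takes a different route from the paper's. Both proofs hinge on the same fact — that $v\mapsto\natural(v)(x)$ is surjective onto $T_xM$ for $x\in U$, by condition (ii) of Definition~\ref{dfn:graded basis} together with Proposition~\ref{lem:generating_family} — but they exploit it differently. The paper forms the bundle map $\phi(v,x)=\natural(v)(x)$, chooses a smooth projection $p$ onto $\ker(\phi)$, and observes that $\psi(v,x)=(p(v,x),\exp(\natural(v))\cdot x)$ is a map between manifolds of equal dimension that is a local diffeomorphism along $\{0\}\times U$; the function $k$ is then read off directly from $\psi^{-1}$, and property (c) is immediate from $\psi^{-1}\circ\psi=\mathrm{id}$. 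You instead keep the full submersion $(w,v,x)\mapsto(v,\exp(\natural(w))\cdot x,x)$ and build a section of it with prescribed boundary values on the graph $\Sigma$ by choosing an Ehresmann connection and horizontally lifting short geodesics; property (c) then follows because constant paths have constant lifts. Your construction is sound — the lift forces the $v$- and $x$-slots to stay constant, smoothness follows from smooth dependence of the lifting ODE on data, and openness of the domain is the usual openness of the set where a solution persists to time $1$ inside $\Omega$ — but it invokes more machinery (a connection, a family of geodesics, and an ODE existence/openness argument) where the paper gets by with a single application of the inverse function theorem after cutting the fibre dimension down with the projection $p$. The paper's version is shorter and avoids having to control the lifted paths; yours has the mild advantage of making the "section of a submersion with prescribed values on a submanifold" structure explicit.
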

\begin{proof}
The map $$\phi:V\times U\to TU,\quad \phi(v,x)=\natural(v)(x)$$is a bundle morphism over $U$ between the trivial vector bundle $V\times U$ and the tangent bundle $TU$. It is surjective by Condition (ii) of Definition \ref{dfn:graded basis}. Let $p:V\times U\to  \ker(\phi)\subseteq V\times U$ be a smooth projection onto the kernel of $\phi$. We view $\ker(\phi)$ as a manifold of dimension $\dim(V)$. We define the map $$\psi:V\times U\to \ker(\phi)\times U,\quad \psi(v,x)=(p(v,x),\exp(\natural(v))\cdot x).$$The map $\psi$ is a smooth map between manifolds of equal dimension. Its differential $d\psi$ is injective at $(0,x)$ for any $x\in U$. Hence there exists an open neighbourhood $W$ of $\{0\}\times U$ such that $\psi:W\to \psi(W)$ is a diffeomorphism. Let \begin{align*}
\dom(k)=\{(v,y,x)\in V\times U\times U:(v,x)\in W\ \&\ (p(v,x),y)\in \psi(W) \}.
\end{align*}
We define $k(v,y,x)$ by $$\psi^{-1}(p(v,x),y)=(k(v,y,x),x).$$It is straightforward to check that $k$ has the required properties.
\end{proof}

\subsection{Proof of Theorem \ref{main_tech_thm}}\label{sec:proof of main_tech_thm}
Let $k$ be as in Lemma \ref{lem:k_appendix} applied to the graded Lie basis $(\mathfrak{g},\natural,\mathbb{U},U)$. We will use the notation $$\pi(Y,X,x,t):=\exp(\natural (\alpha_t (Y)))\cdot\Big(\exp(\natural(\alpha_t (X))) \cdot x\Big).$$
We define $\phi:\dom(\phi)\subseteq \mathfrak{g}\times\mathfrak{g}\times U\times \R_+^\times\to \mathbb{U}$ by the formula 
\begin{align}
\phi(Y,X,x,t)=\begin{cases} \left(\alpha_{t^{-1}} \left( k\Big(\BCH(\alpha_t (Y) ,\alpha_t (X)),\pi(Y,X,x,t),x \Big)\right),x,t\right),  &\mathrm{if}\, t> 0\\
(\BCH(Y,X),x,0),  &\mathrm{if}\, t=0,\end{cases}
\label{Eqn:phi_appendix}
\end{align}
where $\BCH$ is the BCH formula in the nilpotent Lie algebra $\mathfrak{g}$. The domain of $\phi$ is the set in which the above formula is valid, i.e., \begin{align*}
 \dom(\phi)=\{(Y,X,x,t)\in \mathfrak{g}\times\mathfrak{g}\times U\times \R_+^\times&:(\BCH(\alpha_t(Y) ,\alpha_t (X)),\pi(Y,X,x,t),x)\in \dom(k)\\
 & \& \left(k\Big(\BCH(\alpha_t (Y) ,\alpha_t (X)),\pi(Y,X,x,t),x \Big),x,1\right)\in \mathbb{U}\},
\end{align*}

Let us show that $\phi$ is smooth. It suffices to show that the map $\psi:\dom(\phi)\to\lie{g}$ given by
$$\psi: (Y,X,x,t)\mapsto \begin{cases}\alpha_{t^{-1}} (k(BCH(\alpha_t( Y),\alpha_t( X)),\pi(Y,X,x,t),x)),& \mathrm{if}\, t>0 \\ BCH(Y,X),& \mathrm{if}\, t= 0\end{cases}$$ is smooth. The map
\[
 (Y,X,x,t) \mapsto \alpha_t( \psi (Y,X,x,t))
\]
is smooth and vanishes at $t=0$.  Since $\alpha_{t^{-1}}$ is given by division by some $t^k$ on each coordinate of $\lie{g}$, and it follows that if we prove that $\psi$ is continuous at $t=0$ then it is automatically smooth.

Let us show continuity. Fix $p\in U$. The function $k$ is smooth, so restricting to a neighbourhood of $(0,p,p) \in \lie{g}\times U\times U$, we have a constant $C>0$ such that 
$$|k(Y,y,x)-k(Y,y',x)|\leq C|y-y'|.$$ Here, the norms represent any choice of a norm on $\lie{g}$ and a chart near $p\in M$. Let $Z=BCH(Y,X)$. Notice that $\alpha_t (Z)=BCH(\alpha_t (Y),\alpha_t (X))$. It now follows that for $t$ small enough,  
\begin{align*}
|k(\alpha_t(Z), \pi(Y,X,x,t),x)-\alpha_t (Z)|&=|k(\alpha_t(Z),\pi(Y,X,x,t),x)-k(\alpha_t(Z),\exp(\natural(\alpha_t(Z)))\cdot x,x)|\\
 &\leq C|\pi(Y,X,x,t)-\exp(\natural(\alpha_t(Z)))\cdot x|.
\end{align*}
Now we consider $\natural(\alpha_t(X))$, $\natural(\alpha_t(Y))$ and $\natural(\alpha_t(Z))$ as elements of $t\cX_c(M)[[t]]$. By \eqref{eqn:bracket graded Lie}, it follows that $$\BCH(\natural(\alpha_t(Y)),\natural(\alpha_t(X)))_N=\natural(\alpha_t(Z))$$Hence Theorem \ref{thm:BCH} implies that $$|\pi(Y,X,x,t)-\exp(\natural(\alpha_t(Z)))\cdot x|=o(t^{N}).$$Therefore $$|\alpha_{t^{-1}}(k(\alpha_t(Z),\pi(Y,X,x,t),x))-Z|=o(1).$$Continuity of $\psi$ follows. Hence $\phi$ is smooth.

It is straightforward to check that $\phi$ satisfies Theorem \ref{main_tech_thm}.b, c, d. For a, it is clear that $\phi$ is a submersion at $(0,0,p,0)$ for any $p\in U$. Since $\phi$ is $\R^\times_+$-equivariant, it follows that $\phi$ is a submersion on an $\R^+$-equivariant neighbourhood of $\{0\}\times \{0\}\times U\times \{0\}$. Restricting to such a neighbourhood, we can ensure that $\phi$ is a submersion. This finishes the proof of Theorem \ref{main_tech_thm}.


\subsection{Proof of Theorem \ref{second_main_tech_thm}}\label{sec: proof of second_main_tech_thm}
In this section, we will prove the following which easily implies Theorem \ref{second_main_tech_thm}. 
\begin{thm}
Let $(V,\natural,\mathbb{U},U), (V',\natural',\mathbb{U}',U')$ be two graded bases with $U=U'$. There exists a smooth map
$$\phi:\dom(\phi)\subseteq\mathbb{U}\to \mathbb{U}'$$ 
defined on an $\R^\times_+$-invariant neighbourhood of $\{0\}\times U\times \{0\}$ such that \begin{enumerate}
\item $\phi$ is $\R^\times_+$-equivariant.
\item the following diagram commutes $$\begin{tikzcd}\dom(\phi)\arrow[d,"\ev_{|\dom(\phi)}"']\arrow[r,"\phi"]&\mathbb{U}'\arrow[dl,"\ev"]\\M\times M\times \R_+
\end{tikzcd}$$
\item for every $x\in U$, the following diagram commutes $$\begin{tikzcd}[column sep=large]V\times \{x\}\times \{0\}\arrow[d,"\ev_{{x,0}}"']\arrow[r,"\phi_{|V\times \{x\}\times \{0\}}"]&V'\times \{x\}\times \{0\}\arrow[dl,"\ev_{x,0}"]\\\Gr(\cF)_x
\end{tikzcd}$$
\item If $V'$ is minimal at $p$, then $\phi$ is a submersion at $(0,p,0)$.
\end{enumerate}
\end{thm}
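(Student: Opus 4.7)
The proof will follow the blueprint of the proof of Theorem~\ref{main_tech_thm} in Section~\ref{sec:proof of main_tech_thm}, this time using the corrector function $k'$ from Lemma~\ref{lem:k_appendix} applied to $(V',\natural',\mathbb{U}',U)$ in place of the $k$ used there. The essential preparatory step is to construct a smooth \emph{graded linear lift} $M : V \times U \to V'$, linear in its first argument with $M(V^k\times U)\subseteq V'^k$, satisfying $\natural'_x(M(X,x)) = \natural_x(X)$ in $\gr(\cF)_x$ for every $X\in V$ and $x \in U$. Such an $M$ exists because Condition~(ii) of Definition~\ref{dfn:graded basis} asserts that $\{\natural'(Y):Y\in V'^k\}$ generates $\cF^k$ modulo $\cF^{k-1}$ on $U$: for each basis element $X_\alpha$ of $V^k$ one writes $\natural(X_\alpha) = \sum_\beta f_{\alpha,\beta}\,\natural'(Y_\beta) + (\text{element of } \cF^{k-1})$ with smooth $f_{\alpha,\beta} \in C^\infty(U)$ and sets $M(X_\alpha,x) = \sum_\beta f_{\alpha,\beta}(x) Y_\beta$.

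With $M$ in hand I would define
\[
 \phi(X,x,t) = \begin{cases} \bigl(\alpha'_{t^{-1}}\bigl(k'(\alpha'_t(M(X,x)),\,\exp(\natural(\alpha_t(X)))\cdot x,\,x)\bigr),\,x,\,t\bigr), & t>0, \\ (M(X,x),\,x,\,0), & t=0, \end{cases}
\]
on an $\R_+^\times$-invariant neighbourhood of $\{0\}\times U\times \{0\}$ small enough for the expression to be well-defined and to land in $\mathbb{U}'$. Properties (a), (b), (c) are then immediate: (a) follows from the $\R_+^\times$-equivariance of $M$, $\alpha_t$, $\alpha'_t$ and $\exp$; (b) is exactly the defining property of $k'$ from Lemma~\ref{lem:k_appendix}.b; and (c) at the point $x$ reduces to the identity $\natural'_x\circ M(\cdot,x) = \natural_x$ built into $M$.

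The main technical point is smoothness across $t=0$. As in Section~\ref{sec:proof of main_tech_thm}, since $\alpha'_t\circ\phi(X,x,t) = k'(\alpha'_t(M(X,x)),\exp(\natural(\alpha_t(X)))\cdot x,x)$ is manifestly smooth in $(X,x,t)$ and since $\alpha'_{t^{-1}}$ divides the $V'^j$-component by $t^j$, continuity of $\phi$ at $t=0$ automatically upgrades to smoothness. Continuity in turn reduces to controlling the discrepancy $\exp(\natural(\alpha_t(X)))\cdot x - \exp(\natural'(\alpha'_t(M(X,x))))\cdot x$, whose $t^i$-coefficient at $x$ is $\natural(X_i)(x) - \natural'(M_i(X,x))(x)$ and thus lies in $\cF^{i-1}_x$ by construction of $M$. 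Using $\cF^{i-1}_x = \natural'_x(V'^{\leq i-1})$ together with a Lipschitz-type bound for $k'$ (the direct analogue of the inequality $|k(\alpha_t(Z),\pi(Y,X,x,t),x) - k(\alpha_t(Z),\exp(\natural(\alpha_t(Z)))\cdot x,x)| \leq C|\pi(Y,X,x,t) - \exp(\natural(\alpha_t(Z)))\cdot x|$ exploited in Section~\ref{sec:proof of main_tech_thm}) shows that the $V'^j$-component of $k'(\alpha'_t(M(X,x)),\ldots) - \alpha'_t(M(X,x))$ is $O(t^{j+1})$, so that after rescaling by $\alpha'_{t^{-1}}$ the difference $\phi(X,x,t) - (M(X,x),x,0)$ is $O(t)$. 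This graded convergence estimate — a graded analogue of Theorem~\ref{thm:BCH} in the present setting — is the main obstacle I expect to encounter.

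Finally, for property (d), the differential of $\phi$ at $(0,p,0)$ acts on the $V$-component by $M(\cdot,p):V\to V'$, on the $U$-component as the identity, and surjectively onto $\R_+$ from the $t$-component, so the full differential is surjective provided $M(\cdot,p)$ is. When $V'$ is minimal at $p$, $\dim V' = \dim\gr(\cF)_p$ and $\natural'_p$ is a linear isomorphism; combined with $\natural'_p\circ M(\cdot,p) = \natural_p$ and surjectivity of $\natural_p$ (again Condition~(ii) of Definition~\ref{dfn:graded basis}), this forces $M(\cdot,p)$ to be surjective and yields the required submersion.
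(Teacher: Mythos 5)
Your overall framework mirrors the paper's — the use of the corrector function $k'$ from Lemma~\ref{lem:k_appendix}, the $\R_+^\times$-equivariance, and the observation that continuity at $t=0$ self-upgrades to smoothness. The gap is in your graded linear lift $M$: it is not strong enough to give the estimate you need, and the claim that the $t^i$-coefficient of the discrepancy $\exp(\natural(\alpha_t(X)))\cdot x - \exp(\natural'(\alpha'_t(M(X,x))))\cdot x$ is simply $\natural(X_i)(x)-\natural'(M_i(X,x))(x)$ is false. The Taylor expansion of a time-one flow in $t$ involves iterated Jacobian terms, not just values at $x$; in particular if $W,W'$ are vector fields with $W(x)=W'(x)$, the flows $\exp(tW)\cdot x$ and $\exp(tW')\cdot x$ typically disagree already at order $t^2$ by $-\tfrac{t^2}{2}D(W-W')(x)\cdot W(x)$. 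A concrete counterexample to your estimate: on $M=\R^2$ with $\cF^1=\langle\partial_a\rangle$, $N=2$, take $V^1=\R$ with $\natural(1)=\partial_a$ and $V'^1=\R$ with $\natural'(1)=e^a\partial_a$. Your $M$ sends $1\mapsto 1$ at $x=(0,0)$ since both vector fields equal $\partial_a$ there; but $\exp(t\partial_a)\cdot(0,0)=(t,0)$ while $\exp(te^a\partial_a)\cdot(0,0)=(t+t^2/2+O(t^3),0)$, so the discrepancy is $t^2/2$, not $o(t^2)$. After applying $\alpha'_{t^{-1}}$ this does not vanish as $t\to 0$, so your $\phi$ is not continuous at $t=0$.

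This is exactly why the paper's Lemma~\ref{lem:prem second_main_tech_thm} produces a \emph{polynomial} correction $\psi_x:\lie{g}\to V'$ of degree $N$, not a linear one. The linear part $\phi^1_x$ is constructed just as you construct $M$, but then the quadratic piece $\phi^2_x(X)=\tfrac12\phi^1_x([\phi^1_x(X),X])$, and successively $\phi^3_x,\dots,\phi^N_x$, are added to cancel the higher BCH terms $-\tfrac12[\bfX,\bfY]+\tfrac1{12}[\bfX,[\bfX,\bfY]]-\cdots$ so that every $t^i$-coefficient ($i\le N$) of $\BCH(-\psi_x(\alpha_t(X)),\alpha_t(X))$ vanishes at $x$. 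Only then does Theorem~\ref{thm:BCH} yield the $o(t^N)$ flow discrepancy that survives division by $\alpha'_{t^{-1}}$. (A secondary, minor issue: you assert a component-wise bound $O(t^{j+1})$ on the $V'^j$-piece of $k'(\dots)-\alpha'_t(M(X,x))$ from the Lipschitz bound on $k'$, which does not follow; the paper instead gets the global $o(t^N)$ bound, which suffices because $\alpha'_{t^{-1}}$ divides each coordinate by at most $t^N$.) Your verification of properties (a), (b), (c) and the submersion argument for (d) in the minimal case are fine once the correct polynomial $\psi_x$ replaces $M$.
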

\begin{lemma}\label{lem:prem second_main_tech_thm}
There exists a smooth function $\psi:V\times U\to V'$, which we denote by $(X,x) \mapsto \psi_x(X)$ for $x\in U$, $X\in V$, with the following properties:
\begin{enumerate}
\item For every $x\in U$, $\psi_x$ is a polynomial map (of degree $\leq N$).
\item There exist polynomials $\psi^i_x:V\to V'$ for $0\leq i\leq N-1$ such that $$\psi_x(\alpha_t(X))=\sum_{i=0}^{N-1}t^i\alpha_t(\psi^i_x(X)),\quad \forall X\in V,t\in \R_+.$$
\item For any $x\in M$, if we regard $\natural(\alpha_t(X))$ and $\natural'(\psi_x(\alpha_t(X)))$ as elements of $ t\cX_c(M)[[t]]$, then \begin{equation}\label{eqn:lem_app_qhsdljfiq}
 \natural(\alpha_t(X))\sim_{N,x}\natural'(\psi_x(\alpha_t(X))).
\end{equation}
\item For any $x\in M$, the following diagram commutes $$\begin{tikzcd}[column sep=large]V\arrow[d,"\natural_x"']\arrow[r,"\psi_x^0"]&V'\arrow[dl,"\natural_{x}'"]\\\Gr(\cF)_x
\end{tikzcd}$$

\end{enumerate} 
\end{lemma}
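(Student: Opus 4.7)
The plan is to build $\psi_x$ as a polynomial map whose weighted-homogeneous components are constructed inductively on the weight. Assign weight $i$ to $V^i$ and $V^{\prime i}$; a polynomial $P:V\to V^{\prime j}$ is weighted-homogeneous of weight $w$ when $P(\alpha_\lambda(X))=\lambda^w P(X)$. A finite sum $\psi_x=\sum_{j,w}\psi_x^{(j),w}$ of such pieces automatically admits the decomposition in (b) via $\psi_x^i:=\sum_j \psi_x^{(j),i+j}$, and a uniform weight bound will yield (a).

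For the base case (the leading-order linear $\psi_x^0:V\to V'$ with $\psi_x^0(V^i)\subseteq V^{\prime i}$), I would choose a smooth section of the surjection $\natural'_x:V^{\prime i}\twoheadrightarrow \cF^i_x/\cF^{i-1}_x$ on each graded piece. Since $(V',\natural')$ is a graded basis, for any basis $(e_k)$ of $V^i$ one can write $\natural(e_k)=\sum_{k'} f_{k,k'}(x)\natural'(e'_{k'})\pmod{\cF^{i-1}}$ with $f_{k,k'}\in C^\infty(U)$; this produces $\psi_x^0$ smoothly in $x$, realises (d), and achieves $\natural(\alpha_t(X))\sim_{1,x}\natural'(\psi_x^0(\alpha_t(X)))$.

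For the inductive step, suppose weight-$w'$ components have been built for all $w'<w$ so that the partial sum $\Psi_x$ satisfies $\natural(\alpha_t(X))\sim_{w-1,x}\natural'(\Psi_x(\alpha_t(X)))$. The main structural input is that the discrepancy between the two flows at order $t^w$ is a polynomial $E_w(X,x)$ in $X$ of weighted degree $w$ taking values in $\cF^w_x$. Both flows admit Magnus/BCH expansions whose $t^w$-coefficient is polynomial in the vector fields $\natural(X^k)$ and their primed analogues; the inclusions $\natural(V^i)\subseteq \cF^i$, $\natural'(V^{\prime j})\subseteq \cF^j$, together with $[\cF^i,\cF^j]\subseteq \cF^{i+j}$, force this coefficient into $\cF^w_x$, while weighted-homogeneity of degree $w$ follows from the scaling identity $\exp(\natural(\alpha_t(\alpha_\lambda(X))))=\exp(\natural(\alpha_{\lambda t}(X)))$ and its $\natural'$-analogue (valid because every previously-built piece is itself weighted-homogeneous). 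Given $E_w(X,x)\in\cF^w_x$, I peel off the filtration: applying the surjections $\natural'_x:V^{\prime j}\twoheadrightarrow \cF^j_x/\cF^{j-1}_x$ in succession for $j=w,w-1,\ldots,1$, I choose weighted-homogeneous polynomial pieces $\psi_x^{(j),w}:V\to V^{\prime j}$ (smooth in $x$, by the module-generation argument of the base case) whose $\natural'$-images sum to $-E_w(X,x)$ exactly in $\cF^w_x$. Adding these pieces to $\Psi_x$ upgrades the approximation to $\sim_{w,x}$.

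The induction runs until all indices $i=w-j\in\{0,\ldots,N-1\}$ have been treated, at which point the approximation has reached $\sim_{N,x}$, giving (c). Assembling $\psi_x$ and its components $\psi_x^i$ yields the lemma, with (a), (b), (d) immediate from the construction. The hard part is the structural claim that $E_w$ is simultaneously polynomial-weighted-homogeneous of weight $w$ in $X$ and takes values in $\cF^w_x$: this requires careful bookkeeping of Magnus expansion terms against the filtration, but once this input is established the filtration-peeling step is routine, and smoothness in $x$ at each stage follows from selecting sections in the $C^\infty(U)$-module $\cF^j/\cF^{j-1}$ generated by $\natural'(V^{\prime j})$.
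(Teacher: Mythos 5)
Your strategy -- correct the map order by order in $t$ using the BCH/Magnus expansion of the composed flows, with the generation property of the graded basis supplying smooth-in-$x$ module coefficients for the corrections -- is essentially the strategy of the paper. The paper streamlines it in two ways you should note: it first passes to the free nilpotent graded Lie algebra $\mathfrak{g}$ generated by $V\oplus V'$ (so that all BCH manipulations are exact algebraic identities and every error term is an explicit iterated bracket), and it chooses the linear part $\phi^1_x$ so that $\natural(\phi^1_x(X))$ and $\natural(X)$ agree \emph{as vector fields at the point $x$} (i.e.\ the error lies in $I_x\cX_c(M)$), not merely in the graded fiber $\cF^i_x/\cF^{i-1}_x$. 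With that stronger normalization every residual error vector field vanishes at $x$, hence $x$ is a fixed point of its flow, and no filtration-peeling is needed; one then concludes with Theorem \ref{thm:BCH} and Proposition \ref{prop: appendix 1}.

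There are two gaps in your write-up. The first you acknowledge: the claim that the order-$t^w$ discrepancy $E_w(X,x)$ is weighted-homogeneous of weight $w$ and is canonically realized in the fiber $\cF^w_x$ (polynomially in $X$, smoothly in $x$) is the entire content of the proof, and "careful bookkeeping of Magnus expansion terms" is exactly what the free-Lie-algebra device is for; without it your argument is a plan rather than a proof. The second gap you do not acknowledge: your verification of (d). You identify $\psi^0_x$ with the base-case linear map, but your inductive correction at weight $w$ explicitly allows a piece $\psi^{(w),w}_x$ valued in $V^{\prime w}$, and any such piece contributes to $\psi^0_x$ and would destroy the commutativity of the diagram in (d) unless its class in $\cF^w_x/\cF^{w-1}_x$ vanishes. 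That vanishing is true, but it is not immediate: it requires showing that the top graded symbol of $E_w$ is zero, which follows from the inductive hypothesis $\natural'_x\circ\Psi^0_x=\natural_x$ together with the identity $\BCH(-g,g)=0$ in $\Gr(\cF)_x$ (this is precisely the computation the paper performs when it checks that $\natural_x$ kills the homogeneous parts of $\phi^2_x,\ldots,\phi^N_x$, e.g.\ $\natural_x\bigl(\tfrac12 L([L(X),X])\bigr)=[\natural_x(X),\natural_x(X)]=0$). You need to run (d) as part of the induction, not read it off the base case.
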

Let us first give the proof of Theorem \ref{second_main_tech_thm} assuming Lemma \ref{lem:prem second_main_tech_thm}. We will give the proof of Lemma \ref{lem:prem second_main_tech_thm} after this.
\begin{proof}[Proof of Theorem \ref{second_main_tech_thm}.]Let $k$ be as in Lemma \ref{lem:k_appendix} applied to $(V',\natural',\mathbb{U}',U')$.
We define a smooth map $\phi:\dom(\phi)\subseteq \mathbb{U}\to \mathbb{U}'$ by the formula
 \begin{align*}
\phi(X,x,t)& =\begin{cases} \left(\alpha_{t^{-1}}\bigg(k\Big(\psi_x(\alpha_t(X)),\exp(\natural(\alpha_t(X)))\cdot x,x\Big)\bigg),x,t\right)& \text{if } t\neq 0,\\
(\psi^0_x(X),x,0) &\text{if } t=0.
\end{cases}
\end{align*}
The domain of $\phi$ is the set in which the above formula is valid, i.e.,  \begin{align*}
 \dom(\phi)=\{(X,x,t)\in \mathbb{U}&:(\psi_x(\alpha_t( X) ),\exp(\natural(\alpha_t(X)))\cdot x,x)\in \dom(k),\\& \& \Big(k\Big(\psi_x(\alpha_t(X)),\exp(\natural(\alpha_t(X)))\cdot x,x\Big),x,1\Big)\in \mathbb{U}'\}
 \end{align*}
As in the proof Theorem \ref{main_tech_thm}, to show that $\phi$ is smooth it is enough to show continuity at $t=0$. Let $p\in U$ be fixed and $C>0$ be such that 
 $$|k(Y,y,x)-k(Y,y',x)|\leq C|y-y'|$$
for $(Y,y,x)$ and $(Y,y'x')$ in some neighbourhood of $(0,p,p)$.   It follows that for $t$ small enough
\begin{align}\label{eqn:append 1 psi}
&\lvert k\Big(\psi_x(\alpha_t(X)), \exp(\natural(\alpha_t(X)))\cdot x,x\Big)-\psi_x(\alpha_t(X))\rvert \nonumber\\
=&|k\Big(\psi_x(\alpha_t(X)),\exp(\natural(\alpha_t(X)))\cdot x,x\Big)-k\Big(\psi_x(\alpha_t(X)),\exp(\natural'(\psi_x(\alpha_t(X))))\cdot x,x\Big)|\nonumber\\
\leq &C|\exp(\natural(\alpha_t(X)))\cdot x-\exp(\natural'(\psi_x(\alpha_t(X))))\cdot x|=o(t^N),
 \end{align}
 where in the last inequality we used \eqref{eqn:lem_app_qhsdljfiq}. Lemma \ref{lem:prem second_main_tech_thm}.b implies that \begin{equation}\label{eqn:append 2 psi}
 \lim_{t\to 0^+}\alpha_{t^{-1}}(\psi_x(\alpha_t(X)))=\psi^0_x(X).
 \end{equation}
By \eqref{eqn:append 1 psi} and \eqref{eqn:append 2 psi}, we deduce that $\phi$ is continuous at $t=0$. It is then straightforward to check that $\phi$ satisfies Theorem \ref{second_main_tech_thm}.\end{proof}
\begin{proof}[Proof of Lemma \ref{lem:prem second_main_tech_thm}.] Let $\mathfrak{g}$ be the free nilpotent graded Lie algebra of step $N$ generated by elements of $V\oplus V'$. We can extend $\natural\oplus \natural':V\oplus V'\to \cX_c(M)$ to $\natural:\mathfrak{g}\to \cX_c(M)$ using \eqref{eqn:bracket graded Lie}. It is then enough to construct $\psi:\mathfrak{g}\times U\to V'$ satisfying Lemma \ref{lem:prem second_main_tech_thm}. In  the proof it will be convenient to say that a polynomial map $P:\mathfrak{g}\to V'$ is positive if there exist polynomials maps $P^0,\cdots,P^{N-1}:\mathfrak{g}\to V'$ such that $$P(\alpha_t(X))=\sum_{i=0}^{N-1}t^i\alpha_t(P^i(X)),\quad \forall X\in \mathfrak{g},t\in \R_+.$$We call $P^0$ the homogeneous part of $P$. We start by constructing the linear part of $\psi$.
\begin{lemma}\label{lem:qsjdfqshdjfhq}There exists a smooth map $\phi^1:\mathfrak{g}\times U\to V'$ such that \begin{enumerate}
\item for every $x\in U$, $\phi_x^1:\mathfrak{g}\to V'$ is linear and $\phi_x^1(\mathfrak{g}^i)\subseteq \oplus_{j\leq i}V^{\prime j}$, i.e., $\phi_x^1$ is positive.
\item for every $x\in U$, $X\in \mathfrak{g}$, $\natural(\phi_x^1(X))$ and $\natural(X)$ are vector fields on $M$ which are equal at $x$.
\end{enumerate}
\end{lemma}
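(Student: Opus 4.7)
The plan is to construct $\phi^1$ explicitly on a homogeneous basis of $\mathfrak{g}$ by freezing appropriate smooth coefficients at $x$, and then extending by linearity in $X$. First I would fix a basis $\{X_\alpha\}$ of $\mathfrak{g}$ consisting of homogeneous elements, say $X_\alpha \in \mathfrak{g}^{i_\alpha}$, and for each $\alpha$ a basis $\{v'_{\alpha,\beta}\}_\beta$ of $\bigoplus_{j \le i_\alpha} V'^j$. Since $\natural(X_\alpha)\in\cF^{i_\alpha}$ and, by property (ii) of Definition \ref{dfn:graded basis}, the family $\natural'(v'_{\alpha,\beta})$ generates $\cF^{i_\alpha}$ on $U$, the definition of ``generates on $U$'' recalled in Section \ref{sec:generalised_distr} produces smooth functions $f_{\alpha,\beta}\in C^\infty(M)$ such that
\[
\natural(X_\alpha)_{|U} \;=\; \sum_\beta f_{\alpha,\beta}\,\natural'(v'_{\alpha,\beta})_{|U}.
\]

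Then I would set
\[
\phi^1_x(X_\alpha) \;:=\; \sum_\beta f_{\alpha,\beta}(x)\, v'_{\alpha,\beta} \;\in\; \bigoplus_{j\le i_\alpha} V'^j,
\]
and extend $\phi^1_x$ to all of $\mathfrak{g}$ by linearity in $X$. The resulting map $\phi^1:\mathfrak{g}\times U\to V'$ is smooth because it is linear in $X$ with smoothly varying coefficients in $x$. Property (1) is immediate: for $X\in\mathfrak{g}^i$ written in the homogeneous basis, only terms with $i_\alpha=i$ contribute, and each such $\phi^1_x(X_\alpha)$ lies in $\bigoplus_{j\le i}V'^j$ by construction. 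For property (2), evaluating at $y=x$ gives
\[
\natural(\phi^1_x(X_\alpha))(x) \;=\; \sum_\beta f_{\alpha,\beta}(x)\,\natural'(v'_{\alpha,\beta})(x) \;=\; \natural(X_\alpha)(x),
\]
and the equality extends to all $X\in\mathfrak{g}$ by linearity.

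There is essentially no genuine obstacle here: the lemma is a direct unpacking of the definition of a graded basis together with the definition of ``generates on $U$''. The only conceptual point worth emphasizing is that freezing the coefficients $f_{\alpha,\beta}$ at the point $x$ (as opposed to keeping them as functions of $y$) is what turns the original identity of vector fields into a vector in the constant space $V'$; this is also precisely the reason why $\natural(\phi^1_x(X))$ and $\natural(X)$ agree only at $y=x$, which is exactly what the lemma asks for.
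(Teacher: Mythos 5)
Your proof is correct and is essentially the paper's own argument: both fix a homogeneous basis of $\mathfrak{g}$, use condition (ii) of Definition \ref{dfn:graded basis} for $(V',\natural',\mathbb{U}',U')$ to write $\natural(X_\alpha)=\sum_\beta f_{\alpha,\beta}\natural'(v'_{\alpha,\beta})$ on $U$, and define $\phi^1_x$ by freezing the coefficients at $x$ and extending linearly. Nothing is missing.
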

\begin{proof}
Fix a basis of $\mathfrak{g}$ and $V'$. We further suppose that each element of the basis belongs to $\mathfrak{g}^n$ or $V^{\prime n}$ for some $n$. Since $\phi^1_x$ is linear for $x\in U$, it is enough to define it on basis element of $\mathfrak{g}$. Let $X\in \mathfrak{g}^n$ be a basis element. By Condition (ii) of Definition \ref{dfn:graded basis}, we can find smooth functions $f_1,\cdots,f_m\in C^\infty(M)$ such that $$\natural(X)=\sum_{i=1}^mf_i \natural(v_i),\quad \text{on } U,$$where $v_i$ are the basis elements of $\oplus_{i\leq n}V^i$. We then define $\phi^1$ by \begin{equation}\label{qsjdkfjqmsjfosdkf}
\phi^1(X,x)=\sum_{i=1}^mf_i(x)v_i.\qedhere
\end{equation}
\end{proof}
Fix $x\in U$. In what follows we will say that an element $X\in \mathfrak{g}$ vanishes at $x$ if $\natural(X)(x)=0$. Since $V'\subseteq \mathfrak{g}$, the vector $-\phi^1_x(X)+X\in \mathfrak{g}$ is well defined. It vanishes at $x$ by Lemma \ref{lem:qsjdfqshdjfhq}. We now construct the quadratic part of $\psi_x$. Consider $$\BCH(-\phi^1_x(X),X)=\Big(-\phi^1_x(X)+X\Big)+\frac{1}{2}[\phi^1_x(X),X]+O(\norm{X}^3).$$The first term vanishes at $x$ but the second doesn't. So let $$\phi^2_x(X)=\frac{1}{2}\phi^1_x([\phi^1_x(X),X]).$$ Since $\phi_x^1$ is linear and positive, $\phi^2_X$ is quadratic and positive. Now consider \begin{align*}
\BCH(-\phi^1_x(X)-\phi^2_x(X),X)&=\Big(-\phi^1_x(X)+X\Big)+\Big(-\phi^2_x(X)+\frac{1}{2}[\phi^1_x(X),X]\Big)\\&+\frac{1}{2}[\phi^2_x(X),X]+\frac{1}{12}[\phi^1_x(X),[\phi^1_x(X),X]]+\frac{1}{12}[X,[X,\phi^1_x(X)]]+O(\norm{X}^4).
\end{align*}We define the cubic part of $\psi_x$ by $$\phi^3_x(X)=\phi_x^1\Big(\frac{1}{2}[\phi^2_x(X),X]+\frac{1}{12}[\phi^1_x(X),[\phi^1_x(X),X]]+\frac{1}{12}[X,[X,\phi^1_x(X)]]\Big)$$We continue this procedure until we have define $\phi^{N}_x$. Then let $$\psi_x(X)=\sum_{i=1}^N\phi^i_x(X).$$ The construction of $\psi$ implies that for any $X\in \mathfrak{g}$, there exists $Y_1,\cdots,Y_N\in \mathfrak{g}$ such that $$\BCH(-\psi_x(\alpha_t(X)),\alpha_t(X))=t^1Y_1+\cdots+t^NY_N+O(t^{N+1}),\quad \forall t\in \R_+$$and $Y_1,\cdots,Y_N$ vanish at $x$. Hence trivially $$\natural(\BCH(-\psi_x(\alpha_t(X)),\alpha_t(X)))\sim_{N,x} 0,$$where we now view $\natural(\BCH(-\phi_x(\alpha_t(X)),\alpha_t(X)))\in t\cX_c(M)[[t]]$. Since $\natural$ satisfies \eqref{eqn:bracket graded Lie}, it follows that $$\natural(\BCH(-\psi_x(\alpha_t(X)),\alpha_t(X)))=\BCH(-\natural(\psi_x(\alpha_t(X))),\natural(\alpha_t(X))).$$By Proposition \ref{prop: appendix 1}, we get that $$\natural(\alpha_t(X))\sim_{N,x}\natural(\psi_x(\alpha_t(X))).$$

It is clear that $\psi_x$ is positive for any $x\in U$ and that the map $\psi$ depends smoothly on $x$. It remains to show Lemma \ref{lem:prem second_main_tech_thm}.d. By \eqref{qsjdkfjqmsjfosdkf}, we get that if $L:\mathfrak{g}\to V'$ denotes the homogeneous part of $\phi^1_x$, then $$\natural_{x}(L(X))=\natural_{x}(X),\quad \forall X\in \mathfrak{g}.$$It is also clear that the homogeneous part of $\phi^2_x$ is $$X\mapsto \frac{1}{2}L([L(X),X]).$$By Remark \ref{rmk:natural_p}, $\natural_x(\frac{1}{2}L([L(X),X]))=0$. Same for the homogeneous part of $\phi^3_x,\cdots,\phi^{N}_x$. The proof of the lemma is thus complete.
\end{proof}
\end{appendix}
\begin{refcontext}[sorting=nyt]
\printbibliography
\end{refcontext}
	{\footnotesize
		\vskip -2pt National and Kapodistrian University of Athens
		\vskip -2pt e-mail: \texttt{iandroul@math.uoa.gr}
		\medskip
		\vskip-2pt University of Paris-Saclay
		\vskip-2pt e-mail: \texttt{omar.mohsen@universite-paris-saclay.fr}
		\medskip
		\vskip -2pt University of Lorraine
		\vskip-2pt e-mail: \texttt{robert.yuncken@univ-lorraine.fr}
	}
\end{document}